\newcommand{\bracedincludegraphics}[2][]{%
  \sbox0{$\vcenter{\hbox{\includegraphics[#1]{#2}}}$}%
  \left\lbrace
    \vphantom{\copy0}
  \right.\kern-\nulldelimiterspace
  \underbrace{\vrule width0pt depth \dimexpr\dp0 + .3ex\relax\box0}}
\newtheorem{theorem}{Theorem}
\newtheorem{lemma}[theorem]{Lemma}
\newtheorem{proposition}[theorem]{Proposition}
\newtheorem{corollary}[theorem]{Corollary}
\theoremstyle{definition}
\renewcommand{\det}{\mbox{det\,}}
\newcommand{\Det}{\mbox{Det\,}}
\newcommand{\Pf}{\mbox{Pf\,}}
\newcommand{\pf}{\mbox{pf\,}}
\newcommand{\Tr}{\mbox{Tr\,}}
\newcommand{\sgn}{\mbox{sgn}}
\DeclareMathOperator{\sech}{sech}
\newcommand{\E}{\mathbb{E}}
\newcommand{\Pp}{\mathbb{P}}
\newcommand{\I}{\mathbb{I}}
\newcommand{\Kac}{\mbox{Kac}}
\newcommand{\R}{\mathbb{R}}
\newcommand{\Z}{\mathbb{Z}}
\newcommand{\N}{\mathbb{N}}
\newcommand{\fr}{\frac{1}{2}}
\newcommand{\bea}{\begin{eqnarray}}
\newcommand{\eea}{\end{eqnarray}}
\newcommand{\beast}{\begin{eqnarray*}}
\newcommand{\eeast}{\end{eqnarray*}}
\newcommand{\Brownian}[7]{
\draw[#4] (#6,#7)
\foreach \x in {1,...,#1}
{   -- ++(#2,rand*#3)
}
node[right] {#5};
}
\begin{document}
\begin{frontmatter}
\title{Asymptotic expansions for a class of Fredholm Pfaffians and interacting
particle systems}
\runtitle{Fredholm Pfaffians and interacting particle systems}
\begin{aug}
\author[A]{\fnms{Will} \snm{FitzGerald}\ead[label=e1]{william.fitzgerald@manchester.ac.uk}},
\author[B]{\fnms{Roger} \snm{Tribe}\ead[label=e2,mark]{r.p.tribe@warwick.ac.uk}}
\and
\author[B]{\fnms{Oleg} \snm{Zaboronski}\ead[label=e3,mark]
{o.v.zaboronski@warwick.ac.uk}}
\address[A]{Department of Mathematics, University of Manchester,
Manchester, M$13~9$PL,
United Kingdom,
\printead{e1}}
\address[B]{Mathematics Institute, University of Warwick,
Coventry CV$4~7$AL, United Kingdom,
\printead{e2,e3}}
\end{aug}
\begin{abstract}
Motivated by the phenomenon of duality for interacting particle systems 
we introduce two classes of Pfaffian kernels
describing a number of Pfaffian point processes in the `bulk' and at the `edge'.
Using the probabilistic method due to Mark Kac, we prove two  Szeg\H{o}-type
asymptotic expansion theorems for the corresponding Fredholm Pfaffians. 
The idea of the proof is to introduce an effective random
walk with transition density determined by the Pfaffian kernel, express
the logarithm of the Fredholm Pfaffian through expectations with 
respect to the random walk, and analyse the expectations using general results
on random walks. We demonstrate the utility of the theorems
by calculating asymptotics for the empty interval and non-crossing probabilities for a number
of examples of Pfaffian point processes: 
coalescing/annihilating Brownian motions, massive coalescing
Brownian motions, real zeros of Gaussian power series and Kac polynomials, and real eigenvalues
for the real Ginibre ensemble.
\end{abstract}
\begin{keyword}[class=MSC2020]
\kwd[Primary ]{60G55}
\kwd{82C22}
\kwd[; secondary ]{60J90}
\kwd{47N30}
\end{keyword}

\begin{keyword}
\kwd{Pfaffian point process}
\kwd{Fredholm Pfaffian}
\kwd{Annihilating coalescing Brownian motions}
\kwd{Random polynomials}
\kwd{Kac polynomials}
\kwd{Real Ginibre random matrix ensemble}
\end{keyword}
\end{frontmatter}
\tableofcontents
\section{Introduction} \label{s1}
The aim of the paper is to prove
rigorously, extend and generalise a number of asymptotic formulae
for the empty interval and crossing probabilities for systems of annihilating-coalescing
Brownian motions on $\R$ obtained in the 1990's by Derrida and Zeitak
\cite{Derrida_Zeitak}
and Derrida, Hakim and Pasquier \cite{Derrida_Hakim_Pasquier}. 
A very special feature underlying the computation of   
each of these probabilities is that the corresponding  
random process is actually a Pfaffian point process for arbitrary
deterministic initial conditions. This fact
is by now well known for coalescing or annihilating Brownian motions
on $\R$, \cite{TZ}. However, it also holds true for mixed coalescing/annihilating systems,
for annihilating systems with pairwise immigration, for coelascing systems with branching, 
and for analogous systems on $\Z$,
\cite{GPTZ}, \cite{GTZ}. Therefore, asymptotics of the probabilities 
studied in \cite{Derrida_Zeitak}, \cite{Derrida_Hakim_Pasquier} can be reduced to the asymptotic analysis
of Fredholm Pfaffians
(introduced in \cite{rains2000}, or see the subsection \ref{sFDFP} below for a brief review). 
\paragraph*{Background}
For determinantal point processes the empty interval probabilities (also called gap probabilities) are given by Fredholm determinants
of the corresponding kernels, see e.g. \cite{anderson2010} or \cite{mehta2004random}
for a review. If the kernel is translationally invariant (that is
depends on the difference of the arguments only) 
then the Akhiezer-Kac formula \cite{Kac}, \cite{Akhiezer} gives the asymptotics for a Fredholm determinant
as the length of the interval grows. The Akhiezer-Kac formula is an extension of Szeg\H{o}'s theorem \cite{szego} for 
determinants of
Toeplitz matrices to the continuous case. Unfortunately, 
there seems to be no analogous result for Fredholm Pfaffians, even though
via operator manipulations it is often possible to reduce a Fredholm Pfaffian
to a Fredholm determinant of an operator with a scalar kernel, see e.g. 
\cite{Tracy_Widom} for such a reduction in the case of the Gaussian orthogonal and symplectic ensembles of random matrices or \cite{Rider_Sinclair} for the real Ginibre ensemble. This is probably due to the lack of the general theory of Pfaffian point processes, which would require proving an analogue of Soshnikov's theorem 
\cite{soshnikov2000determinantal} for determinantal point processes by classifying all
skew-symmetric $2\times 2$ matrix-valued kernels that give rise to a random
point process. 
Therefore our task is to study the asymptotics of Fredholm Pfaffians for a sufficiently large class of kernels which contains all of the empty-interval probabilities for the Pfaffian point processes in which we are interested.

\paragraph*{Classes of Pfaffian kernels}
Motivated by the duality ideas often employed in integrable probability to 
find explicit solutions for exactly solvable interacting particle systems, we 
find two classes of such processes, for the translationally invariant (`bulk')
and non-invariant (`edge') cases. Asymptotics for these
two cases (Theorems \ref{pfaffian_czego} and \ref{pfaffian_szego_edge}) are the first main results of the paper. 
As a byproduct of our analysis, we
obtain a generalisation of the Akhiezer-Kac formula  for Fredholm 
determinants for a class of `asymptotically
translationally-invariant' kernels, see Theorem \ref{nonTIszego}.

It turns out that a number of well known Pfaffian point processes fall
into one of our classes including all the reaction diffusion systems listed earlier,
the exit measure for annihilating-coalescing Brownian motions 
(see Section \ref{s2.3} below), the law of
real eigenvalues for the real Ginibre ensemble both in the bulk and near the edge
discovered in \cite{Forrester_Nagao} and \cite{Borodin_Sinclair}, as well
as the law of eigenvalues in the bulk and edge scaling limits
for the classical GOE random matrix ensemble - see 
\cite{mehta2004random},
\cite{anderson2010} for a review. Unfortunately, our theorems have nothing
to say about the last two cases due to the insufficiently fast decay of the corresponding
kernels at infinity.
%
\paragraph*{Approach to the proofs}
%
A notable feature of the class of kernels we consider is the possibility to follow the manipulations
carried out in \cite{Tracy_Widom} and \cite{Rider_Sinclair} in order to reduce
the corresponding Fredholm Pfaffian to the square root of the product of a Fredholm
determinant and a finite dimensional determinant. We are therefore left with a pure analysis
problem of computing the asymptotic behaviour of each of these determinants in the 
limit of large gap sizes. In principle, this problem can be solved using a combination of a direct
computation and an application of Szeg\H{o}-type theorems, possibly modified owing to the presence
of  a Fisher-Hartwig singularity, see \cite{Deift_Its_krasovsky} for a review of the discrete (Toeplitz)
case. Such an approach runs into difficulties due to the dearth of sufficiently general asymptotic
results for Fredholm determinants in the non-translationally invariant case. Fortunately, the
analysis problem at hand can be solved using a probabilistic approach. This was used
by Mark Kac to formulate the first asymptotic results for Fredholm determinants, thus generalising 
 Szeg\H{o}'s original theorems for Toeplitz matrices, see \cite{Kac}. It turns out that
both the finite dimensional determinant and the Fredholm determinant under consideration
can be interpreted as  expectations with respect to the law of (time-inhomogeneous) 
random walk, which can be subsequently analysed using general results concerning
random walks, such as Sparre Andersen's formula, Spitzer's formulae, cyclic symmetry,
renewal theory, the invariance principle and optional stopping. The essence of Kac's
arguments is very easy to illustrate in the translationally invariant case. Let $K_T$
be an integral operator on $L^2[0,T]$: $K_Tf(x)=\int_0^T  \rho(x-y)f(y)dy$. Assume that
$\rho$ is non-negative and such that
$\int_\R \rho(x)=1$, that is a probability density function. Given some regularity conditions
on $\rho$, for any $0\leq \lambda \leq 1$ the Fredholm determinant of the operator $I-\lambda K_T$
can be computed using the trace-log (Plemelj-Smithies) formula:
\beast
 \log \Det (I-\lambda K_T)=-\sum_{n=1}^\infty\lambda^n\frac{\Tr K_T^n}{n}.
\eeast
Using the convention $x_{n+1}=x_1$, one can write for any $n \in \N$:
\beast
\Tr K_T^n=\int_{[0,T]^n}  \prod_{k=1}\rho(x_{k+1}-x_k)dx_1 dx_2\ldots dx_n
=\int_{[0,T]}  \mathbb{P}_x\left[\tau_T > n, X_n \in dx \right],
\eeast
where $\mathbb{P}_x$ is the law of the $\R$-valued random walk $(X_n: n>0)$ started at $x$ with the distribution of increments given by
$\rho$, and $\tau_T$ is the first exit time from the interval $[0,T]$.  
Therefore the problem of computing the Fredholm determinant is reduced to the analysis of random walks constrained not to
exit the interval $[0,T]$ killed independently at every step with probability $1-\lambda$.
It was Kac's insight that such an analysis can be carried out for any random walk using
probabilistic arguments. Varying the parameter $\lambda$ is very natural for probabilists: for a point process 
with (determinantal or Pfaffian) kernel $K$, the thinned point process, where each particle is removed
independently with probability $p$, has new kernel $p K$.


The asymptotic results detailed in Theorems \ref{pfaffian_czego}, \ref{nonTIszego},
\ref{pfaffian_szego_edge} are an example of pure analysis statements proved
using probabilistic methods. As happened with Kac's Theorem for Fredholm determinants, a non-probabilistic
proof of asymptotics for Fredholm Pfaffians will most certainly appear. 
Kac's assumption of positivity $\rho \geq 0$ was used to allow 
probabilistic arguments, but the final result was subsequently found to be true without this assumption,
and we believe the analogous positivity assumption in our results is unnecessary. 
Moreover, such a proof is certainly desirable: at the moment the logarithms of both the 
finite dimensional and Fredholm determinants contain terms proportional to the logarithm
of the gap size, see Theorem \ref{nonTIszego}. The logarithmic terms cancel upon taking the final product. 
In other words, the Fredholm Pfaffian does not have the singularity
present in the corresponding Fredholm determinant.
It is 
likely that there is a 
streamlined proof of Theorems \ref{pfaffian_czego} and \ref{pfaffian_szego_edge} which avoids re-expressing the Fredholm Pfaffians
in terms of determinants, and where the logarithmic terms do not appear at the 
intermediate steps. 
\paragraph*{Application examples}
The motivation for the study of Fredholm Pfaffians came from applications
to probability theory. In
\cite{Derrida_Zeitak}, Derrida and Zeitak study the distribution of domain sizes
in the $q$-state Potts model on $\Z$ for the `infinite temperature' initial conditions
(the initial colours are chosen uniformly independently at each site). They show that,
as $L/\sqrt{t} \to \infty$,
\bea\label{intro_dz}
\Pp[\mbox{The interval } [0,L] \mbox{ contains one colour at time } t]
=e^{-A(q)\frac{L}{\sqrt{t}}+B(q)+o(1)},
\eea
where $A,B$ are some explicit functions of $q$. Derrida and Zeitak's formula
is valid for all $q\in [1,\infty)$. For Potts models, $q \in 1+\N$. However, as pointed out by the authors, there is
a physical interpretation of (\ref{intro_dz}) for $q \notin 1+\N$: it gives
the probability that the interval $[0,x]$ is contained in a domain of positive spins in zero temperature Glauber model on $\Z$ started with independent homogeneous
distribution of spins with the average spin (`magnetisation') equal to $m \in [-1,1]$.
Then the parameter $q=\frac{2}{1+m}$. The computation method employed in \cite{Derrida_Zeitak}
can possibly be made
rigorous for $q<1/2$, but it relies on the assumption of analyticity of the 
functions $A,B$ in order to obtain the answer for $q\geq 2$. These assumptions
seem hard to justify to us. Alternatively, one prove (\ref{intro_dz}) as follows: as
is well known, the boundary of monochrome domains in 
the (diffusive limit of) Potts model behaves as a system of
instantaneously
coalescing/annihilating Brownian motions on $\R$ (denoted CABM) with the annihilating 
probability $\frac{1}{q-1}$ and coalescing 
probability $\frac{q-2}{q-1}$ at each collision, see
\cite{Derrida_Zeitak} and \cite{GPTZ} for details.  Therefore, the distribution of domain
sizes in the Potts model corresponds to the empty interval 
probabilities for coalescing/annihilating
Brownian motions which is known to be a Pfaffian point process \cite{GPTZ}.
Moreover, if $\mathbf{K}$ is the kernel for the purely 
coalescing case ($q=\infty$),
then $\frac{q-1}{q}\mathbf{K}$ is the kernel for  CABM.
In other
words, CABM can be obtained from coalescing Brownian 
motions by thinning, that is deleting particles independently with probability $p=\frac{q-1}{q}$ (see 
Section \ref{s3} for precise details, including the role of the initial conditions).
Therefore the gap probability is given by the Fredholm Pfaffian
$ \Pf_{[0,L]}[\mathbf{J}-p\mathbf{K}]$, and 
applying Theorem \ref{pfaffian_czego} 
one reaches (\ref{intro_dz}). 
Notice that the interpretation 
in terms of gap probabilities
for CABM uses kernel $p \mathbf{K}$ for  $p \geq \fr$ (`weak thinning'). 
For $p<\fr$ (`strong thinning') one can interpret the answer either 
in terms of gaps between the boundaries of positive spins in 
the Glauber model as above, or there is an interpretation
in terms of gap sizes for a $massive$ coalescence model - see Lemma \ref{MCBM} below. 

Derrida and Zeitak's result (\ref{intro_dz}) can be extended as follows. 
CABM
started from half-space initial conditions is still a Pfaffian point process 
with a translationally non-invariant  kernel covered by conditions of Theorem \ref{pfaffian_szego_edge}.
The asymptotics of the corresponding Fredholm Pfaffian can therefore be computed
yielding the right (left) tail of the fixed time distribution of the leftmost 
(rightmost) particle. Notice that the value $p=\fr$ of the thinning parameter corresponds to purely
annihilating Brownian motions (ABM). It is  known \cite{TZ}, \cite{GPTZ} that the 
fixed time law of ABM
with full-space (half-space) initial conditions is a Pfaffian point process
coinciding with the bulk (edge) scaling limit of the law of real eigenvalues
for the real Ginibre random matrix model discovered in \cite{Forrester_Nagao},
\cite{Borodin_Sinclair}, \cite{borodin2016erratum}. This remark
allowed a computation of the tails of the distribution of the maximal 
real eigenvalue for the edge scaling limit of the real Ginibre ensemble in
\cite{PTZ} and \cite{fitzgerald2020sharp}. It is the generalisation
of the arguments in the last two cited papers that led to the asymptotic Theorems
\ref{pfaffian_czego}, \ref{nonTIszego}, \ref{pfaffian_szego_edge}.

In another influential investigation, Derrida, Hakim and Pasquier \cite{Derrida_Hakim_Pasquier} compute
the fraction
of sites which haven't  changed colour up to time $t$ for a q-state
Potts model on $\Z$. 
Due to the translation invariance of the `infinite temperature' initial conditions, this is 
equivalent to the probability that the colour of the state at the origin hasn't changed up to time $t$
(also known as  a `persistence' probability, see \cite{Persistence_review} for a review
of the persistence phenomenon in the context of non-equilibrium statistical mechanics).
They show, up to logarithmic precision, that 
\bea\label{intro_dhp}
\Pp\left[\mbox{The colour at $0$ does not change in }[0,t]\right]\sim t^{-\gamma(q)}
\quad \mbox{ as } t\rightarrow \infty
\eea
where
\beast
\gamma(q)=-\frac{1}{8}+\frac{2}{\pi^2} 
\left[\cos^{-1}\left(\frac{2-q}{\sqrt{2}q} \right)\right]^2.
\eeast
The authors derive this result by noticing that this event is equivalent for the domain boundaries, 
which form a system of coalescing/annihilating random walks, to the event that no boundary
crosses zero during the time interval $[0,t]$.
Motivated by (\ref{intro_dhp}), we consider CABM on $[0,\infty)$ started from the `maximal'
entrance law supported on $(a,\infty)$ for some $a>0$. Particles hitting the boundary
at $x=0$ are removed from the system and we record  the corresponding exit times.
We show that the resulting exit measure is a Pfaffian point process with
non-translationally invariant kernel belonging to our class. Then Theorem
\ref{pfaffian_szego_edge} applies and one can deduce the asymptotics for the probability that the interval
$[0,T]$ contains no exiting particles,
in the limit of large 
time $T$. This immediately gives the non-crossing probability for the position of the leftmost
particle $(L_t: t\geq 0)$ for the system of CABM on the whole real axis started from
every point of $(a,\infty)$: 
\beast
\Pp \left[\inf_{t\in [0,T]} L_t>0 \right]= K(q) \left(\frac{T}{a^2}\right)^{-\gamma(q)/2}\left(1+o(1)\right),
\eeast
for a known constant $K(q)$ (independent of $a$) and the exponent $\gamma (q)$ specified above.
If one starts CABM with particles at every point of $(-\infty,-a)\cap (a,\infty)$,
the above formula can be used to derive the following continuous counterpart of
(\ref{intro_dhp}):
\begin{equation} \label{derrida2}
\Pp\left[\mbox{No particle crosses 0 in } [0,T]\right] 
=K(q)^2 \left(\frac{T}{a^2}\right)^{-\gamma(q)}
\left(1+o(1)\right).
\end{equation}

As discussed above, the fixed time law for annihilating Brownian motions is closely
related to the real Ginibre ensemble. Similarly, 
the `bulk' limit of the Pfaffian point process describing the exit measure for ABM is identical,
up to a deterministic transformation,
to the translationally invariant kernel for the Pfaffian point process giving
the law of real roots of the Gaussian power series,  and also to the 
 large $N$ limit for the real eigenvalues of truncated orthogonal matrices
 (where one row/column has been removed to obtain a minor) - see
\cite{Forrester}, \cite{Matsumoto_Shirai}. Theorem \ref{pfaffian_czego}
allows one to compute the corresponding empty interval  probability, up to the constant term, 
as the endpoints of the interval approach $\pm 1$. For example,
\beast
\Pp \left[\mbox{The interval } [-1+2 \epsilon,1-2 \epsilon] \mbox{ contains no roots} \right]
=\epsilon^{\frac{3}{8}} \, e^{\kappa_2} \left(1+o(1)\right)\quad \mbox{as }\epsilon \downarrow 0
\eeast
where 
\[
\kappa_2 = \frac14 \log \Big(\frac{\pi^2}{2}\Big) - \frac{\gamma}{2} 
- \frac14 
 \int_0^{\infty} \log (x) (\tanh(x) + \tanh(x/2)) (\sech^2(x) + \mbox{$\frac12$} \sech^2(x/2)) dx.
 \]
Here $\gamma$ is the Euler-Mascheroni constant.
Notice that the exponent $3/8$ coincides with the value $\gamma(2)$ of the persistence
exponent (\ref{intro_dhp}). This is a reflection of a general property of the class
of Pfaffian  kernels we consider: the leading
order asymptotics of the gap probability for a non-translationally invariant kernel and its translationally invariant
`bulk' limit coincide, while the constant terms differ (but are explicitly known).  
Notice that the Pfaffian point process describing the edge scaling limit
of the Gaussian orthogonal ensemble does not seem to possess this property.
Still, it is quite satisfying that there is a rather diverse pool of examples of Pfaffian
point processes  which can be all
treated using Theorems \ref{pfaffian_czego}, \ref{nonTIszego}, \ref{pfaffian_szego_edge}.

Moreover, it turns out that our results can be applied to zeros of random
polynomials with i.i.d. mean zero, but not necessarily Gaussian, coefficents, which 
are no longer described by a Pfaffian
point process. In a remarkable paper \cite{Dembo_Poonen_Shao_Zeitouni}, Dembo et al. show
the probability that such a random polynomial of degree $n$ has no zeros 
decays polynomially as $n^{-b}$. They show that this asymptotic is controlled 
by the case of Gaussian coefficients, and characterise the decay rate in terms of the Gaussian power series. This immediately  enables an 
application of our asymptotic results for Fredholm Pfaffians to evaluate the unknown power $b$,
 see Section \ref{s2.2.1} for more details.
\paragraph*{Literature review}
The analysis of Fredholm Pfaffians is an increasingly active area of research due to
applications ranging from interacting particle systems to random matrices. 
Our 
interest in the asymptotics of Fredholm Pfaffians was generated by the work
of Peter Forrester \cite{Forrester_bulk_gap} who used the connection
between the ABM and the law of real eigenvalues in the real Ginibre
ensemble from \cite{TZ}, and formula (\ref{intro_dz}) of Derrida-Zeitak
\cite{Derrida_Zeitak} to calculate the bulk scaling limit for the corresponding gap probability.
In \cite{Poplavskyi_Schehr} Mikhail Poplavskyi and Gregory Schehr use the link between Kac polynomials 
and the ensemble
of truncated orthogonal matrices from \cite{Forrester} to calculate the leading order asymptotic for 
the gap probability
for the real roots of Kac polynomials. It is worth
stressing that their work is independent of our own. A common feature of
the Pfaffian point processes related to Kac polynomials, truncated orthogonal
random matrices and exit measures  for interacting particle systems is
the appearing of the scalar $\sech$-kernel, see Section \ref{s2} for a
review of `derived' Pfaffian point processes and the corresponding scalar kernels.
Exploiting the integrability of the $\sech$-kernel, Ivan Dornic analyses
the asymptotics of a distinguished solution to Painlev\'e VI equation
to re-derive the formula for the empty interval probability for the real roots
of Kac polynomials, \cite{dornic2018universal}. Interestingly enough,
the integrable structure of kernels associated with the single time law of
CABM and the real Ginibre ensemble is not at all obvious. However, in a recent
series of papers \cite{baik2020largest}, \cite{baik2020edge} Jinho Baik
and Thomas Bothner establish a link between the Pfaffian kernel for
the edge scaling limit of the real Ginibre ensemble and Zakharov-Shabat integrable
hierarchy. By analysing the associated Riemann-Hilbert problem using the non-linear
steepest descent method they manage to obtain the right tails of the distribution
for the maximal real eigenvalue for the thinned real Ginibre ensemble up to 
and including the constant term (Lemma 1.14 of \cite{baik2020edge}). 
The answers we obtain for this regime
in Section \ref{s2.2} coincide with the answers presented in \cite{baik2020largest}, \cite{baik2020edge}. 
It is worth pointing out, that at the moment, there
seems to be no link between the integrable structures uncovered in the context
of persistence problems and the laws of extreme particles. However, given that
these problems can be treated in a unified way using Theorems \ref{pfaffian_czego}
and \ref{pfaffian_szego_edge}, it is reasonable to conjecture that there is a universal
integrable structure underlying the classes of kernels introduced in the present work. 
(See also \cite{krajenbrink2020painleve} for an extended study of a class Fredholm determinants
appearing as solutions of Zakharov-Shabat system which includes not only
the real Ginibre case, but also the eigenvalue statistics for Gaussian orthogonal
and symplectic ensembles.)
As mentioned above, our proof of the asymptotic theorems generalises the probabilistic
method of Kac. At the moment, there seem to be very little intersection between
the approaches to the analysis of Fredholm Pfaffians based on integrable systems
and integrable probability. Nevertheless, such a connection might well exist. For example,
in a recent paper \cite{krajenbrink2021inverse} Alexandre Krajenbrink and Pierre Le Doussal study short time large-deviations behaviour of the solutions to the Kadar-Parisi-Zhang equation. They find that the corresponding critical point equations are closely
related to the non-linear Schr\"{o}dinger equations and can be solved exactly using
the inverse scattering treatment of the corresponding Zakharov-Shabat system. One
of the crucial equations appearing 
is an integral equation with a quadratic non-linearity under the integral sign. Using
the probabilistic interpretation, the authors show that this non-linear equation is equivalent
to a linear integral equation typical of the theory of random walks. 
Subsequently, they analyse the linear equation using Sparre Anderson's formula,
which is a cornerstone of our probabilistic argument as well.   

\paragraph*{Paper organisation}
In Section \ref{s2} we introduce the
two  classes of Pfaffian
point processes we consider and state, in Theorems \ref{pfaffian_czego} and \ref{pfaffian_szego_edge}, the asymptotic formulae for the corresponding
Fredholm Pfaffians. In Section \ref{s2.2} we apply these formulae to calculate
gap probabilities for the range of Pfaffian point processes described above.
Subsection \ref{s2.3} studies exit measures for systems
of coalescing/annihilating Brownian motions, establishes their Pfaffian structure
and determines the corresponding kernels. This is a new result concerning Pfaffian
point processes, which
enables the application of the asymptotic theorems to the study of the law
of the leftmost particle for coalescing/annihilating Brownian motions. The rest
of the paper is dedicated to the proof of the stated theorems: in Section \ref{s3}
we prove the asymptotic formula for Fredholm Pfaffians for translationally invariant
kernels; in Section \ref{s4} we prove the asymptotic statement for 
a non-translationally invariant case; in Section \ref{s5} we prove the Pfaffian point
process structure for exit measures. 
Finally, Section \ref{s6} gives details of proofs for some more technical 
statements made in the paper, including a Fourier form for the coefficients
of the  asymptotic expansion for translationally invariant Fredholm Pfaffians and
the analytic properties of these coefficients with respect to the thinning
parameter.

\vspace{.1in} 

\noindent
{\bf Remark.} Some results of the present
paper were reported at the conference `Randomness and Symmetry'
held at the University College Dublin in June 18-22, 2018 (see also the thesis
\cite{FWR}).
\section{Statement of results}\label{s2}
We will state two theorems on asymptotics for Fredholm Pfaffians for 
kernels on $\mathbb{R}$, the first for translationally invariant kernels
and the second for a class of non translationally invariant kernels.
We start with recalling basic facts about determinantal and Pfaffian point
processes, Fredholm determinants and Pfaffians and defining two
classes of Pfaffian kernels the paper is dealing with.
\subsection{Backround}
\subsubsection{Definitions: Fredholm determinants and Pfaffians}\label{sFDFP}
A determinantal point process $X$ on $\mathbb{R}$ with kernel $T: \mathbb{R}^2 \to \mathbb{C}$
is a simple point process, whose $n$-point intensities $\rho_n(x_1,\ldots,x_n)$ exist for all $n \geq 1$ and are given by 
\[
\rho_n(x_1,\ldots,x_n) = \det \left( T(x_i,x_j): 1 \leq i,j \leq n \right) \quad \mbox{for all $x_1, \ldots, x_n \in \mathbb{R}$.}
\]
A Pfaffian point process $X$ on $\mathbb{R}$ with kernel $\mathbf{K}: \mathbb{R}^2 \to M_{2 \times 2}(\mathbb{C})$ is a simple 
point process, whose $n$-point intensities  $\rho_n(x_1,\ldots,x_n)$ exist for all $n \geq 1$ and are given by 
\[
\rho_n(x_1,\ldots,x_n) = \pf \left( \mathbf{K}(x_i,x_j): 1 \leq i,j \leq n \right) \quad \mbox{for all $x_1, \ldots, x_n \in \mathbb{R}$.}
\]
We also meet Pfaffian point processes on intervals $I \subset \mathbb{R}$.

We will consider real Pfaffian kernels $\mathbf{K}: \mathbb{R}^2 \to M_{2 \times 2}(\mathbb{R})$, written as
\begin{equation}
\label{matrix_kernel_notation}
 \mathbf{K}(x, y)  =   \left( \begin{matrix}
                             K_{11}(x, y) & K_{12}(x, y) \\
                             K_{21}(x, y) & K_{22}(x, y)
                            \end{matrix}
\right) \text{ for all } x, y \in \mathbb{R}.
\end{equation}
Recall the antisymmetry requirement on Pfaffian kernels:
\begin{equation}
\label{kernel_antisymmetry}
K_{ij}(x, y) = - K_{ji}(y, x) \qquad \text{ for all } x, y \in \mathbb{R}, \text{ and }i, j \in \{1, 2\}
\end{equation}
which ensures that the matrix $(\mathbf{K}(x_i,x_j): 1 \leq i,j \leq n)$ is a $2n \times 2n$ 
antisymmetric matrix. Recall that the Pfaffian $\pf(A)$ of a $2n \times 2n$ anti-symmetric matrix 
is defined by 
\[
\pf(A) = \frac{1}{2^n n!} \sum_{\sigma \in \Sigma_{2n}} \sgn(\sigma) \prod_{i=1}^n A_{\sigma(2i-1), \sigma(2i)},
\]
and recall the basic properties of Pfaffians: $(\pf(A))^2 = \det(A)$ and $\pf(BAB^T) = \det(B) \pf(A)$ for any 
$B$ of size $2n \times 2n$. We usually only list the elements $(A_{ij}: i<j \leq 2n)$ defining an anti-symmetric matrix. 

For a simple point process $X$, having all intensities $(\rho_n:n \geq 1)$, the gap probabilities, 
writing $X(a,b)$ as shorthand for $X((a,b))$, are given by
\[
\Pp [X(a,b) = 0] = \sum_{n=0}^{\infty} \frac{(-1)^n}{n!} \int_{[a,b]^n} \rho_n(x_1,\ldots,x_n) dx_1 \ldots dx_n
\]
(where the term $n=0$ is taken to have the value $1$) whenever this series is 
absolutely convergent (see Chapter 5 of Daley and Vere-Jones \cite{daley_vere_jones}). 
For determinantal or Pfaffian point processes this leads to the following expressions, which can be taken 
as the definitions of the Fredholm determinant and Fredholm Pfaffian of the kernels $T$ and $\mathbf{K}$ respectively:
\begin{eqnarray} 
 \Det_{[a,b]} (I -T) &=&  \sum_{n=0}^{\infty}  \frac{(-1)^n}{n!} \int_{[a,b]^n}  \det (T(x_i,x_j): 1 \leq i,j  \leq n) \, dx_1 \ldots dx_n, \label{FDdefn} \\
\Pf_{[a,b]}(\mathbf{J} - \mathbf{K}) &=& \sum_{n=0}^{\infty} \frac{(-1)^n}{n!} \int_{[a,b]^n}  
\Pf( \mathbf{K}(x_i,x_j): 1 \leq i,j  \leq n) \, dx_1 \ldots dx_n.   \label{FPdefn}
\end{eqnarray}
Here the left hand side is merely a notation for the right hand side. However later, see (\ref{starttw}), we exploit 
operator techniques to manipulate Fredholm Pfaffians. Then we treat $\mathbf{J}$ as an operator
on $(L^2[a,b])^2$ defined by $\mathbf{J}(f,g)=(g,-f)$.
When $T$ or $\mathbf{K}$ are bounded functions (as in all our results) these series converge 
absolutely (see chapter 24 of Lax \cite{Lax}). 
We will also consider semi-infinite intervals $[a,\infty)$ and then the series converge absolutely under suitable
decay conditions on $T$ and $\mathbf{K}$. More generally we will define
$ \Det_{[a,\infty)} (I -T)  =  \lim_{b \to \infty} \Det_{[a,b]} (I -T) $ and 
$ \Pf_{[a,\infty)}(\mathbf{J} - \mathbf{K}) =  \lim_{b \to \infty}  \Pf_{[a,b]}(\mathbf{J} - \mathbf{K}) $
whenever these limits exist (as they do when they represent gap probabilities). 
%
\subsubsection{Classes of Pfaffian kernels considered} \label{s2.1}
We list the hypotheses required on the kernel of the Fredholm Pfaffians for our results. 
We consider kernels $\mathbf{K}$ in {\it derived form} (see \cite{anderson2010} Definition 3.9.18 for essentially this notion). 
For us, this means that $\mathbf{K}$ is derived from a single scalar function $K \in C^2(\mathbb{R}^2)$ as follows.  
There is a simple jump discontinuity along $x=y$ and we let $S(x,y) = \sgn(y-x)$ (with the convention that $S(x,x)=0$) to display 
this discontinuity. Then $\mathbf{K}$ has the form
\begin{equation}
\label{derived_form}
\mathbf{K}(x, y) = \left( \begin{matrix}
                         S(x,y) + K(x, y) & -D_2 K(x, y)  \\
                         -D_1 K(x, y) & D_1 D_2 K(x, y)
                                             \end{matrix}
\right)  
\end{equation}
 where $D_i K$ denotes  the derivative of $K$ in the $i$-th co-ordinate.
The symmetry condition \eqref{kernel_antisymmetry} then implies that $K$ is anti-symmetric (that is $K(x,y)=-K(y,x)$ 
for all $x,y$). All our applications  are Pfaffian point processes with a kernel of the form $p\mathbf{K}$ for some
$p \in [0,1]$, where $\mathbf{K}$ is in the above form. 

In order to apply probabilistic methods in our analysis, we will assume that $K$ is given in terms of a
probability density function $\rho$, that is $\rho \geq 0, \int_{\mathbb{R}} \rho(x) dx = 1$.
In our translationally invariant examples $K$ will be given 
as
\begin{equation} \label{TIform}
K(x,y) = -2 \int^{y-x}_0 \rho(z) dz, \qquad  \mbox{for  even } \rho \in C^1(\mathbb{R}) \cap L^{\infty}(\mathbb{R})
\end{equation}
(the symmetry condition \eqref{kernel_antisymmetry} requires that $\rho$ must be an even function). 

In our non translationally invariant examples $K$ will be given in terms of a probability density function 
$\rho \in C^1(\mathbb{R}) \cap  H^1(\mathbb{R}) \cap L^{\infty}(\mathbb{R})$ as 
\begin{equation} \label{NTIform}
K(x,y) = \int_{-\infty}^0  \left| \!\!\! \begin{array}{cc}
\int^{x-z}_{-\infty} \rho(w) dw & \int^{y-z}_{-\infty} \rho(w) dw \\
\rho(x-z) & \rho(y-z) 
\end{array} \!\!\!\right| dz
\end{equation}
(where $|A|$ stands for the $2 \times 2$ determinant). The required symmetry for $K$ holds without any symmetry requirement on 
$\rho$. Note that
\begin{equation} \label{tilderho}
\lim_{c \to -\infty} K(x+c,y+c) =  -2 \int^{y-x}_0 \tilde{\rho}(z) dz  \quad \mbox{where} \quad
\tilde{\rho}(z) = \int_{-\infty}^{\infty} \rho(w) \rho(w-z) dw
\end{equation}
so that the kernel is close to the translation invariant form (\ref{TIform}) near $-\infty$. Indeed our examples
that fit this non translationally framework are from (i) random matrices that are studied near the right hand edge of their spectrum
and (ii) particle systems that are started with `half-space' initial conditions, that is where particles are initially spread over the
half-space $(-\infty,0]$, and in both these cases the kernels approach the `bulk' form far from the origin.

We remark, see  \cite{anderson2010} equation (3.9.32), that the kernel for limiting ($N=\infty$) GOE 
Pfaffian point process of eigenvalues in the bulk is in the derived form $\frac12 \mathbf{K}$, with $K$ in the translationally 
invariant form  (\ref{TIform})  for 
$\rho(z) = \frac{1}{\pi} \frac{\sin(z)}{z}$ (which, while not non-negative, at least satisfies 
$\int_{\mathbb{R}} \rho = 1$). Moreover, see  \cite{anderson2010} equation (3.9.41), the limiting ($N=\infty$) edge kernel for GOE eigenvalues  is in the derived form $\frac12 \mathbf{K}$, with $K$ in the translationally 
non-invariant form  (\ref{NTIform})  for 
$\rho(z) = A(z)$ the Airy functions (which again satisfies 
$\int_{\mathbb{R}} \rho = 1$). 
%
\subsection{Asymptotics for Fredholm Pfaffians: translationally invariant kernels} \label{s2.1.1}
\begin{theorem}
\label{pfaffian_czego}
Let $\mathbf{K}$ be in the derived form \eqref{derived_form} using a scalar kernel $K$ in the form \eqref{TIform}
for a density function $\rho$. Then, for $0 < p < 1$ and under the moment assumptions given below, the asymptotic 
\begin{equation*}
 \log \Pf_{[0,L]}(\mathbf{J}- p \mathbf{K}) = -\kappa_1(p) L + \kappa_2(p) + o(1) \quad \text{ as } L \rightarrow \infty
\end{equation*}
holds, where (writing $\rho^{*n}$ for the $n$-fold convolution of $\rho$ with itself) 
\begin{enumerate}[(i)]
 \item for $0<p < 1/2$, supposing  $\int_\R |x| \rho(x) dx < \infty$,
 \begin{eqnarray*}
\kappa_1(p) &=&  \sum_{n=1}^{\infty} \frac{(4p(1-p))^n}{2n} \rho^{*n} (0),\\
\kappa_2(p) 
 & = &  \log\left(\frac{\sqrt{1-2p}}{1-p}\right) + 
 \int_0^{\infty} \frac{x}{2} \left( \sum_{n=1}^{\infty} \frac{(4p(1-p))^n  \rho^{*n}(x)}{n} \right)^2 dx;
\end{eqnarray*}
\item for $p = 1/2$, supposing  $\int_\R |x|^4 \rho(x) dx < \infty$,
 \begin{eqnarray*}
\kappa_1(1/2) &=&  \sum_{n=1}^{\infty} \frac{1}{2n} \rho^{*n} (0),\\
\kappa_2(1/2) & = & \log 2 - \frac14
+ \frac{1}{2}\sum_{n=2}^{\infty} \left( \sum_{k=1}^{n-1}
\int_0^{\infty} x \frac{\rho^{*k}(x) 
\rho^{*(n-k)}(x)}{k(n-k)} dx - \frac{1}{2n} \right);
\end{eqnarray*}
\item for $1/2 < p < 1$, supposing  there exists $\phi_p>0$ so that $4 p(1-p) \int_\R e^{\phi_p x} \rho(x)dx = 1$
and for which $\int_\R |x| e^{\phi_p x} \rho(x)dx < \infty$,
 \begin{eqnarray*}
\kappa_1(p) &=&  \phi_p + \sum_{n=1}^{\infty} \frac{(4p(1-p))^n}{2n} \rho^{*n}(0),\\
\kappa_2(p)  & = &   \log \left( \frac{\sqrt{2p-1}}{8p(1-p)^2} \right)  
+  \int_0^{\infty} \frac{x}{2} \left( \sum_{n=1}^{\infty} \frac{(4p(1-p))^n  \rho^{*n}(x)}{n} \right)^2 dx \\
&& - \log \left( \phi_p \int_{\mathbb{R}} x e^{\phi_p x}\rho(x) dx \right) - 2  \sum_{n=1}^{\infty} 
\frac{(4p(1-p))^n}{n} \int_{-\infty}^0  e^{\phi_p x} \rho^{*n}(x) dx.
\end{eqnarray*}
\end{enumerate}
\end{theorem}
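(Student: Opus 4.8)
The plan is to follow the Tracy--Widom/Rider--Sinclair reduction of a Fredholm Pfaffian of a derived kernel to a square root of a product of a Fredholm determinant and a finite-dimensional determinant, and then to analyse each of these determinants by Mark Kac's probabilistic method. \textbf{Reduction to determinants.} First I would use the derived form \eqref{derived_form} with $K$ of the translationally invariant type \eqref{TIform}: integration by parts trades the derivative entries $-D_2K,-D_1K,D_1D_2K$ against the antiderivative built into the $\sgn$-entry, and the block algebra of the operator $\mathbf{J}-p\mathbf{K}$ on $(L^2[0,L])^2$ (together with $\pf(BAB^{T})=\det(B)\pf(A)$ and $\pf(A)^2=\det(A)$) should produce an identity of the form
\[
\Pf_{[0,L]}(\mathbf{J}-p\mathbf{K})^2 \;=\; \det(M_p)\cdot\Det_{[0,L]}(I-\mathcal{K}_p),
\]
where $M_p$ is an explicit finite matrix and $\mathcal{K}_p$ is a scalar integral operator on $L^2[0,L]$ with kernel built from $\rho$ whose bulk behaviour is governed by the Fourier multiplier $4p(1-p)\hat\rho$ (the $4p(1-p)$ recording the combined effect of the thinning and of the reflection carried by the $\sgn$-part of \eqref{derived_form}). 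I would fix the branch of the square root by continuity in $p$ from $\Pf_{[0,L]}(\mathbf{J})=1$ at $p=0$ (equivalently: the left-hand side is a gap probability and hence non-negative), and check that $\mathcal{K}_p$ is trace-class so that the trace-log expansion below is valid.

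\textbf{Probabilistic representation and Kac's argument.} Next, by the trace-log (Plemelj--Smithies) formula and cyclic symmetry of the trace,
\[
-\log\Det_{[0,L]}(I-\mathcal{K}_p)=\sum_{n\ge1}\frac1n\,\Tr\mathcal{K}_p^{\,n},\qquad
\Tr\mathcal{K}_p^{\,n}=\int_0^L\Pp_x[\tau_L>n,\,X_n\in dx],
\]
where $(X_n)$ is a (time-inhomogeneous) random walk on $\R$ whose one-step behaviour is determined by $\rho$ and the thinning parameter, and $\tau_L$ is its first exit time from $[0,L]$; the finite factor $\det(M_p)$ would likewise be written as a random-walk expectation, or evaluated directly. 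The contribution of paths staying away from the endpoints of $[0,L]$ is translation invariant and, after summation over $n$, produces the linear term $-\kappa_1(p)L$, while the two boundary layers at $0$ and $L$ produce the constant $\kappa_2(p)$. I would run the boundary analysis with Sparre Andersen's identity and Spitzer's formulae for the generating functions of ladder epochs and of one-sided exit quantities, together with renewal theory for the boundary overshoot (this is where $\int_\R|x|\rho(x)dx<\infty$ enters) and optional stopping; the quadratic expression $\int_0^\infty\tfrac{x}{2}\big(\sum_n\tfrac{(4p(1-p))^n}{n}\rho^{*n}(x)\big)^2dx$ in $\kappa_2(p)$ is, up to the factor $\fr$, the Akhiezer--Kac constant of $\Det_{[0,L]}(I-\mathcal{K}_p)$, and should come out of a Spitzer-type identity for the joint law of time and position as the signature of the second-order interaction of the two boundary layers.

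\textbf{The three regimes and assembly.} For $0<p<\fr$ all series converge (the relevant killed walk has a genuine killing rate $1-4p(1-p)>0$) and the above should give the stated $\kappa_1(p),\kappa_2(p)$, with algebraic prefactor $\fr\log\det(M_p)=\log\!\big(\tfrac{\sqrt{1-2p}}{1-p}\big)$. For $\fr<p<1$ I would first perform the Cram\'er change of measure by $e^{\phi_p x}$, with $\phi_p$ defined by $4p(1-p)\int_\R e^{\phi_p x}\rho(x)dx=1$ (and the tilted moment hypothesis playing the role of $\int_\R|x|\rho\,dx<\infty$): under the tilted law the relevant walk has positive drift, the analysis reduces to a case of the previous type, and the tilt accounts both for the extra summand $\phi_p$ in $\kappa_1(p)$ and for the terms $-\log\!\big(\phi_p\int_\R xe^{\phi_p x}\rho\,dx\big)$ and $-2\sum_n\tfrac{(4p(1-p))^n}{n}\int_{-\infty}^0e^{\phi_p x}\rho^{*n}(x)dx$ in $\kappa_2(p)$. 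The case $p=\fr$ is genuinely different: there $4p(1-p)=1$, the walk is not killed, $\Tr\mathcal{K}_{1/2}^{\,n}$ does not decay, and I would invoke the invariance principle and a sharp local central limit theorem (whence the fourth-moment hypothesis) to control the $n\to\infty$ tail, so that the constant $\log2-\tfrac14$ and the regularised series $\tfrac12\sum_{n\ge2}(\cdots-\tfrac1{2n})$ emerge only after subtracting matching counterterms. Finally I would collect $\log\Pf_{[0,L]}(\mathbf{J}-p\mathbf{K})=\fr\big[\log\Det_{[0,L]}(I-\mathcal{K}_p)+\log\det(M_p)\big]$ into $-\kappa_1(p)L+\kappa_2(p)+o(1)$, check that the $\log L$-type contributions present in the two factors separately (cf.\ Theorem \ref{nonTIszego}) cancel in the sum, and verify continuity of $\kappa_1,\kappa_2$ at $p=\fr$ (the divergences of $\log\sqrt{1-2p}$ and of the quadratic integral as $p\uparrow\fr$ cancelling one another).

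\textbf{Main obstacle.} The crux will be the boundary-layer analysis: extracting the \emph{exact} constant $\kappa_2(p)$ --- in particular the quadratic boundary-interaction term, and at $p=\fr$ running the local-CLT/invariance-principle argument cleanly enough to identify $\log2-\tfrac14$ and to justify the regularisation --- while also pinning down the branch of the square root uniformly in $p$ and justifying the many limit interchanges (sum with integral, $L\to\infty$, and $b\to\infty$ for semi-infinite intervals) under only the stated moment hypotheses.
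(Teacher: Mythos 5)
Your proposal follows essentially the same route as the paper: the Tracy--Widom operator reduction of $\Pf_{[0,L]}(\mathbf{J}-p\mathbf{K})^2$ to a product $\Det_{[0,L]}(I-4p(1-p)T)\cdot\det_2$, Kac's probabilistic representation of both factors through a random walk with step density $\rho$, and the asymptotics via Sparre Andersen, Spitzer, renewal theory and a Cram\'er tilt for $p>\frac12$, with optional stopping plus a local-CLT/invariance-principle argument at $p=\frac12$ where the $\log L$ terms cancel. The only minor difference is that the paper derives the Fredholm-determinant representation by differentiating $\Tr T_{0,L}^n$ in the interval endpoint, giving $-\E_0\big[\beta_p^{\tau_{0-}}\delta_0(S_{\tau_{0-}})(L-M_{\tau_{0-}})_+\big]$ with the constant then extracted by cyclic symmetry and Kac's formula for $\E_0[M_n\delta_0(S_n)]$, rather than by summing the confined-walk traces directly as in your sketch.
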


\noindent
\textbf{Remark 1.} Motivated by our applications, implementation of this theorem for the 
densities $\rho(x) = \pi^{-1} \sech(x)$ and $\rho(x) = (4 \pi t)^{-1/2} \exp(-x^2/4t)$ will be 
done in Corollary \ref{sechkernel} and \ref{gaussiankernel}, where simpler expressions for $\kappa_1, \kappa_2$ are calculated. 
In both cases the function $p \to \kappa_1(p)$ for $p \in (0,1)$ turns out to be analytic. We do not know whether 
this is the general case for, say, analytic $\rho$. 

\vspace{.1in} 

\noindent
\textbf{Remark 2.} The case $p=1$ is not covered by our theorem. In this case the Fredholm Pfaffian reduces 
to the (square root) of a $2 \times 2$ determinant (this is evident from the proof in section \ref{s3.1}). Then 
the asymptotics are easier and they need not be exponential (see Remark 2. in Section \ref{s2.2.2}).

\vspace{.1in} 

\noindent
\textbf{Remark 3.} For numerical or theoretical analysis, it may be useful to rewrite the
infinite sums in these formulae in alternate ways. For example, the probabilistic representations 
(see (\ref{probrep1}) for $p \in (0,\frac12)$ and (\ref{probrep2}) for 
$p \in (\frac12,1)$) give formulae for $\kappa_i(p)$ when $p \neq \frac12$ as expectations.
We may also, for suitably good $\rho$, re-write
some terms in the constants $\kappa_i(p)$ usefully in terms of 
Fourier transforms. We use the conventions $ \hat{\rho}(k) = \int \exp(i k x) \rho(x) dx $ with inversion
(when applicable) $\rho(x) = (2 \pi)^{-1} \int \exp(-i k x) \hat{\rho}(k) dk $. The exponents
$\kappa_i(p)$ can be expressed using the function
\begin{equation} \label{Ldefn}
L_{\rho}(p,x) = \frac{1}{2 \pi} \int_{\mathbb{R}}   e^{-ikx} \log \left(1 - 4p(1-p) \hat{\rho}(k) \right) dk.
\end{equation} 
We do not look for good sufficient conditions, but suppose for the formulae (\ref{k1FT}),(\ref{k2FT})
below that $\rho$ is in Schwartz class.
When $p=\frac12$ we assume 
further that there exists $\mu>0$ so that $\int_\R e^{2 \mu |x|} \rho(x) dx < \infty$ which
justifies certain contour manipulations in the proof. 
\begin{eqnarray} 
\kappa_1(p) & = & \left\{ \begin{array}{ll}  
- \frac12 L_{\rho}(p,0) & \mbox{for $p \in (0,\frac12]$,} \\
- \frac12 L_{\rho}(p,0) + \phi_p & \mbox{for $p \in (\frac12,1)$,}
 \end{array} \right. \label{k1FT} \\
&& \nonumber\\
\kappa_2(p) &=&  \left\{ \begin{array}{ll}  
\log\left(\frac{\sqrt{1-2p}}{1-p}\right) + \frac12
 \int_0^{\infty} x L_{\rho}^2(p,x) dx
 & \mbox{for $p \in (0,\frac12)$,} \\
\frac14 \log \left( 2 \sigma^2\right) - \frac{\gamma}{2} 
- \frac12 \int_0^{\infty} \log(x) (x^2 L^2_{\rho}(\frac12,x))' dx
 & \mbox{for $p = \frac12$,} \\
 \log \left( \frac{\sqrt{2p-1}}{8p(1-p)^2} \right)  - \Gamma_p  + \frac12
 \int_0^{\infty} x L_{\rho}^2(p,x) dx
 & \mbox{for $p \in (\frac12,1)$,}
 \end{array} \right.   \label{k2FT}
\end{eqnarray}
where 
\begin{equation} \label{Gammap}
\Gamma_p = \log \left( \phi_p \int_{\mathbb{R}} x e^{\phi_p x}\rho(x) dx \right)  -  \frac{1}{\pi}  \int_\R \frac{\phi_p}{\phi^2_p+k^2}
\log( 1- 4p(1-p) \hat{\rho}(k)) dk
\end{equation}
and where
$\sigma^2 = \int_\R x^2 \rho(x) dx$ and $\gamma$ is the Euler Mascheroni constant.
When $p=\frac12$ the formula for $\kappa_2$ encodes the slightly arbitrary 
compensation by $-\frac{1}{2n}$ used in Theorem \ref{pfaffian_czego} in, perhaps, a more natural way using the transform.
It is tempting to integrate by parts in the term $\int_0^{\infty} \log x (x^2 L^2_{\rho}(\frac12,x))' dx$ in 
order to obtain a form closer to those when $p \neq \frac12$, but this is not justified
(for example $x L_{\rho}(\frac12,x) \to -1$ as $x \to \infty$). 
The proofs of these alternative formulae are in the subsection \ref{s6.1}.
Notice that the leading order term $-\fr L_{\rho}(p,0)L$ for $p<\fr$ is equal to one half
of the leading term in the classical Akhiezer-Kac formula for the Fredholm determinant
of the operator with the kernel $\rho$. This is due to the Tracy-Widom map between Fredholm
Pfaffians and determinants discussed in section \ref{s3.1}.
\subsection{Asymptotics for Fredholm Pfaffians: non translationally invariant kernels}  \label{s2.1.2}
We start with a result on Fredholm determinants, with a kernel in the form that arises in our applications. 
Indeed an operator in the form (\ref{hypothesis3}) below arises immediately in the 
analysis of the Fredholm Pfaffian of a derived form kernel $\mathbf{K} $ in the special 
non translationally invariant form \eqref{NTIform} (see (\ref{3010})). 
\begin{theorem}

\label{nonTIszego}
Suppose that $\rho \in C(\mathbb{R}) \cap L^2(\mathbb{R}) \cap L^{\infty}(\mathbb{R}) $ is a probability density function. 
Define
\begin{equation} \label{hypothesis3}
T(x, y)  =  \int_{-\infty}^0 \rho(x-z) \rho(y-z) dz
\end{equation}
and let $\tilde{\rho}(z) = \int_{\infty}^{\infty} \rho(w) \rho(w-z) dw$.

\vspace{.1in}
\noindent
For $\beta \in [0,1)$, supposing $\int_\R |x| \rho(x) dx < \infty$, 
\begin{equation*}
 \log \Det_{[-L,\infty)}(I- \beta T) = - \kappa_1(\beta) L + \kappa_2(\beta) + o(1) \quad \text{ as } L \rightarrow \infty
\end{equation*}
where
\[
\kappa_1(\beta) = \sum_{n=1}^{\infty} \frac{\beta^n}{n}  
\tilde{\rho}^{*n}(0)
\]
and 
\[
 \kappa_2(\beta) = \sum_{n=1}^{\infty}
  \frac{\beta^n}{n^2} \int_{-\infty}^{\infty} x (\rho^{*n}(x))^2 dx +
 \int_0^{\infty} x \left( \sum_{n=1}^{\infty} \frac{\beta^n  \tilde{\rho}^{*n}(x)}{n} \right)^2 dx. 
\]
When $\beta=1$,  supposing $\int_\R x^4 \rho(x) dx < \infty$,  
\[
 \log \Det_{[-L,\infty)}(I- T)  =  - \sum_{n=1}^{\infty} \frac{1}{n}  
\tilde{\rho}^{*n}(0) L +  \log L + \kappa_2 
+ o(1) \quad \text{ as } L \rightarrow \infty
\]
where, setting $\tilde{\sigma}^2 = \int_\R x^2 \tilde{\rho}(x) dx$,
\begin{eqnarray*}
\kappa_2 &=& \frac{3}{2} \log 2 -  \frac12 - \log \tilde{\sigma} 
+ \sum_{n=1}^{\infty} \frac{1}{n^2} \int_{-\infty}^{\infty} x \left(\rho^{*n}(x)\right)^2 dx \\
& & \hspace{.2in} + \sum_{n=2}^{\infty} \left( \sum_{k=1}^{n-1}
\int_0^{\infty} x \frac{\tilde{\rho}^{*k}(x) 
\tilde{\rho}^{*(n-k)}(x)}{k(n-k)} dx - \frac{1}{2n} \right) 
\end{eqnarray*}
\end{theorem}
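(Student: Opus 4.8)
The plan is to follow the Kac-type programme sketched in the introduction, reducing the Fredholm determinant $\Det_{[-L,\infty)}(I - \beta T)$ to a combinatorial problem about a time-inhomogeneous random walk. First I would use the Plemelj--Smithies expansion $\log \Det_{[-L,\infty)}(I-\beta T) = -\sum_{n\ge 1} \frac{\beta^n}{n} \Tr T^n$, and then give $\Tr T^n$ a probabilistic meaning. The key observation is that since $T(x,y) = \int_{-\infty}^0 \rho(x-z)\rho(y-z)\,dz$, the kernel $T$ is a ``two-step'' kernel: $T(x,y) = \int_{\mathbb{R}} \rho(x-z) \mathbf{1}_{\{z \le 0\}} \rho(y-z)\,dz$, so that $T$ acting on $L^2[-L,\infty)$ is, after inserting the barrier at $0$, essentially a convolution operator conditioned by the event that an intermediate random-walk position lies in $(-\infty,0]$ while the visible positions lie in $[-L,\infty)$. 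Concretely, $\Tr T^n$ becomes the mass of an excursion of a $2n$-step random walk with increment density $\rho$ that, in cyclic fashion, keeps its even-indexed points in $[-L,\infty)$ and its odd-indexed points in $(-\infty,0]$; equivalently, after re-centring, a two-sided constraint on a length-$n$ walk whose increments have density $\tilde\rho(z) = \int \rho(w)\rho(w-z)\,dw$.

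Next I would extract the asymptotics of each of these constrained-walk quantities as $L\to\infty$ using the standard fluctuation-theory toolkit: Sparre Andersen's identity and Spitzer's formulae to handle the one-sided constraint (walk staying above $-L$), cyclic symmetry to deal with the trace's cyclic product structure, renewal theory to resolve the barrier at $0$, and the invariance principle together with optional stopping to control the remaining boundary term. The leading order $-\kappa_1(\beta)L$ with $\kappa_1(\beta) = \sum_n \frac{\beta^n}{n}\tilde\rho^{*n}(0)$ should come out as the ``volume'' or free-energy-per-unit-length term — it is precisely the density at $0$ of the constrained-walk bridge summed over loop lengths with the killing weight $\beta^n$. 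The constant term $\kappa_2(\beta)$ collects two separate contributions: the term $\sum_n \frac{\beta^n}{n^2}\int x(\rho^{*n}(x))^2\,dx$ arises from the ``short-range'' correction near the barrier at $0$ (the difference between the true kernel $T$ and its translation-invariant bulk proxy built from $\tilde\rho$), while $\int_0^\infty x\big(\sum_n \frac{\beta^n \tilde\rho^{*n}(x)}{n}\big)^2 dx$ is the familiar Akhiezer--Kac-type constant for the $\tilde\rho$-walk staying in a half-line, and should be produced by a Spitzer/Baxter-type identity exactly as in the translationally invariant Theorem \ref{pfaffian_czego}. For $\beta \in [0,1)$ the killing weight makes all the relevant series converge geometrically and $\int_\R |x|\rho(x)\,dx < \infty$ suffices; I would structure the argument so that the $\beta<1$ case is essentially a corollary of the estimates used for Theorem \ref{pfaffian_czego} applied to the pair $(\rho,\tilde\rho)$.

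The critical case $\beta = 1$ is where the real work lies, and this is the main obstacle. Here the killing weight disappears, the geometric decay is lost, and the walk's recurrence forces the emergence of the $\log L$ term: the probability that the $\tilde\rho$-walk stays positive up to time comparable to $L$ decays like a power, and summing the loop contributions over all lengths picks up a logarithmic divergence that gets cut off at scale $L$. The coefficient of $\log L$ being exactly $\sum_n \frac1n \tilde\rho^{*n}(0)$ (the same series as $\kappa_1$) and the precise constant $\kappa_2$ — with its $\frac32\log 2 - \frac12 - \log\tilde\sigma$ piece — should come from a careful renewal/Tauberian analysis: one identifies the ``ladder epoch'' structure, applies Sparre Andersen to the staying-positive probabilities, and uses the central limit theorem with the second moment $\tilde\sigma^2 = \int x^2\tilde\rho(x)\,dx$ to pin down the scale. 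The slightly artificial $-\frac{1}{2n}$ compensation inside $\sum_{n\ge 2}(\cdots)$ is exactly the renormalisation needed to make the doubly-indexed sum converge once the $\log L$ divergence has been subtracted, mirroring the $p=\tfrac12$ case of Theorem \ref{pfaffian_czego}; this is why the stronger moment hypothesis $\int_\R x^4\rho(x)\,dx < \infty$ is imposed, since controlling the rate in the invariance principle and the error in Spitzer's formula at the critical parameter requires fourth moments. I would therefore prove the $\beta<1$ statement first in full, then obtain the $\beta=1$ statement either by a delicate $\beta\uparrow 1$ limit with uniform control of the remainders, or — more likely cleaner — by redoing the renewal analysis directly at $\beta=1$ and matching constants against the $\beta<1$ expansion.
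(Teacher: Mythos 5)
Your plan is essentially the paper's own route: trace--log expansion, reinterpretation through the two-step walk whose visible points $S_n$ have increment density $\tilde\rho$ while the intermediate points $\tilde S_n$ carry the barrier at $0$, cyclic symmetry plus a Kac-type identity for the constant term, and a direct critical analysis at $\beta=1$ with the $-\frac{1}{2n}$ compensation and fourth-moment error control. The paper packages the trace sum into the single identity of Lemma \ref{NTIFdetrep}, $\log\Det_{[-L,\infty)}(I-\beta T)=-\E_0\bigl[\beta^{\tau_{0-}}\delta_0(S_{\tau_{0-}})(L-\tilde M_{\tau_{0-}})_+\bigr]$, obtained by differentiating in the left endpoint of the interval, and the extra piece $\sum_n\frac{\beta^n}{n^2}\int x(\rho^{*n}(x))^2dx$ of $\kappa_2$ comes from the two-step Kac formula (\ref{kac_eq_3+}) together with the cyclic identity (\ref{CS4}), i.e. precisely from the fact that the barrier constrains $\tilde M_n$ rather than $M_n$ --- which is consistent with your heuristic attribution of that term to the edge at $0$.

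One concrete correction for the critical case: the coefficient of $\log L$ in the statement is exactly $1$, not $\sum_n\frac1n\tilde\rho^{*n}(0)$; that series is the coefficient of the linear term $L$ (it is $\kappa_1$ at $\beta=1$). The $\log L$ is universal and arises because, after cyclic symmetry, the $n$-th term of the series for $\E_0[\delta_0(S_{\tau_{0-}})\min\{L,\tilde M_{\tau_{0-}}\}]$ equals $\frac1n\E_0[\delta_0(S_n)\min\{L,\tilde M_n-m_n\}]\approx\frac{1}{2n}$ up to the diffusive scale $n\sim L^2/\tilde\sigma^2$ (not times of order $L$), so the sum is cut off at $n\sim L^2$ and contributes $\frac12\log L^2=\log L+O(1)$; getting this coefficient exactly $1$ matters, since it is what cancels the $-\log L$ from the small determinant when the Pfaffian asymptotics are reassembled. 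Relatedly, the paper does not obtain the $\beta=1$ constant by a Tauberian/$\beta\uparrow1$ passage but by your second option: redoing the analysis at $\beta=1$ as in Lemma \ref{12lemma}, via Skorokhod embedding and a Brownian comparison, the explicit theta-function computation (\ref{1020}), and the error estimates of Section \ref{s6.3} (this is where the fourth moment is used). With the $\log L$ bookkeeping corrected and the two-step Kac identity made explicit, your outline matches the proof.
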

We do not discuss any direct applications of Theorem \ref{nonTIszego} in this paper.
However, note that Fredholm determinants of integral operators with kernels of the form (\ref{hypothesis3}) given by a
Hankel convolution have recently been linked to integrable hierarchies of partial differential equations, such as 
the non-linear Schrodinger equation, see \cite{krajenbrink2020painleve}. Moreover, if $\rho$ is Gaussian, the corresponding Fredholm
determinant appears in the weak noise theory of Kardar-Parisi-Zhang equation, see \cite{krajenbrink2021inverse}.

Our main result for non translationally invariant Fredholm Pfaffians is as follows.  
\begin{theorem}
\label{pfaffian_szego_edge}
Let $\mathbf{K}$ be in the derived form \eqref{derived_form} using a kernel in the form
\eqref{NTIform} for a probability density function 
$\rho \in C^1(\mathbb{R}) \cap  H^1(\mathbb{R}) \cap L^{\infty}(\mathbb{R})$.
 Define $\tilde{\rho}(z) = \int_{\mathbb{R}} \rho(w) \rho(w-z) dw$.
Then, for $0 < p < 1$ and under the moment assumptions given below, the asymptotic 
\begin{equation*}
 \log \Pf_{[-L,\infty)}(\mathbf{J}- p \mathbf{K}) = - \kappa_1(p) L + \kappa_2(p) + o(1) \quad \text{ as } L \rightarrow \infty
\end{equation*}
holds, where 
\begin{enumerate}[(i)]
 \item for $0<p < 1/2$, supposing  $\int_\R |x| \rho(x) dx < \infty$, 
 \begin{eqnarray*}
\kappa_1(p) &=&  \sum_{n=1}^{\infty} \frac{(4p(1-p))^n}{2n} \tilde{\rho}^{*n} (0),\\
\kappa_2(p) &=& \frac12 \log\left(\frac{1-2p}{1-p} \right) +  \frac12  \sum_{n=1}^{\infty}
  \frac{(4p(1-p))^n}{n^2} \int_{-\infty}^{\infty} \!x\, (\rho^{*n}(x))^2 dx \\
  && \hspace{.4in} + 
 \int_0^{\infty}  \frac{x}{2} \Big( \sum_{n=1}^{\infty} \frac{(4p(1-p))^n  \tilde{\rho}^{*n}(x)}{n} \Big)^2 dx;
\end{eqnarray*}
\item for $p = 1/2$, supposing  $\int_\R |x|^4 \rho(x) dx < \infty$, 
 \begin{eqnarray*}
\kappa_1(1/2) &=&  \sum_{n=1}^{\infty} \frac{1}{2n} \tilde{\rho}^{*n} (0),\\
\kappa_2(1/2) & = & \frac12 \log 2 - \frac14 +  \frac12  \sum_{n=1}^{\infty}
  \frac{1}{n^2} \int_{-\infty}^{\infty} \!x\, (\rho^{*n}(x))^2 dx \\
  && + \frac{1}{2}\sum_{n=2}^{\infty} \left( \sum_{k=1}^{n-1}
\int_0^{\infty} x \frac{\tilde{\rho}^{*k}(x) 
\tilde{\rho}^{*(n-k)}(x)}{k(n-k)} dx - \frac{1}{2n} \right);
\end{eqnarray*}
\item for $1/2 < p < 1$, supposing there exists $\phi_p$ so that  $4 p(1-p) \int_\R e^{\phi_p x} \tilde{\rho}(x)dx = 1$
and that the integrals $\int_\R |x| e^{\phi_p x} \tilde{\rho}(x)dx$ and $\int_\R e^{\phi_p |x|} \rho(x)dx$ are finite,
 \begin{eqnarray*}
\kappa_1(p) &=&    \phi_p + \sum_{n=1}^{\infty} \frac{(4p(1-p))^n}{2n} \tilde{\rho}^{*n}(0),\\
\kappa_2(p)  & = & \log\left( \frac{\sqrt{2p-1}}{16 p^{3/2}(1-p)^2} \right) +  \frac12  \sum_{n=1}^{\infty}
  \frac{(4p(1-p))^n}{n^2} \int_{-\infty}^{\infty} \!x\, (\rho^{*n}(x))^2 dx \\
  && + 
 \int_0^{\infty} \! \frac{x}{2} \, \Big( \sum_{n=1}^{\infty} \frac{(4p(1-p))^n  \tilde{\rho}^{*n}(x)}{n} \Big)^2 dx  
 - \log \left( \phi_p\int_{\mathbb{R}} x e^{\phi_p x} \tilde{\rho}(x) dx \right) \\
&& - \log \left(\int_{\mathbb{R}} e^{\phi_p x} \rho(x) dx \right) - 2  \sum_{n=1}^{\infty} 
\frac{(4p(1-p))^n}{n} \int_{-\infty}^0  e^{\phi_p x} \tilde{\rho}^{*n}(x) dx. 
\end{eqnarray*}
\end{enumerate}
\end{theorem}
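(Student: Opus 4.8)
To prove Theorem \ref{pfaffian_szego_edge} the plan is to reduce the non translationally invariant Fredholm Pfaffian to the square root of a product of a scalar Fredholm determinant of exactly the type covered by Theorem \ref{nonTIszego} and a finite-dimensional determinant, and then to analyse the two factors separately. First I would run the Tracy--Widom/Rider--Sinclair operator manipulations --- the same ones developed for the translationally invariant case in Section \ref{s3.1}, see \eqref{starttw}, and already flagged at \eqref{3010} --- on $\Pf_{[-L,\infty)}(\mathbf J - p\mathbf K)$ with $\mathbf K$ in the derived form \eqref{derived_form} built from the special non translationally invariant scalar kernel \eqref{NTIform}. Writing $K$ through the $2\times2$ determinant in \eqref{NTIform}, the scalar kernel factorises through the operator $R$ with kernel $\rho(x-z)\mathbf 1_{\{z\le 0\}}$, so that $RR^{*}=T$ is precisely the operator \eqref{hypothesis3}; after conjugating $\mathbf J - p\mathbf K$ by suitable block-triangular operators, using $\pf(BAB^{T})=\det(B)\,\pf(A)$, and applying a Schur-complement (finite-rank perturbation) identity, I expect to reach an equation of the shape
\[
\Big(\Pf_{[-L,\infty)}(\mathbf J - p\mathbf K)\Big)^{2} \;=\; c(p)\,\Det_{[-L,\infty)}\!\big(I-4p(1-p)\,T\big)\;\det M_L(p),
\]
where $c(p)$ is an explicit scalar and $M_L(p)$ is a fixed-size matrix whose entries are explicit integrals over $[-L,\infty)$ against $\rho$ and its translates (the analogue of the rank-one/rank-two corrections familiar from the real Ginibre reduction).

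With this reduction in hand, Theorem \ref{nonTIszego} applied with $\beta=4p(1-p)$ gives the asymptotics of the Fredholm determinant factor at once. For $p\neq\tfrac12$ (so $\beta<1$) it contributes $-\tfrac12\kappa_1(\beta)L+\tfrac12\kappa_2(\beta)+o(1)$ to $\log\Pf_{[-L,\infty)}(\mathbf J - p\mathbf K)$, which already accounts for the $\tilde\rho^{*n}(0)$ part of $\kappa_1(p)$ and, in $\kappa_2(p)$, both the $\tfrac12\sum_n\tfrac{(4p(1-p))^{n}}{n^{2}}\int x(\rho^{*n}(x))^{2}\,dx$ term and the $\int_0^\infty\tfrac{x}{2}\big(\sum_n (4p(1-p))^{n}\tilde\rho^{*n}(x)/n\big)^{2}\,dx$ term. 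For $p=\tfrac12$ one uses instead the $\beta=1$ statement of Theorem \ref{nonTIszego}, which additionally produces a $+\tfrac12\log L$ term that must be absorbed by the finite-dimensional factor and whose compensated ($-\tfrac1{2n}$) series reappears in $\kappa_2(\tfrac12)$.

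The remaining task is the behaviour of $\det M_L(p)$ as $L\to\infty$, and this is where the probabilistic method enters: the entries of $M_L(p)$ are naturally expectations for the (time-inhomogeneous) ``zig-zag'' random walk whose one-step kernel is built from $\rho$ subject to the constraints $\{x\ge -L\}$ and $\{z\le 0\}$ --- the same walk that underlies the proof of Theorem \ref{nonTIszego} --- so I would estimate them with the toolkit used throughout the paper (renewal theory, the Spitzer and Sparre Andersen identities, the invariance principle, optional stopping). I expect three regimes. For $p<\tfrac12$ the walk has the appropriate drift, $M_L(p)$ converges to an explicit finite limit, and $c(p)\det M_L(p)\to\tfrac{1-2p}{1-p}$, supplying the $\tfrac12\log\!\big(\tfrac{1-2p}{1-p}\big)$ in $\kappa_2(p)$. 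For $p=\tfrac12$ the walk is centred and $\det M_L(p)$ decays like a constant times $L^{-1}$, that constant being governed by $\tilde\sigma^{2}=\int x^{2}\tilde\rho(x)\,dx$ through a local-central-limit/renewal estimate; this simultaneously cancels the $+\log L$ coming from Theorem \ref{nonTIszego} and produces the $-\tfrac14\log2+\tfrac12\log\tilde\sigma$ correction beyond one half of the $\beta=1$ constant of Theorem \ref{nonTIszego}. For $p>\tfrac12$ I would exponentially tilt by $\phi_p$, the root of $4p(1-p)\int e^{\phi_p x}\tilde\rho(x)\,dx=1$; then $\det M_L(p)$ decays like $e^{-2\phi_p L}$ by a Cram\'er/large-deviation estimate, which produces the $+\phi_p L$ correction to $\kappa_1(p)$ and, through the ladder-height/renewal constants of the tilted walk, the remaining $p$-dependent constants in $\kappa_2(p)$ --- in particular $-\log\!\big(\phi_p\int x e^{\phi_p x}\tilde\rho(x)\,dx\big)$, $-\log\!\big(\int e^{\phi_p x}\rho(x)\,dx\big)$ and the series $-2\sum_n\tfrac{(4p(1-p))^{n}}{n}\int_{-\infty}^{0}e^{\phi_p x}\tilde\rho^{*n}(x)\,dx$.

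Finally I would combine the two factors, take logarithms, divide by two, and verify the cancellations --- most importantly that the $\pm\log L$ at $p=\tfrac12$ cancel, so that (as remarked in the introduction) the Fredholm Pfaffian inherits none of the logarithmic singularity of the underlying Fredholm determinant --- and collect constants to match the stated $\kappa_1(p)$, $\kappa_2(p)$. The step I expect to be the main obstacle is the exact execution of the reduction --- pinning down the finite-rank matrix $M_L(p)$ and the prefactor $c(p)$ in the half-line, non translationally invariant setting, where the boundary terms in the repeated integration by parts are more delicate than in Section \ref{s3.1} --- together with the $p=\tfrac12$ subcase, where the $L^{-1}$ decay constant of $\det M_L(p)$ must be pinned down exactly for the constant term to come out as stated.
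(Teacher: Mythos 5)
Your proposal follows essentially the same route as the paper: the Tracy--Widom-style reduction (Lemma \ref{TWmanip2}) writes the squared Pfaffian as $\Det_{[-L,\infty)}(I-4p(1-p)T)$ times a finite-rank determinant --- which, because of the extra finite-rank piece in $2p(1-p)D_2K$, comes out as a $3\times 3$ determinant collapsing to a $2\times 2$ one as the right endpoint is sent to $+\infty$ --- the determinant factor is then handled exactly by Theorem \ref{nonTIszego}, and the small determinant is analysed via the two-step walk in precisely your three regimes (limit $\tfrac{1-2p}{1-p}$ for $p<\tfrac12$, exponential tilting and a renewal argument giving $e^{-2\phi_p L}$ decay for $p>\tfrac12$, and optional stopping plus Spitzer's formula giving the $L^{-1}$ decay that cancels the $\log L$ at $p=\tfrac12$). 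So your plan is correct and matches the paper's proof in all essentials.
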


\noindent \textbf{Remark.} Implementation of this theorem for the 
densities $\rho(x) = (2 \pi t)^{-1/2} \exp(-x^2/4t)$ and $\rho(x) = \frac{2}{\sqrt{\pi}} \exp(x - e^{2x})$, both arising 
from applications, is done in Sections \ref{s2.2.3} and \ref{s2.2.4}
where simpler expressions for $\kappa_1, \kappa_2$ are calculated.

\section{Applications} \label{s2.2}
We repeatedly use the following lemma.
\begin{lemma} \label{littlelemma}
Suppose $X$ is a Pfaffian point process on an interval $A \subseteq \mathbb{R}$ with a kernel $\mathbf{K}$ in the 
derived form \eqref{derived_form}, with an underlying scalar kernel $K$. Suppose $\phi:A \to \mathbb{R}$ is 
$C^1$ and strictly increasing. Then the push forward point process $X'$ of $X$
under $\phi$, given by $X'(\cdot)=X(\phi^{-1}(\cdot))$, is still a Pfaffian point process on the
interval $\phi(A)$ with a kernel $\mathbf{K}'$ in the 
derived form \eqref{derived_form}, with the underlying scalar kernel $K'(x,y) = K(\phi^{-1}(x),\phi^{-1}(y))$.
\end{lemma}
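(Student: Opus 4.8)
The plan is to verify directly that the $n$-point intensities of the pushforward process $X'$ have the Pfaffian form with the claimed kernel $\mathbf{K}'$, using the elementary change-of-variables formula for intensities under a diffeomorphism together with the multilinearity of the Pfaffian. First I would recall that if $X$ has $n$-point intensity $\rho_n$ and $\phi$ is $C^1$ and strictly increasing, then $X'=X\circ\phi^{-1}$ has $n$-point intensity $\rho'_n(x_1,\dots,x_n)=\rho_n(\phi^{-1}(x_1),\dots,\phi^{-1}(x_n))\prod_{i=1}^n (\phi^{-1})'(x_i)$; this is just the transformation rule for factorial moment measures under a smooth bijection and requires no new input. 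So it suffices to show that
\[
\pf\big(\mathbf{K}(\phi^{-1}(x_i),\phi^{-1}(x_j)):1\le i,j\le n\big)\,\prod_{i=1}^n (\phi^{-1})'(x_i)
= \pf\big(\mathbf{K}'(x_i,x_j):1\le i,j\le n\big),
\]
where $\mathbf{K}'$ is the derived-form kernel built from $K'(x,y)=K(\phi^{-1}(x),\phi^{-1}(y))$.

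The key computation is to identify $\mathbf{K}'$ explicitly. Writing $\psi=\phi^{-1}$, the chain rule gives $D_2 K'(x,y)=\psi'(y)\,(D_2K)(\psi(x),\psi(y))$, $D_1K'(x,y)=\psi'(x)\,(D_1K)(\psi(x),\psi(y))$, and $D_1D_2K'(x,y)=\psi'(x)\psi'(y)\,(D_1D_2K)(\psi(x),\psi(y))$; also, since $\phi$ is strictly increasing, $S(x,y)=\sgn(y-x)=\sgn(\psi(y)-\psi(x))=S(\psi(x),\psi(y))$, so the jump term is preserved. Hence, with $D(x)=\mathrm{diag}(1,\psi'(x))$,
\[
\mathbf{K}'(x,y)=D(x)\,\mathbf{K}(\psi(x),\psi(y))\,D(y).
\]
Now assemble the $2n\times 2n$ matrices: $(\mathbf{K}'(x_i,x_j))_{i,j} = B\,(\mathbf{K}(\psi(x_i),\psi(x_j)))_{i,j}\,B^{T}$ where $B$ is the block-diagonal matrix with $i$-th block $D(x_i)=\mathrm{diag}(1,\psi'(x_i))$ (note $D(x)$ is symmetric, so $D(y)=D(y)^T$ and the conjugation has the form $B A B^T$). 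By the Pfaffian identity $\pf(BAB^T)=\det(B)\pf(A)$, and since $\det B=\prod_{i=1}^n \psi'(x_i)=\prod_{i=1}^n(\phi^{-1})'(x_i)$, we get exactly the displayed identity, which combined with the intensity transformation rule shows $\rho'_n$ has the Pfaffian form with kernel $\mathbf{K}'$.

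It remains to check the bookkeeping that makes $\mathbf{K}'$ a legitimate derived-form Pfaffian kernel: that $K'\in C^2(\mathbb{R}^2)$ (immediate from $K\in C^2$ and $\phi\in C^1$ with $\phi$ strictly increasing so $\psi\in C^1$ — though one should note $\psi$ is $C^1$ but for $K'$ to be genuinely $C^2$ one really wants $\phi\in C^2$; I would either assume this or simply interpret the $C^2$ requirement as the existence and continuity of the mixed derivatives appearing in \eqref{derived_form}, which is all that is used), that $K'$ is antisymmetric (inherited from antisymmetry of $K$ since $\psi$ is a bijection), and that $X'$ is again a simple point process on $\phi(A)$ (clear, as $\phi$ is a continuous injection). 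The one genuine subtlety — and the part I expect to need the most care — is the regularity of $\psi=\phi^{-1}$ and hence the precise smoothness of $\mathbf{K}'$, since the paper's standing hypotheses on derived-form kernels demand $C^2$; everything else is a routine application of $\pf(BAB^T)=\det(B)\pf(A)$ together with the change-of-variables formula for intensities.
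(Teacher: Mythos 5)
Your proof is correct and takes essentially the same route as the paper's: the change-of-variables rule for the intensities combined with extracting the Jacobian factors $(\phi^{-1})'(x_i)$ as a conjugation by the block-diagonal matrix with blocks $\mathrm{diag}(1,(\phi^{-1})'(x_i))$ and the identity $\pf(BAB^T)=\det(B)\pf(A)$. Your side remark about the regularity of $\phi^{-1}$ (and hence whether $K'$ is genuinely $C^2$ when $\phi$ is only $C^1$) is a fair observation that the paper glosses over, but it does not affect the argument.
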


\noindent
\textbf{Proof.}
The intensities $\rho_n'$ for $X'$ are given by 
\[
\rho_n'(x_1,\ldots,x_n) = \rho_n(\phi^{-1}(x_1),\ldots, \phi^{-1}(x_n)) \prod_{k=1}^n \alpha_k
\quad \mbox{where $\alpha_k = (\phi^{-1})'(x_k)$.}
\]
Also 
\begin{eqnarray*} 
&& \hspace{-.4in}
\pf \left( \mathbf{K}'(x_i,x_j): 1 \leq i,j \leq n \right) \\
& = &
\pf_{i,j \leq n}  \left( \begin{matrix}
                         S(x_i,x_j) + K'(x_i, x_j) & -D_2 K'(x_i, x_j)  \\
                         -D_1 K'(x_i, x_j) & D_1 D_2 K'(x_i, x_j)
                                             \end{matrix} \right)  \\    
& = &
\pf_{i,j \leq n} \left( \begin{matrix}
                         S(x_i,x_j) + K(\phi^{-1}(x_i), \phi^{-1}(x_j)) & -D_2 K(\phi^{-1}(x_i), \phi^{-1}(x_j)) \alpha_j \\
                         -D_1 K(\phi^{-1}(x_i), \phi^{-1}(x_j)) \alpha_i & D_1 D_2 K(\phi^{-1}(x_i), \phi^{-1}(x_j))
                         \alpha_i \alpha_j
                                             \end{matrix} \right)                          
\end{eqnarray*}
and the factors of $\alpha_k$ can be extracted as this is the conjugation with a block diagonal matrix $D$ with blocks
$\left( \begin{matrix}
                         1 & 0 \\
                         0& \alpha_i 
                                             \end{matrix} \right) $ for which $\det(D) =  \prod_{k=1}^n \alpha_k$. \qed
\subsection{Zeros of Gaussian Power Series}  \label{s2.2.1}
%
Let $(a_k)_{k\geq 0}$ be an independent  collection of real $N(0, 1)$ random variables and define the 
{\it Gaussian power series}
$  f(z) = \sum_{k=0}^{\infty} a_k z^k$. The series converges almost surely to a continuous function 
on $|z|<1$ and we consider the real zeros of $f$ as a point process $X$ on $(-1,1)$. Forrester \cite{Forrester} (see 
Theorem 2.1 of  Matsumoto and Shirai \cite{Matsumoto_Shirai}) showed that $X$ is a Pfaffian point process with 
kernel $\frac12 \mathbf{K}$, with $\mathbf{K}$ in derived form \eqref{derived_form} with the choice 
\[
K(x,y) = \frac{2}{\pi} \sin^{-1}\left( \frac{\sqrt{(1-x^2)(1-y^2)}}{1-xy}\right) - 1 \qquad \mbox{for $x<y$.}
\]
Using Lemma \ref{littlelemma},
the push forward of the process $X$ under the mapping $x \mapsto  \Phi(x) := \frac12 \log(\frac{1+x}{1-x})$ 
is a Pfaffian point process on $\mathbb{R}$, still in the derived 
form, with the choice (note that $\Phi^{-1}(x) = \tanh(x)$) 
\[
K(\Phi^{-1}(x), \Phi^{-1}(y)) =
\frac{2}{\pi} \sin^{-1} (\sech(y-x)) - 1 \qquad \mbox{for $x<y$.}
\]
The problem has now become translationally invariant and the kernel is in the form (\ref{TIform})
with $\rho(z) = \pi^{-1} \sech(z)$. 
Theorem \ref{pfaffian_czego} with $p=\frac12$ leads to (see Corollary \ref{sechkernel} below)  
\bea\label{ref_needed}
\log \Pp[X(a,b) = 0] = -\frac{3}{8} (\Phi(b) - \Phi(a))  + \kappa_2(1/2) + o(1) 
\eea
where the term $o(1)$ converges to zero whenever $b \uparrow 1$ or $a \downarrow -1$ and $\kappa_2(1/2)$ 
is given by 
\[
\frac14 \log \Big(\frac{\pi^2}{2}\Big) - \frac{\gamma}{2} 
- \frac14 
 \int_0^{\infty} \log (x) (\tanh(x) + \tanh(x/2)) (\sech^2(x) + \mbox{$\frac12$} \sech^2(x/2)) dx.
 \]
In particular (\ref{ref_needed}) implies that
\[
\lim_{\epsilon \downarrow 0} 
\epsilon^{-\frac{3}{16}}  \, \Pp[X(0, 1- 2 \epsilon) = 0]  = 
\lim_{\epsilon \downarrow 0} 
\epsilon^{-\frac{3}{8}}  \, \Pp[X(-1+2 \epsilon, 1- 2 \epsilon) = 0] = e^{\kappa_2(\fr)}.
\]
It is possible that the remaining integral in (\ref{sechk2.2})
can be expressed in terms of special functions,
but it is also not hard to calculate it numerically which gives $\kappa_2(1/2) \approx 0.0247$.

As an application of the results obtained in this section let us prove the 
following statement which completes the theorem of Dembo, Poonen, Shao and Zeitouni
\cite{Dembo_Poonen_Shao_Zeitouni} concerning the zeros of random polynomials.
\begin{proposition}
Let $f_n: x\mapsto \sum_{i=0}^{n-1} a_i x^i$ be a random polynomial on $\R$,
where $(a_i)_{i \geq 0}$ is a sequence of i.i.d. random variables with zero mean, unit variance such that moments of all orders exist. Let $p_n = \Pp[f_n(x) > 0 \quad \forall x \in \mathbb{R}]$ be the probability that $f_n$ stays positive 
('persistence probability'). Then
\bea
\lim_{n \rightarrow \infty} \frac{\log p_{2n+1}}{\log n} = -\frac{3}{4}.
\eea
\end{proposition}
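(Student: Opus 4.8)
The plan is to couple the universality theorem of Dembo, Poonen, Shao and Zeitouni \cite{Dembo_Poonen_Shao_Zeitouni} with the Fredholm Pfaffian asymptotic (\ref{ref_needed}) for the real zeros of the Gaussian power series. First recall that $f_{2n+1}$ has even degree $2n$, so having no real zeros is the same as having constant sign; hence $\Pp[f_{2n+1}\text{ has no real zero}]=2p_{2n+1}$ by the symmetry $a_i\mapsto-a_i$, and the exponent governing $p_{2n+1}$ is the one governing the probability of having no real zero at all, which is what is computed in \cite{Dembo_Poonen_Shao_Zeitouni}. That work shows this probability is $n^{-b+o(1)}$ with $b$ independent of the law of the $a_i$ (given zero mean and finiteness of all moments) and equal to its value for Gaussian coefficients, and it characterises $b$ through the real zero set of the Gaussian power series $f(z)=\sum_{k\ge0}a_k z^k$. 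Concretely, using the symmetries $x\mapsto-x$ and $x\mapsto 1/x$ (under which the law of the root set of $f_{2n+1}$ is invariant, up to reflection/reversal of coefficients), the real line splits into the four pieces $(-\infty,-1),(-1,0),(0,1),(1,\infty)$, each equivalent to a polynomial with i.i.d.\ coefficients on $(0,1)$; the polynomial decay of the persistence probability comes entirely from the accumulation of real zeros near the four edges $\pm1$, and $b=4\,b_{\mathrm{edge}}$, where $b_{\mathrm{edge}}$ is the exponent of $\Pp[f\text{ has no real zero in }(0,1-\epsilon)]$ as $\epsilon\downarrow0$.

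So the only quantity still to be pinned down is $b_{\mathrm{edge}}$, and this is where the Pfaffian structure enters. By Forrester \cite{Forrester} and Matsumoto--Shirai \cite{Matsumoto_Shirai} the real zeros of $f$ form a Pfaffian point process $X$ on $(-1,1)$ with kernel $\tfrac12\mathbf K$ in derived form; pushing forward by $\Phi(x)=\tfrac12\log\tfrac{1+x}{1-x}$ via Lemma \ref{littlelemma} gives a translationally invariant Pfaffian point process with density $\rho(z)=\pi^{-1}\sech(z)$ and thinning parameter $p=\tfrac12$. Theorem \ref{pfaffian_czego}, in the explicit form of Corollary \ref{sechkernel} recorded as (\ref{ref_needed}), then gives $\log\Pp[X(0,1-\epsilon)=0]=-\tfrac38\,\Phi(1-\epsilon)+\kappa_2(\tfrac12)+o(1)$ as $\epsilon\downarrow0$; since $\Phi(1-\epsilon)=\tfrac12\log(1/\epsilon)+O(1)$, this reads $\Pp[X(0,1-\epsilon)=0]=\epsilon^{3/16+o(1)}$, i.e.\ $b_{\mathrm{edge}}=\tfrac3{16}$. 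Combining with the previous paragraph, $b=4\cdot\tfrac3{16}=\tfrac34$, and therefore $\log p_{2n+1}/\log n\to-\tfrac34$.

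The substance of the argument is thus the reconciliation of these two inputs, and I expect this matching — rather than any new analysis — to be the main obstacle (a matter of bookkeeping). One must check that the edge exponent occurring in \cite{Dembo_Poonen_Shao_Zeitouni} (defined there through the random polynomial after a change of variables near $\pm1$, equivalently through a stationary Gaussian process whose correlation at lag $t$ is $\sech(t/2)$) is exactly the $b_{\mathrm{edge}}$ above; the factor $\tfrac12$ relating the exponent $\tfrac38$ of Theorem \ref{pfaffian_czego} to $b_{\mathrm{edge}}=\tfrac3{16}$ is precisely the factor $\tfrac12$ in $\Phi(x)=\tfrac12\log\tfrac{1+x}{1-x}$ (the pushed-forward process has correlation $\sech(\cdot)$, not $\sech(\cdot/2)$). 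One also has to account for the combinatorial factor $4$ coming from the four symmetric regions, and to verify that the $o(1)$ error term in (\ref{ref_needed}) really is bounded (indeed $o(1)$) as the endpoint approaches $\pm1$, not merely $o(\log(1/\epsilon))$, so that it can be inserted into the estimates of \cite{Dembo_Poonen_Shao_Zeitouni} without weakening their precision; this last point is exactly the strength of the asymptotic supplied by Theorem \ref{pfaffian_czego}. All the genuinely hard steps — the local convergence of $f_{2n+1}$ to $f$ near $\pm1$, the asymptotic independence of the four regions up to $n^{o(1)}$, and the universality over coefficient distributions — are already provided by \cite{Dembo_Poonen_Shao_Zeitouni}, while the missing ingredient, the exact value $\tfrac3{16}$ of the edge exponent (previously known only numerically or conjecturally), is delivered by Theorem \ref{pfaffian_czego}.
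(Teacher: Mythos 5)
Your proposal is correct and takes essentially the same route as the paper: it cites Dembo--Poonen--Shao--Zeitouni for universality and for reducing $b$ to a quantity determined by the Gaussian power series (equivalently the stationary $\sech(t/2)$-correlated process), and then extracts the exponent from the sech-kernel value $\kappa_1(\tfrac12)=\tfrac38$ of Corollary \ref{sechkernel}, yielding $b=4\cdot\tfrac{3}{16}=\tfrac34$. The paper does the "bookkeeping" you flag by using the exact DPSZ characterisation $b=-4\lim_{T\to\infty}T^{-1}\log\Pp[\sup_{0\le t\le T}Y_t\le 0]$ and converting one-sided persistence into absence of zeros via a.s.\ simplicity of the zeros and sign symmetry of $Y$; note also that only the leading rate $\kappa_1$ enters, so your concern about the sharpness of the $o(1)$ term in (\ref{ref_needed}) is immaterial for this proposition.
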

\begin{proof} All the hard work is done in \cite{Dembo_Poonen_Shao_Zeitouni}, where strong  approximations are used to show the asymptotic will follow from the Gaussian case, and the 
approximation of the Gaussian polynomial by the Gaussian power series is controlled.
We are left with an easy task: by Theorem 1.1 of \cite{Dembo_Poonen_Shao_Zeitouni}, the limit
\bea\label{willrem1}
b:=-\lim_{n \rightarrow \infty} \frac{\log p_{2n+1}}{\log n} 
\eea
exists and can be characterised in terms of the Gaussian power series as follows. Let $(Y_t)_{t\in \R}$ be a centered
stationary continuous Gaussian process with the correlation 
$R(t):=\E[Y_{0}Y_{t}]/\E[Y_{0}^2]=\sech(t/2)$. Then
\begin{equation}\label{dzchar}
b = -4 \lim_{T \rightarrow \infty} \frac{1}{T} \log \Pp[\sup_{0\leq t\leq T}Y_t\leq 0].
\end{equation} 
In other words the constant $b$ is universal.

The process $Y$ can be realised as a rescaling of the Gaussian power series $f$
followed by pushing it forward to a process on $\R$ by the function $2\Phi^{-1} =2 \tanh^{-1}$,
\beast
\tilde{Y}_t:=\frac{f(\tanh(t/2))}{\left(\E[f^2(\tanh(t/2))]\right)^{1/2}}, \quad ~t \in \R.
\eeast
Indeed, $(\tilde{Y}_t)_{t \in \R}$ is a continuous centered Gaussian process on $\R$ with 
with the correlation function
\beast
\E[\tilde{Y}_0\tilde{Y}_t]/\E[\tilde{Y}_0^2]=\E[\tilde{Y}_0\tilde{Y}_t]=\sech(t/2),
\eeast
which also implies the stationarity of $\tilde{Y}$.
Therefore the law of $Y$ coincides with the law of a constant multiple
of $\tilde{Y}$, meaning that the laws of real zeros of $Y$ and $\tilde{Y}$
coincide. It follows from the theorem of Forrester above that
the law of real zeros of $Y$ is a translationally invariant
Pfaffian point process with $\rho(\cdot)=\frac{\sech(\fr \cdot)}{2\pi}$,
where the factors of $2$ appear because $Y$ is the pushforward
of the Gaussian power series of by $2\Phi^{-1}$ rather than $\Phi^{-1}$. As a consequence
of the Fourier formula (\ref{k1FT}) for $\kappa_1$ and the $p=1/2$ statement of 
Corollary \ref{sechkernel} below,
\bea\label{sechkappa12}
\log \Pp \left[Y_t \neq 0 \quad \forall 0\leq t\leq T \right]=-\frac{\kappa_1}{2}T+o(T)
=-\frac{3}{16}T+o(T).
\eea
Therefore, using (\ref{dzchar}),
\beast
b &=&-4 \lim_{T \rightarrow \infty} \frac{1}{T} \log \Pp[\sup_{0\leq t\leq T}Y_t\leq 0]\\
& = &
-4 \lim_{T \rightarrow \infty} \frac{1}{T} \log \Pp[\sup_{0\leq t\leq T}Y_t< 0]\\
&=&-4\lim_{T \rightarrow \infty} \frac{1}{T} \log \Pp[Y_t\neq 0 \quad \forall 0\leq t\leq T]
=2\kappa_1=\frac{3}{4}.
\eeast
The second equality is due to the fact that zeros of $Y$ are almost surely simple; the 
third is due to the reflection symmetry of the process $Y$, the fourth
uses  (\ref{sechkappa12}).
\end{proof}

\noindent
\textbf{Remark.} It is worth stressing that
Theorem 1.1 is just one of the universality results presented in \cite{Dembo_Poonen_Shao_Zeitouni}: the case where $\E[a_i]\neq 0$ was also treated; 
the probability that random polynomials have exactly $k$ real zeros or the number of real zeros is $o(\log n/\log\log n)$ were analysed. For all the cases formulae analogous to (\ref{willrem1}) were proved (with $b\rightarrow b/2$ when means are non-zero). However, the value of the limit $b$ could only be calculated numerically as $b = 0.76 \pm 0.03$ and bounded rigorously as $0.4\leq b \leq 2$. For all these 
statements the unknown constant $b$ can now be replaced with $3/4$. 

\vspace{.1in}

We record now
the concrete application of Theorem \ref{pfaffian_czego} for the specific kernel based on
$\rho(x) = \pi^{-1} \sech(x)$ for all $p \in (0,1)$. 
The case $p=1/2$ yields the above application to Gaussian power series. The case
$p \in (0, 1/2)$ would correspond to a gap probability for the thinning of the point process formed 
by the zeros of a Gaussian power series (should this ever be needed). However, the sech kernel arises completely independently 
(as far as we know) in a later application in section \ref{s2.2.3}, 
where the problem of a system of coalescing/annihilating particles on $\R$ never crossing the origin by 
time $t$ is studied. That probability is related to a Fredholm Pfaffian with a 
non-translationally invariant kernel, but which asymptotically agrees with the kernel based on 
$\rho(z) = \pi^{-1} \sech(z)$. The corollary below then becomes needed for all $p \in (0,1)$.
It is also our first chance to study the regularity of $p \to \kappa_i(p)$. 
\begin{corollary}  \label{sechkernel}
Let $\mathbf{K}$ be a derived form kernel, in the translationally invariant form 
(\ref{TIform})  with $\rho(x) = \pi^{-1} \sech(x)$. Then for $p \in [0,1)$
\[
\log \Pf_{[0,L]}(\mathbf{J} - p \mathbf{K}) = -\kappa_1(p) L + \kappa_2(p) + o(1) \text{ as } L \rightarrow \infty 
\]
where 
\begin{equation} \label{k1sech}
\kappa_1(p) =   
\frac{2}{\pi^2} \left( \cos^{-1} \frac{1-2p}{\sqrt{2}} \right)^2    - \frac18 
\end{equation}
is real analytic for $p \in [0,1)$ and $\kappa_2(p)$ is given by
\begin{align}
&  \frac12  \int_0^{\infty} x L_{\rho}^2(p,x) dx + \log\left(\frac{\sqrt{1-2p}}{1-p}\right)
&& \mbox{$p < \frac12$}, \label{sechk2.1} \\
&  \frac14 \log \Big(\frac{\pi^2}{2}\Big) - \frac{\gamma}{2} 
- \frac18 
 \int_0^{\infty} \log (x) \left((\tanh(x) + \tanh(x/2))^2\right)^{'} dx 
 && \mbox{$p = \frac12$,} \label{sechk2.2}  \\ 
&
\frac12  \int_0^{\infty} x L_{\rho}^2(p,x) dx  
- \log \left( \cos^{-1}(4p(1-p)) \right) && \nonumber \\
& \hspace{.15in}  - \log \left( \frac{\sqrt{(2p-1)(1+4p-4p^2)}}{2p} \right)
&& \nonumber \\
& \hspace{.3in} + \frac{1}{\pi} \int_{\mathbb{R}} \frac{1}{1+k^2} \log \left(1- 4p(1-p) \sech(\cos^{-1}(4p(1-p))k)\right) dk
&& \mbox{$p> \frac12$},  \label{sechk2.3}
\end{align}
where, for $p \neq \frac12$ and $x \neq 0$, 
\begin{equation}  \label{sechL}
L_{\rho}(p,x) = \frac{\cosh(x) - \cosh \Big( \frac{4}{\pi} \cos^{-1} (\frac{|2p-1|}{\sqrt{2}}) \, x \Big)}{2x\sinh(x) \cosh(x)}.
\end{equation}
\end{corollary}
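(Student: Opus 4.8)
The plan is to specialise Theorem~\ref{pfaffian_czego}, in its Fourier form \eqref{k1FT}--\eqref{k2FT}, to $\rho(x)=\pi^{-1}\sech(x)$ and to evaluate the resulting quantities using the fact that $\sech$ is essentially fixed by the Fourier and bilateral Laplace transforms. First I would record the basic facts about $\rho$: it is an even, smooth, bounded probability density in Schwartz class with $\hat\rho(k)=\sech(\pi k/2)$ and $\int_{\mathbb R}e^{sx}\rho(x)\,dx=\sec(\pi s/2)$ for $|s|<1$. In particular $\rho$ has finite moments of all orders and finite exponential moments of every order strictly below $1$, so every moment hypothesis of Theorem~\ref{pfaffian_czego} is met; for $1/2<p<1$ the equation $4p(1-p)\sec(\pi\phi_p/2)=1$ has the unique positive root $\phi_p=\tfrac{2}{\pi}\cos^{-1}(4p(1-p))\in(0,1)$, and $\int_{\mathbb R}|x|e^{\phi_p x}\rho(x)\,dx<\infty$. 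As $\rho$ is Schwartz and (for the $p=\tfrac12$ case) has $\int e^{2\mu|x|}\rho<\infty$ for any $\mu<\tfrac12$, the formulae \eqref{k1FT}--\eqref{k2FT} apply verbatim. Theorem~\ref{pfaffian_czego} then delivers the expansion with error $o(1)$, and it only remains to identify $\kappa_1(p)$ and $\kappa_2(p)$ by computing $L_\rho(p,0)$, $L_\rho(p,x)$ and, when $p>\tfrac12$, the pieces of $\Gamma_p$; the case $p=0$ is trivial.

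For $\kappa_1$ I would evaluate $L_\rho(p,0)=\frac{1}{\pi^2}\int_{\mathbb R}\log\bigl(1-4p(1-p)\sech u\bigr)\,du$ by differentiating under the integral in the parameter $c=4p(1-p)=\cos\theta$ and using the classical identity $\int_{\mathbb R}\frac{du}{\cosh u-\cos\theta}=\frac{2(\pi-\theta)}{\sin\theta}$, $\theta\in(0,\pi)$; the change of variables $c=\cos\theta$ reduces the remaining $\theta$-integration to an elementary quadratic, giving $L_\rho(p,0)=\pi^{-2}\bigl(\tfrac{\pi^2}{4}-(\theta-\pi)^2\bigr)$ with $\theta=\cos^{-1}(4p(1-p))$. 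Substituting into \eqref{k1FT} yields $\tfrac{(\theta-\pi)^2}{2\pi^2}-\tfrac18$ for $p\le\tfrac12$ and the same expression plus $\phi_p$ for $p>\tfrac12$. The elementary identity $2\sin^2(\theta/2)=1-\cos\theta=(2p-1)^2$ gives $\pi-\theta=2\cos^{-1}\!\bigl(\tfrac{|2p-1|}{\sqrt2}\bigr)$, and combined with $\phi_p=\tfrac2\pi\theta=2-\tfrac4\pi\cos^{-1}\!\bigl(\tfrac{|2p-1|}{\sqrt2}\bigr)$ both cases collapse to \eqref{k1sech}; real-analyticity on $[0,1)$ is then clear since $p\mapsto(1-2p)/\sqrt2$ takes values in $(-1,1)$, where $\cos^{-1}$ is real-analytic.

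For $L_\rho(p,x)$ with $x\neq0$ the same parameter-differentiation works, now using $\int_{\mathbb R}\frac{\cos(\omega u)}{\cosh u-\cos\theta}\,du=\frac{2\pi}{\sin\theta}\,\frac{\sinh((\pi-\theta)\omega)}{\sinh(\pi\omega)}$ with $\omega=2x/\pi$ (after rescaling $u=\pi k/2$); integrating back in $\theta$ gives $L_\rho(p,x)=\frac{\cosh x-\cosh(\frac{2(\pi-\theta)x}{\pi})}{2x\sinh x\cosh x}$, and the identity $\pi-\theta=2\cos^{-1}(|2p-1|/\sqrt2)$ turns this into \eqref{sechL}. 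The three formulae for $\kappa_2$ then follow by substitution: the case $p<\tfrac12$ is just \eqref{k2FT} with the explicit $L_\rho$, i.e.\ \eqref{sechk2.1}; for $p=\tfrac12$ one computes $\sigma^2=\int x^2\rho=\pi^2/4$ and observes the hyperbolic identity $xL_\rho(\tfrac12,x)=-\tfrac12(\tanh x+\tanh(x/2))$ (the $\theta\to0$ limit of the general formula), whence $x^2L_\rho^2(\tfrac12,x)=\tfrac14(\tanh x+\tanh(x/2))^2$ and \eqref{k2FT} becomes \eqref{sechk2.2}; for $p>\tfrac12$ one computes $\int x e^{\phi_p x}\rho(x)\,dx=\tfrac\pi2\sec(\tfrac{\pi\phi_p}{2})\tan(\tfrac{\pi\phi_p}{2})=\tfrac{\pi(2p-1)\sqrt{1+4p-4p^2}}{32p^2(1-p)^2}$ and converts the integral $\tfrac1\pi\int_{\mathbb R}\frac{\phi_p}{\phi_p^2+k^2}\log(1-4p(1-p)\hat\rho(k))\,dk$ appearing in $\Gamma_p$ by the substitution $k=\phi_p m$ (using $\tfrac{\pi\phi_p}{2}=\cos^{-1}(4p(1-p))$) into $\tfrac1\pi\int_{\mathbb R}\frac{\log(1-4p(1-p)\sech(\cos^{-1}(4p(1-p))\,m))}{1+m^2}\,dm$; collecting the remaining logarithmic terms then gives \eqref{sechk2.3}.

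The hard part will be the $p>\tfrac12$ case. There one must track the branch of $\cos^{-1}$ consistently across $p<\tfrac12$ and $p>\tfrac12$ (on the two sides $\cos^{-1}(4p(1-p))$ equals $\pi-2\cos^{-1}(\tfrac{1-2p}{\sqrt2})$ and $2\cos^{-1}(\tfrac{1-2p}{\sqrt2})-\pi$ respectively), simplify the several logarithmic constants in $\Gamma_p$ (lengthy but mechanical), and check that the representation \eqref{Gammap} of $\Gamma_p$ is legitimately applicable for this $\rho$ --- the relevant subtlety being that $z\mapsto\log(\cosh(\pi z/2)-4p(1-p))$ has a logarithmic branch point precisely at $z=i\phi_p$, which is exactly why \eqref{k2FT} is phrased through $\Gamma_p$ rather than a clean harmonic evaluation. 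The two classical hyperbolic integrals invoked above are standard residue computations, and everything else is routine bookkeeping.
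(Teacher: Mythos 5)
Your proposal is correct and follows essentially the same route as the paper: both specialise the Fourier-form constants \eqref{k1FT}--\eqref{k2FT} using $\hat\rho(k)=\sech(\pi k/2)$, $\phi_p=\tfrac{2}{\pi}\cos^{-1}(4p(1-p))$ and $\int_{\mathbb{R}} xe^{\phi_p x}\rho(x)\,dx=\tfrac{\pi}{2}\sec(\pi\phi_p/2)\tan(\pi\phi_p/2)$, then identify $L_\rho(p,x)$ explicitly and collect the logarithmic terms. The only (immaterial) difference is how the integral defining $L_\rho$ is evaluated --- the paper integrates by parts in $k$ and sums the residues directly, whereas you differentiate in the parameter $\lambda=\cos\theta$ and integrate back using the classical formula for $\int_{\mathbb{R}}\cos(\omega u)(\cosh u-\cos\theta)^{-1}\,du$ --- and your remaining bookkeeping (the identity $\pi-\theta=2\cos^{-1}(|2p-1|/\sqrt2)$ giving analyticity of $\kappa_1$, the $p=\tfrac12$ identity $xL_\rho(\tfrac12,x)=-\tfrac12(\tanh x+\tanh(x/2))$ with $\sigma^2=\pi^2/4$, and the substitution $k=\phi_p m$ in the $\Gamma_p$ integral) matches the paper's computation.
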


\noindent
\textbf{Proof}. We calculate $\kappa_1(p), \kappa_2(p)$ from the Fourier transform representations
(\ref{k1FT},\ref{k2FT}), as follows.
The Fourier transform of $\rho(x) = \pi^{-1} \sech(x)$ is $\hat{\rho}(k) = \sech(k\pi/2)$. 
 For $|\phi|<1$ the exponential moments are given by $ \int_\R e^{\phi x} \rho(x)dx =  \sec( \pi \phi/2)$, so that the 
 solution $\phi_p$ 
to $4 p(1-p) \int_\R e^{\phi x} \rho(x)dx = 1$ is given by $\phi_p = (2/\pi) \cos^{-1}(4p(1-p))$. 
The Fourier integral formula (\ref{Ldefn}) can be evaluated using the 
 integral
\begin{equation} \label{sechint}
I_\lambda (x) := \frac{1}{2\pi} \int_{\mathbb{R}}  e^{-ikx} \log (1-\lambda \sech(k\pi /2)) dk,
\quad \mbox{for $x \in \mathbb{R}$, $\lambda \in (0,1]$.}
\end{equation}
Note $I_\lambda (x)$ is continuous and even in $x$. Integrating by parts we find 
\[
I_\lambda (x)  = \frac{1}{2\pi i x} \int_{\mathbb{R}}  e^{- 2 ikx/\pi}  \left(\frac{\sinh (k)}{\cosh (k)}-\frac{\sinh(k)}{\cosh(k)-\lambda} \right) dk.
\]
For $x>0$ this integral can be computed by closing the integration contour in the lower half plane and
applying Cauchy's residue theorem. The only singularities of the first term of the integrand are the first order
 poles at $z^{(1)}_m=(\frac{\pi}{2}+\pi m)i$ for $ m \in \mathbb{Z}$. For $\lambda \in (0,1)$ 
 the singularities of the second term are also first order poles 
 at $z^{(2)}_m=(\alpha+2\pi m)i$ and $z^{(3)}_m=(-\alpha+2\pi m)i$ for $m \in \mathbb{Z}$, 
 where $\alpha \in (0,\pi)$ satisfies  $\cos\alpha = \lambda, \sin\alpha=\sqrt{1-\lambda^2}$. 
The corresponding residues are $e^{- 2 i x z^{(k)}_m/\pi}$.   Summing up the three 
resulting geometric progressions of residues one finds, for $x>0$,
\[
I_\lambda (x) =\frac{1}{2x}\frac{\cosh(x)-\cosh(2x(1-\frac{\alpha}{\pi}))}{\sinh(x)\cosh(x)}.
\]  
Note also that
\begin{equation} \label{silly}
1-\frac{\alpha}{\pi} = 1- \frac{\cos^{-1}(\lambda)}{\pi} =  \frac{2}{\pi} \cos^{-1} \sqrt{ \frac{1-\lambda}{2}}.
\end{equation}
The value at $\lambda=1$ can be rewritten, for $x>0$, as
\[
I_1 (x) = \frac{1}{2x}\frac{\cosh(x)-\cosh(2x)}{\sinh(x)\cosh(x)} 
= -\frac{1}{2x} \left(\tanh (x)+\tanh (x/2) \right).
\]
The value at $x=0$, by continuity, is 
\[
 I_\lambda (0) = \frac14 - \left( 1- \frac{\cos^{-1}(\lambda)}{\pi} \right)^2 
 = \frac14 - \frac{4}{\pi^2} \left(\cos^{-1} \sqrt{ \frac{1-\lambda}{2}} \right)^2.
\]
Using $L_{\rho}(p,x) = I_{4p(1-p)}(x)$, the expression (\ref{k1FT}) 
leads to the following formulae for $\kappa_1$
\[
\kappa_1(p) =   
\frac{2}{\pi^2} \left( \cos^{-1} \frac{|2p-1|}{\sqrt{2}} \right)^2    - \frac18 
+ \I(p>1/2) \frac{2}{\pi} \cos^{-1}(4p(1-p)).
\]
We can remove the indicator to reveal the analyticity of this formula; indeed 
we use $\cos^{-1}(x) = \pi-\cos^{-1}(-x)$ and 
$\cos^{-1}(x) = \frac{1}{2}\cos^{-1}(2x^2 - 1)$ to see, for $p \in (1/2,1)$, 
\begin{align*}
\kappa_1(p) 
& = \frac{2}{\pi^2}\left( \cos^{-1} \frac{2p-1}{\sqrt{2}}\right)^2 + \frac{2}{\pi} \cos^{-1}(4p(1-p)) - \frac{1}{8} \\
& =  \frac{2}{\pi^2} \left(  \cos^{-1} \frac{2p-1}{\sqrt{2}}\right)^2 + 2 - \frac{2}{\pi} \cos^{-1}(-4p(1-p))  - \frac{1}{8} \\
& =  \frac{2}{\pi^2} \left( \cos^{-1} \frac{2p-1}{\sqrt{2}}\right)^2 + 2 - \frac{4}{\pi} \cos^{-1}\left(\frac{2p-1}{\sqrt{2}}\right)  - \frac{1}{8} \\
& =  \frac{2}{\pi^2} \left(\pi - \cos^{-1}\frac{2p-1}{\sqrt{2}}\right)^2  - \frac{1}{8} = \frac{2}{\pi^2}
\left( \cos^{-1} \frac{1-2p}{\sqrt{2}}\right)^2  - \frac{1}{8}
\end{align*}
agreeing with the expression for $\kappa_1(p)$ when $p \in (0,1/2)$.
Using the exponential moments we find for $p \geq 1/2$
\[
\int_{\mathbb{R}} x e^{\phi_p x}\rho(x) dx = \frac{\pi}{2} \frac{\sin(\pi \phi_p/2)}{\cos^2(\pi \phi_p/2)}
= \frac{(2p-1) \pi \sqrt{1+4p-4p^2}}{32 p^2 (1-p)^2}
\]
and the formula for $\kappa_2$ then follows from (\ref{k2FT}). \qed
%

\noindent 
\textbf{Remark.} We do not investigate the regularity of $\kappa_2$, but the numerics in Figure \ref{fig:sech}
suggest that it is at least in $C^1$.  

\begin{figure}
  \includegraphics[width=\linewidth]{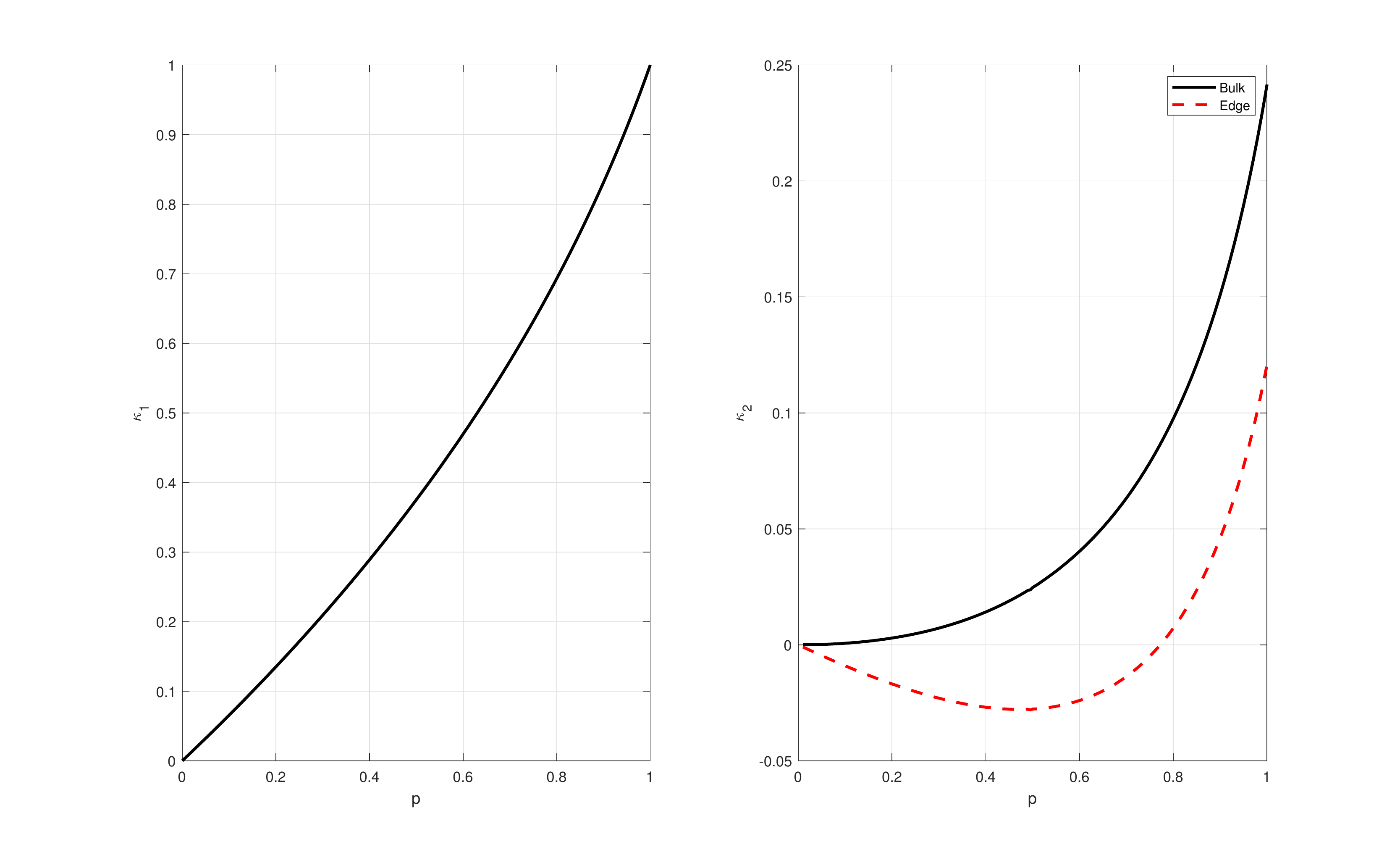}
  \caption{Left pane: the leading coefficient $p \mapsto \kappa_1(p)$ for the sech kernel from Corollary 
  \ref{sechkernel} in Section \ref{s2.2.1}.  
  Right pane: the sub-leading coefficient $p\mapsto \kappa_2(p)$ for the
same kernel (labelled `bulk') and for the non translationally invariant kernel based on
$\rho(x) = \frac{2}{\sqrt{\pi}} \exp(x - e^{2x})$ discussed in Corollary \ref{persistence} in Section \ref{s2.2.4} (labelled `edge').}
\label{fig:sech}
\end{figure}

\subsection{Gap probabilities for Coalescing/Annihilating Brownian Motions}  \label{s2.2.2}
%
This section discusses the result that arose in Derrida and Zeitak
\cite{Derrida_Zeitak} in their study of domain sizes for Potts models.
Consider an infinite system of reacting Brownian motions on $\mathbb{R}$,
 where each colliding pair instantly annihilates with probability $\theta$ or instantly coalesces with probability $1-\theta$
 (independently at each collision). We will refer to this system as CABM($\theta$). Suppose
the  initial positions form a Poisson point process with bounded intensity $\lambda(x) dx$. 
The positions of the particles at time $t>0$ form a Pfaffian point process $X_t$
with a  kernel $(1+\theta)^{-1} \mathbf{K}$ in the derived form (\ref{derived_form}) 
where
\begin{equation} \label{generalPoissonkernel}
K(x,y) = \int_{-\infty}^\infty \int_{-\infty}^{y'}  
\left( e^{-(1+\theta) \int^{y'}_{x'} \lambda(z) dz} -1 \right)  
\left|  \begin{array}{cc} 
p_t(x,x')  &  p_t(x,y') \\
p_t(y,x') &   p_t(y,y') 
\end{array}  \right| dx' dy' 
\end{equation}
 where $p_t(x,x')$ is the transition density for Brownian motion on $\mathbb{R}$.
When $\lambda$ is constant this reduces to 
\begin{equation} \label{Poissonkernel}
K(x,y) = \int^{\infty}_0 ( e^{-\lambda(1+\theta) z} -1 ) \frac{1}{\sqrt{4 \pi t}} 
\left(e^{-(z-y+x)^2/4t} - e^{-(z+y-x)^2/4t}\right) dz.
\end{equation}
This scalar $K(x,y)$ is in the translationally invariant form (\ref{TIform}) with
\begin{equation} \label{Poissonrho}
\rho(x) = 
\int_{\mathbb{R}} \frac{\lambda(1+\theta)}{2} e^{-\lambda(1+\theta) |z|} \frac{1}{\sqrt{4\pi t}}e^{-\frac{|x-z|^2}{4t}} dz, 
\end{equation}
that is the density for the convolution of a Gaussian N$(0,2t)$ variable with a two sided 
Exponential($\lambda(1+\theta)$) variable. One may also let $\lambda \uparrow \infty$, 
starting the process as an entrance law (which we informally call the maximal entrance law), 
and where $\rho$ becomes just Gaussian N$(0,2t)$ density. 

A derivation of the kernel (\ref{generalPoissonkernel}) is not quite in the literature. The 
maximal entrance law and its kernel are derived in \cite{TZ} for annihilating or coalescing Brownian motions.
Discrete analogues of CABM($\theta$) are discussed in \cite{GPTZ}, together with the kernels for
continuum limits, but for deterministic initial conditions. We go through all the (analogous) 
steps when deriving the kernel for the novel case of exit measures in Section \ref{s5}. 

Our interest here is to 
explore the gap probability asymptotics. For constant intensity Poisson($\lambda$) initial conditions, we may apply Theorem  \ref{pfaffian_czego} to deduce for $\theta >0$ that, setting 
$p_{\theta} = (1+\theta)^{-1}$, 
\[
\log \Pp[X_t(0,L) = 0] = -\kappa_1(p_{\theta}) L + \kappa_2(p_{\theta}) + o(1) \text{ as } L \rightarrow \infty 
\]
where $\kappa_1(p),\kappa_2(p)$ are given in (\ref{k1FT}) and (\ref{k2FT}) using 
$\hat{\rho}(k) = \frac{\lambda^2}{\lambda^2 + k^2} \exp(-T k^2)$. 
For the maximal entrance law, that is where $\lambda \uparrow \infty$, the underlying 
density $\rho$ is Gaussian and  the formulae
for $\kappa_1$ and $\kappa_2$ become more tractable, as shown in the upcoming corollary.

Note that as $\theta$ ranges over $(0,1]$ the value $p_{\theta}$ ranges over $[1/2,1)$. 
However the kernel $p \mathbf{K}$ for $p \in (0,\frac12)$ also has a use for the study of massive coalescing 
particles - see Lemma \ref{MCBM}. Therefore we now examine the behaviour of $\kappa_i(p)$ for all 
$p \in (0,1)$. Brownian scaling would reduce the two parameters $t,L$ in $\rho$ to 
one, but we leave both parameters so we can align our results with those in \cite{Derrida_Zeitak}.
\begin{corollary}  \label{gaussiankernel}
Let $\mathbf{K}$ be a derived form kernel, in the translationally invariant form 
(\ref{TIform})  with $\rho(x) = (4 \pi t)^{-1/2} \exp(-x^2/4t)$. Then for $p \in [0,1)$
\[
\log \Pf_{[0,L]}(\mathbf{J} - p \mathbf{K}) = -\kappa_1(p) L + \kappa_2(p) + o(1) \text{ as } L \rightarrow \infty 
\]
where $\kappa_1(p)$ is given by 
\begin{equation} \label{k1C1}
\kappa_1(p) 
= \frac{1}{4\sqrt{\pi t}} \mbox{Li}_{3/2}(4p(1-p))
+  \I(p >1/2) \left( - t^{-1} \log 4p(1-p)\right)^{1/2} 
\end{equation}
using the poly-logarithm function $\mbox{Li}_s(x) = \sum_{n\geq 1} x^n/n^{s}$, and $\kappa_2(p)$ is given by
\begin{align}
& \log \Bigg(\frac{\sqrt{1-2p}}{1-p} \Bigg) + \frac{1}{4 \pi} \sum_{n=2}^{\infty} \frac{(4p(1-p))^n}{n} 
 \sum_{k=1}^{n-1} \frac{1}{\sqrt{k(n-k)}}
&& \mbox{  for $p \in (0,\frac12)$}, \label{DZ50} \\
& \log 2 - \frac14 + \frac{1}{4 \pi}\sum_{n=2}^{\infty} \frac1n \left( \sum_{k=1}^{n-1} \frac{1}{\sqrt{k(n-k)}}
 - \pi \right) && \mbox{  for $p = 1/2$},   \label{DZ51} \\ 
& \frac12 \log \left( \frac{2p-1}{16(1-p)^2} \right) 
+
\frac{1}{4 \pi}\sum_{n=2}^{\infty} \frac{(4p(1-p))^n}{n} \sum_{k=1}^{n-1} \frac{1}{\sqrt{k(n-k)}} &&
 \nonumber  \\
 & \hspace{.2in} - \log (- \log(4p(1-p)))  -   \sum_{n=1}^{\infty} 
\frac{1}{n} \mathrm{erfc}(\sqrt{-n \log(4p(1-p))}) && \mbox{  for $p \in (\frac12,1)$}.  \label{DZ53}
\end{align}
The function $\kappa_1$ is analytic and the function $\kappa_2$ is $C^1$ for $p \in (0,1)$. 
\end{corollary}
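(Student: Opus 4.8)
The plan is to specialise each of the three cases of Theorem~\ref{pfaffian_czego} to the Gaussian density $\rho(x)=(4\pi t)^{-1/2}e^{-x^2/4t}$, evaluating in closed form all the convolution sums and integrals that occur, and then to analyse the regularity of $\kappa_1,\kappa_2$ at the exceptional value $p=\tfrac12$ by expanding the resulting special functions at the branch point $4p(1-p)=1$.

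\emph{Evaluating the formulae.} First I would record the elementary Gaussian facts $\rho^{*n}(x)=(4\pi nt)^{-1/2}e^{-x^2/4nt}$ (so $\rho^{*n}(0)=(4\pi nt)^{-1/2}$) and $\int_\R e^{\phi x}\rho(x)\,dx=e^{t\phi^2}$. In particular every moment hypothesis in Theorem~\ref{pfaffian_czego} holds automatically for the relevant $p$-ranges, and the equation $4p(1-p)\int_\R e^{\phi_p x}\rho=1$ of case~(iii) has the unique positive root $\phi_p=(-t^{-1}\log 4p(1-p))^{1/2}$ for all $p\in(\tfrac12,1)$. By completing the square one further gets $\int_\R x e^{\phi_p x}\rho(x)\,dx=2t\phi_p e^{t\phi_p^2}=t\phi_p/(2p(1-p))$ and $\int_{-\infty}^0 e^{\phi_p x}\rho^{*n}(x)\,dx=\tfrac12(4p(1-p))^{-n}\mathrm{erfc}(\sqrt{-n\log 4p(1-p)})$, using $\int_{-\infty}^0(4\pi a)^{-1/2}e^{-(x-\mu)^2/4a}\,dx=\tfrac12\mathrm{erfc}(\mu/2\sqrt a)$ for $\mu\ge0$ together with $nt\phi_p^2=-n\log 4p(1-p)$. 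Substituting $\rho^{*n}(0)$ turns $\sum_n\tfrac{(4p(1-p))^n}{2n}\rho^{*n}(0)$ into $\tfrac1{4\sqrt{\pi t}}\sum_n(4p(1-p))^n n^{-3/2}=\tfrac1{4\sqrt{\pi t}}\mbox{Li}_{3/2}(4p(1-p))$, and adding the $\phi_p$-term in case~(iii) produces \eqref{k1C1}.

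\emph{The coefficient $\kappa_2$.} The integral $\int_0^\infty\tfrac x2\,(\sum_n\tfrac{(4p(1-p))^n\rho^{*n}(x)}{n})^2\,dx$ common to all three cases is handled by expanding the square, using the closed form $\int_0^\infty x\,\rho^{*k}(x)\rho^{*(n-k)}(x)\,dx=\tfrac{\sqrt{k(n-k)}}{2\pi n}$ (complete the square again), and grouping the pair $(k,n-k)$ by $n$; this gives $\tfrac1{4\pi}\sum_{n\ge2}\tfrac{(4p(1-p))^n}{n}\sum_{k=1}^{n-1}(k(n-k))^{-1/2}$, the series appearing in each case. Cases~(i) and~(ii) are then immediate: at $p=\tfrac12$ the compensating $-\tfrac1{2n}$ of Theorem~\ref{pfaffian_czego}(ii) combines with $\tfrac1{2\pi n}c_n$, where $c_n:=\sum_{k=1}^{n-1}(k(n-k))^{-1/2}$, into $\tfrac1{4\pi n}(c_n-\pi)$, which is \eqref{DZ51}, and \eqref{DZ50} follows likewise. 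For case~(iii) one inserts the two Gaussian integrals, observes $2\sum_n\tfrac{(4p(1-p))^n}{n}\int_{-\infty}^0 e^{\phi_p x}\rho^{*n}(x)\,dx=\sum_n\tfrac1n\mathrm{erfc}(\sqrt{-n\log 4p(1-p)})$, and simplifies $\log(\tfrac{\sqrt{2p-1}}{8p(1-p)^2})-\log(\phi_p\int_\R x e^{\phi_p x}\rho)=\tfrac12\log\tfrac{2p-1}{16(1-p)^2}-\log(-\log 4p(1-p))$ using $t\phi_p^2=-\log 4p(1-p)$; this gives \eqref{DZ53}.

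\emph{Regularity --- the main obstacle.} On $(0,\tfrac12)$ and $(\tfrac12,1)$ all ingredients are manifestly real-analytic since $4p(1-p)\in(0,1)$ there, so the only issue is at $p=\tfrac12$, where $4p(1-p)\uparrow1$. I would use the branch-point expansion $\mbox{Li}_{3/2}(e^{\mu})=-2\sqrt\pi(-\mu)^{1/2}+\sum_{k\ge0}\tfrac{\zeta(3/2-k)}{k!}\mu^k$ with $\mu=\log 4p(1-p)$, noting $-\mu=(1-2p)^2+O((1-2p)^4)$, hence $(-\mu)^{1/2}=|1-2p|A(p)$ for an analytic $A$ with $A(\tfrac12)=1$, while $\phi_p=t^{-1/2}(-\mu)^{1/2}$. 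For $\kappa_1$ this gives $\tfrac1{4\sqrt{\pi t}}\mbox{Li}_{3/2}(4p(1-p))=-\tfrac12\phi_p+(\text{analytic in }p)$, and because $-\tfrac12\phi_p+\I(p>\tfrac12)\phi_p=-\tfrac12 t^{-1/2}(1-2p)A(p)$ on \emph{both} sides of $\tfrac12$ (the positive root $\phi_p$ absorbs the sign of $1-2p$ together with the indicator), $\kappa_1$ extends analytically through $p=\tfrac12$. For $\kappa_2$ one applies the same expansion to the poly-logarithm-type sum after the Riemann-sum estimate $c_n=\pi+2\zeta(\tfrac12)n^{-1/2}+o(n^{-1/2})$ (since $c_n=\tfrac1n\sum_{k=1}^{n-1}(\tfrac kn(1-\tfrac kn))^{-1/2}$ is a Riemann sum for $\int_0^1(x(1-x))^{-1/2}\,dx=\pi$ with endpoint-singularity correction of order $n^{-1/2}$), together with the small-$\epsilon$ expansion $\sum_n\tfrac1n\mathrm{erfc}(\sqrt{n\epsilon})=-\log\epsilon-2\log2-\tfrac{2\zeta(1/2)}{\sqrt\pi}\epsilon^{1/2}+O(\epsilon^{3/2})$ (from the Mellin transform $\zeta(s+1)\Gamma(s+\tfrac12)/(s\sqrt\pi)$), and the elementary expansions of the explicit logarithms. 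These show that the $\log|1-2p|$ divergences cancel between the logarithmic prefactors and the $\pi$-part of the inner sum, and that the $|1-2p|$-corner contributions of the poly-logarithm and, for $p>\tfrac12$, of the $\mathrm{erfc}$-series combine into the single analytic function $-\tfrac{\zeta(1/2)}{\sqrt\pi}(1-2p)A(p)$ on both sides; what remains is continuous with continuous derivative, so $\kappa_2\in C^1$. The hard part is precisely this bookkeeping: one must establish the two-term asymptotics of $c_n$ and of $\sum_n n^{-1}\mathrm{erfc}(\sqrt{n\epsilon})$ with enough uniformity to sum the series and differentiate in $p$ near $\tfrac12$, and verify that no singular term survives.
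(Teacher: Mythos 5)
Your evaluation of the explicit formulae is correct and matches the paper's: the paper computes $\kappa_1$ via the Fourier formula (\ref{k1FT}) (equivalently your direct summation of $\rho^{*n}(0)=(4\pi nt)^{-1/2}$ giving $\mbox{Li}_{3/2}$), and $\kappa_2$ by exactly the Gaussian evaluations you list, including $\int_0^\infty x\,\rho^{*k}\rho^{*(n-k)}\,dx=\frac{\sqrt{k(n-k)}}{2\pi n}$, the $\mathrm{erfc}$ identity for $\int_{-\infty}^0 e^{\phi_p x}\rho^{*n}$, and $t\phi_p^2=-\log 4p(1-p)$; your analyticity argument for $\kappa_1$ via the branch expansion (\ref{lismart}) is also the paper's. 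Where you genuinely diverge is the $C^1$ property of $\kappa_2$ at $p=\tfrac12$ (handled in subsection \ref{s6.2}). The paper splits it into two cheaper steps: continuity from the left needs only the one-term bound $c_n-\pi=O(n^{-1/2})$, continuity from the right follows from the exact limit $\lim_{\delta\downarrow0}\bigl(\log\delta+\sum_n n^{-1}\mathrm{erfc}(\sqrt{n\delta})\bigr)=-2\log 2$ (proved by an elementary integral comparison and the digamma value $\psi^{(0)}(\tfrac12)$, no $\epsilon^{1/2}$ term needed); differentiability is then obtained by computing $\kappa_2'(p)$ in closed form, equation (\ref{kappap}), in terms of $\mbox{Li}_{1/2}$ and $\mbox{Li}_{3/2}$ of $\beta_p$, and matching the one-sided limits $2+\frac{2}{\sqrt\pi}\zeta(1/2)$ via (\ref{lismart}). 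You instead propose a two-term branch expansion of $\kappa_2$ itself, which requires the sharper asymptotics $c_n=\pi+2\zeta(\tfrac12)n^{-1/2}+o(n^{-1/2})$ and the Mellin expansion $\sum_n n^{-1}\mathrm{erfc}(\sqrt{n\epsilon})=-\log\epsilon-2\log2-\frac{2\zeta(1/2)}{\sqrt\pi}\epsilon^{1/2}+O(\epsilon^{3/2})$; both expansions are correct (the cancellations you describe do occur, consistently with the paper's value of $\kappa_2'(\tfrac12)$), but proving them with the uniformity needed to sum and differentiate near $p=\tfrac12$ is, as you note, the real work, and is roughly comparable in effort to the paper's route. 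The trade-off: the paper's differentiate-then-expand argument avoids second-order asymptotics of $c_n$ altogether because the series for $\kappa_2'$ resums into known polylogarithms; your expand-then-differentiate argument yields as a byproduct the local behaviour of $\kappa_2$ near $p=\tfrac12$ but needs the extra Tauberian/Mellin input. Either way the statement is fully proved, so I regard your proposal as correct, with a different (and somewhat heavier) treatment of the regularity step.
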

\vspace{.1in}

\noindent
\textbf{Proof.} We use (\ref{k1FT}) to calculate $\kappa_1(p)$. We have 
$\hat{\rho}(k) =  \exp(- tk^2)$ so that 
\[
L_{\rho}(p,0) = \frac{1}{2 \pi} \int_{\mathbb{R}} \log \left(1 - 4p(1-p) e^{-t k^2} \right) dk = 
- \frac{1}{\sqrt{4\pi t}} \mbox{Li}_{3/2}(4p(1-p)).
\]
The factor $\phi_p = (- t^{-1} \log(4p(1-p)))^{1/2}$ and (\ref{k1C1}) follows from (\ref{k1FT}). 
 For $\kappa_2(p)$ we use the expressions in Theorem \ref{pfaffian_czego}, where all the 
 integrals can be evaluated using the explicit Gaussian densities $\rho^{*n}$.
 
 The regularity of $\kappa_1,\kappa_2$ is not immediately evident from these expressions, but follows 
 after some manipulation which we detail in the subsection \ref{s6.2}. \qed
 
  \vspace{.1in} 
 
 \noindent
\textbf{Remark 1.}
The formulae for $\kappa_2(p)$ are independent of $t$: 
(\ref{DZ50}) agrees with Derrida and Zeitak  \cite{Derrida_Zeitak} equation (50);
(\ref{DZ51}) agrees with \cite{Derrida_Zeitak} equation (51); 
(\ref{DZ53}) agrees with \cite{Derrida_Zeitak} equation (53). 
The formulae for $\kappa_1(p)$ depend on $t$; with the choice $t=p^2/\pi$ we find
(\ref{k1C1}) agrees with  \cite{Derrida_Zeitak} equations (49) and (52). This choice of $t$ is also 
consistent with space scaling used in \cite{Derrida_Zeitak}, as it makes the one-point 
density take the constant value $1$.
 
 \vspace{.1in} 
 
\noindent
\textbf{Remark 2.} Figure \ref{fig:gauss} plots $p \to \kappa_1(p), \kappa_2(p)$ from Corollary 
 \ref{gaussiankernel} at $t=\frac12$. As expected, $\kappa_1(p)$  increases with $p$, which corresponds 
 to weaker thinning.
Note that $\kappa_1(p) \to \infty$ as $p \uparrow 1$ (indeed $\kappa_1(p)=(-2\log(4(1-p)))^{\fr}+O(1-p)$.)
This is good sense, since at $p=1$ we 
are studying coalescing Brownian motions where gap probability have Gaussian tails not exponential tails.
Indeed gap probabilities for $p=1$ can be read off from the Brownian web in terms of a single pair of dual Brownian motions
(see Section 2 of \cite{TZ}). This simplicity corresponds in the analytic approach to the fact that the 
Fredholm Pfaffian reduces (see Proposition \ref{TWmanip}) to a $2 \times 2$ determinant.
\begin{figure}
  \includegraphics[width=\linewidth]{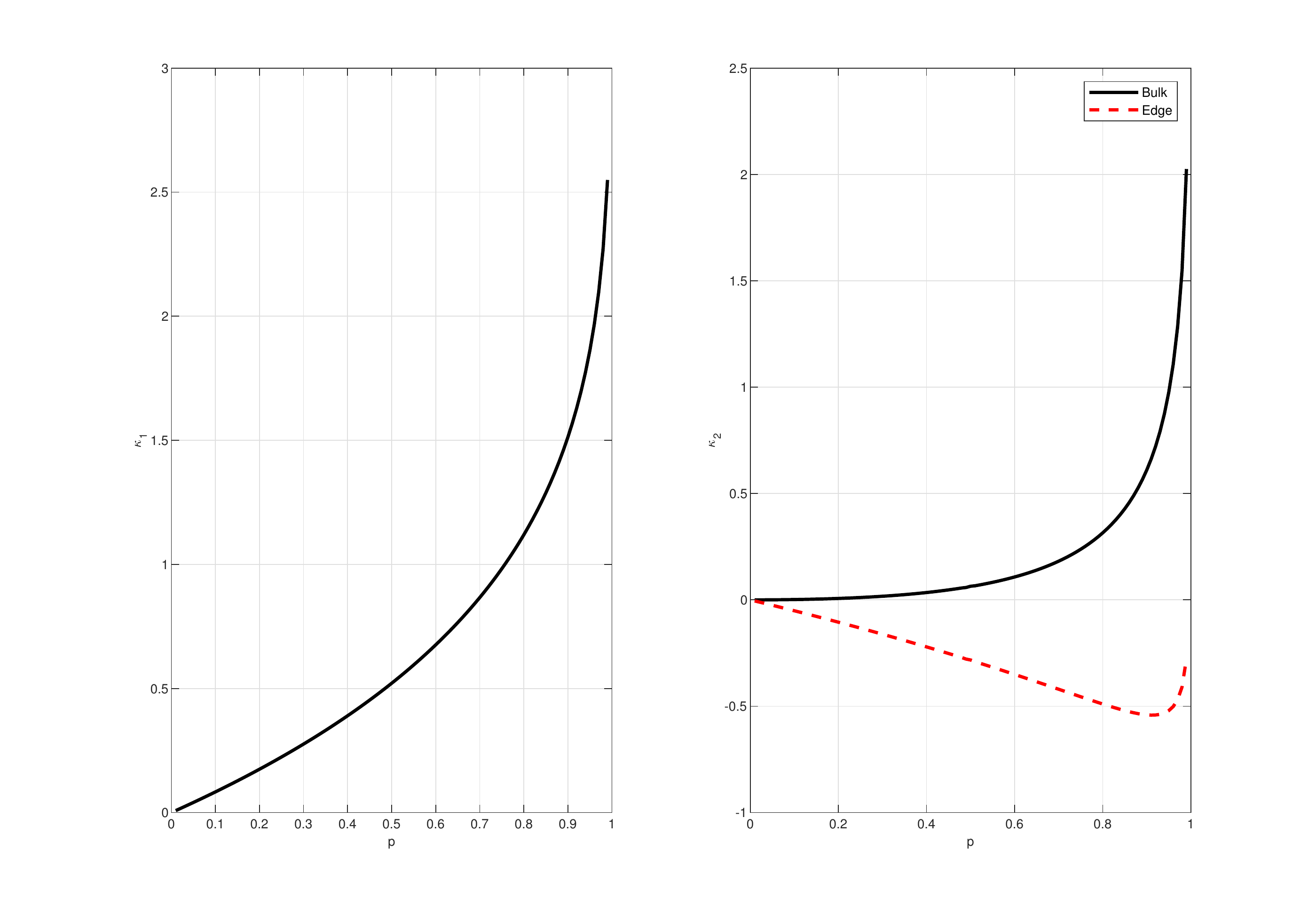}
  \caption{Left pane: the leading coefficient $p \mapsto \kappa_1(p)$ for the Gaussian kernel from Corollary 
  \ref{gaussiankernel} in Section \ref{s2.2.2} at $t=\fr$.  
  Right pane: the sub-leading coefficient $p\mapsto \kappa_2(p)$ for the
same kernel (labelled `bulk') and for the non translationally invariant kernel discussed in Section \ref{s2.2.3} (labelled `edge').}
  \label{fig:gauss}
\end{figure}
%
\subsection{Half-space initial conditions for Coalescing/Annihilating Brownian Motions} \label{s2.2.3}

Consider the same system CABM($\theta$) of reacting Brownian motions on $\mathbb{R}$ as in 
 Section \ref{s2.2.2}, but with a 
 a `maximal' entrance law on $(-\infty,0]$, defined as the limit of Poisson($\mu$) initial conditions 
 on $(-\infty,0]$ as $\mu \to \infty$. This example fits into the framework for Theorem \ref{pfaffian_szego_edge}. Indeed 
the positions of the particles at time $t>0$ form a Pfaffian point process $X_t$
with a  kernel $(1+\theta)^{-1} \mathbf{K}$ in the derived form (\ref{derived_form}). 
Taking $\lambda(z) = \lambda_0 \I(z \leq 0)$ in (\ref{generalPoissonkernel}) 
and then letting $\lambda_0 \uparrow \infty$ we find the 
underlying scalar kernel $K(x,y)$ is given by  
\begin{equation} \label{halfspacekernel}
K(x,y) = \int \int \displaylimits_{\hspace{-.2in} x'<y'}  
\I(x' \leq 0)
\left| \!\! \begin{array}{cc} 
p_t(x,x')  &  p_t(x,y') \\
p_t(y,x') &   p_t(y,y') 
\end{array} \!\! \right| dx' dy' 
\end{equation}
This scalar $K(x,y)$ is in the non translationally invariant form (\ref{NTIform}) with
$ \rho(x) = p_t(x)$.
Note that $\tilde{\rho}(z) = \int_{\mathbb{R}} \rho(w) \rho(w-z) dw = p_{2t}(z)$. 
Thus, as expected, the kernel for the half-space initial condition converges to the kernel for the full space
initial condition near $-\infty$. We therefore compare the answers given by Theorem \ref{pfaffian_szego_edge}
for the half-space maximal initial condition:
\begin{equation} \label{edge}
\log \Pp[X_t (-L,\infty) =0]  = - \kappa^{\mbox{\scriptsize{edge}}}_1 (p_{\theta}) L + \kappa^{\mbox{\scriptsize{edge}}}_2(p_{\theta}) + o(1) 
\mbox{ as $L \to - \infty$,}
\end{equation}
with those for the full space maximal initial condition given by Theorem \ref{pfaffian_czego}
in Section \ref{s2.2.2}:
\begin{equation} \label{bulk}
\log \Pp[X_t(-L,0) = 0] = -\kappa^{\mbox{\scriptsize{bulk}}}_1(p_{\theta}) L + \kappa^{\mbox{\scriptsize{bulk}}}_2(p_{\theta}) + o(1) 
\text{ as } L \rightarrow \infty. 
\end{equation}
(using the random matrix terminology for analogous problems on random spectra). 
The expression for $\kappa_1$ in Theorems \ref{pfaffian_czego} and \ref{pfaffian_szego_edge} show, as expected, 
that $\kappa^{\mbox{\scriptsize{edge}}}_1(p) = \kappa^{\mbox{\scriptsize{bulk}}}_1(p)$. The change in the O(1) constant $\kappa_2$ can be 
evaluated exactly for this Gaussian kernel and we find
\[
\kappa^{\mbox{\scriptsize{edge}}}_2(p) = \kappa^{\mbox{\scriptsize{bulk}}}_2(p) + \frac12 \log(1-p) \qquad \mbox{for all $p \in (0,1)$.}
\]
Thus the regularity properties of $\kappa_1,\kappa_2$ for $p \in (0,1)$ are unchanged
when switching from the bulk to edge case. 
Figure \ref{fig:gauss} plots $\kappa^{\mbox{\scriptsize{edge}}}_2(p)$ and $\kappa^{\mbox{\scriptsize{bulk}}}_2(p)$.
According to Corollary $6$, they are at 
least $C^1$ functions on $[0,1)$, which is consistent with the shape of the presented graphs. Near $p=1$,
$\kappa_2^{\mbox{\scriptsize{bulk}}}(p)=-\log(1-p)-\log(-\log(1-p))+O((1-p)^0)$, $\kappa_2^{\mbox{\scriptsize{edge}}}(p)=-\fr \log(1-p)-\log(-\log(1-p))+O((1-p)^0)$,
so each coefficient approaches $+\infty$ as $p\rightarrow 1-$.  

\vspace{.1in}

\noindent
\textbf{Remark 1.} As already mentioned in the introduction, the answers for $\kappa_1, \kappa_2$ for $p \in [0,1/2)$
correspond to the thinning of the real Ginibre ensemble with the thinning parameter $\gamma=2p$ investigated in
 \cite{baik2020edge}.
Under this substitution, the answer for the constant term given in Lemma 1.14 of the cited paper coincides with the answers
presented above.

\vspace{.1in}

\noindent
\textbf{Remark 2.}
For half-space initial condition it is natural to write the results in terms of the rightmost particle.
Let $R_t$ denote the position of the rightmost particle alive at time $t \geq 0$ 
so that $ \Pp[R_t \leq -L]  =  \Pp[X_t (-L,\infty) =0]$. 
The limit as $L \to \infty$ involves events where there are large numbers of annihilations by time $t$. 
The easier asymptotic probability $\Pp[R_t \geq L]$ as $L \to \infty$ involves a particle moving a large distance 
by time $t$. Indeed, using $\I(X \geq 1) = X - (X-1)_+$ and $\rho_1$, it is straightforward to see that
\[
\log \Pp[R_t \geq L] = \log \int^{\infty}_{L} \rho_1(x) dx + o(1) \quad \mbox{as $L \to \infty$.}
\]
\subsection{Non-crossing probabilities for Coalescing/Annihilating Brownian motions}  \label{s2.2.4}
%
Here, and in Section \ref{s2.3}, we study the problem by Derrida, Hakim and Pasquier
\cite{Derrida_Hakim_Pasquier} which arose in their study of persistence for Potts models, as discussed in the introduction.
We again consider the system CABM($\theta$) of reacting Brownian motions on $\mathbb{R}$ as in 
Section \ref{s2.2.3}, started from the `maximal' entrance law on $[0,\infty)$. 
We denote the position of the leftmost particle by  $(L_t: t \geq 0)$. The non-crossing probability
\[
\Pp \left[L_t > - a, \; \mbox{$ \forall t \in [0,T]$} \right] 
\]
turn out to be exactly given by a Fredholm Pfaffian. Indeed we believe the entire law of $(L_t: t \geq 0)$ should be determined
by Fredholm Pfaffians. This is explained and proved in Section \ref{s2.3} where we show 
that the particles that reach the line $x=-a$ form an exit measure point process that is Pfaffian. Its kernel fits into the 
hypotheses for Theorem \ref{pfaffian_szego_edge}  and we will deduce, for all $a>0$ and $\theta \in [0,1]$, that
 \begin{equation} \label{persistenceevent}
\log \Pp \left[L_t > - a, \; \mbox{$ \forall t \in [0,T]$} \right]  = - \frac12 \kappa_1(p_{\theta}) 
\log (2T/a^2)  
+ \kappa_2(p_{\theta}) + o(1) 
\end{equation}
as $T/a^2 \to \infty$ (again Brownian scaling shows that this probability depends only on the combination $T/a^2$). 
Here  $p_{\theta} = (1+\theta)^{-1}$ and the coefficient $\kappa_1(p), \kappa_2(p)$ are given below in 
Corollary \ref{persistence}. Using an initial condition that is  `maximal' entrance law on $(-\infty,a] \cup [a,\infty)$, the probability
that no particle crosses the origin is the square of the probability in  (\ref{persistenceevent}), since on this event the particles to the right and to the left of the origin evolve independently. This confirms the result (\ref{derrida2}) described in the 
introduction that is closest to those in \cite{Derrida_Hakim_Pasquier}.

Corollary \ref{persistence} below is a direct application
of Theorem \ref{pfaffian_szego_edge} to the non translationally invariant kernel based on 
 $\rho(x) = \frac{2}{\sqrt{\pi}} \exp(x - e^{2x})$.
Note that $\tilde{\rho}(z) = \int_{\mathbb{R}} \rho(w) \rho(w-z) dw = \frac{1}{\pi} \sech(z)$ so 
that the leading coefficient $\kappa_1(p)$ agrees with that in Corollary \ref{sechkernel} for the sech kernel. 
It is, at the moment, a coincidence that the sech kernel arises in this problem and also for Gaussian power series. 
The corollary is proved in Section \ref{s2.3}.
\begin{corollary} \label{persistence}
Let $\mathbf{K}$ be a derived form kernel, in the non translationally invariant form (\ref{NTIform}) 
based on the probability density  $\rho(x) = \frac{2}{\sqrt{\pi}} \exp(x - e^{2x})$.
Then for $p \in [0,1]$
\[
\log \Pf_{[-L,\infty)}(\mathbf{J} - p \mathbf{K}) = - \kappa_1(p) L + \kappa_2(p) + o(1) \quad \text{ as } L \rightarrow \infty 
\]
where $\kappa_1(p)$ is given by 
\begin{equation} \label{persistencekappa1}
\kappa_1(p) =   
\frac{2}{\pi^2} \left( \cos^{-1} \frac{|2p-1|}{\sqrt{2}} \right)^2    - \frac18
+ \frac{2}{\pi} \cos^{-1}(4p(1-p)) \I(p > 1/2);
\end{equation}
for $p \in [0,1/2)$
\begin{eqnarray*}
\kappa_2(p) & = & \frac12 \log \left( \frac{1-2p}{1-p} \right) +  
\frac12  \int_0^{\infty} x L_{\rho}^2(p,x) dx  
 \nonumber \\
&&  \hspace{.3in}  + \frac{1}{8 \pi } \int_{-\infty}^{\infty} \psi^{(0)}((1+ik)/2)  \log \left(1 - 4p(1-p) \sech( k \pi/2) \right) dk;
\end{eqnarray*}
\begin{eqnarray*}
\kappa_2(1/2) & = &   \frac14 \log \Big(\frac{\pi^2}{8}\Big) - \frac{\gamma}{2} 
- \frac18 \int_0^{\infty} \log (x) \left((\tanh(x) + \tanh(x/2))^2\right)^{'} dx \\
&& \hspace{.3in} + 
 \frac{1}{8 \pi } \int_{-\infty}^{\infty} \psi^{(0)}((1+ik)/2)  \log \left(1 - \sech( k \pi/2) \right) dk;
\end{eqnarray*}
and for $p \in (1/2,1]$, using $\phi_p = \frac{2}{\pi} \cos^{-1}(4p(1-p))$, 
\begin{eqnarray*}
\kappa_2(p) & = &  
\frac12  \int_0^{\infty} x L_{\rho}^2(p,x) dx  - \log \left( \cos^{-1}(4p(1-p)) \right)
 \nonumber \\
&& - \log \left( \sqrt{\frac{(2p-1)(1+4p-4p^2)}{\pi p}} \Gamma ((1+ \phi_p)/2) 
\right)  \nonumber \\
&& \hspace{.15in} + \frac{1}{\pi} \int_{\mathbb{R}} \frac{1}{1+k^2} \log \left(1- 4p(1-p) \sech( \pi \phi_p k/2) \right) dk \\
&&  \hspace{.3in}  + \frac{1}{8 \pi } \int_{-\infty}^{\infty} \psi^{(0)}((1+ik)/2)  \log \left(1 - 4p(1-p) \sech( k \pi/2) \right) dk
\end{eqnarray*}
where $L_{\rho}(p,x)$ is given in (\ref{sechL}) and $\psi^{(0)}(z)$ is the digamma function
\end{corollary}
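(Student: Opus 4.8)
This corollary is the specialisation of Theorem \ref{pfaffian_szego_edge} to $\rho(x) = \frac{2}{\sqrt{\pi}} e^{x - e^{2x}}$, so everything reduces to evaluating, for this density, the series and integrals that define $\kappa_1$ and $\kappa_2$ there. The work splits into three parts: (i) compute the Fourier data of $\rho$; (ii) identify the terms that depend on $\rho$ only through $\tilde\rho$ and import them from the proof of Corollary \ref{sechkernel}; (iii) evaluate the one remaining term, the one carrying $\int_{\mathbb{R}} x(\rho^{*n}(x))^2\,dx$, which is the origin of the digamma integral in the statement. For (i): the substitution $u = e^{2x}$ identifies $\rho$ with the law of $\tfrac12\log U$ for $U$ a $\mathrm{Gamma}(\tfrac12,1)$ variable, giving $\hat\rho(k) = \pi^{-1/2}\Gamma((1+ik)/2)$ and $\int_{\mathbb{R}} e^{\phi x}\rho(x)\,dx = \pi^{-1/2}\Gamma((1+\phi)/2)$ for $|\phi|<1$; Euler's reflection formula then gives $\hat\rho(k)\hat\rho(-k) = \pi^{-1}\Gamma(\tfrac{1+ik}{2})\Gamma(\tfrac{1-ik}{2}) = \sech(\pi k/2)$, so $\tilde\rho$ has Fourier transform $\sech(\pi k/2)$, i.e. $\tilde\rho = \tfrac1\pi\sech$ as asserted. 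One checks that $\rho \in C^1(\mathbb{R})\cap H^1(\mathbb{R})\cap L^\infty(\mathbb{R})$, and that for $p\in(0,1)$ the relation $4p(1-p)\int_{\mathbb{R}} e^{\phi_p x}\tilde\rho(x)\,dx = 1$ has solution $\phi_p = \tfrac2\pi\cos^{-1}(4p(1-p))\in(0,1)$, making $\int_{\mathbb{R}} e^{\phi_p|x|}\rho(x)\,dx$ finite (the integrand decays like $e^{(1-\phi_p)x}$ as $x\to-\infty$); hence Theorem \ref{pfaffian_szego_edge} applies for $p\in(0,1)$, and the endpoint $p=1$ follows by continuity of $\Pf_{[-L,\infty)}(\mathbf{J}-p\mathbf{K})$ in $p$ (dominated convergence in the Fredholm series), every term of the stated $\kappa_i(1)$ having a finite limit.

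For (ii), note that $\kappa_1(p)$ and every term of $\kappa_2(p)$ in Theorem \ref{pfaffian_szego_edge} except the one with $\int_{\mathbb{R}} x(\rho^{*n}(x))^2\,dx$ depends on $\rho$ only through $\tilde\rho = \tfrac1\pi\sech$, so they coincide with quantities already evaluated in the proof of Corollary \ref{sechkernel} for the $\sech$-density and can be imported: the value $\phi_p = \tfrac2\pi\cos^{-1}(4p(1-p))$; the closed form \eqref{sechL} for $L_{\tilde\rho}(p,x) = \sum_n \tfrac{(4p(1-p))^n}{n}\tilde\rho^{*n}(x)$, hence $\int_0^\infty \tfrac x2 L_{\tilde\rho}^2(p,x)\,dx$; the contour computation $-2\sum_n \tfrac{(4p(1-p))^n}{n}\int_{-\infty}^0 e^{\phi_p x}\tilde\rho^{*n}(x)\,dx = \tfrac1\pi\int_{\mathbb{R}}(1+k^2)^{-1}\log(1-4p(1-p)\sech(\pi\phi_p k/2))\,dk$; and $\int_{\mathbb{R}} x e^{\phi_p x}\tilde\rho(x)\,dx = \tfrac{\pi(2p-1)\sqrt{1+4p-4p^2}}{32p^2(1-p)^2}$. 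The only new arithmetic, for $p>\tfrac12$, is to combine the new term $-\log(\int_{\mathbb{R}} e^{\phi_p x}\rho(x)\,dx) = \tfrac12\log\pi - \log\Gamma((1+\phi_p)/2)$ with $-\log(\phi_p\int_{\mathbb{R}} x e^{\phi_p x}\tilde\rho(x)\,dx)$ and the prefactor $\log(\sqrt{2p-1}/(16p^{3/2}(1-p)^2))$; these three collapse algebraically into $-\log\cos^{-1}(4p(1-p)) - \log\big(\sqrt{(2p-1)(1+4p-4p^2)/(\pi p)}\,\Gamma((1+\phi_p)/2)\big)$, exactly as in the statement. The cases $p<\tfrac12$ and $p=\tfrac12$ are handled identically, the latter keeping the $-1/(2n)$ regularisation of Theorem \ref{pfaffian_szego_edge} and reducing to the $\tfrac14\log(\cdot)-\tfrac{\gamma}{2}-\tfrac18\int_0^\infty\log x\,((\tanh x+\tanh(x/2))^2)'\,dx$ form of Corollary \ref{sechkernel}.

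For (iii), the genuinely new term is $\tfrac12\sum_n \tfrac{(4p(1-p))^n}{n^2}\int_{\mathbb{R}} x(\rho^{*n}(x))^2\,dx$. Writing $\lambda = 4p(1-p)$, Plancherel's identity gives $\int_{\mathbb{R}} x(\rho^{*n}(x))^2\,dx = \tfrac{-i}{2\pi}\int_{\mathbb{R}} \big(\tfrac{d}{dk}\hat\rho(k)^n\big)\hat\rho(-k)^n\,dk = \tfrac{-in}{2\pi}\int_{\mathbb{R}} \hat\rho(k)^{n-1}\hat\rho'(k)\,\hat\rho(-k)^n\,dk$; summing the geometric series using $\hat\rho(k)\hat\rho(-k) = \sech(\pi k/2)$ and $\hat\rho'(k)/\hat\rho(k) = \tfrac i2\psi^{(0)}((1+ik)/2)$ (from $\hat\rho = \pi^{-1/2}\Gamma((1+ik)/2)$), the whole sum reduces to $\tfrac1{8\pi}\int_{\mathbb{R}}\psi^{(0)}((1+ik)/2)\log(1-4p(1-p)\sech(\pi k/2))\,dk$, which is the term appearing in all three formulae for $\kappa_2(p)$. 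The interchange of summation and integration is justified by $\lambda\sech(\pi k/2)\le\lambda<1$, the bound $|\psi^{(0)}((1+ik)/2)| = O(\log(2+|k|))$, and the exponential decay of $\sech$; finiteness of $\int_{\mathbb{R}}|x|(\rho^{*n}(x))^2\,dx$ follows from the Gaussian-type decay of $\rho$. Assembling the imported terms with this one, case by case according to Theorem \ref{pfaffian_szego_edge}, produces the stated $\kappa_2(p)$.

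The main obstacle is entirely in part (iii): spotting that for this particular $\rho$ the logarithmic derivative of $\hat\rho$ is a rescaled digamma function, so that the $n$-sum telescopes into a single $\log(1-\lambda\sech(\pi k/2))$, and then justifying the Plancherel/Fourier manipulations and the interchange of sum and integral. Everything else is bookkeeping that has, in effect, already been carried out in the proof of Corollary \ref{sechkernel}.
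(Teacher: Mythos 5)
Your proposal is correct and follows essentially the same route as the paper's own proof (end of Section \ref{s2.3.2}): it computes $\hat\rho(k)=\pi^{-1/2}\Gamma((1+ik)/2)$ so that $\tilde\rho=\pi^{-1}\sech$, imports all $\tilde\rho$-dependent constants from Corollary \ref{sechkernel}, evaluates the genuinely new term $\frac12\sum_n\frac{(4p(1-p))^n}{n^2}\int x(\rho^{*n}(x))^2dx$ by exactly the paper's Plancherel/geometric-series argument with $\hat\rho'(k)/\hat\rho(k)=\frac{i}{2}\psi^{(0)}((1+ik)/2)$, and uses $\int e^{\phi_p x}\rho\,dx=\pi^{-1/2}\Gamma((1+\phi_p)/2)$ for the $p>\frac12$ bookkeeping. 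The only soft spot, which the paper shares (it handles $p=1$ only via the remark comparing with the reflection principle), is the endpoint $p=1$: the moment hypothesis $\int e^{\phi_p|x|}\rho\,dx<\infty$ fails there since $\phi_1=1$ and $\rho(x)\sim\frac{2}{\sqrt\pi}e^{x}$ as $x\to-\infty$, and your continuity-in-$p$ argument does not by itself transfer the $L\to\infty$ asymptotic to $p=1$ because the two limits need not commute.
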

%
%
\noindent
\textbf{Remark.} Figure \ref{fig:sech} plots $p \to \kappa_1(p), \kappa_2(p)$ from Corollary 
 \ref{persistence}.  
 When $p=1$ the exponents correspond to coalescing Brownian motions 
and take the values $\kappa_1(1) =1$ and $\kappa_2(1) = \log(2/\sqrt{\pi})$ giving, 
in (\ref{persistenceevent}) 
that
\[
 \Pp \left[L_t > - a, \; \mbox{$ \forall t \in [0,T]$} \right]  = \sqrt{\frac{2a^2}{\pi T}} (1+ o(1)).
 \]
The leftmost particle is just a Brownian motion started at $0$ and the result is then consistent  
with the exact formula found from the reflection principle.  

Figure \ref{fig:sech} also allows a comparison between the coefficients 
$\kappa^{\mbox{\scriptsize{edge}}}_2$ from Corollary \ref{persistence} 
with $\kappa_2^{\mbox{\scriptsize{bulk}}}$ from Corollary \ref{sechkernel} for the sech kernel. 
A exact computation shows that 
$\kappa_2^{\mbox{\scriptsize{bulk}}}(1)=2  \log(2/\sqrt{\pi}) = 2 \kappa^{\mbox{\scriptsize{edge}}}_2 (1)$, a relation
that requires an independent  derivation. 
%
%
\subsection{Real Eigenvalues for Real Ginibre Matrices} \label{s2.2.5}

This example is treated  in \cite{fitzgerald2020sharp} using the techniques that are generalised in this paper. Moreover
it coincides exactly with examples discussed above by considering the 
purely annihilating case ($\theta = 1$, $p_{\theta}=\frac12$) in Sections \ref{s2.2.2} and \ref{s2.2.3}.  
However, we record the results here again, as examples of both 
Theorem \ref{pfaffian_czego} and Theorem \ref{pfaffian_szego_edge} that are 
of interest to the random matrix community.

A real Ginibre ensemble matrix $M_N$ has i.i.d. real Gaussian $N(0,1)$ entries. 
Let $X_N$ be the point process created by the positions of the real eigenvalues of $M_N$. 
Then $X_N$ converges to a limit point process 
$X$ on $\mathbb{R}$ as $N \to \infty$. Also the shifted point  process  $\tilde{X}_N(dx) = X_N(N^{1/2} + dx)$ 
(that is shifted to the position of the the right hand edge of the spectrum) also converge to a 
limit $\tilde{X}$ on $\mathbb{R}$ as $N \to \infty$. Then 
 \[
\log \Pp[X(0,L) =0] =  - \frac{1}{\sqrt{8\pi}} \zeta(3/2) L + \kappa_2^{\mbox{\scriptsize{bulk}}} + o(1) \mbox{ as $L \to \infty$},
\]
and 
\[
\log \Pp[\tilde{X} (-L, \infty) =0] =- \frac{1}{\sqrt{8\pi}} \zeta(3/2) L + \kappa_2^{\mbox{\scriptsize{edge}}} + o(1) \mbox{ as $L \to \infty$,}
\]
where 
\[
 \kappa_2^{\mbox{\scriptsize{bulk}}} = \log 2 + \frac{1}{4 \pi} \sum_{n=1}^{\infty} \left( -\pi + \sum_{m=1}^{n-1} \frac{1}{\sqrt{m(n-m)}} \right) 
 =  \kappa_2^{\mbox{\scriptsize{edge}}} + \frac12 \log 2. 
\] 
The point is that the Pfaffian kernels for the bulk limit (respectively the edge limit) for the real eigenvalues 
in the Real Ginibre ensemble coincide with those for annihilating Brownian motions at time $t = \frac12$ started 
from  the maximal initial condition (respectively the half-space maximal initial condition).  
\subsection{Exit measures for particle systems} \label{s2.3} 
\subsubsection{Exit kernels} \label{s2.3.1}
To reach the applications above to persistence problems we will study exit measures for particle systems. 
We consider particle systems evolving in a region $D \subseteq \mathbb{R} \times [0,\infty)$ where whenever 
a particle hits the boundary $\partial D$ it is frozen at its exit position and plays no further role in the evolution. 
This leads to a collection of frozen particles on the boundary $\partial D$ which we call the exit measure.
Such exit measures have been used commonly in the study of branching systems, but they 
are also straightforward to construct for our coalescing and annihilating systems (first for finite systems and then 
by approximation for certain infinite systems - see the discussion in section \ref{s5.2}). We
use only the special example of the exit measure from a half-space.
\begin{theorem} \label{Exittheorem}
Let $X_e$ be the exit measure for the domain $D= (0,\infty) \times [0,\infty)$ for a system CABM($\theta$) of 
coalescing/annihilating particles as described in example \ref{s2.2.2}, started from $\mu$ a (deterministic) locally finite
simple point measure on $(0,\infty)$. Then the exit measure $X_e$ on $\{0\} \times [0,\infty)$ 
is a Pfaffian point process with kernel 
$(1+\theta)^{-1}\mathbf{K}$, where $\mathbf{K}$ is in the derived form (\ref{NTIform}) given by, when $s<t$,
\begin{equation} 
  K((0,s),(0,t))
= \int \int \displaylimits_{\hspace{-.2in} 0<y_1< y_2}  \left((-\theta)^{\mu(y_1,y_2)} -1 \right)  
\left|  \begin{array}{cc} 
p^R_s(0,y_1)  &  p^R_t(0,y_1) \\
p^R_s(0,y_2) &   p^R_t(0,y_2) 
\end{array}  \right| dy_1dy_2 \label{exitkernel}
\end{equation}
 where $p^R_t(x,y)$ is the transition density for reflected Brownian motion on $[0,\infty)$. 
 When the initial condition is 
Poisson with a bounded intensity $\lambda:(0,\infty) \to \mathbb{R}$ 
the exit measure $X_e$ remains a Pfaffian point process as above with
\begin{equation} 
  K((0,s),(0,t))
= \int \int \displaylimits_{\hspace{-.2in} 0<y_1< y_2}  \left(e^{-\int^{y_2}_{y_1} \lambda(x) dx} -1 \right)  
\left|  \begin{array}{cc} 
p^R_s(0,y_1)  &  p^R_t(0,y_1) \\
p^R_s(0,y_2) &   p^R_t(0,y_2) 
\end{array}  \right| dy_1dy_2. \label{poissonexitkernel}
\end{equation}
\end{theorem}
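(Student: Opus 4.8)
The plan is to follow the method developed for the forward, fixed-time problem in \cite{TZ} and \cite{GPTZ} and adapt it to the space--time exit setting, reducing everything to a finite deterministic initial measure. The soft reductions come at the end. Given (\ref{exitkernel}) for finite deterministic $\mu=\sum_{i=1}^m\delta_{a_i}$, the Poisson case (\ref{poissonexitkernel}) follows by conditioning on the initial configuration and averaging, using $\E[z^N]=e^{(z-1)\Lambda}$ for $N\sim\mathrm{Poisson}(\Lambda)$ with $z=-\theta$ and $\Lambda=\int_{y_1}^{y_2}\lambda$; and a general locally finite deterministic $\mu$ is recovered by truncating to finitely many particles in a bounded interval, coupling so that the exit measure restricted to a bounded time window converges, and passing to the limit on both sides, the right-hand side of (\ref{exitkernel}) being dominated using the Gaussian decay of $p^R_s(0,\cdot)$ together with the one-point intensity bound. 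Thus the substance is the finite deterministic case.

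For finite $\mu$ the exit measure $X_e$ is a finite simple point process on $\{0\}\times[0,\infty)$, and I would compute its intensities $\rho_n((0,t_1),\dots,(0,t_n))$ for $0<t_1<\dots<t_n$ via the duality underlying all the CABM Pfaffian formulae. The system CABM($\theta$) admits a duality in which a dual interval $(y_1,y_2)$ is weighted by the duality function $(-\theta)^{\mu(y_1,y_2)}$ --- which specialises to $\I(\mu(y_1,y_2)\ge 1)$ in the purely coalescing case ($\theta=0$) and to the parity $(-1)^{\mu(y_1,y_2)}$ in the purely annihilating case ($\theta=1$). Running the dual from the boundary line backwards into $(0,\infty)$, the void/correlation functionals $\E[(-\theta)^{X_e(I_1)}\cdots]$ for boundary intervals $I_j=(0,\tau_j)$ are computed by independent Brownian motions launched from the boundary, reflected at $x=0$ --- the reflecting boundary for the dual being the image, under duality, of the absorbing boundary for the forward particles --- so that $p^R_t(0,\cdot)$ is the relevant transition density. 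An inclusion--exclusion over the boundary intervals turns the correlation functionals into an antisymmetrised expectation over $n$ dual reflected Brownian motions, whose pairwise structure is of de Bruijn/Pfaffian type: the expectation collapses to a Pfaffian of $2\times2$ blocks, each block being the $2\times2$ determinant in (\ref{exitkernel}) integrated against the separation weight. This identifies $K$ as in (\ref{exitkernel}) and shows $\rho_n=\pf\big((1+\theta)^{-1}\mathbf{K}((0,t_i),(0,t_j))\big)$; the derived form (\ref{NTIform}) follows from the explicit shape of $K$.

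To make the dual computation rigorous I would run it alongside a characterisation of $(\rho_n)$ as the unique solution of a closed linear hierarchy, mirroring \cite{TZ} and \cite{GPTZ}. Off the diagonals $\{t_i=t_{i+1}\}$ each $\rho_n$ satisfies a heat equation in the exit-time variables (exit times of Brownian motion are governed by the heat equation, and between reactions the particles move independently), subject to a boundary condition across each diagonal encoding the reaction rule --- the ``$(1+\theta)$''-gluing characteristic of this family --- together with a small-time condition as $t_1\downarrow 0$ fixed by $\mu$. One then checks that the Pfaffian built from (\ref{exitkernel}) solves this system: the heat equation holds entrywise because each $p^R_t(0,y)$ does; antisymmetry and the derived form are structural; the diagonal conditions reduce, through a cofactor expansion of the Pfaffian together with the jumps of $(-\theta)^{\mu(y_1,y_2)}$ across $y_i=a_k$ and of $S$ along $x=y$, to a finite identity checkable directly; and the $t_1\downarrow 0$ behaviour follows from $p^R_t(0,\cdot)\to 2\delta_0$ weakly. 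A Duhamel/energy uniqueness argument for the hierarchy then forces the correlation functions to equal the Pfaffian.

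\textbf{Main obstacle.} I expect the crux to be the verification of the diagonal boundary conditions --- correctly formulating the coalescing/annihilating reaction rule in the exit-time variables and showing the Pfaffian ansatz obeys it --- because this is where the coalescing-versus-annihilating weighting and the absorbing boundary at $x=0$ interact, and where the non-smoothness of both $(-\theta)^{\mu(y_1,y_2)}$ and the sign $S$ of the derived form must be handled with care; closely related is the need to justify, rather than merely cite, the dual representation of the exit measure in the presence of the absorbing boundary, including the interchange of reflection and absorption under duality. The remaining ingredients --- the heat-equation bookkeeping, uniqueness for the hierarchy, the Poisson averaging, and the finite-to-infinite approximation --- are routine adaptations of the methods of \cite{TZ} and \cite{GPTZ}.
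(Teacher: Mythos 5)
Your overall frame (a boundary--bulk duality with reflected Brownian motions, followed by a PDE-hierarchy/uniqueness verification of a Pfaffian ansatz, then soft limits) is in the right family, but two of your load-bearing steps do not hold as stated. First, the dual particles launched from the boundary-interval endpoints are \emph{not} independent reflected Brownian motions whose correlation functionals collapse by inclusion--exclusion/de Bruijn antisymmetrisation: the half-space Brownian web duality (\cite{Toth_Werner}, and (\ref{duality2}) after the colouring/thinning step that produces the $(-\theta)^{\mu(\cdot)}$ weight) produces an \emph{annihilating, interacting} system of reflected Brownian motions with \emph{immigration} of particles at distinct times $t-a_i$, and the whole content of the theorem is that the alternating product moments of this interacting system factorise as a Pfaffian of two-particle quantities. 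That factorisation is exactly Lemma \ref{apmlemma}, which the paper must prove afresh (the immigration at staggered times forces an induction over the inter-immigration intervals in the Kolmogorov/uniqueness argument); it is not delivered by any identity for independent paths. Your fallback --- a closed heat-equation hierarchy for the exit intensities $\rho_n$ \emph{in the exit-time variables} with a ``$(1+\theta)$-gluing'' across diagonals --- is not obviously well posed: in the paper's argument the heat equations and diagonal (annihilation) boundary conditions live in the \emph{spatial} variables of the dual particles, the dual reaction is pure annihilation regardless of $\theta$, and $\theta$ (and the prefactor $(1+\theta)^{-1}$) only enters through the duality weight and the differentiation of the $(-\theta)^{X_e(I)}$-moments that extracts the intensities. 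You would need to actually exhibit and close your exit-time hierarchy, which you have not done.

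Second, your reduction of the Poisson case to the deterministic case ``by conditioning on the initial configuration and averaging'' is invalid: averaging the deterministic result gives $\mathbb{E}_{\Xi}\bigl[\pf\bigl(\mathbf{K}_{\mu}\bigr)\bigr]$, and the expectation of a Pfaffian is not the Pfaffian of the averaged kernel --- the paper's Remark 1 after the theorem warns precisely that the Pfaffian structure fails for general random initial data even though the \emph{kernel} formula (\ref{poissonexitkernel}) is the average of (\ref{exitkernel}). The correct route (the one taken in Section \ref{s5.3}) is to average \emph{inside} the duality: $\mathbb{E}_{\Xi}\bigl[(-\theta)^{\Xi(S_t)}\bigr]=\exp\bigl(-(1+\theta)\int_{S_t}\lambda\bigr)$ is again an alternating product moment, so Lemma \ref{apmlemma} applies a second time with $g=\exp\bigl(-(1+\theta)\int_0^{\cdot}\lambda\bigr)$, $h=1/g$. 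The remaining ingredients you list (Karlin--McGregor for the two-particle blocks, truncation $\mu|_{[0,n]}\to\mu$, extraction of intensities by differentiating the interval moments) do match the paper, but without a proof of the product-moment Pfaffian for the annihilating-with-immigration dual and with the flawed Poisson reduction, the proposal has genuine gaps at its core.
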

\noindent
\textbf{Remark 1.}
Note that kernel (\ref{poissonexitkernel}) can be obtained by averaging
the kernel (\ref{exitkernel}) for deterministic initial conditions, considering $\mu$ as Poisson. 
However this is not true for all random initial conditions and the 
Pfaffian point process structure does not hold in general. 

\vspace{.1in}

\noindent 
\textbf{Remark 2.} We believe that the Pfaffian property holds also for the exit measures
on more general regions $D$ and we explain this informally here. 
Consider the domain $D = \{(x,s): x > g(s), s \in [0,t]\}$ for some continuous $g \in C^1([0,t], \mathbb{R})$. 
We now allow particles that hit $\partial D$ to continue with constant negative drift $\mu$. 
Choosing $\mu <  - \|g'\|_{\infty}$ the particles can 
never re-enter the region $D$. This  yields a new reacting system on $\mathbb{R} \times [0,t]$ with spatially
inhomogeneous motion, where particles to the left of the graph of $g$ move deterministically 
and never re-enter $D$ nor ever again collide (see Figure \ref{willsfigure}). For coalescing/annihilating spatially inhomogeneous systems on 
$\mathbb{R}$ we believe the 
particles at a fixed time $t>0$ will form a Pfaffian point process (started from suitable initial conditions). 
Indeed in \cite{GPTZ} a class of interacting particle systems on the lattice $\mathbb{Z}$ are shown to be Pfaffian 
point processes at fixed times $t \geq 0$. These include spatially inhomogeneous coalescing and annihilating 
random walks where the right and left jump rates and the coalescence and annihilation parameters may be site dependent. 
By a continuum approximation one 
expects that the analogous systems of continuous diffusions should retain the Pfaffian property. 
The point process formed at time $t$ by the particles alive on the half-line $(-\infty,g(t)]$
can be mapped (by a deterministic bijection) onto the exit measure of the original system 
on $\partial D$, and so this exit measure should itself be a Pfaffian point process. 
The gap probabilities for this exit measure will coincide (see the discussion in the next section) 
with $\Pp[L_s > g_s, s \leq t]$, and by varying the function $g$ will determine the law of the 
leftmost particle $\{L_t: t \geq 0\}$ for a process on $\mathbb{R}$.
\pgfmathsetseed{1240}
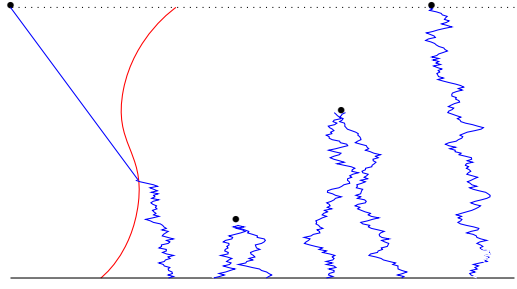
\begin{figure}
\centering
\begin{tikzpicture}[rotate = 90]
\draw [red] plot [smooth, tension=1]
coordinates { (0, 4) (1,3.5) (2.5,3.7) (3.6, 3)};
\draw(0, -1.5) -- (0, 5.2);
 \Brownian{180}{0.02}{-0.1}{blue}{}{0}{-1}
  \Brownian{20}{0.02}{-0.1}{white}{}{0}{-1}
  \Brownian{110}{0.02}{-0.1}{blue}{}{0}{0}
\Brownian{110}{0.02}{-0.1}{blue}{}{0}{1}
\Brownian{35}{0.02}{-0.1}{blue}{}{0}{2.5}
\Brownian{25}{0.02}{-0.1}{white}{}{0}{2.5}
\Brownian{35}{0.02}{-0.1}{blue}{}{0}{1.8}
\Brownian{65}{0.02}{-0.1}{blue}{}{0}{3.1}
 \node at (2.2,0.8) {\textbullet};
 \node at (0.75, 2.2){\textbullet};
  \node at (3.6, 5.2) {\textbullet};
   \node at (3.6 , -0.4) {\textbullet};
 \draw[blue] (1.3, 3.5) -- (3.6, 5.2); 
 \draw[dotted](3.6, -1.5) -- (3.6, 5.2);
 \end{tikzpicture}
 \caption{The exit measure can be transported by ballistic motion to the halfline
 $(-\infty, g(t)) \times \{t\}$.}
  \label{willsfigure}
\end{figure}
\subsubsection{Non-crossing probability} \label{s2.3.2}
In this section we prove  Corollary \ref{persistence}. 
For specific choices of initial condition the underlying scalar kernels $K((0,s),(0,t))$ in
Theorem \ref{Exittheorem} can be computed more 
precisely. They become most tractable for the entrance laws constructed as the limit of 
Poisson initial conditions with increasing intensities. The existence of these entrance laws is discussed in 
Section \ref{s5.2}. Starting with constant Poisson($\lambda$) initial conditions on $(0,\infty)$ and taking the 
limit as $\lambda \uparrow \infty$, the CABM($\theta$) starts according to a  `maximal' entrance law.
The exit measure $X_e$ remains Pfaffian with a kernel, as expected, that is the limit of the 
corresponding kernels for finite Poisson intensity (this can be checked by passing to the limit
in the duality identity (\ref{poissonduality})). Taking this limit in (\ref{poissonexitkernel}), we find
the kernel for $X_e$ under the maximal entrance law on $(0,\infty)$ has underlying scalar kernel
 \begin{eqnarray} 
K^{(\infty)}((0,s),(0,t)) & =  & - \int \int \displaylimits_{\hspace{-.2in} 0<y_1< y_2}  
\left|  \begin{array}{cc} 
p^R_s(0,y_1)  &  p^R_t(0,y_1) \\
p^R_s(0,y_2) &   p^R_t(0,y_2) 
\end{array}  \right| dy_1dy_2 \nonumber \\
& = & -  \int_0^{\infty}  \int_0^{\infty}  \sgn(y_2-y_1) p^R_s(0,y_1) p^R_t(0,y_2) dy_1dy_2 dy_1dy_2 \nonumber \\
& = & -  \frac{2}{\pi} \int_0^{\infty}  \int_0^{\infty}
\sgn(\sqrt{t} y_2- \sqrt{s} y_1) e^{-(y_1^2+y_2^2)/2} dy_1dy_2 \nonumber \\
& = & -  \frac{2}{\pi} \int_0^{\pi/2} \sgn(\sqrt{t} \sin \theta - \sqrt{s} \cos \theta)  d\theta \nonumber \\
& = & \frac{4}{\pi} \tan^{-1}\sqrt{\frac{s}{t}} -1 \qquad \mbox{for $s<t$} \label{Maximalexitkernel}
\end{eqnarray}
where in the third equality we have used
$p^R_t(0,y) = \sqrt{2/\pi t} \exp(-y^2/2t)$, and in the fourth polar coordinates.
Under the map $(0,t) \to \frac12 \log t$ the exit measure $X_e$ is pushed forward to a 
translation invariant point process on $\mathbb{R}$, and the kernel (\ref{Maximalexitkernel}) is mapped to
a translationally invariant kernel in the form (\ref{TIform}) with $\rho(z) = \pi^{-1} \sech(z)$. 
This is exactly the kernel for the zeros of the Gaussian power series in example \ref{s2.2.1}, showing 
that the zeros of the random power series agree in law, after a change of variable, with the exit measure of
annihilating Brownian motions. The asymptotics for the probability $\Pr(X_e(\{0\} \times (s,t))=0)$
(as $s \downarrow 0$ or $t \uparrow \infty$) can be read off from Corollary \ref{sechkernel}. Note, however, that 
 the exit measure $X_{e}$ gives infinite mass to any interval $\{0\} \times [0,\delta)$ if $\delta>0$.
 
 To study the persistence problem from Section \ref{s2.2.4} we choose $a>0$ and 
 start the process from Poisson($\lambda_0 \I(a,\infty)$), then let 
$\lambda_0 \uparrow \infty$, to obtained another entrance law Poisson($\infty \I(a,\infty)$), that is `maximal on $(a,\infty)$'.
Then
\begin{equation} \label{persistenceidentity}
\{X_e(\{0\} \times [0,T]) =0\} = \{L_t > 0, \; \mbox{for $t \leq T$}\}
\end{equation}
where $L_t$ denotes the position of the leftmost particle at time $t$, and the probability of this event 
agrees with the event (\ref{persistenceevent}) in Corollary \ref{persistence} (by translating by $a$). 
The parameters $a,T$ are linked and we choose $a=\sqrt{2}$ and will 
restore the final answers by Brownian scaling
\begin{eqnarray*}
&& \hspace{-.2in} \Pp[L_t > 0, \; \mbox{$ \forall t \leq T$}] \quad \mbox{under Poisson($\infty \I(a,\infty)$)} \\
& = &
\Pp[L_t > 0, \; \mbox{$\forall t \leq 2T/a^2$}] \quad \mbox{under Poisson($\infty \I(\sqrt{2},\infty)$).}
\end{eqnarray*}
Choosing $\lambda(x) = \lambda_0 \I(x > \sqrt{2})$ in (\ref{poissonexitkernel}) and letting 
$\lambda_0 \uparrow \infty$ we find the kernel 
under the entrance law Poisson($\infty \I(a,\infty)$) is 
\begin{eqnarray*}
  K((0,s),(0,t))
& = & - \int \int \displaylimits_{\hspace{-.2in} 0<y_1< y_2} \I(y_2 > \sqrt{2})
\left|  \begin{array}{cc} 
p^R_s(0,y_1)  &  p^R_t(0,y_1) \\
p^R_s(0,y_2) &   p^R_t(0,y_2) 
\end{array}  \right| dy_1dy_2 \\
& = &  \int_{\sqrt{2}}^{\infty} \int_0^{y_2}   \frac{2}{\pi \sqrt{st}} 
\left(e^{-\frac{y_2^2}{2s}-\frac{y_1^2}{2t}} - e^{-\frac{y_1^2}{2s}-\frac{y_2^2}{2t}} \right) dy_1 dy_2 \\
& = & \int_{-\infty}^0 \int_{-\infty}^{-z_2}  \frac{4 e^{z_1-z_2}}{\pi \sqrt{st}} 
\left(e^{-\frac{e^{-2z_2}}{s}-\frac{e^{2z_1}}{t}} -e^{-\frac{e^{2 z_1}}{s}-\frac{e^{-2 z_2}}{t}}  \right) dz_1 dz_2 
\end{eqnarray*}
using the substitutions $y_1 = \sqrt{2} \exp(z_1)$ and $y_2 = \sqrt{2} \exp(-z_2)$. 
Under the map $(0,t) \to -\frac12 \log t$ the exit measure $X_e$ is pushed forward to a 
point process $\tilde{X}$ on $\mathbb{R}$. The new kernel for $\tilde{X}$ is $\tilde{K}(x_1,x_2) =
K((0,e^{-2x_2}),(0,e^{-2x_1}))$ for $x_1<x_2$, which becomes  
\begin{eqnarray*}
&& \hspace{-.3in} 
\int_{-\infty}^0 \int_0^{-z_2}  \frac{4}{\pi} e^{z_1-z_2+x_1+x_2} 
\left(e^{-e^{-2(z_2-x_2)}- e^{2(z_1+x_1)}} -e^{-e^{2 (z_1+x_2)}-e^{-2 (z_2-x_1)}}  \right) dz_1 dz_2 \\
& = & \int_{-\infty}^0  \left|  \begin{array}{cc}
\int^{x_1-z}_{-\infty} \rho(w) dw & \int^{x_2-z}_{-\infty} \rho(w) dw \\
\rho(x_1-z) & \rho(x_2-z) 
\end{array} \right| dz
\end{eqnarray*}
for the probability kernel $\rho(x) = \frac{2}{\sqrt{\pi}} \exp(x - e^{2x})$.
This is in the non-translationally invariant form (\ref{NTIform}) so that we may apply  
Theorem \ref{pfaffian_szego_edge} which gives, for the initial condition Poisson($\infty \I(a,\infty)$), 
\begin{eqnarray} \label{2.2.4.1}
\log \Pp \left[L_t >0, \; \forall t \leq T \right] & = & \log \Pp \left[ \tilde{X}(-\log(2T/a^2)/2,\infty) = 0 \right] \nonumber \\
& = & - \kappa_1(p_{\theta}) \frac12 \log(2T/a^2) + \kappa_2(p_{\theta}) + o(1),
\end{eqnarray}
as $T \to \infty$, where $p_{\theta} = (1+\theta)^{-1} $ and $\kappa(p), \kappa_2(p)$ are given by Theorem \ref{pfaffian_szego_edge}
using the density $\rho(x)$. 
To evaluate $\kappa(p), \kappa_2(p)$ we first calculate  
$\tilde{\rho}(z) = \int_{\mathbb{R}} \rho(w) \rho(w-z) dw = \frac{1}{\pi} \sech(z)$.
This shows that the leading order asymptotics, that is $\kappa_1(p)$, will coincide with those for the 
translationally invariant sech kernel in Corollary \ref{sechkernel}. This immediately gives the value of 
$\kappa_1(p)$ in (\ref{persistencekappa1}) (we need to consider only $p \in [1/2,1]$ since this is the range of 
$p_{\theta}$ for $\theta \in [0,1]$). 
The terms in the formulae for $\kappa_2(p)$ in Theorem \ref{pfaffian_szego_edge}
that involve $\tilde{\rho}$ have been rewritten using Fourier transforms in  (\ref{k2FT}). 
We continue this with the terms that involve $\rho$, so 
we will use $\hat{\rho}(k) = \frac{1}{\sqrt{\pi}} \Gamma((1+ik)/2)$. Expressing $\rho^{*n}$ via the Fourier inversion formula  
and then performing the integral in $x$ we find
\begin{eqnarray*}
 && \hspace{-.3in} \frac12  \sum_{n=1}^{\infty}
  \frac{(4p(1-p))^n}{n^2} \int_{-\infty}^{\infty} \!x\, (\rho^{*n}(x))^2 dx \\
  &=&  - \frac{1}{4 \pi i}  \sum_{n=1}^{\infty}
  \frac{(4p(1-p))^n}{n^2} \int_{-\infty}^{\infty}  n (\hat{\rho}(k))^{n-1} \hat{\rho}'(k) (\hat{\rho}(-k))^n  dk \\
 & = & \frac{1}{4 \pi i} \int_{-\infty}^{\infty}  \frac{\hat{\rho}'(k) }{\hat{\rho}(k)} \log \left(1 - 4p(1-p) \sech( k \pi/2) \right) dk \\
 & = & \frac{1}{8 \pi } \int_{-\infty}^{\infty} \psi^{(0)}((1+ik)/2)  \log \left(1 - 4p(1-p) \sech( k \pi/2) \right) dk
\end{eqnarray*}
using $\hat{\rho}(k) \hat{\rho}(-k) = \hat{\tilde{\rho}}(k) = \sech(k \pi /2)$ and 
\[
\frac{\hat{\rho}'(k) }{\hat{\rho}(k)} = \frac{i}{2} \frac{\Gamma'((1+ik)/2)}{\Gamma((1+ik)/2)} = \frac{i}{2} \psi^{(0)}((1+ik)/2)
\]
where $\psi^{(0)}(z)$ is the digamma function. Finally 
\[
\int_{\mathbb{R}} e^{\phi_p x} \rho(x) dx = \hat{\rho}(-i \phi_p) = \frac{1}{\sqrt{\pi}} \Gamma ((1+\phi_p)/2)
\]
which completes all the terms contributing to $\kappa_2(p)$ for $p > \frac12$. 
\section{The proof of Theorem \ref{pfaffian_czego}}\label{s3}
In this section we will derive the asymptotic expressions
for Fredholm Pfaffians stated in the translationally invariant case. 
The proofs consists of the following
steps: (i) represent the square of the
Fredholm Pfaffian at hand as a product of a Fredholm determinant and a finite
dimensional determinant; (ii) interpret each factor as an expectation of a function
of a random walk with the transition density determined by the Pfaffian kernel;
(iii) Calculate each expectation using general theory of random walks. 
\subsection{Operator manipulation} \label{s3.1}
The first step is a calculation that was used by Tracy and Widom \cite{Tracy_Widom}  in their analysis of the Pfaffian 
kernels for GOE and GSE. It exploits the special derived form (\ref{derived_form}) of the Pfaffian kernel.
\begin{proposition} \label{TWmanip}
Let $\mathbf{K}$ be a kernel in the derived form (\ref{derived_form}) based on kernel $K \in C^2[a,b]$ 
for a finite interval $[a,b]$. We suppose that the operator $I + 2p(1-p) D_2 K$  on $L^2[a,b]$ has an inverse 
$R = (I + 2p(1-p) D_2 K)^{-1}$, for which $R-I$ itself has a $C^1$ kernel. Then 
\begin{equation} \label{TWprop}
\left( \Pf_{[a,b]} (\mathbf{J} - p \mathbf{K}) \right)^2 = \Det_{[a,b]} (I + 2p(1-p) D_2K) \; \det^{a,b}_2(K)
\end{equation}
where $\det^{a,b}_2(K)$ is the $2 \times 2$ determinant 
$\det \left( \begin{matrix} 1 + k^{(1)}(a) & k^{(1)}(b) \\ k^{(2)}(a) & 1+ k^{(2)}(b) \end{matrix}
\right)$ with entries given in terms of the functions
\begin{equation} \label{smalldetfns}
k^{(1)}  = (p-p^2) R K(\cdot,a) + p^2 R K(\cdot,b), \quad k^{(2)} = (p^2-p) R K(\cdot,b) - p^2 R K(\cdot,a). 
\end{equation}
\end{proposition}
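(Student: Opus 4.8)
The plan is to reduce everything to a single Fredholm determinant and then peel off a rank-two correction. First I would invoke the operator form of the Fredholm Pfaffian from Rains \cite{rains2000}: since $\pf(A)^2=\det(A)$ for any finite anti-symmetric matrix, $\det(\mathbf{J})=1$ and $\mathbf{J}^{-1}=-\mathbf{J}$, one has in the Fredholm setting $\left(\Pf_{[a,b]}(\mathbf{J}-p\mathbf{K})\right)^2=\Det_{[a,b]}(I+p\,\mathbf{J}\mathbf{K})$, where $\mathbf{J}\mathbf{K}$ is the $2\times 2$ operator matrix with kernel $\left(\begin{smallmatrix} -D_1K & D_1D_2K \\ -(S+K) & D_2K\end{smallmatrix}\right)$. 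Thus it suffices to prove $\Det_{[a,b]}(I+p\,\mathbf{J}\mathbf{K})=\Det_{[a,b]}(I+2p(1-p)D_2K)\,\det^{a,b}_2(K)$. Note that no square root has to be extracted, since the proposition already squares the Pfaffian.

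The crucial feature of the derived form is that the second block-row of $\mathbf{K}$ is the $x$-derivative of the first, i.e. $\mathbf{K}=\mathrm{diag}(I,\partial_x)\tilde{\mathbf{K}}$ with $\tilde{\mathbf{K}}$ built only from $K$ and $D_2K$, and that $\partial_x\,\mathrm{sgn}(y-x)=-2\delta(x-y)$. Left-multiplying $I+p\,\mathbf{J}\mathbf{K}$ by the unipotent block operator $\left(\begin{smallmatrix} I & -\partial_x \\ 0 & I\end{smallmatrix}\right)$ then replaces the first block-row by $\big((1-2p)I,\,-\partial_x\big)$: the $D_1D_2K$ and $D_1K$ contributions cancel, and the scalar $1-2p$ is exactly what the $-2\delta$ from the derivative of $S$ produces. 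A subsequent unipotent block column-operation clears the $-\partial_x$ entry and pushes everything into the $(2,2)$ corner; integrating by parts in the resulting $(S+K)\partial_x$ term turns it into $-2I-D_2K$ plus a rank-$\le 2$ boundary operator $\mathcal{B}$ whose range is spanned by $K(\cdot,a)$, $K(\cdot,b)$ and the constant function (from $S(\cdot,a),S(\cdot,b)=\mp 1$). Collecting coefficients, the $(2,2)$ corner becomes $(1-2p)^{-1}\big(I+2p(1-p)D_2K-p\mathcal{B}\big)$ and the $(1,1)$ corner is $(1-2p)I$; the two divergent scalar factors cancel and leave $\Det_{[a,b]}(I+p\,\mathbf{J}\mathbf{K})=\Det_{[a,b]}(I+2p(1-p)D_2K-p\mathcal{B})$.

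The last step applies the matrix determinant (Sylvester) identity: $\Det(I+2p(1-p)D_2K-p\mathcal{B})=\Det(I+2p(1-p)D_2K)\cdot\det(I-p\mathcal{B}_R)$, where $\mathcal{B}_R$ is the $2\times 2$ matrix obtained by composing $\mathcal{B}$ with $R=(I+2p(1-p)D_2K)^{-1}$ and pairing against the point-evaluation functionals $f\mapsto f(a),f(b)$ — this pairing is legitimate precisely because $R-I$ has a $C^1$ kernel, which is the role of that hypothesis. Computing the four entries, and using the identity $D_2K(\mathbf{1})=K(\cdot,b)-K(\cdot,a)$ to trade the constant-function contributions for $RK(\cdot,a)$ and $RK(\cdot,b)$, one matches $p\mathcal{B}_R$ with the rank-two matrix whose determinant is $\det^{a,b}_2(K)$, with the combinations $k^{(1)},k^{(2)}$ of \eqref{smalldetfns} (the coefficients $p^2$ and $p-p^2$ emerging from this bookkeeping), completing the proof of \eqref{TWprop}.

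The main obstacle is the rigour surrounding the differentiation operator $\partial_x$ and the boundary functionals: the block row/column operations above are only formally determinant-preserving, since $\left(\begin{smallmatrix} I & -\partial_x \\ 0 & I\end{smallmatrix}\right)$ and its column analogue are unbounded on $(L^2[a,b])^2$, and the transient factors $(1-2p)^{\pm 1}$ have no meaning as Fredholm determinants in infinite dimensions. To make the argument honest one should never introduce $\partial_x$ as a standalone operator but instead perform the analogous manipulations directly on the continuous kernels via their integration-by-parts identities (or regularise/discretise and pass to the limit, or prove $\log\Det(I+p\,\mathbf{J}\mathbf{K})=\log\Det(I+2p(1-p)D_2K-p\mathcal{B})$ by deforming in $p$ from $p=0$); in every version the $C^2$-regularity of $K$ and the existence of the $C^1$ resolvent $R$ are exactly what control all the boundary terms and validate the rank-two reduction. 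The remaining work is purely combinatorial: pinning the coefficients down so that the correction is the specific $\det^{a,b}_2(K)$ of \eqref{smalldetfns} rather than another rank-two update.
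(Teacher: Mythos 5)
Your architecture begins and ends where the paper does (square the Pfaffian to get $\Det_{[a,b]}(I+p\mathbf{J}\mathbf{K})$, reduce to $I+2p(1-p)D_2K$ plus a finite-rank correction, then split off the small determinant with the resolvent $R$), but the middle step is not merely unrigorous: it produces a false identity. Test it on $K\equiv 0$. Then $\mathbf{J}\mathbf{K}$ is strictly block lower triangular, so $\Det_{[a,b]}(I+p\mathbf{J}\mathbf{K})=1=\bigl(\Pf_{[a,b]}(\mathbf{J}-p\mathbf{K})\bigr)^2$, in agreement with the proposition since $\det^{a,b}_2(0)=1$; your reduction, after the row and column operations and the claimed cancellation of the $(1-2p)^{\pm 1}$ blocks, leaves $\Det_{[a,b]}\bigl(I-p\,\mathbf{1}\otimes(\delta_a+\delta_b)\bigr)=1-2p$. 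The culprit is the pair of identities you use: $\partial S=-2I$ in the row operation and $S\partial=-2I+\mathbf{1}\otimes(\delta_a+\delta_b)$ in the column operation. These differ by a rank-one operator, so no discretisation or regularisation can realise both exactly (in finite dimensions $\Tr(\partial_N S_N)=\Tr(S_N\partial_N)$), and the mismatch survives as a finite multiplicative anomaly precisely when the divergent scalar factors are ``cancelled''. Hence your correction operator, which carries the bare constant-function term $-p\,\mathbf{1}\otimes(\delta_a+\delta_b)$, is the wrong one, and none of your proposed repairs (kernel-level manipulations, regularise-and-pass-to-the-limit, deformation in $p$) can help, because the intermediate identity they would be certifying is itself false.

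The paper's proof avoids this by never allowing a bare $S\partial$ or a scalar $(1-2p)$ block to appear: it smooths $S$ to $S^{(\epsilon)}$ (needed anyway to make $(\Pf)^2=\Det(I+p\mathbf{J}\mathbf{K}^{(\epsilon)})$ and the trace-class bookkeeping legitimate), factorises $\mathbf{J}\mathbf{K}^{(\epsilon)}=\mathbf{A}\mathbf{B}$ with the bounded $\mathbf{A}=\left(\begin{smallmatrix}0&\partial\\-I&I\end{smallmatrix}\right):(H^1_{[a,b]})^2\to(L^2_{[a,b]})^2$ and the trace-class $\mathbf{B}=\left(\begin{smallmatrix}S^{(\epsilon)}&0\\-K&D_2K\end{smallmatrix}\right)$ mapping the other way, and applies the two-space Sylvester identity $\Det(I+p\mathbf{A}\mathbf{B})=\Det(I+p\mathbf{B}\mathbf{A})$; the block-triangular structure of $I+p\mathbf{B}\mathbf{A}$ then gives $\Det_{H^1}(I-pK\partial+pD_2K+p^2D_2KS^{(\epsilon)}\partial)$, in which $S\partial$ only ever occurs composed with $D_2K$ and with coefficient $p^2$. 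Integrating by parts there yields the correct finite-rank correction $F=pK(\cdot,a)\otimes\delta_a-pK(\cdot,b)\otimes\delta_b+p^2D_2K(1)\otimes(\delta_a+\delta_b)$ with $D_2K(1)=K(\cdot,b)-K(\cdot,a)$ --- note the $p^2$ and the absence of the constant function --- rather than your $-p\mathcal{B}$. Your endgame (resolvent splitting, evaluation of the small determinant, and the trade $R\mathbf{1}=\mathbf{1}-2p(1-p)\bigl(RK(\cdot,b)-RK(\cdot,a)\bigr)$) is in the right spirit and matches the paper once the correct $F$ is in hand, but with your correction operator the coefficients in \eqref{smalldetfns} cannot be recovered.
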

This proposition does two things. It represents the square of the Pfaffian in terms of determinants. 
However, the main point is to exploit the derived form as follows. In the 
finite Pfaffians that define a Fredholm Pfaffian, 
there are integrals over $[a,b]$ of products of $K,D_1K,D_2K,D_{12}K$.  Each occurrence of a term $K(x_i,x_j)$ 
can be paired with a term $D_{12}K(x_j,x_k)$ and then integration by parts yields terms that only involve $D_1K$ or $D_2K$. 
Moreover $D_1K$ and $D_2K$ are related by the symmetry conditions. Repeated integration by parts leaves an 
expression that is mostly expressible only in terms of $D_2K$. This is all best done at the operator level.
Since this is a key starting point for this paper (as it was also for the study
for the specific case of the real Ginibre ensemble in \cite{Rider_Sinclair} and \cite{fitzgerald2020sharp}), and since
we will also need a modification when we treat the non translationally invariant case, we include a proof.

\noindent
\textbf{Proof.} 
The proof exploits results on determinants for trace class operators. 
We may consider a kernel $(K(x,y): x,y \in [a,b])$ as an operator on $L^2[a,b]$ via the map 
$K(f) = \int^b_a K(x,y)f(y) dy$ (we need only finite intervals).
The references \cite{GGK}, \cite{Lax} contain most of the results that we need, in particular that 
the Fredholm determinant $\Det_{[a,b]}(I+K)$  agrees with the
trace class determinant $\Det_{L^2[a,b]}(I+K)$ whenever $K: L^2[a,b] \to L^2[a,b]$
 is a trace class operator and that $K$ will be trace class
 if it is sufficiently smooth. 
 
 We need to consider operators $A \in L(H_1,H_2)$ between two 
different Hilbert spaces. In particular, an operator $A \in L(H_1,H_2)$ is called trace class if it 
satisfies $\|A\|_{tr} := \sum_n s_n < \infty$ where $(s_n)$ are the singular values of $A$, that is the 
eigenvalues of  $\sqrt{A^*A}: H_1 \to H_1$.  For $A \in  L(H_1,H_2)$ and $B \in L(H_2,H_1)$, with operator norms
$\|A\|,\|B\|$, we have
\[
\| A B \|_{tr} \leq \|A\| \|B\|_{tr} \quad \mbox{and}  \quad  \| A B \|_{tr} \leq \|A\|_{tr} \|B\|. 
\]
Thus if one of the operators $A$ or $B$ is trace class then the compositions $AB$ and $BA$ are trace class.
 Moreover the Sylvester identity
\begin{equation} \label{sylvester}
\Det_{H_2} (1 + AB) = \Det_{H_1} (1+BA)
\end{equation}
(see \cite{GGK} Chapter 4 for the case $H_1=H_2$ where $A,B$ are both trace class) also holds in the case
where $A$ is bounded and $B$ is trace class, a result which can, as in \cite{GGK}, be checked by approximating 
by finite rank operators. 

The finite interval $[a,b]$ is fixed throughout this proof. We first suppose that $K$ is smooth. 
The discontinuity $S(x,y)$ in our kernels means that the entries in $\mathbf{K}$, as in (\ref{derived_form}), 
are not trace class operators and, as in Tracy and Widom \cite{Tracy_Widom}, 
we first make a smooth approximation. We may choose smooth anti-symmetric approximations $S^{(\epsilon)}(x,y)$ that converge pointwise as $\epsilon \to 0$ to
$S(x,y)$ and are uniformly bounded by $1$.  
Then $\mathbf{K}^{\epsilon}$, defined as in (\ref{derived_form}) with $S$ is replaced by $S^{(\epsilon)}$,
can be considered a trace class operator on $(L^2_{[a,b]})^2 \to (L^2_{[a,b]})^2$ (and we do this without changing the notation). 
 
For finite dimensional matrices we have $(\Pf(J-K))^2 = \det(J-K) = \det(I+JK)$, where $J$ is block diagonal matrix made from
blocks $\tiny{ \left( \begin{matrix} 0 & 1 \\ -1 & 0 \end{matrix} \right)}$ (so that $J^2 = - I$ and $\det(J) = 1$). The analogue for us is
the relation 
\begin{equation} \label{starttw}
\left(\Pf_{[a,b]}(\mathbf{J} - p \mathbf{K}^{(\epsilon)}) \right)^2 = \Det_{(L^2_{[a,b]})^2}(I + p \mathbf{J} \mathbf{K}^{(\epsilon)}) 
\end{equation}
where the left hand side is the Fredholm Pfaffian given by the infinite series (\ref{FPdefn}) and the right hand side
is the trace class determinant on $(L^2[a,b])^2$  and $\mathbf{J}$ is the bounded operator defined by 
$\mathbf{J}(f,g) = (g,-f)$.  To derive the identity (\ref{starttw}) it is natural to argue by finite rank approximations. 
Indeed $K$ can be approximated by a polynomial $K_N(x,y) = \sum_{n,m \leq N} c_{n,m} x^n y^m$ so that
$K_N$ converges both uniformly over $[a,b]^2$ and also in trace norm as operators. For the operator 
$K_N$ the identity reduces to the finite dimensional result. 

Tracy and Widom then exploit block manipulations in the operator determinant. 
Write, in block operator notation,
\begin{eqnarray*}
\mathbf{J} \mathbf{K}^{(\epsilon)} 
&=&  \left( \begin{matrix} 0 & I \\ -I & 0 \end{matrix} \right) 
\left(  \begin{matrix} S^{(\epsilon)} + K & - D_2 K
 \\ - D_1 K &  D_{12} K  \end{matrix}\right)  \\
& = & \left( \begin{matrix} - D_1 K &  D_{12} K  \\ - S^{(\epsilon)} - K  &  D_{2} K \end{matrix} \right) \\
& = & \left( \begin{matrix} 0 & \partial  \\ -I  & I  \end{matrix} \right) 
\left( \begin{matrix}  S^{(\epsilon)} & 0  \\  -K & D_2 K \end{matrix} \right) 
\end{eqnarray*}
where $\partial : H^1_{[a,b]} \to L^2_{[a,b]}$ is the derivative operator $\partial(f) = Df$. This expresses $\mathbf{J} \mathbf{K}^{(\epsilon)}$
as the composition of two operators $\mathbf{A} \mathbf{B}$ where $\mathbf{A}: (H^1_{[a,b]})^2 \to (L^2_{[a,b]})^2$ and
$\mathbf{B}: (L^2_{[a,b]})^2 \to (H^1_{[a,b]})^2$. Moreover $A$ is bounded and 
$B$ is trace class, again by the smoothness of the kernels, and hence the 
compositions $\mathbf{A} \mathbf{B}$ and $ \mathbf{B} \mathbf{A}$ are trace class. 
Now we apply the Sylvester identity (\ref{sylvester}) to find
\begin{eqnarray}
\Det_{(L^2_{[a,b]})^2}(I + p \mathbf{J} \mathbf{K}^{(\epsilon)})  
& = & \Det_{(H^1_{[a,b]})^2}(I + p \mathbf{B} \mathbf{A}) \nonumber  \\
& = & \Det_{(H^1_{[a,b]})^2} \left( \left( \begin{matrix} I & 0 \\0  & I  \end{matrix} \right)
 + p \left( \begin{matrix} 0 & S^{(\epsilon)} \, \partial \nonumber \\
-D_2 K  & -K  \, \partial + D_2 K  \end{matrix} \right) \right)  \nonumber \\
& = & \Det_{H^1_{[a,b]}} ( I - p K \, \partial + p D_2 K + p^2 D_2 K S^{(\epsilon)} \, \partial ) \label{rhs101}
\end{eqnarray}
where the last step uses a simple manipulation for determinants of block operators.  

Now we let $\epsilon \to 0$. On the left hand side of (\ref{starttw}) the absolute convergence of the series for the
Fredholm Pfaffian allows us to check that 
$\Pf_{[a,b]}(\mathbf{J} - p \mathbf{K}^{(\epsilon)})  \to \Pf_{[a,b]}(\mathbf{J} - p \mathbf{K}) $. 
On the right hand side of (\ref{rhs101}) we rewrite various terms. We have 
\[
K \partial (f) (x) = \int^b_a K(x,y) f'(y) dy = K(x,b) f(b) - K(x,a) f(a) + \int^b_a D_2 K(x,y) f(y) dy
\]
so that $ K \partial = - D_2K + K(\cdot,b) \otimes \delta_b - K(\cdot,a) \otimes \delta_a$, as 
an operator mapping $H^1_{[a,b]} \to L^2_{[a,b]}$, where a tensor 
operator $h \otimes \delta_a$, for $h \in L^2$, acts via $ h \otimes \delta_a (f) = f(a) h$.
Similarly, again using integration by parts, $S \partial = 1 \otimes (\delta_a+\delta_b) - 2 I$. 
Then 
\begin{eqnarray*}
\| S^{(\epsilon)} \partial f + 2f - (f(a)+f(b)) \|_{L^2}^2 & = & \| \int^b_a (S^{(\epsilon)}(\cdot,y)-S(\cdot,y)) f'(y) dy \|_{L^2}^2 \\
& \leq & \|f\|_{H^1}^2 \int^b_a \int^b_a (S^{(\epsilon)}(x,y)-S(x,y))^2 dx dy
 \end{eqnarray*}
showing  the convergence $ S^{(\epsilon)} \, \partial \to 1 \otimes (\delta_a+\delta_b) - 2 I $
in operator norm from $H^1_{[a,b]} \to L^2_{[a,b]}$.
Hence the composition $ D_2 K S^{(\epsilon)} \partial $ converges in trace norm from
$H^1_{[a,b]} \to L^2_{[a,b]}$.
Using the continuity of the determinant with respect to the trace norm, the right hand side of (\ref{rhs101})  
converges and we reach
\begin{equation} \label{4010}
\left( \Pf_{[a,b]}(\mathbf{J} - p \mathbf{K})\right)^2 =  \Det_{H^1_{[a,b]}} (I + 2p(1-p) D_2K  + F)
\end{equation}
where $F: H^1_{[a,b]} \to H^1_{[a,b]}$ is the finite rank operator 
\begin{eqnarray} 
F & = & p K(\cdot,a) \otimes \delta_a - p K(\cdot,b) \otimes \delta_b + p^2 D_2K(1) \otimes (\delta_a + \delta_b) \nonumber \\
& = &  \left((p -p^2) K(\cdot,a) + p^2 K(\cdot,b)\right) \otimes \delta_a + \left((p^2-p) K(\cdot,b)-p^2 K(\cdot,a)\right) \otimes \delta_b \label{4020}
\end{eqnarray}
(using $D_2K(1)(x) = \int^b_a D_2K(x,z) dz = K(x,b) - K(x,a)$).
The assumption on the resolvent $R = (I + 2p(1-p) D_2K)^{-1}$ now allows us to split this as the product 
\begin{eqnarray*}
\Det_{H^1_{[a,b]}} ( I + 2p(1-p) D_2K  + F ) &=& \Det_{H^1_{[a,b]}} ( I + 2p(1-p) D_2K) \Det_{H^1_{[a,b]}} ( I + RF) \\
&=& \Det_{[a,b]} ( I + 2p(1-p) D_2K) \det^{a,b}_2(K)
\end{eqnarray*}
where the finite rank determinant $\Det_{H^1_{[a,b]}} ( I + RF)$ is evaluated as a $2 \times 2$ determinant 
$\det^{a,b}_2(K)$ by examining the operator $RF$ on its two-dimensional range.

Finally if $K$ is only $C^2$ we approximate by smooth anti-symmetric kernels $K_{\epsilon}$ so that the first two 
derivatives converge uniformly. If  $I + 2p(1-p) D_2 K$ is invertible and $I - R$ has a $C^1$ kernel then 
the same is true for $K_{\epsilon}$ for small $\epsilon$ and one may 
conclude  by passing to the limit in the conclusion (\ref{TWprop}) for $K_{\epsilon}$. 
\qed
%
\subsection{Probabilistic representation} \label{s3.2}
Throughout this section we suppose a kernel $\mathbf{K}$ is in derived form and has the special translationally invariant form 
(\ref{TIform}), based on a probability density $\rho$. 
We aim to apply Proposition \ref{TWmanip} to the kernel $\mathbf{K}$ on an interval $[a,b]$. 

\vspace{.1in}
\noindent
\textbf{Notation.} In this subsection only, we write $T$ for the convolution operator on $L^\infty(\R)$ 
with kernel $\rho(y-x)$ and we write $T_{a,b}$ for the convolution operator restricted to $L^2[a,b]$, that is
$T_{a,b}(f)(x) = \int^b_a \rho(y-x) f(y) dy$.

\vspace{.1in}

\noindent
Note, from (\ref{TIform}),  that 
\[
I + 2p(1-p) D_2K  = I- \beta_p T  \quad \mbox{where} \quad \beta_p := 4p(1-p).
\]
We first check the resolvent hypothesis for Proposition \ref{TWmanip}.
Since $\rho$ is a probability density we have $\gamma_0 := \sup_{x \in [a,b]} \int_{[a,b]} \rho(y-x) dy \leq 1$ 
and when $\beta_p <1$ (that is when $p \neq \frac12$) or when $\gamma_0<1$ the series
\[
\left| \sum_{n=1}^{\infty} \beta_p^n T_{a,b}^n(x,y) \right| \leq \sum_{n=1}^{\infty} \beta_p^n \gamma^{n-1}_0 
\|\rho\|_{\infty}
\]
 is uniformly convergent and hence the operator $I-\beta_p T_{a,b}$ has the inverse
$R = I +\sum_{k=1}^{\infty}  \beta_p^k T_{a,b}^{k}$.
Similarly, since $\rho \in C^1$, the series for the first derivatives of $(R - I)(x,y)$ also converge uniformly
implying that $R-I$ has a $C^1$ kernel.
In the case $p=\frac12$ we may choose $n_0 \geq 1$ so that 
\begin{equation}
\gamma_1 := \sup_{x \in [a,b]} \int_{[a,b]} T^{n_0}_{a,b}(x,y) dy < 1 \label{n0eqn}.
\end{equation}
Repeating the arguments above for 
 $\sum_{k=1}^{\infty}  T_{a,b}^{kn_0}$ we see that 
 $R = I + (I+T_{a,b} + \ldots + T_{a,b}^{n_0-1}) \sum_{k=1}^{\infty}  T_{a,b}^{kn_0}$
so that we may apply Proposition \ref{TWmanip}.

\vspace{.1in}
\noindent
\textbf{Notation}

\vspace{.1in}
Let $S=(S_n: n \geq 0)$ be a random walk, with increments distributed according to the law with density 
$\rho(x)dx$, and started at $x \in \mathbb{R}$ under the probability $\mathbb{P}_x$.  

\vspace{.1in}

We write $M_n$ for the running maximum $M_n = \max_{1 \leq k \leq n} S_k$.

\vspace{.1in}

We write $\tau_A = \inf\{ n \geq 1: S_n \in A\} $ for the positive hitting time of $A \subseteq \mathbb{R}$. 

\vspace{.1in}

We write $\tau_{a-}$ as shorthand for $\tau_{(-\infty,a]}$ and $\tau_{a+}$ as shorthand for $\tau_{[a,\infty)}$.
%
%
%
\vspace{.1in}

\noindent
We will now rewrite the Fredholm determinant and small determinant $\det^{a,b}_2(K)$
from Proposition \ref{TWmanip} as expectations for this random walk.
First we follow Kac's probabilistic representation for the Fredholm determinant from \cite{Kac} (where the 
result is established for small $\beta$ only).
\begin{lemma} \label{TIPR1}
For all $\beta \in [0,1]$ 
\begin{equation} \label{TIFDPR}
\log \Det_{[a,b]}(I - \beta T)  =  - \E_{a} \left[ \beta^{\tau_{a-}} \,\delta_{a}(S_{\tau_{a-}}) (b- M_{\tau_{a-}})_+  \right],
 \end{equation}
 where $(z)_{+}=\max(z,0)$ and $\delta_a$ stands for the Dirac delta function concentrated at $a$.
\end{lemma}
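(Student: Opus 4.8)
The plan is to follow Kac's route for Fredholm determinants: expand $\log\Det$ by the trace–log (Plemelj–Smithies) formula, convert each trace into a loop integral, symmetrise it cyclically, and recognise the result as a random-walk expectation. The clever ingredient is the cyclic symmetrisation of the trace; the only genuine technical care is needed for the validity of the expansion up to $\beta=1$ and for the rigorous meaning of the $\delta_a$-expectation.

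\emph{Validity of the expansion.} First I would record that $T_{a,b}$ is trace class (its kernel $\rho(y-x)$ is $C^1$ on the bounded square $[a,b]^2$), so that the Fredholm determinant defined by \eqref{FDdefn} coincides with the trace-class determinant, and that, as shown in the resolvent discussion above, $\sup_{x\in[a,b]}\int_{[a,b]}T_{a,b}^{n_0}(x,y)\,dy<1$ for some $n_0\ge1$; by Schur's test this gives $\|T_{a,b}^{n_0}\|_{\mathrm{op}}<1$, hence the spectral radius $r(T_{a,b})<1$. Consequently the Plemelj–Smithies identity
\[
\log\Det_{[a,b]}(I-\beta T)=-\sum_{n=1}^{\infty}\frac{\beta^n}{n}\,\Tr T_{a,b}^n
\]
holds for $|\beta|<1/r(T_{a,b})$, hence for every $\beta\in[0,1]$ (in particular $\Det_{[a,b]}(I-T)\neq 0$).

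\emph{Cyclic symmetrisation and the walk.} Writing $\Tr T_{a,b}^n=\int_{[a,b]^n}\prod_{k}\rho(x_{k+1}-x_k)\,dx$ with indices taken $\bmod\ n$, almost every configuration has a unique minimal coordinate; splitting the integral according to its position and using invariance under cyclic rotation of $(x_1,\dots,x_n)$ to identify the $n$ pieces gives
\[
\frac1n\,\Tr T_{a,b}^n=\int_{[a,b]^n}\I\big(x_1<x_j,\ j\ge 2\big)\prod_{k=1}^n\rho(x_{k+1}-x_k)\,dx .
\]
Reading $x_1=u$ as the starting point and $x_2,\dots,x_n,x_1$ as the successive positions of the walk $S$, the constraints say precisely that $\tau_{u-}=n$, $S_{\tau_{u-}}=u$, and $M_n\le b$ (the lower bound $x_j\ge a$ being automatic, since $u\ge a$ is the minimum). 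Translating by $u-a$ to start the walk at $a$ turns the max-constraint into $M_n\le b-(u-a)$; substituting $c=u-a\in[0,b-a]$,
\[
\frac1n\,\Tr T_{a,b}^n=\int_0^{b-a}\E_a\!\big[\I(\tau_{a-}=n)\,\delta_a(S_{\tau_{a-}})\,\I(M_{\tau_{a-}}\le b-c)\big]\,dc ,
\]
where $\delta_a(S_{\tau_{a-}})$ is interpreted, as elsewhere in the paper, as the value at $a$ of the (explicit, constrained convolution of copies of $\rho$) sub-density of $S_n$ on $\{\tau_{a-}=n\}$; keeping everything at this finite-integral level is how I would make all the delta-manipulations precise.

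\emph{Assembly.} Multiplying by $\beta^n$, summing over $n$, and exchanging summation with the $c$-integral (all terms non-negative, Tonelli) yields
\[
-\log\Det_{[a,b]}(I-\beta T)=\E_a\!\Big[\beta^{\tau_{a-}}\,\delta_a(S_{\tau_{a-}})\int_0^{b-a}\I(M_{\tau_{a-}}\le b-c)\,dc\Big].
\]
On the event $\{S_{\tau_{a-}}=a\}$ one has $M_{\tau_{a-}}\ge a$, so the inner integral equals $\min\big(b-a,(b-M_{\tau_{a-}})_+\big)=(b-M_{\tau_{a-}})_+$, giving the stated identity. I expect the cyclic-symmetry/translation step to be the conceptual heart, while the genuine technical care is confined to (i) the $\beta=1$ endpoint, dealt with by the spectral-radius bound, and (ii) the bookkeeping for $\delta_a$, handled by working with the finite convolution integrals $\rho^{*k}$ until after the $c$-integration is performed.
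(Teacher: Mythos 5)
Your proposal is correct, and the overall strategy (trace--log expansion plus a Kac-style random-walk reading of the traces) is the same as the paper's; the difference lies in how the factor $1/n$ is absorbed and how the interval variable enters. The paper differentiates $\Tr(T_{a,b}^n)$ in the left endpoint $a$ -- the derivative splits into $n$ equal boundary contributions, which cancels the $1/n$ and pins one coordinate at $a$ -- and then integrates the logarithmic derivative back over $[a,b]$. You instead symmetrise the closed-loop integral cyclically, decomposing by the (a.s.\ unique) minimal coordinate $u$, and then integrate over $u$ using translation invariance of the step density; this is in effect an undifferentiated version of the same idea, and it buys you something: you never have to justify differentiating $a\mapsto\Tr(T_{a,b}^n)$ or interchanging $d/da$ with the infinite sum, since Tonelli on nonnegative terms suffices. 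Your treatment of the $\beta=1$ endpoint via Schur's test and the spectral radius (note that Schur's test also needs the column bound, which here follows from the evenness of $\rho$ making the kernel symmetric) replaces the paper's direct trace-growth estimate $|\Tr(T_{a,b}^{kn_0+j})|\le (b-a)^j\|\rho\|_\infty^j\gamma_1^k$, and both give analyticity of the series on a disc containing $[0,1]$ together with non-vanishing of the determinant there. The one thing the paper's route buys in exchange is that the differentiate-in-$a$ device carries over essentially verbatim to the non-translationally-invariant determinant (Lemma \ref{NTIFdetrep}), where your translation of the starting point to $a$ would no longer be available in the same simple form.
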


\noindent
\textbf{Proof.}
The trace-log formula (sometimes called the Plemelj-Smithies formula)
\begin{eqnarray}
&& \log \Det_{[a,b]}(I - \beta T)  = - \sum_{n=1}^{\infty} \frac{\beta^n}{n} \Tr (T_{a,b}^n) \nonumber \\
&&  \hspace{.4in}  =  - \sum_{n=1}^{\infty} \frac{\beta^n}{n} \int_{[a,b]^n} \rho(x_2-x_1) \ldots 
\rho(x_n-x_{n-1}) \rho(x_1-x_n) dx_1 \ldots dx_n \label{log-trace}
\end{eqnarray}
always holds for $|\beta|>0$ small (see \cite{GGK} Theorem 3.1). Also the Fredholm determinant $\Det_{[a,b]}(I - \beta T)$ 
is a real-analytic function of $\beta \in \mathbb{R}$. We now show the trace-log expansion
is also real-analytic for $\beta \in [0,1]$ by estimating the growth of the traces. Indeed the estimate (\ref{n0eqn})
implies that $|\Tr (T_{a,b}^{kn_0 + j})| \leq (b-a)^j \|\rho\|_{\infty}^j \gamma_1^k$ implying the series is analytic for 
$|\beta| < \gamma_1^{-1}$, so that we may apply (\ref{log-trace}) for all $\beta \in [0,1]$.

The derivative below has $n$ equal contributions:
\begin{eqnarray*}
\frac{d}{da} \Tr (T_{a,b}^n)  & = & - n \int_{[a,b]^{n-1}} \rho(x_2-a) \ldots \rho(x_n-x_{n-1}) \rho(a-x_n) dx_2 \ldots dx_{n} \\
& = & - n \, \E_a [ \delta_a(S_{n}); \tau_{b+} > \tau_{a-} =n].
\end{eqnarray*}
Subsitituting this into (\ref{log-trace}) we find 
\begin{eqnarray*}
\frac{d}{da} \log \Det_{[a,b]}(I - \beta T) & = &  \E_a[ \beta^{\tau_{a-}}\delta_a(S_{\tau_{a-}}); \tau_{b+} > \tau_{a-}] \\
& = & \E_a \left[ \beta^{\tau_{a-}}\delta_a(S_{\tau_{a-}}); M_{\tau_{a-}} <b \right].
\end{eqnarray*}
Integrating this equality over $[a,b]$ gives
\begin{eqnarray*}
 \log \Det_{[a,b]}(I - \beta T) & = & - \int^b_a  \frac{d}{dc}  \log \Det_{[c,b]}(I - \beta T) \, dc \\
 & = & - \E_{a} \left[ \beta^{\tau_{a-}} \,\delta_{a}(S_{\tau_{a-}}) (b- M_{\tau_{a-}})_+  \right].
\end{eqnarray*} \qed
%
%
%
\begin{lemma} \label{TIPR2}
When $K$  has the translationally invariant form (\ref{TIform}), based on a (symmetric) probability density $\rho$,
the factor $\det^{a,b}_2(K)$ from Proposition \ref{TWmanip} has the following probabilistic representation, 
recalling $\beta_p = 4 p(1-p)$:  when $p \neq \frac12 \; \mbox{or} \; 1$
\begin{eqnarray*}
 \det_2^{a,b}(K) 
&=&  \Bigl( 1 + \frac{2p}{2p-1} \E_a [
\beta_p^{\tau_{(a,b)^c} -1} -1]\Bigr) \\
&& \hspace{.2in} 
\left( 1 +  \frac{1}{2(1-p)} 
\left( \E_a [\beta_p^{\tau_{b+}}; \tau_{b+}< \tau_{a-}] -   \E_a [\beta_p^{\tau_{a-}}; \tau_{a-}< \tau_{b+}] \right) \right). \\
\end{eqnarray*}
Also $\det^{a,b}_2(K) = 2 \Pp_a[ \tau_{b+}< \tau_{a-}]$ when $p = \frac12$ and 
 $\det^{a,b}_2(K) = 4 \Pp_a[ \tau_{b+} =1]^2$ when $p=1$. 
\end{lemma}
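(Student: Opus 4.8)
The plan is to feed the scalar kernel $K$ of \eqref{TIform} through Proposition \ref{TWmanip} --- which applies, since the resolvent hypothesis was checked in the paragraph just before the lemma --- and to evaluate the resulting $2\times 2$ determinant $\det^{a,b}_2(K)$. With $R=(I-\beta_pT_{a,b})^{-1}$ and $\beta_p=4p(1-p)$, this reduces through \eqref{smalldetfns} to computing the four numbers $RK(\cdot,a)(a),\ RK(\cdot,a)(b),\ RK(\cdot,b)(a),\ RK(\cdot,b)(b)$, and it is cleanest to work with $u:=RK(\cdot,a)+RK(\cdot,b)$ and $w:=RK(\cdot,a)-RK(\cdot,b)$ on $[a,b]$.

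First I would record, from \eqref{TIform} and the evenness of $\rho$, the identities on $[a,b]$: $K(x,c)=\E_x[\sgn(S_1-c)]$ for $c\in\{a,b\}$, so that $K(\cdot,a)-K(\cdot,b)=2\,T_{a,b}\mathbf{1}$ (with $\mathbf{1}$ the constant function $1$) and $K(\cdot,a)+K(\cdot,b)=2\big(\Pp_{\cdot}[S_1>b]-\Pp_{\cdot}[S_1<a]\big)$. Writing $\tau:=\tau_{(a,b)^c}=\tau_{a-}\wedge\tau_{b+}$, I apply $R$ to the first identity: the resolvent relation $R\,\beta_pT_{a,b}=R-I$ gives $w=\frac{2}{\beta_p}(R\mathbf{1}-\mathbf{1})$, and the Kac-type expansion $R\mathbf{1}(x)=\sum_{k\ge0}\beta_p^k\Pp_x[\tau>k]=\E_x\big[(1-\beta_p^{\tau})/(1-\beta_p)\big]$ yields $w(x)=\frac{2}{1-\beta_p}\big(1-\E_x[\beta_p^{\tau-1}]\big)$ for $p\neq\fr$ (and $w(x)=2(\E_x[\tau]-1)$ for $p=\fr$). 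Applying $R$ to the second identity, one expands the Neumann series $R=\sum_k\beta_p^kT_{a,b}^k$, uses $T^k_{a,b}g(x)=\E_x[g(S_k);\,\tau>k]$, and observes that on $\{\tau>k\}$ the event $\{S_{k+1}>b\}$ (resp.\ $\{S_{k+1}<a\}$) is the first-exit event $\{\tau=k+1,\ S_\tau>b\}$ (resp.\ $\{\tau=k+1,\ S_\tau<a\}$), so the sum telescopes to $u(x)=2\,\E_x\big[\beta_p^{\tau-1}\big(\I(\tau_{b+}<\tau_{a-})-\I(\tau_{a-}<\tau_{b+})\big)\big]$. Interchanging sum and expectation is justified by $\sum_k\beta_p^k\Pp_x[\tau>k]\le\E_x[\tau]<\infty$, the exit time of a non-degenerate random walk from a bounded interval being a.s.\ finite.

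Next I would use the reflection $\iota(x)=a+b-x$. Since $\rho$ is even, $T_{a,b}$ --- and hence $R$ --- commutes with conjugation by $\iota$, while $K(\cdot,a)^{\iota}=-K(\cdot,b)$ on $[a,b]$ (and symmetrically $K(\cdot,b)^{\iota}=-K(\cdot,a)$); therefore $w$ is $\iota$-invariant, giving $w(b)=w(a)$, and $u$ is $\iota$-odd, giving $u(b)=-u(a)$. Substituting $RK(\cdot,a)=\frac12(u+w)$ and $RK(\cdot,b)=\frac12(u-w)$ into \eqref{smalldetfns} and evaluating at $a$ and $b$ gives $k^{(1)}(a)-k^{(1)}(b)=p\,u(a)$, $k^{(1)}(a)+k^{(1)}(b)=p(1-2p)w(a)$, and --- the key algebraic point --- $k^{(2)}(a)=k^{(1)}(b)$, $k^{(2)}(b)=k^{(1)}(a)$, so that $\det^{a,b}_2(K)=(1+k^{(1)}(a))^2-k^{(1)}(b)^2=(1+p(1-2p)w(a))(1+p\,u(a))$. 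Inserting the formulas for $u(a),w(a)$ and using $1-\beta_p=(2p-1)^2$, $2p/\beta_p=1/(2(1-p))$, $2p(1-2p)/(2p-1)^2=-2p/(2p-1)$, together with $\beta_p^{\tau-1}\I(\tau_{b+}<\tau_{a-})=\beta_p^{-1}\beta_p^{\tau_{b+}}\I(\tau_{b+}<\tau_{a-})$ (and likewise for $\tau_{a-}$), reproduces exactly the stated product for $p\neq\fr,1$.

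It then remains to handle the two degenerate parameters. At $p=\fr$ the coefficient $1-2p$ vanishes while $w(a)$ stays finite, so the first factor is $1$; since $\beta_p=1$ makes $u(a)=2(\Pp_a[\tau_{b+}<\tau_{a-}]-\Pp_a[\tau_{a-}<\tau_{b+}])$ and $\Pp_a[\tau_{b+}<\tau_{a-}]+\Pp_a[\tau_{a-}<\tau_{b+}]=1$ (the walk a.s.\ leaves $(a,b)$, and being absolutely continuous it does so through exactly one endpoint), this gives $\det^{a,b}_2(K)=2\Pp_a[\tau_{b+}<\tau_{a-}]$. At $p=1$ we have $\beta_p=0$ and $R=I$, so $u=K(\cdot,a)+K(\cdot,b)$ and $w=K(\cdot,a)-K(\cdot,b)$ are the explicit one-step functions above; evaluating at $a,b$ (using $K(a,a)=K(b,b)=0$ and $\Pp_a[S_1<a]=\fr$) simplifies both factors to $2\Pp_a[S_1>b]=2\Pp_a[\tau_{b+}=1]$, so $\det^{a,b}_2(K)=4\Pp_a[\tau_{b+}=1]^2$. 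The main obstacle is the bookkeeping of the third paragraph --- collapsing the four matrix entries onto the pair $(u(a),w(a))$, spotting the anti-diagonal symmetry $k^{(2)}(a)=k^{(1)}(b),\ k^{(2)}(b)=k^{(1)}(a)$ that factors the $2\times2$ determinant, and carrying the algebra through the $(2p-1)^2$ cancellations --- together with cleanly isolating the removable singularity at $p=\fr$; the probabilistic telescoping and the convergence justifications are routine.
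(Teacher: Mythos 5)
Your proposal is correct and takes essentially the same route as the paper's proof: both decompose $K(\cdot,a)$, $K(\cdot,b)$ into the transition kernel acting on the indicators of $(-\infty,a]$, $(a,b)$, $[b,\infty)$, identify $R$ applied to each piece as an expectation of $\beta_p^{\tau}$-type functionals of the walk killed on exiting $(a,b)$, and exploit the evenness of $\rho$ to transfer values at $b$ to values at $a$ before substituting into the $2\times2$ determinant. Your $(u,w)$ bookkeeping, the reflection operator, and the anti-diagonal symmetry $k^{(2)}(a)=k^{(1)}(b)$, $k^{(2)}(b)=k^{(1)}(a)$ simply make explicit the "after some manipulation" step the paper leaves to the reader, and your handling of $p=\tfrac12$ and $p=1$ agrees with the stated special cases.
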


\noindent
\textbf{Proof.}
We rewrite the functions $k^{(1)},k^{(2)}$ that define $\det^{a,b}_2(K)$ in terms of the kernel 
$T(x,y) = \phi(y-x)$. Using the form  (\ref{TIform}) and the symmetry of $\rho$,
\begin{eqnarray*}
K(x,a) & = &  -2 \int^{a-x}_0 \rho(z) dz \\
& = &  \int^{\infty}_a \rho(z-x) dz - \int^a_{-\infty} \rho(z-x) dz \\
& = &  T \I_{(a,b)}(x) +T \I_{[b,\infty)}(x)  - T \I_{(-\infty,a]}(x).
\end{eqnarray*}
Similarly $K(x,b) = - T \I_{(-\infty,a]}(x) - T \I_{(a,b)}(x)  + T\I_{[b,\infty)}(x)$. 
Also, for $n \geq 0$, 
\[
T_{a,b}^n T\I_{(-\infty,a]}(x) = \Pp_x[ \tau_{(a,b)^c} = n+1, S_{n+1} < a]
\]
so that, using $R = \sum_{n=0}^{\infty} \beta_p^n T_{a,b}^n$, 
\[
R T\I_{(-\infty,a]}(x)   = \E_x [ \beta_p^{ \tau_{(a,b)^c}-1}; S_{(a,b)^c} < a]
= \E_x [ \beta_p^{\tau_{a-}-1}; \tau_{a-}< \tau_{b+}].
\]
Similarly $R T \I_{[b,\infty)}(x)   = \E_x [ \beta_p^{\tau_{b+}-1}; \tau_{b+}< \tau_{a-}]$. Also
$T_{a,b}^n T \I_{[a,b]}(x) = \Pp_x[ \tau_{(a,b)^c} \geq n+2]$ so that 
\[
\mbox{$R T \I_{[a,b]}(x) = \frac{1}{\beta_p-1} \E_x [ \beta_p^{\tau_{(a,b)^c} -1}-1]$ when $p \neq \frac12$.}
\]
The symmetry of $\rho$ allows us to rewrite
\[
\E_b [ \beta_p^{\tau_{a-}}; \tau_{a-}< \tau_{b+}] =  \E_a [ \beta_p^{\tau_{b+}}; \tau_{b+} < \tau_{a-} ], \quad
\E_b [ \beta_p^{\tau_{b+}}; \tau_{b+}< \tau_{a-}] = \E_a [ \beta_p^{\tau_{a-}}; \tau_{a-} < \tau_{b+} ],
\]
as well as $ \E_a [ \beta_p^{\tau_{(a,b)^c}}] = \E_b [ \beta_p^{\tau_{(a,b)^c}}]$. 
The lemma follows, after some manipulation, by substituting the above representations
into the expressions given in Proposition \ref{TWmanip} for $k^{(1)},k^{(2)}$ and then $\det_2^{a,b}(K)$.
\qed

\subsection{Asymptotics} \label{s3.3}
We will derive the asymptotics in Theorem \ref{pfaffian_czego}. These rely on some 
classical results
about general random walks 
which we recall here. We include some derivations since we will need slight variants
in section \ref{s4} for the non translationally invariant results. The identities below hold for 
walks whose steps have a density $\rho$; we state explicitly when in addition they require 
symmetry and/or continuity of $\rho$. 
\subsubsection{Random Walk Results} \label{s3.3.1}
\paragraph*{Overshoots} 
Many of the classical results we need follow from the fact that, when the walk starts from
the  origin, the joint law of $(\tau_{0+},S_{\tau_{0+}})$ can be calculated 
in terms of the step distribution.
Indeed, supposing only that the step distribution has a density $\rho$, 
\begin{equation} \label{WH}
1- \E_0 \left[ \beta^{\tau_{0+}} e^{ik S_{\tau_{0+}}} \right] = 
\exp \big( - \sum_{n=1}^{\infty} \frac{\beta^n}{n} \int^{\infty}_0  e^{ikx} \rho^{*n}(x) dx\big) 
\quad \mbox{for $k \in \mathbb{R}, 0 \leq \beta <1$,}
\end{equation}
(see Lemma 1 of XVIII.3 from Feller \cite{feller}).  

We use various consequences of this joint law. Choosing $k=0$ and letting $\beta \uparrow 1$ yields
the entrance probability
\begin{equation} \label{entranceprob}
\Pp_0 [ \tau_{0+} = \infty] = \exp \big( - \sum_{n=1}^{\infty} \frac{1}{n}\Pp_0 [ S_n >0] \big). 
\end{equation}
When $\rho$ is in addition symmetric, one has 
\textbf{Sparre Andersen's formula} (Theorem 1 in Section XII.7 of \cite{feller})
\begin{equation}
\E_0 [\beta^{\tau_{0+}}] = 1 - \sqrt{1-\beta} \quad \mbox{for $\beta \in [0,1]$.}  \label{andersen}
\end{equation}
When $\E_0[S_1]>0$ and $\E_0[S_1^2] < \infty$ one has
\begin{equation} \label{overlapmean}
\E_0[S_{\tau_{0+}}] = \E_0[S_1]\exp \big( \sum_{n=1}^{\infty} \frac{1}{n}\Pp_0 [ S_n < 0] \big)
= \E_0[S_1]/\Pp_0[\tau_{0-} = \infty].
\end{equation}
When $\rho$ is symmetric, this is replaced by 
\textbf{Spitzer's formula} (Theorem 1  in Section XVIII.5 of \cite{feller}):
if $\sigma^2  = \E_0[S_1^2]  < \infty$ then
\begin{equation}
\E_0 [S_{\tau_{0+}}] = \frac{\sigma}{\sqrt{2}}. \label{spitzer}
\end{equation}
We give a derivation of (\ref{overlapmean}) since we do not find it in \cite{feller}. We can rewrite (\ref{WH})
as
\begin{eqnarray*}
1- \E_0 \left[ \beta^{\tau_{0+}} e^{ik S_{\tau_{0+}}} \right] &=&
\exp \big( - \sum_{n=1}^{\infty} \frac{\beta^n}{n} \int^{\infty}_{-\infty}  e^{ikx} \rho^{*n}(x) dx
+ \sum_{n=1}^{\infty} \frac{\beta^n}{n} \int^{0}_{-\infty}  e^{ikx} \rho^{*n}(x) dx \big) \\
&=&  (1- \beta \E_0[e^{ik S_1}]) \exp \big(\sum_{n=1}^{\infty} \frac{\beta^n}{n} \int^{0}_{-\infty}  e^{ikx} \rho^{*n}(x) dx \big).
\end{eqnarray*}
Differentiating in $k$ and then setting $k=0$ yields
\[
\E_0 \left[ \beta^{\tau_{0+}} S_{\tau_{0+}} \right]
= \exp \big(\sum_{n=1}^{\infty} \frac{\beta^n}{n} \Pp_0[S_n < 0] \big)
\big( \beta \E_0[S_1] - (1-\beta) \sum_{n=1}^{\infty} \frac{\beta^n}{n}
\int^0_{-\infty} x \rho^{*n}(x) dx \big).
\]
The positive mean $\E_0[S_1]>0$ and finite variance imply that
$\frac{1}{n} \int^0_{-\infty} x \rho^{*n}(x) dx \to 0$ and letting $\beta \uparrow 1$ leads to (\ref{overlapmean}).

\paragraph*{Cyclic symmetry} We use several formulae whose 
proofs exploit cyclic symmetry of the increments of the walk. 
For these, we suppose $\rho$ is both symmetric and continuous. 
The first (which can also be derived from (\ref{WH})) is 
\begin{equation}
 \E_0 [\delta_0(S_n); \tau_{0+} = n]  = \frac{1}{n} \, \E_0 [\delta_0(S_{n})].
 \label{CS1} 
 \end{equation}
We give a direct proof using cyclic symmetry since we apply this technique on other similar identities.
Let $(\mathcal{X}_0,\ldots, \mathcal{X}_{n-1})$ be the first $n$ increments of the walk, that is 
$S_k = \sum_{j=1}^k \mathcal{X}_{j-1}$. Let $S^{(p)}$, for $p=0,1,\ldots,n-1$, be the $n$-step random walk constructed from the 
same increments $(\mathcal{X}_0,\ldots, \mathcal{X}_{n-1})$ but with a cyclical permutation of the increments: 
that is $S^{(p)}_0 = 0$ and
\begin{equation*}
 S_k^{(p)} = \sum_{j=1}^k \mathcal{X}_{p \oplus (j -1)} \quad \mbox{for $1 \leq k \leq n$}
\end{equation*}
where $p \oplus (j-1)$ is addition modulo $n$. Note that  $(S^{(0)}_k)$ coincides with the original walk $(S_k)$. 
Moreover $S^{(p)}_n = S_n$ is independent of $p$. Furthermore 
\begin{equation} \label{cyclickey}
S^{(p)}_k = S_{p \oplus k} - S_{p} \quad \mbox{for all $k,p$ whenever $S_n =0$.} 
\end{equation}
Let $\tau^{(p)}_{0+} = \inf\{k \geq 1:  S_k^{(p)} > 0\}$. 
The law of each of the $(S_k^{(p)})_{0 \leq k \leq n}$ is identical so that
\begin{eqnarray*}
 \E_0 [\I(\tau_{0+} = n) \delta_0(S_n)] 
& = & 
 \frac{1}{n} \sum_{p = 0}^{n-1}  \E_0 [\I(\tau^{(p)}_{0+} = n) \delta_0(S^{(p)}_n)]  \\
 & = &   \frac{1}{n} \sum_{p = 0}^{n-1}  \E_0 [\I(\tau^{(p)}_{0+} = n) \delta_0(S^{(0)}_n)].
\end{eqnarray*}
The proof of (\ref{CS1}) is completed by noting that the sum $\sum_{p=0}^{n-1} \I(\tau^{(p)}_{0+} = n)= 1$ 
almost surely; this follows from (\ref{cyclickey}) because $\{\tau_{0+}^{(p)} = n\}$ holds if and only if 
the index $p$ is chosen such that $S_p$ is the global maximum of the random walk $(S_0, S_1, \ldots, S_{n-1})$
(this global maximum is almost surely unique since the increments have a continuous density).

Recall that $M_n := \max\{S_k: 1 \leq k \leq n\}$.  A lemma from \cite{Kac} (or
see the short proof, also based on cyclic symmetry, in the appendix of \cite{fitzgerald2020sharp}) states that 
for all $n$
 \begin{equation} \label{kac_eq_3}
\E_0 [M_{n}  \delta_0(S_{n})] =  \Kac_{\rho}(n) : = \frac{n}{2} \int_0^{\infty} x 
 \sum_{k=1}^{n-1} \frac{\rho^{*k}(x)\rho^{*(n-k)}(x)}{k(n-k)} dx
\end{equation}
where the right hand side is taken as zero for $n = 0$ or $n=1$. We call this 
\textbf{Kac}'s formula, as it was originally derived in Kac's work on Fredholm determinants
and it enters all our asymptotics. 

One final consequence of cyclic symmetry:  let $m_n := \min\{S_k: 1 \leq k \leq n\}$, then 
 \begin{equation}
 \E_0 [\min \{L, M_{n}\} \delta_0(S_n); \tau_{0-}= n] =
 \frac{1}{n} \, \E_0 [\min \{L, M_{n} - m_{n}\} \delta_0(S_{n})] \label{CS2}
\end{equation}
and therefore, letting $L \uparrow \infty$ and using the symmetry of $\rho$, 
 \begin{equation}
 \E_0 [M_{n} \delta_0(S_n); \tau_{0-}= n] =
 \frac{2}{n} \, \E_0 [M_{n} \delta_0(S_{n})]. \label{CS3}
\end{equation}
To prove (\ref{CS2}) note that 
\begin{eqnarray*}
 && \hspace{-.4in} \E_0 [\min \{L, M_{n}\} \delta_0(S_n) \I(\tau_{0-} = n)] \\
 & = & 
 \E_0 [\min\{L, M_{n}-m_n\} \delta_0(S_n) \I(\tau_{0-} = n)] \\
 & = & \frac{1}{n} 
 \sum_{p=0}^{n-1} \E_0 [\min \{L, M_{n}^{(p)}-m_n^{(p)}\} \I(\tau^{(p)}_{0-} = n)
 \delta_0(S_n) ]
\end{eqnarray*}
When $S_n = 0$ then, using (\ref{cyclickey}), 
\[
M_n^{(p)} - m_n^{(p)} = \max_{1 \leq k \leq n}  (S_{p \oplus k} - S_{p}) - \min_{1 \leq k \leq n}  (S_{p \oplus k} - S_{p})
= M_n - m_n
\]
is constant in $p$ and as above that there is exactly one value of $p$ with 
$\{\tau_{0-}^{(p)} = n\}$, establishing (\ref{CS2}). 
\subsubsection{Asymptotics for $p \in (0,\frac12)$} \label{s3.3.2}
%
Combining Proposition \ref{TWmanip} and Lemmas \ref{TIPR1} and \ref{TIPR2} for the interval $[0,L]$
we have the probabilistic representation (recall $\beta_p = 4p(1-p)$)
\begin{eqnarray}
&& \hspace{-.3in}  2 \log \Pf_{[0,L]} (\mathbf{J} - p \mathbf{K}) \nonumber \\
&  =  &  \log \Det_{[0,L]} (I + 2p(1-p) D_2K) +  \log \det^{0,L}_2(K)  \nonumber \\
& = &  -  \E_{0} [ \beta_p^{\tau_{0-}} \delta_{0}(S_{\tau_{0-}}) (L- M_{\tau_{0-}})_+ ] 
\nonumber  \\
&& \hspace{.1in} + \log \Bigl( 1 + \frac{2p}{2p-1} \E_0 [
\beta_p^{\tau_{(0,L)^c} -1} -1]\Bigr)   \nonumber \\
&&  \hspace{.1in} + \log \left( 1 +  \frac{1}{2(1-p)} \left(\E_0 [\beta_p^{\tau_{L+}}; \tau_{L+}< \tau_{0-}] -  
\E_0 [\beta_p^{\tau_{0-}}; \tau_{0-}< \tau_{L+}] \right) \right). \label{123}
\end{eqnarray}
We split the first term in (\ref{123}) using the identity $(L-M)_+ = L - \min\{L,M\}$ as follows
\begin{eqnarray}
&& \hspace{-.3in} \E_{0} [ \beta_p^{\tau_{0-}} \delta_{0}(S_{\tau_{0-}}) (L- M_{\tau_{0-}})_+ ] \nonumber \\
& = &  L \E_{0} \left[ \beta_p^{\tau_{0-}} \delta_{0}(S_{\tau_{0-}}) \right]
- \E_{0} [ \beta_p^{\tau_{0-}}  \delta_{0}(S_{\tau_{0-}}) \min\{L,M_{\tau_{0-}} \} ] \nonumber \\
& = &   L \E_{0} [ \beta_p^{\tau_{0-}} \delta_{0}(S_{\tau_{0-}}) ]
- \E_{0} [ \beta_p^{\tau_{0-}}  \delta_{0}(S_{\tau_{0-}}) M_{\tau_{0-}} ] + o(1) \label{FDp<1/2}
\end{eqnarray}
as $L \to \infty$, where the $o(1)$ asymptotic follows by monotone 
convergence, provided that $\E_{0} \left[ \beta_p^{\tau_{0-}}  \delta_{0}(S_{\tau_{0-}}) M_{\tau_{0-}} \right] $
is finite, which follows from the explicit finite formula below. 

For the second and third terms in (\ref{123}) we use Sparre Andersen's formula (\ref{andersen})
to see that $\E_0 [\beta_p^{\tau_{0-}}] = 1 - \sqrt{1-\beta_p} = 2p$ when $p < \frac12$. Thus we can write 
\[
\E_0 [\beta_p^{\tau_{0-}}; \tau_{0-} < \tau_{L+}]
= \E_0 [\beta_p^{\tau_{0-}}] + o(1) = 2p  + o(1).
\]
Similarly $\E_0 [\beta_p^{\tau_{L+}}; \tau_{L+} < \tau_{0-}]  = o(1)$ and $\E_0 [\beta_p^{\tau_{(0,L)^c}}]
= \E_0 [\beta_p^{\tau_{0-}}] + o(1)$. 

Substituting in all these asymptotics into (\ref{123}) we reach
\begin{eqnarray} 
&& \hspace{-.5in} 2 \log \Pf_{[0,L]} (\mathbf{J} - p \mathbf{K}) \nonumber \\
& = & -L \E_{0} [ \beta_p^{\tau_{0-}} \delta_{0}(S_{\tau_{0-}}) ]
+ \E_{0} [ \beta_p^{\tau_{0-}}  \delta_{0}(S_{\tau_{0-}}) M_{\tau_{0-}} ] + \log \left(\frac{1-2p}{(1-p)^2}\right) + 
o(1)  \label{probrep1}
\end{eqnarray}
which gives a probabilistic formula for the constants $\kappa_1(p),\kappa_2(p)$ 
when $p<\frac12$. Also
\begin{equation} \label{ka1form}
\E_{0} [ \beta_p^{\tau_{0-}} \delta_{0}(S_{\tau_{0-}}) ] = \sum_{n=1}^{\infty} \beta_p^n  
\E_{0} \left[ \delta_{0}(S_n); \tau_0 = n \right] =  \sum_{n=1}^{\infty} \frac{\beta_p^n}{n}  
\rho^{*n}(0)
\end{equation}
using the cyclic symmetry formula (\ref{CS1}). This  gives the form for $\kappa_1(p)$ 
stated in Theorem \ref{pfaffian_czego}. For $\kappa_2(p)$ we use 
the cyclic symmetry (\ref{CS3}) and then Kac's formula \eqref{kac_eq_3} as follows: 
\begin{eqnarray}
 \E_0 [ \beta_p^{\tau_{0-}} M_{\tau_{0-}} \delta_0(S_{\tau_{0-}})] 
 &  = & \sum_{n=1}^{\infty} \beta_p^n  \E_0 [M_{n} \delta_0(S_n); \tau_{0-}= n] \nonumber \\
 & = & \sum_{n=1}^{\infty} \frac{2\beta_p^n}{n} \E_0[M_{n} \delta_0(S_{n})] \nonumber \\
 & = & \sum_{n=1}^{\infty} \beta_p^n \int_0^{\infty} x 
 \sum_{k=1}^{n-1} \frac{\rho^{*k}(x)\rho^{*(n-k)}(x)}{k(n-k)} dx \nonumber \\
 & = &  \int_0^{\infty} x \left( \sum_{n=1}^{\infty} \frac{\beta_p^n  \rho^{*n}(x)}{n} \right)^2 dx. \label{<1/2k2}
\end{eqnarray}
This gives the form for $\kappa_2(p)$ stated in Theorem \ref{pfaffian_czego}.
The expression is finite since we assumed $\rho$ was bounded, so that
we may bound $\sup_n \|\rho^{*n}\|_{\infty} \leq \|\rho\|_{\infty} < \infty$, and that $\rho$ has  
first moment so that $\int^{\infty}_0 x \rho^{*n}(x) dx \leq Cn$ for all $n$. 
\subsubsection{Asymptotics for $p \in (\frac12,1)$} \label{s3.3.3}
The identity (\ref{123}) holds for  $p \in (\frac12,1)$, and the asymptotic for the Fredholm determinant
(\ref{FDp<1/2}) still holds. The asymptotic for the small determinant $\det_2^{0,L}(K)$ is more 
complicated, and contributes to the leading term $O(L)$. We use Sparre Andersen's formula (\ref{andersen})
to see that $\E_0 [\beta_p^{\tau_{0-}}] = 1 - \sqrt{1-\beta_p} = 2(1-p)$ when $p > \frac12$. 
This allows us to rewrite the final two terms in (\ref{123}) using
\begin{eqnarray}
&& \hspace{-.4in} 1 +  \frac{1}{2(1-p)} \left(\E_0 [\beta_p^{\tau_{L+}}; \tau_{L+}< \tau_{0-}] -  
\E_0 [\beta_p^{\tau_{0-}}; \tau_{0-}< \tau_{L+}] \right) \nonumber\\
& = &  1 +  \frac{1}{2(1-p)} \left(\E_0 [\beta_p^{\tau_{L+}}; \tau_{L+}< \tau_{0-}] -  
\E_0 [\beta_p^{\tau_{0-}}]  +
\E_0 [\beta_p^{\tau_{0-}}; \tau_{L+}< \tau_{0-}] \right) \nonumber\\
& = & \frac{1}{2(1-p)} \left(\E_0 [\beta_p^{\tau_{L+}}; \tau_{L+}< \tau_{0-}] +
\E_0 [\beta_p^{\tau_{0-}}; \tau_{L+}< \tau_{0-}] \right) \label{2003}
\end{eqnarray}
and 
\begin{eqnarray}
&& \hspace{-.4in}  1 + \frac{2p}{2p-1} \E_a [\beta_p^{\tau_{(0,L)^c} -1} -1]  \nonumber \\
 & = &   \frac{2p}{(2p-1)\beta_p} \left(\E_0 [\beta_p^{\tau_{L+}}; \tau_{L+}< \tau_{0-}] + 
\E_0 [\beta_p^{\tau_{0-}}]  -
\E_0 [\beta_p^{\tau_{0-}}; \tau_{L+}< \tau_{0-}] \right)  
- \frac{1}{2p-1}
\nonumber \\
 & = & \frac{1}{2(2p-1)(1-p)} \Bigl( \E_0 [\beta_p^{\tau_{L+}}; \tau_{L+} < \tau_{0-}]  
 - \E_0 [\beta_p^{\tau_{0-}}; \tau_{L+} < \tau_{0-}] \Bigr).  \label{2004}
\end{eqnarray}
Note that 
\begin{eqnarray}
 0 \leq \E_0 [\beta_p^{\tau_{0-}}; \tau_{L+} < \tau_{0-}] & = &  
 \E_0 \left[\beta_p^{\tau_{L+}} \E_{S_{\tau_L}}[\beta_p^{\tau_{0-}}]; \tau_{L+} < \tau_{0-} \right] \nonumber \\
 & \leq &  \E_0 [\beta_p^{\tau_{L+}}; \tau_{L+} < \tau_{0-} ] \sup_{x \geq L} \E_x [\beta_p^{\tau_{0-}}] \nonumber \\
   & = &  \E_0 [\beta_p^{\tau_{L+}}; \tau_{L+} < \tau_{0-} ] \sup_{x \leq 0} \E_x [\beta_p^{\tau_{L+}}]. \label{littleo}
\end{eqnarray}
An exact calculation shows that $\Pp_x[ \tau_{L+} =k] \to 0$ as $L \to \infty$ uniformly over $x \leq 0$. This implies that
$ \sup_{x \leq 0} \E_x [\beta_p^{\tau_{L+}}] \to 0$ and hence that we need the asymptotics only for one part of the terms
(\ref{2003}) and (\ref{2004}). Using this, (\ref{123}) can be rewritten as
\begin{eqnarray}
&& \hspace{-.3in}  2 \log \Pf_{[0,L]} (\mathbf{J} - p \mathbf{K}) \nonumber \\
& = &  -  \E_{0} [ \beta_p^{\tau_{0-}} \delta_{0}(S_{\tau_{0-}}) (L- M_{\tau_{0-}})_+ ] 
\nonumber  \\
&& \hspace{.1in} + 2 \log \E_0 [\beta_p^{\tau_{L+}}; \tau_{L+} < \tau_{0-}] - \log(4(2p-1)(1-p)^2) + o(1). \label{2005}
\end{eqnarray}
It remains only to find the asymptotics of $\E_0 [\beta_p^{\tau_{L+}}; \tau_{L+} < \tau_{0-}]$, which are as follows. 
\begin{lemma} \label{finalconstant}
Suppose  there exists $\phi_p>0$ so that $\beta_p \int e^{\phi_p x} \rho(x)dx = 1$ and that
$\int |x| e^{\phi_p x} \rho(x)dx < \infty$.
 Let $\Pp^{(p)}_x, \E^{(p)}_x$ be the tilted probability and expectation, where the random walk
$(S_n)$ has i.i.d. increments under the tilted density $\rho^{(p)}(x) = \beta_p \exp(\phi_p x) \rho(x) dx$. 
Then
\begin{equation} \label{olegf}
\lim_{L \to \infty}  e^{\phi_p L} \, \E_0 [\beta_p^{\tau_{L+}}; \tau_{L+} < \tau_{0-} ] = 
 \frac{\sqrt{1-\beta_p}}{\phi_p \E^{(p)}_0[S_1]} 
\left(\Pp^{(p)}_0 [ \tau_{0-} = \infty]\right)^2.
\end{equation}
\end{lemma}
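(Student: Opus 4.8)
The plan is a Cramér-type exponential change of measure followed by renewal-theoretic control of the overshoot on the survival event. Since the Radon--Nikodym derivative of the tilted walk satisfies $\mathrm{d}\Pp^{(p)}_0/\mathrm{d}\Pp_0\big|_{\mathcal F_n} = \beta_p^n e^{\phi_p S_n}$, and $\{\tau_{L+}<\tau_{0-}\}$ is a disjoint union over $n$ of $\mathcal F_n$-events on which $\tau_{L+}=n<\infty$, summing over $n$ the identity $\E_0[\beta_p^n \mathbf 1_E]=\E^{(p)}_0[e^{-\phi_p S_n}\mathbf 1_E]$ for $E\in\mathcal F_n$ gives $\E_0[\beta_p^{\tau_{L+}};\tau_{L+}<\tau_{0-}] = \E^{(p)}_0[e^{-\phi_p S_{\tau_{L+}}};\tau_{L+}<\tau_{0-}]$. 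Writing $R_L := S_{\tau_{L+}}-L\ge 0$ for the overshoot, this equals $e^{-\phi_p L}\,\E^{(p)}_0[e^{-\phi_p R_L};\tau_{L+}<\tau_{0-}]$, so the claim reduces to identifying $\lim_L \E^{(p)}_0[e^{-\phi_p R_L};\tau_{L+}<\tau_{0-}]$. Convexity of $\phi\mapsto\beta_p\int e^{\phi x}\rho(x)\mathrm{d}x$ (value $\beta_p<1$ at $0$, value $1$ at $\phi_p>0$) forces its derivative at $\phi_p$, namely $\E^{(p)}_0[S_1]$, to be strictly positive; hence under $\Pp^{(p)}_0$ the walk has positive drift, so $\tau_{L+}<\infty$ a.s., $S_n\to+\infty$ a.s., and $\Pp^{(p)}_0[\tau_{0-}=\infty]>0$.

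Next I would reduce to the survival event. The events $\{\tau_{L+}<\tau_{0-}\}$ decrease in $L$ (since $\tau_{L+}\le\tau_{L'+}$ for $L\le L'$) and $\bigcap_{L}\{\tau_{L+}<\tau_{0-}\}=\{\tau_{0-}=\infty\}$ $\Pp^{(p)}_0$-a.s.; as $0\le e^{-\phi_p R_L}\le 1$, the difference $\E^{(p)}_0[e^{-\phi_p R_L};\tau_{L+}<\tau_{0-}]-\E^{(p)}_0[e^{-\phi_p R_L};\tau_{0-}=\infty]$ is bounded by $\Pp^{(p)}_0[\tau_{L+}<\tau_{0-},\,\tau_{0-}<\infty]\to 0$. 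So it suffices to show $\E^{(p)}_0[e^{-\phi_p R_L};\tau_{0-}=\infty]\to \Pp^{(p)}_0[\tau_{0-}=\infty]\,c_\infty$, where $c_\infty:=\lim_{L}\E^{(p)}_0[e^{-\phi_p R_L}]$. The latter limit is delivered by the renewal theorem applied to the strictly ascending ladder-height process of $(S_n)$ under $\Pp^{(p)}_0$ — a non-lattice renewal sequence (as $\rho$ has a density) with positive finite mean $\mu_+:=\E^{(p)}_0[S_{\tau_{0+}}]$: the overshoot $R_L$ converges in law to the equilibrium distribution with tail $x\mapsto\mu_+^{-1}\Pp^{(p)}_0[S_{\tau_{0+}}>x]$, whence $c_\infty = \mu_+^{-1}\phi_p^{-1}\big(1-\E^{(p)}_0[e^{-\phi_p S_{\tau_{0+}}}]\big)$.

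The heart of the matter is the factorisation $\E^{(p)}_0[e^{-\phi_p R_L};\tau_{0-}=\infty]\to \Pp^{(p)}_0[\tau_{0-}=\infty]\,c_\infty$, which I would extract from a two-parameter limit. Fix a truncation $A>0$, cut the path at the stopping time $\sigma_A:=\tau_{(L-A)+}$ (finite a.s.), at which $S_{\sigma_A}=L-A+R_{L-A}$. On $\{\tau_{0-}>\sigma_A\}\cap\{R_{L-A}<A\}$ the walk has crossed neither $0$ nor $L$ by time $\sigma_A$, and by the strong Markov property the post-$\sigma_A$ increments form a fresh copy of the walk; on this event $\{\tau_{0-}=\infty\}$ is the event that the fresh walk never drops below $-S_{\sigma_A}$ and $R_L$ is its overshoot over level $A-R_{L-A}$, so $\E^{(p)}_0[e^{-\phi_p R_L}\mathbf 1(\tau_{0-}=\infty)\mid\mathcal F_{\sigma_A}] = g(A-R_{L-A},S_{\sigma_A})$ with $g(a,y):=\E^{(p)}_0[e^{-\phi_p R_a};\inf_n S_n>-y]$, while the complementary event $\{R_{L-A}\ge A\}$ contributes at most $\Pp^{(p)}_0[R_{L-A}\ge A]$, which by the renewal limit is $O\big(\mu_+^{-1}\int_A^\infty\Pp^{(p)}_0[S_{\tau_{0+}}>x]\,\mathrm{d}x\big)$ uniformly for $L$ large, hence small as $A\to\infty$. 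Letting $L\to\infty$ with $A$ fixed: $S_{\sigma_A}\ge L-A\to\infty$, so $g(a,S_{\sigma_A})\to h_\infty(a):=\E^{(p)}_0[e^{-\phi_p R_a}]$ uniformly in $a\le A$ (the correction is bounded by $\Pp^{(p)}_0[\inf_n S_n\le -S_{\sigma_A}]\to0$); and, by standard renewal theory, the overshoot $R_{L-A}$ over the far level $L-A$ becomes asymptotically independent of the event $\{\tau_{0-}>\sigma_A\}$ (which is $\mathcal F_{\sigma_A}$-measurable and decreases to $\{\tau_{0-}=\infty\}$), with $R_{L-A}$ converging to the equilibrium law. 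This gives $\E^{(p)}_0[e^{-\phi_p R_L};\tau_{0-}=\infty]\to \Pp^{(p)}_0[\tau_{0-}=\infty]\,\E[h_\infty(A-\mathsf R)\mathbf 1(\mathsf R<A)]+\mathrm{error}(A)$, with $\mathsf R$ equilibrium-distributed; sending $A\to\infty$ and using $h_\infty(a)\to c_\infty$ completes it. Making this ``asymptotic independence of overshoot and survival'' precise and uniform enough to exchange the two limits is the step that requires genuine care.

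Finally I would pin down the constants by reversing the tilt: $\E^{(p)}_0[e^{-\phi_p S_{\tau_{0+}}}] = \E_0[\beta_p^{\tau_{0+}}]$ (using $\tau_{0+}<\infty$ $\Pp^{(p)}_0$-a.s.), which equals $1-\sqrt{1-\beta_p}$ by Sparre Andersen's formula (\ref{andersen}), so $1-\E^{(p)}_0[e^{-\phi_p S_{\tau_{0+}}}]=\sqrt{1-\beta_p}$. Moreover (\ref{overlapmean}), which applies here since the tilted walk has positive drift and integrable step, gives $\mu_+=\E^{(p)}_0[S_1]/\Pp^{(p)}_0[\tau_{0-}=\infty]$. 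Substituting both into $\Pp^{(p)}_0[\tau_{0-}=\infty]\,c_\infty=\Pp^{(p)}_0[\tau_{0-}=\infty]\,\sqrt{1-\beta_p}/(\phi_p\mu_+)$ yields exactly $\sqrt{1-\beta_p}\,(\Pp^{(p)}_0[\tau_{0-}=\infty])^2/(\phi_p\,\E^{(p)}_0[S_1])$, as claimed.
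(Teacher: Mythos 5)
Your setup and your endgame are both sound: the exponential change of measure giving $e^{\phi_p L}\,\E_0[\beta_p^{\tau_{L+}};\tau_{L+}<\tau_{0-}]=\E^{(p)}_0[e^{-\phi_p R_L};\tau_{L+}<\tau_{0-}]$, the identification of your $c_\infty$ with the constant $C_h$ via the renewal theorem for the ladder-height process, and the evaluation of the constants are all correct. Indeed your route to $1-\E^{(p)}_0[e^{-\phi_p S_{\tau_{0+}}}]=\sqrt{1-\beta_p}$ — reverse the tilt to get $\E_0[\beta_p^{\tau_{0+}}]$ and quote Sparre Andersen (\ref{andersen}) — is arguably cleaner than the paper's computation, which goes through the identity (\ref{WH}) for the tilted density at $k=i\phi_p$ together with a symmetry manipulation; your use of (\ref{overlapmean}) matches the paper's.

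The gap is the decoupling step, and that step is the whole content of the lemma. After reducing to showing $\E^{(p)}_0[e^{-\phi_p R_L};\tau_{0-}=\infty]\to c_\infty\,\Pp^{(p)}_0[\tau_{0-}=\infty]$, your truncation at $\sigma_A=\tau_{(L-A)+}$ leaves you needing $\E^{(p)}_0[F(R_{L-A});\tau_{(L-A)+}<\tau_{0-}]\to\E[F(R_\infty)]\,\Pp^{(p)}_0[\tau_{0-}=\infty]$, where $F(r)=h_\infty(A-r)\I(r<A)$ is bounded and $R_\infty$ has the stationary overshoot law. This is a statement of exactly the same type as the one you set out to prove (level $L$ replaced by $L-A$, the function $e^{-\phi_p r}$ replaced by $F$), so the appeal to ``standard renewal theory / asymptotic independence of overshoot and survival'' is circular as written: the two-parameter scheme postpones the joint limit rather than establishing it. The paper closes this point without any independence claim by never restricting to the survival event: write $\E^{(p)}_0[e^{-\phi_p R_L};\tau_{L+}<\tau_{0-}]=V(L)-\E^{(p)}_0[e^{-\phi_p R_L};\tau_{0-}\le\tau_{L+}]$, where $V(L)=\E^{(p)}_0[e^{-\phi_p R_L}]$ is the unconstrained overshoot transform, which satisfies a renewal equation and converges to $C_h$ by the renewal theorem (\ref{renewal}); then observe that $\{\tau_{0-}\le\tau_{L+}\}$ is $\mathcal F_{\tau_{0-}}$-measurable and that on this event the walk has not yet crossed $L$, so the strong Markov property at $\tau_{0-}$ gives $\E^{(p)}_0[e^{-\phi_p R_L}\mid\mathcal F_{\tau_{0-}}]=V(L-S_{\tau_{0-}})$ exactly; since $V\le 1$, $V(L-S_{\tau_{0-}})\to C_h$ pointwise on $\{\tau_{0-}<\infty\}$, and $\{\tau_{0-}\le\tau_{L+}\}$ increases to $\{\tau_{0-}<\infty\}$, dominated convergence gives the subtracted term the limit $C_h\,\Pp^{(p)}_0[\tau_{0-}<\infty]$, and the claim follows. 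Either adopt this subtraction-plus-strong-Markov argument, or supply a genuine proof of your asymptotic-independence assertion; leaving it as a citation leaves the crucial step of Lemma \ref{finalconstant} unproved.
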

Before the proof, we confirm that we have completed part (iii) of Theorem \ref{pfaffian_czego}.
The lemma, combined with (\ref{FDp<1/2}) and (\ref{2005}), gives 
\begin{eqnarray}
 && \hspace{-.3in} 2 \log \Pf_{[0,L]} (\mathbf{J} - p \mathbf{K}) \nonumber \\
 & = & 
 -  L  \E_{0} [ \beta_p^{\tau_{0-}} \delta_{0}(S_{\tau_{0-}}) ] - 2 \phi_pL 
 + \E_{0} [ \beta_p^{\tau_{0-}}  \delta_{0}(S_{\tau_{0-}}) M_{\tau_{0-}} ]   
  \nonumber \\ 
&&  - \log(4(2p-1)(1-p)^2)+ 
2 \log \left(  \frac{\sqrt{1-\beta_p}}{\phi_p \E^{(p)}_0[S_1]} 
\left(\Pp^{(p)}_0 [ \tau_{0-} = \infty]\right)^2 \right) \label{probrep2}
+ o(1)
\end{eqnarray}
which gives the probabilistic representation of $\kappa_1(p), \kappa_2(p)$ when $p \in (\frac12,1)$.
The expressions in the statement of Theorem  \ref{pfaffian_czego} emerge after 
using (\ref{ka1form}), (\ref{<1/2k2}) and the exact formula for
$\Pp^{(p)}_0 [ \tau_{0-} = \infty]$ given in (\ref{entranceprob}). 

\vspace{.1in}
\noindent
\textbf{Proof of Lemma \ref{finalconstant}.}
The process $X_n = \beta_p^n \exp(\phi_p S_n)$ is a martingale under $\Pp_0$ and 
$d \Pp_0^{(p)}/d \Pp_0 = X_n$ on $\mathcal{F}_n = \sigma(S_1,\ldots,S_n)$. Since
$\Pp_0[ \tau_{L+} <\infty] =1$ this extends to $d \Pp_0^{(p)}/d \Pp_0 = X_{\tau_{L+}}$ on $\mathcal{F}_{\tau_{L+}}$.
Hence
\begin{eqnarray*}
\E^{(p)}_0[ e^{-\phi_p S_{\tau_{L+}}}; \tau_{L+} < \tau_{0-}]
&=& \E_0 [ e^{-\phi_p S_{\tau_L+}} X_{\tau_{L+}}; \tau_{L+} < \tau_{0-}] \\
&=&  \E_0 [ \beta_p^{\tau_{L+}}; \tau_{L+} < \tau_{0-}]
\end{eqnarray*}
so that
\begin{eqnarray}
e^{\phi_p L} \, \E_0 [\beta_p^{\tau_{L+}}; \tau_{L+} < \tau_{0-} ] 
&=&  \E^{(p)}_0[ e^{-\phi_p (S_{\tau_{L+}}-L)}; \tau_{L+} < \tau_{0-}]. \label{p1234}
\end{eqnarray}
By conditioning on the value of $S_{\tau_{0+}}$ we see that $V(L) = \E^{(p)}_0[ \exp(-\phi_p (S_{\tau_{L+}}-L))]$, 
an exponential moment of the overlap at $L$,
satisfies the renewal equation
\[
V(L) = \int^L_0 V(L-z) G(dz) + h(L), \quad \mbox{for $h(x) = \int_x^{\infty} e^{-\phi_p(z-x)} G(dz)$}
\]
where $G(dz)$ is the law of the variable $S_{\tau_{0+}}$ on $[0,\infty)$ under $\Pp_0^{(p)}$.
By the renewal theorem (see \cite{durrett2019probability} Theorem 2.6.12 - $h$ is directly Riemann integrable)
\begin{equation} \label{renewal}
V(L) \to C_h := \frac{\int^{\infty}_0 h(x) dx}{\int_0^{\infty} x G(dx)} =  
\frac{\E^{(p)}_0[1-e^{-\phi_p S_{\tau_{0+}}}]}{\phi_p \E^{(p)}_0[S_{\tau_{0+}}]} \quad \mbox{as $L \to \infty$.}
\end{equation}
Conditioning (\ref{p1234}) on $\sigma(S_{\tau_{0-}})$ we see, as $L \to \infty$,
\begin{eqnarray*}
e^{\phi_p L} \, \E_0 [\beta_p^{\tau_{L+}}; \tau_{L+} < \tau_{0-} ] 
&=& V(L) -  \E^{(p)}_0[ e^{-\phi_p (S_{\tau_{L+}}-L)};  \tau_{0-} \leq \tau_{L+}] \\
& = & V(L) - \E^{(p)}_0[ V(L- S_{\tau_{0-}}); \tau_{0-} \leq \tau_{L+}] \\
& \to & C_h - C_h \Pp^{(p)}_0[ \tau_{0-} < \infty] = C_h \Pp^{(p)}_0[ \tau_{0-} = \infty].
\end{eqnarray*}
This shows the existence of the desired limit, and it remains to re-write this limit in an easier
form. The identity (\ref{WH}) holds for all $k \in \mathbb{C}$ with $\Im(k)>0$, since both sides are analytic there. 
Using this identity for the density $\rho^{(p)}$, 
choosing $k = i \phi_p$ and letting $\beta \uparrow 1$, gives
\begin{eqnarray}
1-\E^{(p)}_0 \left[ e^{- \phi_p S_{\tau_{0+}}} \right] 
&=& \exp \big( - \sum_{n=1}^{\infty} \frac{1}{n} \int^{\infty}_0  e^{- \phi_p x} (\rho^{(p)})^{*n}(x) dx\big) \nonumber \\
& = & \exp \big( - \frac12 \sum_{n=1}^{\infty} \frac{1}{n} \int^{\infty}_{-\infty}  e^{- \phi_p x} (\rho^{(p)})^{*n}(x) dx\big) \nonumber \\
& = &  \exp \big( - \frac12 \sum_{n=1}^{\infty} \frac{1}{n} \E^{(p)}_0 [ e^{-\phi_p S_n}] \big) \nonumber \\
& = & \exp( \frac12 \log(1-\E^{(p)}_0 [ e^{-\phi_p S_1}]  \big) \nonumber \\
& = & \sqrt{1-\beta_p} \label{9078}
\end{eqnarray}
where the second equality follows from an explicit calculation using $\rho^{(p)}(x) = \beta_p \exp(\phi_p x) \rho(x) dx$ 
and the symmetry of $\rho$ that shows
\[
\int^{\infty}_0  e^{- \phi_p x} (\rho^{(p)})^{*n}(x) dx = \int_{-\infty}^0  e^{- \phi_p x} (\rho^{(p)})^{*n}(x) dx.
\]
Together (\ref{9078}), (\ref{overlapmean}) and (\ref{renewal}) lead to the desired form for the limit. 
\qed
\subsubsection{Asymptotics for $p=\frac12$} \label{s3.3.4}
When  $p=\frac12$ we find, again by applying Propositions \ref{TWmanip}, \ref{TIPR1} and 
Lemma \ref{TIPR2} for the interval $[0,L]$ (and noting that the probabilistic representation for the small determinant is different),
\begin{equation}
 2 \log \Pf_{[0,L]} (\mathbf{J} - \frac12 \mathbf{K}) 
= -  \E_{0} \left[ \delta_{0}(S_{\tau_{0-}}) (L- M_{\tau_{0-}})_+  \right] 
 +  \log 2 + \log \Pp_0[ \tau_{L+} < \tau_{0-}]. 
\label{E1}
\end{equation}
The optional stopping theorem is applicable to the martingale $(S_n)_{n \geq 0}$ and the stopping time
$\tau_{0-} \wedge \tau_{L+}$ (see \cite{lawler} Lemma 5.1.3) giving 
\begin{eqnarray*}
0 &=&  \E_0[S_{\tau_{L+} \wedge \tau_{0-}}]  \\
& = & \E_0[S_{\tau_{0-}}; \tau_{0-} < \tau_L] + \E_0[S_{\tau_{L+}}; \tau_{L+} < \tau_{0-}]  \\
& = & \E_0[S_{\tau_{0-}}] - \E_0[S_{\tau_{0-}}; \tau_{L+} < \tau_{0-}]  + \E_0[(S_{\tau_{L+}}-L); \tau_{L+} < \tau_{0-}] 
+ L \Pp_0[\tau_{L+} < \tau_{0-}]. 
\end{eqnarray*}
so that
\begin{equation} \label{gambler}
 \Pp_0[\tau_{L+} < \tau_{0-}] =  \frac{-\E_0[S_{\tau_{0-}}]}{L}  + \frac{\E_0[S_{\tau_{0-}} - (S_{\tau_{L+}}-L); \tau_{L+} < \tau_{0-}]}{L}
 \leq   \frac{-\E_0[S_{\tau_{0-}}]}{L}.
\end{equation}
The overshoots $S_{\tau_{0-}}$ and $S_{\tau_{L+}}-L$ have in general less moments than the underlying step distribution.
However, Lemma 5.1.10  from \cite{lawler} shows that when $\int x^4 \rho(x) dx <\infty$ then the overshoots have finite second moment
(bounded independently of the starting point). Then, for example, 
\[
\E_0[(S_{\tau_{L+}}-L); \tau_{L+} < \tau_{0-}] \leq (\E_0[(S_{\tau_{L+}}-L)^2] )^{1/2} (\Pp_0[ \tau_{L+} < \tau_{0-}])^{1/2}, 
\]
and  we deduce from (\ref{gambler}) that $ \Pp_0[\tau_{L+} < \tau_{0-}] =  -\E_0[S_{\tau_{0-}}]/L + O(L^{-3/2})$.
Hence, using Spitzer's formula (\ref{spitzer}) for  $\E_0[S_{\tau_{0-}}]$,
\begin{equation} \label{E2}
\log \Pp_0[\tau_{L+} < \tau_{0-}] = - \log L + \log \sigma - \frac12 \log 2+ o(1).
\end{equation} 
As before we have
\begin{equation} \label{E3}
\E_{0} \left[ \delta_{0}(S_{\tau_{0-}}) (L- M_{\tau_{0-}})_+  \right] 
= L \E_{0} \left[ \delta_{0}(S_{\tau_{0-}}) \right]
- \E_{0} \left[ \delta_{0}(S_{\tau_{0-}}) \min\{L,M_{\tau_{0-}} \} \right] 
\end{equation}
which, as in the case $p<\frac12$, gives the value of the constant $\kappa_1(1/2)$ in the 
leading order $O(L)$ asymptotic. The following lemma 
shows that $\E_{0} \left[ \delta_{0}(S_{\tau_{0-}}) \min\{L,M_{\tau_{0-}} \} \right]$
is of form $\log L + C_0 + o(1)$ leading to the cancelation of the $-\log L$ term in (\ref{E2}).
This lemma, together with (\ref{E1}), (\ref{E2}), (\ref{E3}) completes the $p=\frac12$ case
of Theorem \ref{pfaffian_czego}.
\begin{lemma} \label{12lemma}
\begin{eqnarray*}
&& \hspace{-.4in} \E_{0} \left[ \delta_{0}(S_{\tau_{0-}}) \min\{L,M_{\tau_{0-}} \} \right] \\
& = &  \log L + \frac{3}{2} \log 2 - \log \sigma +
\sum_{n=1}^{\infty} \left( \sum_{k=1}^{n-1}
\int_0^{\infty} x \frac{\rho^{*k}(x) 
\rho^{*(n-k)}(x)}{k(n-k)} dx - \frac{1}{2n} \right) + o(1).
\end{eqnarray*}
\end{lemma}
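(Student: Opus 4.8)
The plan is to expand the left‑hand side over the value of $\tau_{0-}$, use the cyclic–symmetry identity (\ref{CS2}), and then separate a convergent sum (evaluated by Kac's formula) from a boundary term which carries the $\log L$ and is controlled by an invariance principle for the range of a bridge. (Equivalently, by Lemma \ref{TIPR1} at $\beta=1$ and the identity $(L-M)_+=L-\min\{L,M\}$, the quantity in question equals $\log \Det_{[0,L]}(I-T)+L\sum_{n\geq 1}n^{-1}\rho^{*n}(0)$, so the lemma is the critical Akhiezer–Kac asymptotics for the convolution operator; we argue directly.) \textbf{Step 1 (cyclic symmetry).} Writing $\delta_0(S_{\tau_{0-}})=\sum_{n\geq1}\delta_0(S_n)\I(\tau_{0-}=n)$ and applying (\ref{CS2}) term by term,
\[
\E_{0}[\delta_0(S_{\tau_{0-}})\min\{L,M_{\tau_{0-}}\}]=\sum_{n=1}^\infty\frac1n\,\E_0[\min\{L,M_n-m_n\}\,\delta_0(S_n)],
\]
where $m_n=\min_{1\leq k\leq n}S_k$; this series converges absolutely because each term is at most $\frac Ln\rho^{*n}(0)\leq CLn^{-3/2}$.

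\textbf{Step 2 (extracting the Kac sum).} Use $\min\{L,a\}=a-(a-L)_+$; the symmetry of $\rho$ (negating the walk sends $M_n$ to $-m_n$ and fixes the event $\{S_n=0\}$) gives $\E_0[(M_n-m_n)\delta_0(S_n)]=2\E_0[M_n\delta_0(S_n)]=2\Kac_\rho(n)$ by (\ref{kac_eq_3}). Hence, with $r_n(L):=\E_0[(M_n-m_n-L)_+\delta_0(S_n)]$,
\[
\E_{0}[\delta_0(S_{\tau_{0-}})\min\{L,M_{\tau_{0-}}\}]=\sum_{n=1}^\infty\Big(\frac{2\Kac_\rho(n)}{n}-\frac1{2n}\Big)+C(L),\qquad C(L):=\sum_{n=1}^\infty\frac1n\Big(\frac12-r_n(L)\Big).
\]
Since $\frac{2\Kac_\rho(n)}{n}=\sum_{k=1}^{n-1}\int_0^\infty x\frac{\rho^{*k}(x)\rho^{*(n-k)}(x)}{k(n-k)}dx$, the first sum is exactly the infinite series in the statement; it converges because $2\Kac_\rho(n)=2\E_0[M_n\delta_0(S_n)]\to\frac12$ with a summable rate — this uses the local CLT $\rho^{*n}(0)=\frac{1}{\sigma\sqrt{2\pi n}}(1+O(n^{-1}))$ (the $O(n^{-1})$ correction being available since $\rho$ is symmetric, so the third cumulant vanishes) together with the invariance principle, which makes $\E_0[M_n\mid S_n=0]/(\sigma\sqrt n)$ converge to the mean of the maximum of a standard Brownian bridge. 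Both series on the right‑hand side converge, so the rearrangement is legitimate.

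\textbf{Step 3 (asymptotics of $C(L)$).} It remains to show $C(L)=\log L+\frac32\log2-\log\sigma+o(1)$. Split the sum at $n\asymp L^{3/2}$. For $n\leq L^{3/2}$ the overshoot past $L$ is negligible: the bound $(a-L)_+\leq a^4/L^3$ together with the moment estimate $\E_0[(M_n-m_n)^4\delta_0(S_n)]\leq Cn^{3/2}$ (which is where $\int_\R x^4\rho(x)dx<\infty$ enters) give $\sum_{n\leq L^{3/2}}n^{-1}r_n(L)=O(L^{-3/4})$, so this block contributes $\sum_{n\leq L^{3/2}}\frac1{2n}=\frac34\log L+\frac\gamma2+o(1)$. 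For $n>L^{3/2}$, the uniform local CLT together with a strong approximation of the conditioned walk by a Brownian bridge yield $r_n(L)=\frac1{\sqrt{2\pi}}\E[(R-u_n)_+]+\varepsilon_n$, where $u_n=L/(\sigma\sqrt n)$, $R$ is the range of a standard Brownian bridge (so $\E[R]=\sqrt{\pi/2}$), and $\sum_n n^{-1}|\varepsilon_n|=o(1)$; since $\frac1{\sqrt{2\pi}}\E[R]=\frac12$ this turns $\sum_{n>L^{3/2}}\frac1n(\frac12-r_n(L))$ into a Riemann sum for $\frac2{\sqrt{2\pi}}\int_0^{L^{1/4}/\sigma}u^{-1}\E[R\wedge u]\,du$. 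Using that $u^{-1}\E[R\wedge u]\to1$ as $u\downarrow0$ and that $\E[R]-\E[R\wedge u]=\E[(R-u)_+]$ decays rapidly, this integral equals $\frac2{\sqrt{2\pi}}\E[R](\frac14\log L-\log\sigma)+\frac2{\sqrt{2\pi}}c_1+o(1)=\frac14\log L-\log\sigma+\frac2{\sqrt{2\pi}}c_1+o(1)$ with $c_1=\int_0^1 u^{-1}\E[R\wedge u]\,du+\int_1^\infty u^{-1}(\E[R\wedge u]-\E[R])\,du$. Adding the two blocks, $C(L)=\log L-\log\sigma+\big(\frac\gamma2+\frac2{\sqrt{2\pi}}c_1\big)+o(1)$; the bracketed constant does not depend on $\rho$, and a direct evaluation from the explicit law of the Brownian‑bridge range — equivalently, matching any one computable instance such as the Gaussian case of \cite{fitzgerald2020sharp} — gives $\frac\gamma2+\frac2{\sqrt{2\pi}}c_1=\frac32\log2$.

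\textbf{Main obstacle.} The crux is Step 3: the local‑CLT/invariance‑principle approximation of $r_n(L)$ has to be uniform in $n$ with an error summable against $n^{-1}$ (rather than merely $o(1)$ for each fixed $n$), because the range of $n$ that matters contains $\sim L^2$ indices. This is precisely where $\int_\R x^4\rho(x)dx<\infty$ is used: for the uniform local CLT with the $o(n^{-1/2})$ correction, for the rate in the strong approximation of the bridge, and for the uniform integrability needed to replace the conditional law of $M_n-m_n$ given $S_n=0$ by its Brownian‑bridge limit. The only other non‑automatic input is the evaluation of the universal constant $\frac\gamma2+\frac2{\sqrt{2\pi}}c_1=\frac32\log2$.
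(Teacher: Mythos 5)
Your strategy is essentially the one the paper uses: the cyclic-symmetry identity (\ref{CS2}) to pass to the bridge quantity $\frac1n\E_0[\delta_0(S_n)\min\{L,M_n-m_n\}]$, Kac's formula (\ref{kac_eq_3}) to extract the compensated series $\sum_n\big(\frac{2\Kac_\rho(n)}{n}-\frac1{2n}\big)$, a split at an intermediate scale with a crude overshoot bound for small $n$ and a Brownian computation for large $n$, a Riemann-sum evaluation of the large-$n$ block, and a transfer of the universal constant to a known instance (the paper likewise defers the constant to \cite{fitzgerald2020sharp}). Your repackaging of the Brownian block through the bridge range $R$ and $\E[R\wedge u_n]$ is the same object the paper writes via the theta-type function $\Omega$ in (\ref{1020}), and doing the $-\frac1{2n}$ compensation globally rather than inside the first block, or cutting at $L^{3/2}$ rather than $L^{2-\epsilon}$, are organisational differences only; your two blocks correctly reproduce $\frac34\log L+\frac\gamma2$ and $\frac14\log L-\log\sigma$.

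The genuine gap is exactly the point you label the main obstacle: the statement $r_n(L)=\frac1{\sqrt{2\pi}}\E[(R-u_n)_+]+\varepsilon_n$ with $\sum_{n>L^{3/2}}n^{-1}|\varepsilon_n|=o(1)$ is the entire analytic content of the lemma beyond bookkeeping, and you only name the tools (uniform local CLT, strong approximation, uniform integrability) without producing the estimate. Under a fourth-moment hypothesis this is not off-the-shelf: strong approximation theorems apply to the unconditioned walk, and the conditioning carried by $\delta_0(S_n)$ must be disentangled from the functional of the path. The paper does this in Section \ref{s6.3} by stopping the path at $\hat n=n-n^\alpha$, using the local CLT (\ref{lclt}) for the remaining increments, and a Skorokhod embedding with Doob and modulus-of-continuity controls, arriving at the summable bound (\ref{E20}), $E^{(2)}(n,L)\le Cn^{-1-\eta}$; some construction of this kind has to be supplied for your $\varepsilon_n$. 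Two secondary assertions also need work. First, your small-$n$ block relies on $\E_0[(M_n-m_n)^4\delta_0(S_n)]\le Cn^{3/2}$; this is true but not immediate — the naive step of bounding the last-step density by $\|\rho\|_\infty$ only gives $Cn^2$, which at your cutoff $L^{3/2}$ yields an $O(1)$, not $o(1)$, error — so either prove the sharper bound (e.g.\ split the walk at $n/2$ and bound the mid-to-end transition density by $Cn^{-1/2}$) or move the cutoff to $L^{2-\epsilon}$ with $\epsilon>\frac12$ as in (\ref{E10}), where the crude bound suffices. Second, convergence of the compensated Kac series needs a quantitative rate for $2\Kac_\rho(n)-\frac12$; the invariance principle you invoke gives no rate, whereas the paper proves $\Kac_\rho(n)=\frac14+O(n^{-1/4})$ in (\ref{1011}) by comparing $\rho^{*k}$ directly with Gaussian densities via Feller's local CLT (the vanishing third moment giving the $O(n^{-3/2})$ error).
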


\noindent
\textbf{Proof.}
We follow the strategy used in \cite{fitzgerald2020sharp} which considers a walk with Gaussian increments;
in this general case the asymptotics differ only in the part of 
the constant term arising from Kac's formula (\ref{kac_eq_3}).
Write 
\begin{equation}
 \E_{0} \left[ \delta_{0}(S_{\tau_{0-}}) \min\{L,M_{\tau_{0-}} \} \right] = \sum_{n=1}^{\infty} p(n,L)
 \label{1005}
\end{equation}
where, using the cyclic symmetry technique (\ref{CS2}), 
\begin{eqnarray*}
p(n,L) & = &  \E_{0} \left[ \delta_{0}(S_n) \min\{L,M_n\} \I(\tau_{0-} = n) \right] \\
& = & \frac1n \E_{0} \left[ \delta_{0}(S_n) \min\{L,M_n - m_n\}  \right].
\end{eqnarray*}
While $n \leq L^{2 - \epsilon}$ (we will soon choose $\epsilon \in (\frac12,2)$) the walk is unlikely to have reached $L$ and we will approximate 
$\min\{L,M_n - m_n\} \approx M_n - m_n$. For $n \geq L^{2 - \epsilon}$ we will
use a Brownian approximation, using a Brownian motion $(W(t): t \geq 0)$ run at speed $\sigma^2$
(that is $[W](t) = \sigma^2t$) and the 
running extrema $W^*(t) = \sup_{s \leq t} W(s)$ and $W_*(t) = \inf_{s \leq t} W(s)$.
These approximations lead to 
\begin{eqnarray}
p(n,L) & = & \frac1n \E_{0} \left[ \delta_{0}(S_n) (M_n - m_n)  \right] + E^{(1)}(n,L), \label{P1} \\
p(n,L) & = & \frac1n \E_{0} \left[ \delta_{0}(W(n)) \min\{L,W^*(n) - W_*(n)\}  
\right] + E^{(2)}(n,L), \label{P2}
\end{eqnarray}
where, for some $\eta>0, \,C< \infty$
\begin{eqnarray}
&& E^{(1)}(n,L) \leq  C n L^{-3} \quad \mbox{for $n \leq L^{2-\epsilon}$,} \label{E10} \\
&& E^{(2)}(n,L) \leq C n^{-1-\eta} \quad \mbox{for $n \geq L^{2-\epsilon}$.} \label{E20}
\end{eqnarray}
We delay the detailed proof for the error bounds (\ref{E10}),(\ref{E20}) to 
the subsection \ref{s6.3}.

Kac's formula (\ref{kac_eq_3}) and the symmetry of $\rho$ give 
\begin{equation} \label{1010}
 \frac1n \E_{0} \left[ \delta_{0}(S_n) (M_n - m_n)  \right]  = 
 \frac2n \E_{0} \left[ \delta_{0}(S_n) M_n \right] = \frac2n \Kac_{\rho}(n).
\end{equation}
The asymptotic
\begin{eqnarray*}
\E_{0} \left[ \delta_{0}(S_n) M_n \right] & = & \frac{1}{2 \pi} \int_{\mathbb{R}} \E_{0} \left[  e^{i \theta S_n} M_n \right] d \theta \\
& = & \frac{1}{2 \pi} \int_{\mathbb{R}} \E_{0} \left[  e^{i \theta S_n/\sqrt{n}} \frac{M_n}{\sqrt{n}} \right] d \theta \\
& \to & \E_{0} \left[ \delta_0(W(1)) W^*(1) \right] = \frac14
\end{eqnarray*} 
(using an explicit calculation with the joint density for $(W^*(t),W(t))$) can be quantified, 
using the local central limit theorem - see the details in the subsection \ref{s6.3}, to give
\begin{equation} \label{1011}
\Kac_{\rho}(n) = \frac14 + O(n^{-1/4}).
\end{equation}
Thus the series $\sum \frac2n \Kac_{\rho}(n)$ is divergent and we will need to compensate the 
terms to gain a convergent series. 
This completes all parts of this lemma that are different from the Gaussian case in 
\cite{fitzgerald2020sharp}, and we now refer the reader to that paper for some of the subsequent calculations.

The Brownian expectation (\ref{P2}) can be calculated (see section 3.2.2 of \cite{fitzgerald2020sharp}) using the joint distribution 
of $(W(t), W^*(t),W_*(t))$, yielding 
\begin{equation} \label{1020}
\E_{0} \left[ \delta_{0}(W(n)) \min\{L,W^*(n) - W_*(n)\}  
\right] 
= \frac{1}{2} - \sqrt{\frac{2L^2}{\pi \sigma^2 n}} \Omega \left( \frac{2L^2}{\pi n \sigma^2} \right)
\end{equation} 
where $\Omega(t) = \sum_{k\geq 1} \exp(- \pi k^2 t)$ is a special function 
(related to Jacobi's $\theta$-function). 
From (\ref{P1}), (\ref{P2}) we find, as in \cite{fitzgerald2020sharp}, the asymptotic for $L \to \infty$:
\begin{equation} \label{1056}
\sum_{n=1}^{\infty} p(n,L) = \sum_{n \leq L^{2-\epsilon}}  \frac2n \Kac_{\rho}(n) 
+ \sum_{n > L^{2-\epsilon}} \frac{1}{2n} - \sqrt{\frac{2L^2}{\pi \sigma^2 n^3}} 
\Omega \left( \frac{2L^2}{\pi n \sigma^2} \right) + o(1).
\end{equation}
The error terms $E^{(i)}(n,L)$ contribute only to the $o(1)$ term, but for this we 
must choose $\epsilon \in (\frac12,2)$ (in \cite{fitzgerald2020sharp} the error term $E^{(1)}(n,L)$ was exponentially small
as we dealt with a walk with Gaussian increments - here we suppose only fourth moments). 

We compensate, using (\ref{1011}), 
\begin{eqnarray*}
\sum_{n \leq L^{2-\epsilon}}  \frac2n \Kac_{\rho}(n) &=& \sum_{n \leq L^{2-\epsilon}}  \frac2n \left(\Kac_{\rho}(n) - \frac14\right) 
+ \sum_{n \leq L^{2-\epsilon}} \frac{1}{2n} \\
& = & \sum_{n \geq 1}  \frac2n\left(\Kac_{\rho}(n) - \frac14\right)  +  \sum_{n \leq L^{2-\epsilon}} \frac{1}{2n} + o(1) \\
& = & \sum_{n \geq 1}  \frac2n \left(\Kac_{\rho}(n) - \frac14\right)   + \frac12 \log L^{2-\epsilon} + \frac{\gamma}{2} + o(1)
\end{eqnarray*}
using the asymptotic $\sum_{n \leq N} \frac1n = \log N + \gamma + O(N^{-1})$ where $\gamma$ is the 
Euler-Mascheroni constant. An analysis of the error in a Riemann block approximation implies 
\begin{eqnarray*} 
&& \hspace{-.4in} 
 \sum_{n > L^{2-\epsilon}} \frac{1}{2n} - \sqrt{\frac{2L^2}{\pi \sigma^2 n^3}} 
\Omega \left( \frac{2L^2}{\pi n \sigma^2} \right) \\
& = & \int_{L^{-\epsilon}}^{\infty} \left(\frac{1}{2x} - \sqrt{\frac{2L^2}{\pi \sigma^2 x^3}} 
\Omega \left( \frac{2L^2}{\pi x \sigma^2} \right)\right) dx + o(1) \\
& = &  \int_{1}^{\infty} \left(\frac{1}{2x} - \sqrt{\frac{2L^2}{\pi \sigma^2 x^3}} 
\Omega \left( \frac{2L^2}{\pi x \sigma^2} \right)\right) dx 
- \int_{0}^{1} \sqrt{\frac{2L^2}{\pi \sigma^2 x^3}} 
\Omega \left( \frac{2L^2}{\pi x \sigma^2} \right) dx + \frac{\epsilon}{2} \log L + o(1) 
\end{eqnarray*}
The asymptotics $\Omega(t) = \frac{1}{2 \sqrt{t}} - \frac12 + o(1)$ as $t \downarrow 0$ 
and $\Omega(t) \sim \exp(-\pi t)$ as $t \to \infty$  justify the convergence of these integrals. 
Substituting these into (\ref{1056}) we find the dependence on $\epsilon$ vanishes and we reach
\[
\sum_{n=1}^{\infty} p(n,L) = \log L + \sum_{n \geq 1}  \frac2n \left(\Kac_{\rho}(n) - \frac14\right)  + C_0 + o(1)
\]
where 
\[
C_0 = \frac{\gamma}{2} 
+ \int_{1}^{\infty} \left(\frac{1}{2x} - \sqrt{\frac{2L^2}{\pi \sigma^2 x^3}} 
\Omega \left( \frac{2L^2}{\pi x \sigma^2} \right)\right) dx 
- \int_{0}^{1} \sqrt{\frac{2L^2}{\pi \sigma^2 x^3}} 
\Omega \left( \frac{2L^2}{\pi x \sigma^2} \right) dx.
\]
Amazingly, certain identities for $\gamma$ and the function $\Omega(t)$ imply that $C_0 = \frac{3}{2} \log 2 - \log \sigma$ 
(see Section 2 of \cite{fitzgerald2020sharp}) and this completes the proof. \qed
\section{The proof of Theorem \ref{pfaffian_szego_edge}}   \label{s4}
Throughout this section we suppose we have a kernel $\mathbf{K}$ in the derived form (\ref{derived_form})
based on a scalar kernel in the form (\ref{NTIform}), that is 
\begin{equation} \label{repeat}
K(x,y) = \int_{-\infty}^0  \left|  \begin{array}{cc}
\int^{x-z}_{-\infty} \rho(w) dw & \int^{y-z}_{-\infty} \rho(w) dw \\
\rho(x-z) & \rho(y-z) 
\end{array} \right| dz
\end{equation}
for a probability density $\rho \in C^1(\mathbb{R}) \cap  H^1(\mathbb{R}) \cap L^{\infty}(\mathbb{R})$.
This is sufficient to allow 
differentiation under the integral and integration by parts showing that $K,D_1K,D_2K,D_{12}K$ are all
continuous and in particular
\[
D_2K(x,y) = -2 \int^0_{-\infty} \rho(x-z) \rho(y-z) dz - \rho(y) \int^x_{-\infty} \rho(w) dw.
\]
We now write $T$ for the integral operator on $L^{\infty}(\mathbb{R})$ with kernel 
\begin{equation} \label{recallT}
T(x,y) = \int^0_{-\infty} \rho(x-z) \rho(y-z) dz.
\end{equation}
Then, recalling $\beta_p=4p(1-p)$,  
\begin{equation} \label{3010}
2p(1-p) D_2 K(x,y) = - \beta_p T(x,y) - 2p(1-p)  \rho(y) \int^x_{-\infty} \rho(w) dw.
\end{equation}
Note the last term in (\ref{3010}) is a finite rank kernel. The operator $T$ still can be interpreted in probabilistic terms
using a random two-step walk, where pairs of increments have the density $\rho(-x)$ and $\rho(x)$. 
To our surprise, it is possible to follow fairly closely the strategy used in the translationally invariant case, namely 
(i) represent the Fredholm Pfaffian in terms of determinants; (ii) represent these in terms of the random two-step walk; 
(iii) derive asymptotics from probabilistic results for a two-step walk. 
Each of these steps requires slight modifications
(and becomes slightly messy) due to the different operator $T$ and due to the extra finite rank 
term above. (The thesis \cite{FWR} contains an exploration of more general 
kernels where $T(x,y) = \int^0_{-\infty} \rho^{(1)}(x-z) \rho^{(2)}(y-z) dz$ for two probability densities $\rho^{(1)},\rho^{(2)}$, 
and shows that many of the steps above go through; however we have yet to find applications). 
\subsection{Operator manipulation}  \label{s4.1}
We again write $T_{a,b}$ for the integral operator restricted to $L^2[a,b]$, that is
$T_{a,b}f(x) = \int_a^b T(x,y) f(y) dy$. Our first aim is an 
analogue of the Tracy Widom manipulations in Proposition \ref{TWmanip} for this non
translationally invariant  setting. The reasoning at the start of Section \ref{s3.2} extends 
to this case to show that $1-\beta_p T_{a,b}$ has the inverse
$R = I +\sum_{k=1}^{\infty}  \beta_p^k T_{a,b}^{k}$ and that 
$R-I$ has a $C^1$ kernel.
\begin{lemma} \label{TWmanip2}
With the above notation we have
\begin{equation} \label{TWprop2}
\left( \Pf_{[a,b]} (\mathbf{J} - p \mathbf{K}) \right)^2 = \Det_{[a,b]} (I - \beta_p T) \; \det^{a,b}_3(K)
\end{equation}
where $\det^{a,b}_3(K)$ is the $3 \times 3$ determinant with entries $\delta_{ij} + (Re_i,f_j)$, where
$(e,f)$ is the dual pairing between $H^1_{[a,b]}$ and its dual, for the elements
\begin{eqnarray*}
& f_1 = \beta_p \rho, \quad  &  e_1 = - \frac12 \Phi_{\rho}, \\
 & f_2 = \delta_a - \delta_b, \quad &  e_2 = p T \I_{[b,\infty)} - p T \I_{(-\infty,a]} 
+ \frac{p}{2}(2-\Phi_{\rho}(b)-\Phi_{\rho}(a)) \Phi_{\rho}, \\
&f_3 = \delta_a + \delta_b, \quad & e_3 = -p(2p-1) T \I_{(a,b)} - \frac{p(2p-1)}{2} 
(\Phi_{\rho}(b)-\Phi_{\rho}(a)) \Phi_{\rho}, 
\end{eqnarray*}
where $\Phi_{\rho}(x) = \int_{-\infty}^x \rho(x)$ is the distribution function for $\rho$.
\end{lemma}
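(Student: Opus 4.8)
The plan is to reuse the first half of the proof of Proposition~\ref{TWmanip} essentially verbatim, and then carry out the splitting into determinants by hand, keeping track of the extra finite-rank term that the non translationally invariant form of $D_2K$ produces.

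First I would note that the derivation in the proof of Proposition~\ref{TWmanip} up to and including equation~(\ref{4010}),
\[
\left( \Pf_{[a,b]}(\mathbf{J} - p \mathbf{K})\right)^2 =  \Det_{H^1_{[a,b]}} (I + 2p(1-p) D_2K  + F),
\]
with $F$ the rank-two operator of~(\ref{4020}), uses only that $\mathbf{K}$ is in the derived form~(\ref{derived_form}) based on a $K\in C^2$ (the block factorisation $\mathbf{J}\mathbf{K}^{(\epsilon)}=\mathbf{A}\mathbf{B}$, the Sylvester identity, the integration-by-parts rewriting of $K\partial$ and $S\partial$, and the $\epsilon\to 0$ limit never see translational invariance). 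Both hypotheses hold here, as recorded after~(\ref{repeat}), so~(\ref{4010}) is available. The splitting step of Proposition~\ref{TWmanip} then used the resolvent of $I+2p(1-p)D_2K$, which here is \emph{not} $I-\beta_p T_{a,b}$ because of the extra finite-rank piece of $D_2K$; this is why we redo the splitting below and end up with a $3\times 3$ rather than a $2\times 2$ determinant.

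Next I would rewrite $I+2p(1-p)D_2K+F$ in the form $I-\beta_p T_{a,b}+\sum_{i=1}^3 e_i\otimes f_i$. From~(\ref{3010}) we have $2p(1-p)D_2K(x,y)=-\beta_p T(x,y)-\frac{\beta_p}{2}\rho(y)\Phi_{\rho}(x)$, so as an operator $2p(1-p)D_2K=-\beta_p T_{a,b}+e_1\otimes f_1$ with $e_1=-\frac12\Phi_{\rho}$, $f_1=\beta_p\rho$. Writing the rank-two operator~(\ref{4020}) as $(e_2+e_3)\otimes\delta_a+(e_3-e_2)\otimes\delta_b=e_2\otimes f_2+e_3\otimes f_3$ forces $e_2=\frac{p}{2}(K(\cdot,a)+K(\cdot,b))$ and $e_3=\frac{p(2p-1)}{2}(K(\cdot,b)-K(\cdot,a))$. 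Integrating $D_2K$ across $[a,b]$ gives $K(\cdot,b)-K(\cdot,a)=-2T\I_{[a,b]}-(\Phi_{\rho}(b)-\Phi_{\rho}(a))\Phi_{\rho}$, which yields the stated $e_3$; and an integration by parts in~(\ref{repeat}) — the boundary contributions at $z=-\infty$, where $\Phi_{\rho}(x-z)\to 1$, producing the additive constants — gives $K(\cdot,a)+K(\cdot,b)=2T\I_{[b,\infty)}-2T\I_{(-\infty,a]}+(2-\Phi_{\rho}(a)-\Phi_{\rho}(b))\Phi_{\rho}$, which yields the stated $e_2$. Along the way one checks $e_i\in H^1_{[a,b]}$, which uses $\rho\in H^1(\mathbb{R})$, while the functionals $f_i$, built from $\rho$ and the Dirac masses $\delta_a,\delta_b$, lie in the dual of $H^1_{[a,b]}$.

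Finally I would split the determinant. As recalled at the start of Section~\ref{s4.1}, $I-\beta_p T_{a,b}$ is invertible with $R:=(I-\beta_p T_{a,b})^{-1}=I+\sum_{k\geq 1}\beta_p^k T_{a,b}^k$ and $R-I$ has a $C^1$ kernel; applying $R$ turns the perturbation into the finite-rank operator $\sum_{i=1}^3 (Re_i)\otimes f_i$, so multiplicativity of the determinant together with the standard identity $\Det(I+\sum_i u_i\otimes v_i)=\det(\delta_{ij}+v_i(u_j))$ give
\[
\Det_{H^1_{[a,b]}}\Big(I-\beta_p T_{a,b}+\sum_{i=1}^3 e_i\otimes f_i\Big)=\Det_{[a,b]}(I-\beta_p T)\,\det\big(\delta_{ij}+(Re_i,f_j)\big)_{1\leq i,j\leq 3},
\]
using $\det M=\det M^T$ to match conventions and, as in Proposition~\ref{TWmanip}, that the trace-class determinant of the smooth operator $I-\beta_p T_{a,b}$ on $H^1_{[a,b]}$ coincides with the Fredholm determinant $\Det_{[a,b]}(I-\beta_p T)$. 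Combined with~(\ref{4010}) this is~(\ref{TWprop2}). I expect the main obstacle to be bookkeeping rather than anything conceptual: correctly peeling off the extra rank-one term $e_1\otimes f_1$ (which is what turns Proposition~\ref{TWmanip}'s $2\times 2$ determinant into a $3\times 3$ one) and extracting the $-\infty$ boundary constants in the integration by parts that yields $e_2$, together with verifying that $e_i\in H^1_{[a,b]}$ and the $C^1$ kernel of $R-I$ suffice to legitimise the finite-rank determinant manipulation.
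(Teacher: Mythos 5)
Your proposal is correct and follows essentially the same route as the paper: it reuses the Tracy--Widom manipulation of Proposition \ref{TWmanip} up to (\ref{4010})--(\ref{4020}) (noting translational invariance is never used there), substitutes (\ref{3010}) to peel off the extra rank-one piece $-\tfrac{\beta_p}{2}\Phi_{\rho}\otimes\rho$, identifies $e_2,e_3$ from $K(\cdot,a)\pm K(\cdot,b)$ exactly as the paper does (the paper phrases the computation of $K(\cdot,a)+K(\cdot,b)$ via the limits $K(x,-\infty)=1$, $K(x,\infty)=\Phi_{\rho}(x)-1$, which amounts to the same boundary-term bookkeeping you describe), and splits off $\Det_{[a,b]}(I-\beta_p T)$ with the resolvent $R$ to leave the $3\times 3$ finite-rank determinant. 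No gaps of substance.
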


\noindent
\textbf{Proof.} We follow the proof of Proposition  \ref{TWmanip} up to (\ref{4010}), (\ref{4020}). Then using (\ref{3010})
we have
\[
\left( \Pf_{[a,b]}(\mathbf{J} - p \mathbf{K})\right)^2 =  \Det_{H^1_{[a,b]}} (I - \beta_p T  + F)
=  \Det_{H^1_{[a,b]}} (I - \beta_p T)  \Det_{H^1_{[a,b]}} (I  + RF)
\]
where $F$ is the finite rank operator
\begin{eqnarray*}
&& \hspace{-.3in} - \frac{\beta_p}{2} \Phi_{\rho} \otimes \rho + 
\left((p -p^2) K(\cdot,a) + p^2 K(\cdot,b)\right) \otimes \delta_a + \left((p^2-p) K(\cdot,b)-p^2 K(\cdot,a)\right) 
\otimes \delta_b\\
& = & - \frac{\beta_p}{2} \Phi_{\rho} \otimes \rho + 
\frac{p}{2} 
\left(K(\cdot,b) + K(\cdot,a)\right) \otimes (\delta_a-\delta_b) + \frac{p(2p-1)}{2} \left(K(\cdot,b)-K(\cdot,a)\right) \otimes 
(\delta_a+\delta_b).
\end{eqnarray*}
This gives the rank $3$ form for $F$ and the values of $f_1,f_2,f_3,e_1$ as stated. 
Again using (\ref{3010}) we have
\[
K(\cdot,b)-K(\cdot,a) = \int^b_a D_2 K(\cdot,z) dz =
- 2 T \I_{(a,b)} - \int^b_a \rho(w) dw \, \Phi_{\rho}
\]
giving the value of $e_3$. The limits $
K(x,-\infty) = 1$ and $K(x,\infty) = \Phi_{\rho} (x)-1 $
follow from the definition (\ref{repeat}). Then
\begin{eqnarray*}
&& \hspace{-.4in} K(\cdot,b) + K(\cdot,a) \\
& = & \left( K(\cdot,a) - K(\cdot,-\infty) \right) - \left( K(\cdot,\infty) - K(\cdot,b) \right)
+ K(\cdot, \infty) + K(\cdot,-\infty) \\
& = & -2 T \I_{(-\infty,a]} - \int_{-\infty}^a \rho(w) dw \, \Phi_{\rho}
+2 T \I_{[b,\infty)} + \int_b^{\infty} \rho(w) dw \, \Phi_{\rho} + \Phi_{\rho}
\end{eqnarray*}
which gives the value of $e_2$. \qed
\subsection{Probabilistic representation}  \label{s4.2}
We need two different two-step random walks. These have independent increments, but alternate between
a step distributed as $\rho(-x)dx$ and then as $\rho(x)dx$, as follows. 

\vspace{.1in}
\noindent
\textbf{Notation}

\vspace{.1in}

\noindent
Let
$(\mathcal{X}_k)$ and $(\mathcal{Y}_k)$ be two independent families of i.i.d. variables where 
$\mathcal{X}_k$ have density $\rho(-x)dx$ and $\mathcal{Y}_k$ have density $\rho(x)dx$. 

\vspace{.1in}

\noindent
Under $\Pp_x$ the variables $(S_n: n \geq 0)$ and $(\tilde{S}_n: n \geq 1)$ are defined by $S_0=x$ and
\[
\tilde{S}_k = S_{k-1} + \mathcal{X}_k, \quad S_k = \tilde{S}_k + \mathcal{Y}_k \quad \mbox{for $k = 1,2, \ldots$}.
\]
\noindent
We write $\tau_A = \inf\{n \geq 1: S_n \in A\}, \;  \tilde{\tau}_A = \inf\{n \geq 1: \tilde{S}_n \in A\}$ and the 
special cases 
\begin{eqnarray*}
&\tau_{a+} = \inf\{n \geq 1: S_n > a\}, \quad \tau_{a-} = \inf\{n \geq 1: S_n <a\}, & \\
&\tilde{\tau}_{a+} = \inf\{n \geq 1: \tilde{S}_n > a\}, \quad \tilde{\tau}_{a-} = \inf\{n \geq 1: \tilde{S}_n <a\}, &
\end{eqnarray*}
and the running maxima and minima
\begin{eqnarray*}
&M_n = \max\{S_k: 1 \leq k \leq n\}, \quad m_n = \min\{S_k: 1 \leq k \leq n\}  & \\
&\tilde{M}_n = \max\{\tilde{S}_k: 1 \leq k \leq n\} \quad \tilde{m}_n = \min\{\tilde{S}_k: 1 \leq k \leq n\}. &
\end{eqnarray*}

\vspace{.05in} 
\noindent
Note that, under $\Pp_x$, the process $(S_n)$ is a random walk whose increments
have the density $\tilde{\rho}(z) = \int_{\mathbb{R}} \rho(w) \rho(w-z) dw$.

\begin{lemma} \label{NTIFdetrep}
For $\beta \in [0,1]$, when $T(x,y) = \int^0_{-\infty} \rho(x-z) \rho(y-z) dz$
for a probability density  $\rho \in C(\mathbb{R}) \cap L^2(\mathbb{R}) \cap L^{\infty}(\mathbb{R}) $,
\begin{equation} \label{NTIFDPR}
\log \Det_{[-L,\infty)}(I - \beta T)  = - \E_{0} [ \beta^{\tau_{0-}} \,\delta_{0}(S_{\tau_{0-}}) (L- \tilde{M}_{\tau_{0-}})_+ ].
 \end{equation}
\end{lemma}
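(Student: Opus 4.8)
The plan is to reproduce the argument of Lemma~\ref{TIPR1}, with the one-step walk replaced by the two-step walk $(S_n,\tilde S_n)$ of Section~\ref{s4.2} and with an extra bookkeeping term tracking the constraint that the kernel $T$ imposes on the intermediate positions $\tilde S_n$. The key elementary fact is the probabilistic reading of $T$: under $\Pp_x$ the pair $(\tilde S_1,S_1)$ has joint density $\rho(x-u)\rho(y-u)\,du\,dy$, so $T(x,y)\,dy=\Pp_x[\tilde S_1<0,\,S_1\in dy]$, and hence, writing $T_{c,R}$ for the restriction of $T$ to $L^2[c,R]$ on a finite interval with $c<-L$,
\[
(T_{c,R})^n(x,y)\,dy=\Pp_x\big[\tilde S_j<0\ (1\le j\le n),\ S_j\in[c,R]\ (1\le j\le n-1),\ S_n\in dy\big].
\]

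First I would install the analytic input on the finite interval $[c,R]$ exactly as in Lemma~\ref{TIPR1}. Since $\rho\in L^2(\mathbb{R})$ the operator $T_{c,R}$ factors as the product of two Hilbert-Schmidt operators (with kernels $\rho(x-z)$ and $\rho(y-z)\I_{(-\infty,0]}(z)$), so it is trace class and its Fredholm determinant equals the trace-class determinant. The trace-log (Plemelj-Smithies) expansion $\log\Det_{[c,R]}(I-\beta T)=-\sum_{n\ge1}\frac{\beta^n}{n}\Tr((T_{c,R})^n)$ then holds for small $\beta$, and extends to all $\beta\in[0,1]$ by the same trace estimate as in Lemma~\ref{TIPR1}: because the two-step walk confined to the bounded set $[c,R]$ eventually escapes there is an $n_0$ with $\gamma_1:=\sup_{x\in[c,R]}\int_{[c,R]}(T_{c,R})^{n_0}(x,y)\,dy<1$, which together with $(T_{c,R})^n(x,x)\le\|\tilde\rho^{*n}\|_\infty\le\|\rho\|_2^2$ gives $|\Tr((T_{c,R})^{mn_0+j})|\le C\gamma_1^m$ and hence analyticity of the trace-log series on a disc of radius strictly larger than $1$.

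The heart of the matter is then the differentiation identity. Differentiating $\Tr((T_{c,R})^n)$ in the left endpoint $c$ and invoking the cyclic symmetry of the trace as in Lemma~\ref{TIPR1} (the $n$ boundary contributions are all equal) yields
\[
\frac{\partial}{\partial c}\log\Det_{[c,R]}(I-\beta T)=\E_c\big[\beta^{\tau_{c-}}\delta_c(S_{\tau_{c-}});\ \tau_{R+}>\tau_{c-},\ \tilde M_{\tau_{c-}}<0\big].
\]
Translating the walk down by $c$ rewrites the right-hand side as $\E_0[\beta^{\tau_{0-}}\delta_0(S_{\tau_{0-}});\,M_{\tau_{0-}}<R-c,\ \tilde M_{\tau_{0-}}<-c]$. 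Integrating over $c\in[-L,R]$, using $\Det_{[R,R]}(I-\beta T)=1$, and applying Fubini turns the $dc$-integral of the two indicators into $\big(\min(R+L-M_{\tau_{0-}},\,L-\tilde M_{\tau_{0-}})\big)_+$, where one uses that $M_{\tau_{0-}}\ge0$ on the support of $\delta_0(S_{\tau_{0-}})$. Finally, letting $R\to\infty$ the bracket increases to $(L-\tilde M_{\tau_{0-}})_+$, so monotone convergence on the right and $\Det_{[-L,R]}(I-\beta T)\to\Det_{[-L,\infty)}(I-\beta T)$ on the left give the asserted formula.

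The routine ingredients are the interchanges of summation, differentiation and integration and the $\delta$-function bookkeeping, all handled as in Kac's original argument and in Lemma~\ref{TIPR1} (continuity of $\rho$ makes the densities $\tilde\rho^{*n}$ continuous). The one point requiring genuine care — though I expect no essentially new difficulty compared with the translationally invariant case — is the combination of the boundary value $\beta=1$ with the semi-infinite interval; this is why the argument is organised as an identity on each finite $[c,R]$ followed by the monotone limit $R\to\infty$.
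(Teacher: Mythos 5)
Your proposal is correct and follows essentially the same route as the paper's proof: the trace--log expansion on a finite interval, differentiation in the left endpoint with the cyclic-symmetry identification of the boundary terms, the probabilistic reading of $T^n$ via the two-step walk with the constraint $\tilde M_{\tau}<0$, integration in the endpoint, and the monotone limit $R\to\infty$ using the definition of $\Det_{[-L,\infty)}$ as a limit of finite-interval determinants. The only (immaterial) difference is bookkeeping: you keep both indicator constraints through the $dc$-integration and pass to $R\to\infty$ at the end, whereas the paper sends $b\to\infty$ inside the integrated identity first.
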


\noindent
\textbf{Proof.}
Arguing as in Lemma \ref{TIPR1} the log-trace formula formula 
\begin{eqnarray}
&& \log \Det_{[a,b]}(I - \beta T)  = - \sum_{n=1}^{\infty} \frac{\beta^n}{n} \Tr (T_{a,b}^n) \nonumber \\
&&  \hspace{.4in}  =  - \sum_{n=1}^{\infty} \frac{\beta^n}{n} \int_{[a,b]^n} T(x_1,x_2) \ldots 
T(x_{n-1},x_n) T(x_n,x_1) dx_1 \ldots dx_n \label{log-trace2}
\end{eqnarray}
holds for all $\beta \in [0,1]$. The derivative
\begin{eqnarray*}
\frac{d}{da} \Tr (T_{a,b}^n)  & = & - n \int_{[a,b]^{n-1}}  T(a,x_2) T(x_2,x_3) \ldots 
T(x_n,a) dx_2 \ldots dx_{n} \\
& = & - n \int_{[a,b]^{n-1}} dx_2 \ldots dx_{n}   \int_{(-\infty,0]^n} dz_1 \ldots dz_n \\
&& \hspace{.3in} 
\rho(a-z_1) \rho(x_2-z_1) \rho(x_2-z_2)\rho(x_3-z_2) \ldots \rho(x_n-z_n) \rho(a-z_n)\\
& = & - n \, \Pp_a[\tilde{S}_1<0, S_1 \in (a,b), \ldots, \tilde{S}_n <0, S_n \in da] \\
& = & - n \, \E_a [ \delta_a(S_{n}); \tau_{(a,b)^c} = n, \tilde{M}_n <0].
\end{eqnarray*}
Subsitituting this into (\ref{log-trace2}) we find 
\[
\frac{d}{da} \log \Det_{[a,b]}(I - \beta T) =  \E_a[ \beta^{\tau_{a-}}\delta_a(S_{\tau_{a-}}); 
\tau_{b+} > \tau_{a-}, \tilde{M}_{\tau_{a-}} < 0]. 
\]
Integrating this equality over $[a,b]$ gives
\[
 \log \Det_{[a,b]}(I - \beta T) = - \int^b_a  
 \E_c[ \beta^{\tau_{c-}}\delta_c(S_{\tau_{c-}}); \tau_{b+} > \tau_{c-},  \tilde{M}_{\tau_{c-}} < 0]  \, dc.
 \]
Both side of this identity are decreasing in $b$. Setting $a=-L$ and letting $b \to \infty$ we reach
\begin{eqnarray*}
 \log \Det_{[-L,\infty)}(I - \beta T) &=& - \int^{\infty}_{-L}  
 \E_c[ \beta^{\tau_{c-}}\delta_c(S_{\tau_{c-}});  \tilde{M}_{\tau_{c-}} < 0]  \, dc \\
 & = &  - \int^{\infty}_{-L} 
 \E_0[ \beta^{\tau_{0-}}\delta_0(S_{\tau_{0-}});  \tilde{M}_{\tau_{0-}} < -c]  \, dc \\
 & = & - \E_{0} [ \beta^{\tau_{0-}} \,\delta_{0}(S_{\tau_{0-}}) (L- \tilde{M}_{\tau_{0-}})_+ ].
 \end{eqnarray*}
 \qed
 \begin{lemma} \label{NTIFk3rep}
The limit $ \det^{a,\infty}_3(K) = \lim_{b \to \infty} \det^{a,b}_3(K)$ of the finite rank determinant from Lemma \ref{TWmanip2} exists 
and is given, when $p \neq \frac12$ , by 
\begin{eqnarray*} 
\det^{a,\infty}_3(K) = \frac{2}{1-2p}
 \left| \begin{matrix} 1-p - \frac12  \E_a [ \beta_p^{\tau_{a-}}; \tilde{\tau}_{0+} > \tau_{a-}]
 &   - \frac12 \E_{a}[ \beta_p^{\tilde{\tau}_{0+}-1}; \tau_{a-} \geq \tilde{\tau}_{0+}] \\  
 - p \E_a [ \beta_p^{\tilde{\tau}_{0+}-1}; \tilde{\tau}_{0+} \leq \tau_{a-}] 
  &  \frac12 - p \E_{a} [ \beta_p^{\tau_{a-}-1}; \tau_{a-} < \tilde{\tau}_{0+}]
  \end{matrix} \right|
\end{eqnarray*}
and by $\Pp_a[\tilde{\tau}_{0+} \leq \tau_{a-}]$ when $p=\frac12$.
\end{lemma}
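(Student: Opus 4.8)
The plan is to compute the $3\times 3$ determinant $\det^{a,b}_3(K) = \det(\delta_{ij}+(Re_i,f_j))$ from Lemma \ref{TWmanip2} explicitly in terms of the two-step walk, exactly paralleling the translation-invariant computation in Lemma \ref{TIPR2}, and then pass to the limit $b\to\infty$. First I would record the $3\times 3$ matrix entries $(Re_i,f_j)$. Since $f_1 = \beta_p\rho$, $f_2 = \delta_a-\delta_b$, $f_3 = \delta_a+\delta_b$, each entry is of one of two types: a pairing of $Re_i$ against $\rho$ (an integral), or evaluations of $Re_i$ at the endpoints $a$ and $b$. The key probabilistic inputs are the identities, valid for $n\ge 0$,
\[
T_{a,b}^n \,T\I_{(-\infty,a]}(x) = \Pp_x[\tilde\tau_{0+}>\tilde\tau_{\cdot}, \ \ldots]
\]
— more precisely the analogues of those in the proof of Lemma \ref{TIPR2}: applying $T$ corresponds to one two-step transition (a $\rho(-\cdot)$ step landing at some $\tilde S$, then a $\rho(\cdot)$ step landing at $S$), so that $R = \sum_{n\ge0}\beta_p^n T_{a,b}^n$ applied to the indicator-type kernels $T\I_{(-\infty,a]}$, $T\I_{[b,\infty)}$, $T\I_{(a,b)}$ produces expectations of the form $\E_x[\beta_p^{\tau-1}; \cdot]$ with $\tau$ an exit time of $S$ from $(a,b)$, constrained by the position of the intermediate points $\tilde S$. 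Likewise $R$ applied to $\Phi_\rho = \I\text{-like}$ terms, and pairings against $\rho$, produce the expectations with $\tilde\tau_{0+}$ appearing. I would assemble these, substitute into the $3\times 3$ determinant, and simplify.

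The simplification is where the bulk of the work lies. The determinant is $3\times3$ but, as in the translationally invariant case, the first row/column (coming from $f_1=\beta_p\rho$, $e_1=-\tfrac12\Phi_\rho$) should reduce the effective size: the combinations are designed so that one row is a fixed linear combination of the others up to an overall scalar $\tfrac{1}{1-2p}$ (this is the origin of the prefactor $\tfrac{2}{1-2p}$ and matches the factor $\tfrac{2p}{2p-1}$ structure seen in Lemma \ref{TIPR2}). Concretely I would use Sparre Andersen's formula (\ref{andersen}) for the two-step walk $S$ — note $S$ has increments with the \emph{symmetric} density $\tilde\rho$, so $\E_0[\beta_p^{\tau_{0-}}]=1-\sqrt{1-\beta_p}=2(1-p)$ or $2p$ according to $p\lessgtr\tfrac12$ — together with the identities $\E_a[\beta_p^{\tau_{(a,b)^c}}]$ decompositions, to collapse the ``interior'' term $RT\I_{(a,b)}$ via $RT\I_{[a,b]}(x) = \tfrac{1}{\beta_p-1}\E_x[\beta_p^{\tau_{(a,b)^c}-1}-1]$. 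After row reduction the surviving $2\times2$ block has precisely the four entries displayed in the statement, with the constraints $\tilde\tau_{0+}>\tau_{a-}$, $\tilde\tau_{0+}\le\tau_{a-}$, $\tau_{a-}<\tilde\tau_{0+}$ appearing naturally from which of the three indicator kernels survives.

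Finally, for the limit $b\to\infty$: one checks $\E_a[\beta_p^{\tau_{b+}};\tau_{b+}<\tau_{a-}]\to0$ and the $\Phi_\rho(b)\to1$ limits, exactly as the $L\to\infty$ arguments in Section \ref{s3.3}, so every term involving $b$ either vanishes or converges, leaving the stated $\det^{a,\infty}_3(K)$; the convergence of the defining series for $R$ on $[a,b]$ (uniform when $p\neq\tfrac12$, and via the $n_0$-step contraction when $p=\tfrac12$, as at the start of Section \ref{s4.1}) justifies interchanging limit and the finite-rank determinant. The $p=\tfrac12$ case is handled separately: then $\beta_p=1$, the $\tfrac{1}{1-2p}$ prefactor degenerates, and instead the determinant collapses directly — the two rows become proportional in a way that leaves a single scalar, and Sparre Andersen gives $\E_a[\cdot]$'s that combine to $\Pp_a[\tilde\tau_{0+}\le\tau_{a-}]$, mirroring how $\det^{a,b}_2(K)=2\Pp_a[\tau_{b+}<\tau_{a-}]$ appeared at $p=\tfrac12$ in Lemma \ref{TIPR2}. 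The main obstacle I anticipate is purely bookkeeping: correctly tracking which intermediate point $\tilde S_k$ versus endpoint $S_k$ each indicator constrains when expanding $R$ applied to the three different $T\I$ kernels, and getting all the signs and the factor-of-$\tfrac12$ from $e_1=-\tfrac12\Phi_\rho$ to line up so the determinant genuinely factors through the $\tfrac{2}{1-2p}$ prefactor.
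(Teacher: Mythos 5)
Your plan follows the paper's proof essentially step for step: represent each entry $(Re_i,f_j)$ via the two-step walk (the geometric-series identity for $RT\I_{(a,b)}$, the endpoint evaluations at $a$ and $b$, and the $b\to\infty$ limits using $\Pp_a[\tau_{b+}\to\infty]=1$, $\Pp_b[\tilde{\tau}_{0+}=1]\to1$ and $\Phi_{\rho}(b)\to1$), then reduce the $3\times3$ determinant by row/column operations to the stated $2\times2$ block with prefactor $\frac{2}{1-2p}$, treating $p=\tfrac12$ separately. Two small corrections that do not change the route: Sparre Andersen's formula is not needed for this exact identity (it only enters the later asymptotics, and the collapse of the interior term is the geometric-series identity you already cite), and the entries paired against $f_1=\beta_p\rho$ require the reflection $x\to-x$ (producing extra $\Phi_{\rho}(a),\Phi_{\rho}(b)$ boundary terms) so that the first step matches the walk's $\rho(-\cdot)$ increment — precisely the bookkeeping you flag as the main obstacle.
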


\noindent
\textbf{Proof.}
We will represent each term $(Re_i,f_j)$ in terms of the two-step walk. 
All nine terms are somewhat similar, and we detail just a few. For example,
\begin{eqnarray}
&&\hspace{-.3in} (R\Phi_{\rho},\delta_{x_0}) = R\Phi_{\rho}(x_0) = \sum_{n=0}^{\infty} \beta_p^n T^n_{a,b} 
\Phi_{\rho}(x_0) \nonumber \\
& = & \sum_{n=0}^{\infty} \beta_p^n \int_{[a,b]^n} dx_1 \ldots dx_n T(x_0,x_1) \ldots T(x_{n-1},x_n) \Phi_{\rho}(x_n) 
\nonumber \\
& = & \sum_{n=0}^{\infty} \beta_p^n \int_{[a,b]^n} dx_1 \ldots dx_n \int_{(-\infty,0]^n} dy_1 \ldots dy_n \int_{0}^{\infty} dz 
\nonumber \\
&& \; \; \rho(x_0-y_1) \rho(x_1-y_1) \rho(x_1-y_2) \rho(x_2-y_2) \ldots  \rho(x_{n-1}-y_n) \rho(x_n-y_n) \rho(x_n-z)
\nonumber  \\
& = & \sum_{n=0}^{\infty} \beta_p^n  \Pp_{x_0}[ \tilde{S}_1 <0, S_1 \in (a,b), \ldots, \tilde{S}_n <0, S_n \in (a,b),
\tilde{S}_{n+1} >0] 
\nonumber \\
 & = & \sum_{n=0}^{\infty} \beta_p^n  \Pp_{x_0}[ \tilde{\tau}_{0+} = n+1, \tau_{(a,b)^c} \geq n+1]
 \nonumber \\
 &&  =\E_{x_0}[ \beta_p^{\tilde{\tau}_{0+} -1}; \tau_{(a,b)^c} \geq \tilde{\tau}_{0+}]. \label{420}
\end{eqnarray}
A similar exact calculation shows that, for bounded $f$,
\[
(R T f,\delta_{x_0}) = \sum_{n=0}^{\infty} \beta_p^n \E_{x_0} [ f(S_{n+1}); \tilde{\tau}_{0+} \geq n+2, \tau_{(a,b)^c} \geq n+1].
\]
Using this for $f= \I_{[b,\infty)}$ gives
\begin{eqnarray}
(R T \I_{[b,\infty)},\delta_{x_0}) &=& \sum_{n=0}^{\infty} \beta_p^n \Pp_{x_0} [ S_{n+1} > b, \tilde{\tau}_{0+} \geq n+2, \tau_{(a,b)^c} \geq n+1] \nonumber \\
& = &  \sum_{n=0}^{\infty} \beta_p^n \Pp_{x_0} [ \tilde{\tau}_{0+} \geq n+2, \tau_{b+} = n+1, \tau_{b+}< \tau_{a-} ] \nonumber \\
& = & \E_{x_0} [ \beta_p^{\tau_{b+}-1}; \tau_{b+} < \tau_{a-} \wedge \tilde{\tau}_{0+}] \label{421}
\end{eqnarray}
and similarly, using $f= \I_{(-\infty,a]}$,
\begin{equation} \label{422}
(R T \I_{(-\infty,a]},\delta_{x_0}) = \E_{x_0} [ \beta_p^{\tau_{a-}-1}; \tau_{a-} < \tau_{b+} \wedge \tilde{\tau}_{0+}]
\end{equation}
and, using $f = \I_{(a,b)}$, when $p \neq \frac12$,
\begin{eqnarray}
(R T \I_{(a,b)},\delta_{x_0}) &=& \sum_{n=0}^{\infty} \beta_p^n \Pp_{x_0} [ S_{n+1} \in (a,b), \tilde{\tau}_{0+} \geq n+2, \tau_{(a,b)^c} \geq n+1] \nonumber \\
& = &  \sum_{n=0}^{\infty} \beta_p^n \Pp_{x_0} [ \tilde{\tau}_{0+} \geq n+2, \tau_{(a,b)^c} \geq n+2] \nonumber \\
& = & \frac{1}{\beta_p-1}\E_{x_0} [ \beta_p^{(\tau_{(a,b)^c} \wedge \tilde{\tau}_{0+})-1} -1 ]. \label{423}
\end{eqnarray} 
 The entries of the form $(Re_i,f_1) = \beta_p (Re_i,\rho)$ start
with an integral against $\rho(w) dw$ and need a reflection $x \to -x$ to be 
written in terms of the two-step walk which start with an 
increment with density $\rho(-w)dw$. For example
 \begin{eqnarray*}
&& \hspace{-.3in} (RTf,\rho) = \sum_{n=0}^{\infty} \beta_p^n \int^b_a dw \rho(w) T^n_{a,b} Tf(w) \\
& = & \sum_{n=0}^{\infty} \beta_p^n \int_{[a,b]^{n+1}} dw dx_1 \ldots dx_n \rho(w) T(w,x_1) \ldots T(x_{n-1},x_n) Tf(x_n) \\
& = & \sum_{n=0}^{\infty} \beta_p^n \int_{[a,b]^{n+1}} dw dx_1 \ldots dx_n \int_{(-\infty,0]^{n+1}} dy_1 \ldots dy_n dz' 
\int_{-\infty}^{\infty} dz \\
&& \;\;\rho(w) \rho(w-y_1) \rho(x_1-y_1) \ldots  \rho(x_{n-1}-y_n) \rho(x_n-y_n) \rho(x_n-z')  \rho(z-z') f(z)\\
& = & \sum_{n=0}^{\infty} \beta_p^n \int_{[-b,-a]^{n+1}} dw dx_1 \ldots dx_n \int_{[0,\infty)^{n+1}} dy_1 \ldots dy_n dz' 
\int_{-\infty}^{\infty} dz \\
&& \;\;\rho(-w) \rho(y_1-w) \rho(y_1-x_1) \ldots  \rho(y_n-x_{n-1}) \rho(y_n-x_n) \rho(z'-x_n)  \rho(z'-z) f(-z)\\
& = & \sum_{n=0}^{\infty} \beta_p^n  \E_0[ f(-\tilde{S}_{n+2}); \tilde{S}_1 \in(-b,-a) , S_1 >0 , \ldots, 
\tilde{S}_{n+1} \in(-b,-a), S_{n+1} >0] \\
 & = & \sum_{n=0}^{\infty} \beta_p^n  \E_0[ f(-\tilde{S}_{n+2}); \tau_{0-} \geq n+2, \tilde{\tau}_{(-b,-a)^c} \geq n+2] \\
  & = & \sum_{n=0}^{\infty} \beta_p^n  \E_a[ f(a-\tilde{S}_{n+2}); \tau_{a-} \geq n+2, \tilde{\tau}_{(a-b,0)^c} \geq n+2].
\end{eqnarray*}
Using this for $f= \I_{[b,\infty)}$ gives
\begin{eqnarray}
(R T \I_{[b,\infty)},\rho)  &=& \sum_{n=0}^{\infty} \beta_p^n 
\Pp_a [ \tilde{\tau}_{(a-b)-} = n+2, \tilde{\tau}_{0+} > n+2, \tau_{a-} \geq n+2] \nonumber \\
& = & \E_a [ \beta_p^{\tilde{\tau}_{(a-b)-}-2}; \tilde{\tau}_{(a-b)-} < \tilde{\tau}_{0+} \wedge (1+\tau_{a-})] 
- \beta_p^{-1} \Pp_a[ \tilde{\tau}_{(a-b)-} = 1] \nonumber \\
& = & \E_a [ \beta_p^{\tilde{\tau}_{(a-b)-}-2}; \tilde{\tau}_{(a-b)-} < \tilde{\tau}_{0+} \wedge (1+\tau_{a-})] 
- \beta_p^{-1} (1- \Phi_{\rho}(b)) \label{424}
\end{eqnarray}
where the final subtracted term emerges since the sum over $n$ does not include the event $ \{\tilde{\tau}_{(a-b)-} = 1\}$.
Similarly, using $f= \I_{(-\infty,a]}$,
\begin{equation} \label{425}
(R T \I_{(-\infty,a]},\rho) =  \E_a [ \beta_p^{\tilde{\tau}_{0+}-2}; \tilde{\tau}_{0+} < \tilde{\tau}_{(a-b)-} \wedge (1+\tau_{a-})] 
- \beta_p^{-1} \Phi_{\rho}(a)
\end{equation}
and, using $f = \I_{(a,b)}$, when $p \neq \frac12$,
\begin{eqnarray}
(R T \I_{(a,b)},\rho) &=& \sum_{n=0}^{\infty} \beta_p^n 
\Pp_a [ \tilde{\tau}_{(a-b,0)^c} \geq n+3, \tau_{a-} \geq n+2] \nonumber \\
& = &  \frac{1}{\beta_p-1} \E_a [ \beta_p^{(\tilde{\tau}_{(a-b,0)^c}-2) \wedge (\tau_{a-} -1)}-1] 
+ \frac{1}{\beta_p} \Pp_a [ \tilde{\tau}_{(a-b,0)^c} = 1] \nonumber \\
& = &  \frac{1}{\beta_p-1} \E_a [ \beta_p^{(\tilde{\tau}_{(a-b,0)^c}-2) \wedge (\tau_{a-} -1)}-1] 
+ \frac{1}{\beta_p}(1+\Phi_{\rho}(a) - \Phi_{\rho}(b)). \label{426}
\end{eqnarray}
The final representation needed is derived in a  similar manner:
\begin{equation} \label{427}
(R \Phi_{\rho},\rho) = \E_a [ \beta^{\tau_{a-}-1}; \tilde{\tau}_{(a-b,0)^c} > \tau_{a-}].
\end{equation}
The formulae (\ref{420}),...,(\ref{427}) can be substituted into the matrix elements $(R e_i,f_j)$ and each has a limit
 as $b \to \infty$. Before evaluating these limits, it is convenient first to do two row operations to convert $e_1,e_2,e_3$ to
$ \hat{e}_1 =  e_1$, $\hat{e}_2 =  e_2 + p(2-\Phi_{\rho}(b) - \Phi_{\rho}(a))  e_1$, and 
 $\hat{e}_3 =  e_3 - p(2p-1)(\Phi_{\rho}(b) - \Phi_{\rho}(a)) e_1$ so that
\begin{eqnarray*}
&& \hat{e}_1 = - \frac12 \Phi_{\rho}, \quad \hat{e}_2 =  p T \I_{[b,\infty)} - p T \I_{(-\infty,a]}, \quad \hat{e}_3 = -p(2p-1) T \I_{(a,b)}, \\
&& f_1 = \beta_p \rho, \quad f_2 = \delta_a - \delta_b, \quad  f_3 = \delta_a + \delta_b.
\end{eqnarray*}
 Under these operations we change 
 \[
 \det^{a,b}_3(K) = \det \left( I + ((R e_i,f_j): i,j \leq 3)\right) = \det \left( \hat{I} + ((R \hat{e}_i,f_j): i,j \leq 3)\right)
 \] 
 where
 \[
\hat{I} =  \left( \begin{matrix} 1 & 0 & 0\\  p(2-\Phi_{\rho}(b) - \Phi_{\rho}(a))  & 1 & 0\\ 
-p(2p-1)(\Phi_{\rho}(b) - \Phi_{\rho}(a)) & 0 & 1 \end{matrix} \right).
 \]
Using (\ref{420}) we have 
\begin{eqnarray*}
(R \hat{e}_1,f_2) &=& - \frac12 ( R \Phi_{\rho}, \delta_a - \delta_b) \\
& = & \frac12 \E_{b}[ \beta_p^{\tilde{\tau}_{0+}-1}; \tau_{(a,b)^c} \geq \tilde{\tau}_{0+}] - \frac12 \E_{a}[ \beta_p^{\tilde{\tau}_{0+}-1}; \tau_{(a,b)^c} \geq \tilde{\tau}_{0+}] \\
& \to &  
\frac12 - \frac12 \E_{a}[ \beta_p^{\tilde{\tau}_{0+}-1}; \tau_{a-} \geq \tilde{\tau}_{0+}] \quad \mbox{as $b \to \infty$}
\end{eqnarray*}
since $\Pp_a[ \tau_{b+} \to \infty]=1$ and $\Pp_b[ \tilde{\tau}_{0+} =1] \to 1$ as $b \to \infty$. The limiting entry for
and $(R\hat{e}_1,f_3)$ differs only by a sign.  Using (\ref{423}) we have, when $p \neq \frac12$,
\begin{eqnarray*}
(R \hat{e}_3,f_2) &=& -p(2p-1) (RT\I_{(a,b)},\delta_a - \delta_b) \\
& = & \frac{p}{2p-1} \left( \E_{a}[ \beta_p^{(\tau_{(a,b)^c} \wedge \tilde{\tau}_{0+})-1} -1 ] 
- \E_{b} [ \beta_p^{(\tau_{(a,b)^c} \wedge \tilde{\tau}_{0+})-1} -1 ] \right) \\
& \to & \frac{p}{2p-1} \E_{a} [ \beta_p^{(\tau_{a-} \wedge \tilde{\tau}_{0+})-1} -1] \quad \mbox{as $b \to \infty$.}
\end{eqnarray*}
The limiting form for the entry $(R \hat{e}_3,f_3)$ is the same. Using (\ref{421}) and (\ref{422}) we have
\begin{eqnarray*}
(R \hat{e}_2,f_2) &=& p(RT\I_{[b,\infty)} - RT_{(-\infty,a]},\delta_a - \delta_b) \\
& = &  p  \E_{a} [ \beta_p^{\tau_{b+}-1}; \tau_{b+} < \tau_{a-} \wedge \tilde{\tau}_{0+}] - p
\E_{b} [ \beta_p^{\tau_{b+}-1}; \tau_{b+} < \tau_{a-} \wedge \tilde{\tau}_{0+}] \\
&& - p \E_{a} [ \beta_p^{\tau_{a-}-1}; \tau_{a-} < \tau_{b+} \wedge \tilde{\tau}_{0+}]
+ p \E_{b} [ \beta_p^{\tau_{a-}-1}; \tau_{a-} < \tau_{b+} \wedge \tilde{\tau}_{0+}] \\
& \to & - p \E_{a} [ \beta_p^{\tau_{a-}-1}; \tau_{a-} < \tilde{\tau}_{0+}] \quad \mbox{as $b \to \infty$}
\end{eqnarray*}
and the limiting form for $(R \hat{e}_2,f_3)$ is the same.  
Using (\ref{427}), and $\Pp_a [\tilde{\tau}_{b-} \to \infty] = 1$ as $b \to \infty$, we have  
 \[
 (R \hat{e}_1,f_1) = - \frac12 \E_a [ \beta_p^{\tau_{a-}}; \tilde{\tau}_{(a-b,0)^c} > \tau_{a-}]
 \to - \frac12  \E_a [ \beta_p^{\tau_{a-}}; \tilde{\tau}_{0+} > \tau_{a-}].
 \]
Using (\ref{426}) we have, when $p \neq \frac12$,
\begin{eqnarray*}
(R \hat{e}_3,f_1) &=& -p(2p-1) \beta_p (RT\I_{(a,b)},\rho) \\
& = & -\frac{p(2p-1)}{\beta_p-1} \E_a [ \beta_p^{(\tilde{\tau}_{(a-b,0)^c}-1) \wedge \tau_{a-} }-\beta_p]  
-p(2p-1) (1+\Phi_{\rho}(a) - \Phi_{\rho}(b)) \\
& \to &  \frac{p}{2p-1} \E_a [ \beta_p^{(\tilde{\tau}_{0+}-1) \wedge \tau_{a-} }] - \frac{4p^2(1-p)}{2p-1} -
p(2p-1) \Phi_{\rho}(a)
\quad \mbox{as $b \to \infty$.}
\end{eqnarray*}
Using (\ref{424}) and (\ref{425}) we have
\begin{eqnarray*}
(R \hat{e}_2,f_1) &=& p \beta_p (RT\I_{[b,\infty)} - RT\I_{(-\infty,a]},\rho) \\
& = & p \beta_p \E_a [ \beta_p^{\tilde{\tau}_{(a-b)-}-2}; \tilde{\tau}_{(a-b)-} < \tilde{\tau}_{0+} \wedge (1+\tau_{a-})] 
- p (1- \Phi_{\rho}(b))  \\
&& - p \beta_p \E_a [ \beta_p^{\tilde{\tau}_{0+}-2}; \tilde{\tau}_{0+} < \tilde{\tau}_{(a-b)-} \wedge (1+\tau_{a-})] 
+  p \Phi_{\rho}(a) \\
& \to &   - p \E_a [ \beta_p^{\tilde{\tau}_{0+}-1}; \tilde{\tau}_{0+} \leq \tau_{a-}] 
+  p \Phi_{\rho}(a)
\quad \mbox{as $b \to \infty$.}
\end{eqnarray*}
Combining with the entries in $\hat{I}$, this completes the limiting values for all nine terms for $\det^{a,\infty}_3(K)$ as
the determinant, when $p \neq \frac12$,
\[
 \left| \begin{matrix} 1 - \frac12  \E_a [ \beta_p^{\tau_{a-}}; \tilde{\tau}_{0+} > \tau_{a-}]
 & \frac12 - \frac12 \E_{a}[ \beta_p^{\tilde{\tau}_{0+}-1}; \tau_{a-} \geq \tilde{\tau}_{0+}] 
  & - \frac12 - \frac12 \E_{a}[ \beta_p^{\tilde{\tau}_{0+}-1}; \tau_{a-} \geq \tilde{\tau}_{0+}] \\  
   p - p \E_a [ \beta_p^{\tilde{\tau}_{0+}-1}; \tilde{\tau}_{0+} \leq \tau_{a-}] 
  & 1 - p \E_{a} [ \beta_p^{\tau_{a-}-1}; \tau_{a-} < \tilde{\tau}_{0+}] & 
   - p \E_{a} [ \beta_p^{\tau_{a-}-1}; \tau_{a-} < \tilde{\tau}_{0+}] \\ 
 \frac{p}{2p-1} \E_a [ \beta_p^{(\tilde{\tau}_{0+}-1) \wedge \tau_{a-} }]  - \frac{p}{2p-1}&   \frac{p}{2p-1} \E_{a} [ \beta_p^{(\tau_{a-} \wedge \tilde{\tau}_{0+})-1} -1]
& 1 +\frac{p}{2p-1} \E_{a} [ \beta_p^{(\tau_{a-} \wedge \tilde{\tau}_{0+})-1} -1] \end{matrix} \right|.
\]
Row and column operations considerably simplify this: after the column operation $C_2 \to C_2-C_3$  and then
the row operation 
$R_3 \to R_3 + \frac{2p}{2p-1} R_1 + \frac{1}{2p-1} R_2$ we reach, when $p \neq \frac12$,
\[
\det^{a,\infty}_3(K) = \frac{1}{2p-1}
 \left| \begin{matrix} 1 - \frac12  \E_a [ \beta_p^{\tau_{a-}}; \tilde{\tau}_{0+} > \tau_{a-}]
 & 1  & - \frac12 - \frac12 \E_{a}[ \beta_p^{\tilde{\tau}_{0+}-1}; \tau_{a-} \geq \tilde{\tau}_{0+}] \\  
  p - p \E_a [ \beta_p^{\tilde{\tau}_{0+}-1}; \tilde{\tau}_{0+} \leq \tau_{a-}] 
  & 1 &  - p \E_{a} [ \beta_p^{\tau_{a-}-1}; \tau_{a-} < \tilde{\tau}_{0+}] \\ 
2p &   2 & -1\end{matrix} \right|.
\]
After $C_1 \to C_1 - pC_2$ and $C_3 \to C_3 + \frac12 C_2$ we reach,  when $p \neq \frac12$, 
\[
\det^{a,\infty}_3(K) = \frac{1}{2p-1}
 \left| \begin{matrix} 1-p - \frac12  \E_a [ \beta_p^{\tau_{a-}}; \tilde{\tau}_{0+} > \tau_{a-}]
 & 1  &  - \frac12 \E_{a}[ \beta_p^{\tilde{\tau}_{0+}-1}; \tau_{a-} \geq \tilde{\tau}_{0+}] \\  
 - p \E_a [ \beta_p^{\tilde{\tau}_{0+}-1}; \tilde{\tau}_{0+} \leq \tau_{a-}] 
  & 1 & \frac12 - p \E_{a} [ \beta_p^{\tau_{a-}-1}; \tau_{a-} < \tilde{\tau}_{0+}] \\ 
0 &   2 & 0\end{matrix} \right|
\]
which reduces to the form stated in the Lemma for $p \neq \frac12$. When $p=\frac12$ 
the calculation is easier and is omitted.  \qed.
\subsection{Asymptotics}  \label{s4.3}
We will derive the asymptotics in Theorems \ref{nonTIszego} and 
\ref{pfaffian_szego_edge}. 
\subsubsection{Random Walk Results} \label{s4.3.1}
We need some variants of the random walk results in Section \ref{s3.3.1} that hold for 
our two-step walk $(S_n, \tilde{S}_n)$. We use the construction and notation from Section \ref{s4.2}.
Recall that $\tilde{\rho}(z) = \int_{\mathbb{R}} \rho(w) \rho(w-z) dw$, which is automatically symmetric. 

 \paragraph*{Two-step Kac's Formula} We extend Kac's formula (\ref{kac_eq_3}) to show that for all $n \geq 1$
 \begin{equation} \label{kac_eq_3+}
\E_0 [\tilde{M}_{n}  \delta_0(S_{n})] =  \frac1n \int_{\mathbb{R}} x \left(\rho^{*n}(x)\right)^2 dx + \Kac_{\tilde{\rho}}(n).
\end{equation}
Note that 
\begin{equation} \label{kac_eq_3+-}
\E_0 [M_{n}  \delta_0(S_{n})] =  - \E_0 [m_{n}  \delta_0(S_{n})] = \Kac_{\tilde{\rho}}(n)
\end{equation}
by a direct application of Kac's formula to $(S_n)$. 
The extra term in (\ref{kac_eq_3+}) arises due the maximum $\tilde{M}_n$ being taken over
$(\tilde{S}_n)$ rather than $(S_n)$. Indeed
\begin{eqnarray}
\tilde{M}_n \delta_0(S_n) & = & \max\{\mathcal{X}_1, \mathcal{X}_1 + \mathcal{Y}_1+\mathcal{X}_2,
\ldots, \mathcal{X}_1 + \mathcal{Y}_1+ \ldots +\ldots \mathcal{Y}_{n-1} + \mathcal{X}_n\} \delta_0(S_n) \nonumber \\
 & = & \mathcal{X}_1 \delta_0(S_n) + 
 \max\{0,  \mathcal{Y}_1+\mathcal{X}_2,
\ldots, \mathcal{Y}_1+ \ldots +\ldots \mathcal{Y}_{n-1} + \mathcal{X}_n\} \delta_0(S_n) \hspace{.2in} \label{K+10}
\end{eqnarray}
Since 
\[
S_n = (\mathcal{Y}_1+\mathcal{X}_2) + (\mathcal{Y}_2+\mathcal{X}_3) + \ldots + (\mathcal{Y}_{n-1} + \mathcal{X}_n) 
+ (\mathcal{Y}_{n} + \mathcal{X}_1) 
\]
the second term in (\ref{K+10}) involves only the maximum of a one step walk with increments equal to 
$\mathcal{X}_k + \mathcal{Y}_k$, and hence has expectation $\Kac_{\tilde{\rho}}(n)$ by Kac's formula
(\ref{kac_eq_3}). The expectation of the first term in (\ref{K+10}) equals, by a cyclic symmetry argument,
\[
\E_0[ \mathcal{X}_1 \delta_0(S_n)] = \frac1n  \sum_{k=1}^n \E_0 \left[ \mathcal{X}_k \delta_0 (S_n) \right] 
= \frac1n \E_0 [ \mathcal{X} \delta_0(\mathcal{X} + \mathcal{Y})]
\]
where $\mathcal{X} = \sum_{k=1}^n \mathcal{X}_k$ has density $\rho^{*n}(x) dx$ and 
$\mathcal{Y} = \sum_{k=1}^n \mathcal{Y}_k$ has density $\rho^{*n}(-x) dx$, which leads to the stated formula.
\paragraph*{Cyclic symmetry} We will use cyclic symmetry to show
 \begin{equation}
 \E_0 [\min\{L,\tilde{M}_{n}\} \delta_0(S_n); \tau_{0-}= n] =
 \frac{1}{n} \, \E_0 [ \min\{L, \tilde{M}_{n} - m_{n}\} \delta_0(S_{n})] \label{CS5}
\end{equation}
and hence its $L \to \infty$ limit
 \begin{equation}
 \E_0 [\tilde{M}_{n}\delta_0(S_n); \tau_{0-}= n] =
 \frac{1}{n} \, \E_0 [ (\tilde{M}_{n} - m_{n}) \delta_0(S_{n})]. \label{CS4}
\end{equation}
The proof is similar to the one-step version (\ref{CS2}). Indeed, we consider the $n$ cyclic permutations of
the variables
$\left( (\mathcal{X}_1,\mathcal{Y}_1),  (\mathcal{X}_2,\mathcal{Y}_2), \ldots,  (\mathcal{X}_n,\mathcal{Y}_n) \right)$.
We define, as in Section \ref{s4.2}, $n$ different two-step walks $(S^{(p)}_{\cdot}, \tilde{S}^{(p)}_{\cdot})$ for
$p=0,1,\ldots,n-1$ as follows: $S^{(p)}_0=0$ and
\[
\tilde{S}^{(p)}_k = S^{(p)}_{k-1} + \mathcal{X}_{p \oplus k}, \quad S^{(p)}_k = \tilde{S}^{(p)}_k + \mathcal{Y}_{p \oplus k} \quad \mbox{for $k = 1,\ldots,n$.}
\]
where $p \oplus k$ is addition modulo $n$. Then the law of $(S^{(p)}_{\cdot}, \tilde{S}^{(p)}_{\cdot})$ is the same for all $p$.
Moreover the final value $S^{(p)}_n = \sum_{k \leq n} (\mathcal{X}_k + \mathcal{Y}_k)$ is independent of $p$.

We define the maxima, minima and stopping times $\tilde{M}^{(p)}_n, m^{(p)}_n,\tau^{(p)}_{0-}$ as in Section \ref{s4.2}, but 
indexed by the superscript $p$ when they are for the two-step walk $(S^{(p)}_{\cdot}, \tilde{S}^{(p)}_{\cdot})$. Then,
using $m_n =0$ whenever $S_n=0$ and $\tau_{0-}=n$, 
\begin{eqnarray*}
\E_0 [\min\{L,\tilde{M}_{n}\} \delta_0(S_n) \I(\tau_{0-} = n)] 
 & = & 
 \E_0 [\min\{L,\tilde{M}_{n}-m_n\} \delta_0(S_n) \I(\tau_{0-} = n)] \\
 & = & \frac{1}{n} 
 \sum_{p=0}^{n-1} \E_0 [\min\{L,\tilde{M}_{n}^{(p)}-m_n^{(p)}\} \I(\tau^{(p)}_{0-} = n)
 \delta_0(S^{(p)}_n) ] \\
 & = & \frac{1}{n} 
 \sum_{p=0}^{n-1} \E_0 [\min\{L,\tilde{M}_{n}-m_n\} \I(\tau^{(p)}_{0-} = n)
 \delta_0(S_n) ] 
\end{eqnarray*}
where the final equality comes from the identities
\[
S^{(p)}_k = S_{p \oplus k} - S_{p} \quad \mbox{and} \quad \tilde{S}^{(p)}_k = \tilde{S}_{p \oplus k} - S_{p}
\quad  \mbox{for all $k,p$ whenever $S_n =0$} 
\]
so that $\tilde{M}_{n}^{(p)}-m_n^{(p)}$ is independent of $p$ whenever $S_n=0$.
Finally there is exactly one cyclic permutation with $\{\tau_{0-}^{(p)} = n\}$, establishing (\ref{CS4}). 
\subsubsection{Proof of Theorem  \ref{nonTIszego}} \label{s4.3.1.5}
%
We note that an assumption that $\int_\R |x|^k \rho(x) dx < \infty$ implies that $\int_\R |x|^k \tilde{\rho}(x) dx < \infty$.
Suppose first that $\beta \in [0,1)$.  
From Lemma \ref{NTIFdetrep} we have 
\begin{eqnarray} 
&& \hspace{-.3in}  \log \Det_{[-L,\infty)}(I - \beta T)   \nonumber \\
& = & - \E_{0} [ \beta^{\tau_{0-}} \,\delta_{0}(S_{\tau_{0-}}) (L- \tilde{M}_{\tau_{0-}})_+ ] \!\!\!\!
\nonumber \\
& = &  -L \E_{0} [ \beta^{\tau_{0-}} \,\delta_{0}(S_{\tau_{0-}}) ] 
+  \E_{0} [ \beta^{\tau_{0-}} \,\delta_{0}(S_{\tau_{0-}}) \min\{L,\tilde{M}_{\tau_{0-}}\} ] \label{poit} \\
 & = &   -L \E_{0} [ \beta^{\tau_{0-}} \,\delta_{0}(S_{\tau_{0-}}) ]
+  \E_{0} [ \beta^{\tau_{0-}} \, \tilde{M}_{\tau_{0-}}  \delta_{0}(S_{\tau_{0-}}) ] +o(1) \nonumber
\end{eqnarray}
(where we show below the variable $\beta^{\tau_{0-}} \, \tilde{M}_{\tau_{0-}}  \delta_{0}(S_{\tau_{0-}})$ is integrable).
The first expectation involves only the walk $(S_n)$ and so is given, as in (\ref{ka1form}), using the increment density $\tilde{\rho}$,
giving the desired formula for $\kappa_1(\beta)$. The second expectation is given, using the cyclic symmetry (\ref{CS4}),  
the Kac formulae (\ref{kac_eq_3+}) and (\ref{kac_eq_3+-}), and then the argument from  (\ref{<1/2k2}) by
\begin{eqnarray*}
 && \hspace{-.4in} \E_{0} [ \beta^{\tau_{0-}} \, \tilde{M}_{\tau_{0-}}  \delta_{0}(S_{\tau_{0-}})] \\
 & = &
  \sum_{n=1}^{\infty} \beta^n  \E_{0} [ \tilde{M}_n \delta_{0}(S_{n}); \tau_{0-} =n ]  \\ 
  & = & \sum_{n=1}^{\infty} \frac{\beta^n}{n}  \E_{0} [ (\tilde{M}_n -m_n) \delta_{0}(S_{n})]  \\ 
 &= & \sum_{n=1}^{\infty}
  \frac{\beta^n}{n^2} \int_{\mathbb{R}} x (\rho^{*n}(x))^2 dx + 
  2 \sum_{n=1}^{\infty} \frac{\beta^n}{n} \Kac_{\tilde{\rho}}(n) \\
 & = &  \sum_{n=1}^{\infty}
  \frac{\beta^n}{n^2} \int_{\mathbb{R}} x (\rho^{*n}(x))^2 dx +
 \int_0^{\infty} x \left( \sum_{n=1}^{\infty} \frac{\beta^n  \tilde{\rho}^{*n}(x)}{n} \right)^2 dx
\end{eqnarray*}
which is finite by the first moment assumption and the boundedness of $\rho$,
completing the formula for $\kappa_2(\beta)$. 

When $\beta = 1$ we follow, with small changes, the argument from Lemma
\ref{12lemma}. Write 
\begin{equation}
 \E_{0} \left[ \delta_{0}(S_{\tau_{0-}}) \min\{L,\tilde{M}_{\tau_{0-}} \} \right] = \sum_{n=1}^{\infty} \tilde{p}(n,L)
 \label{1005+}
\end{equation}
where, using the cyclic symmetry technique (\ref{CS5}), 
\[
\tilde{p}(n,L) : =   \E_{0} \left[ \delta_{0}(S_n) \min\{L,\tilde{M}_n\}; \tau_{0-} = n \right] 
=  \frac1n \E_{0} \left[ \delta_{0}(S_n) \min\{L,\tilde{M}_n - m_n\}  \right]
\]
and we approximate
\begin{eqnarray}
\tilde{p}(n,L) & = & \frac1n \E_{0} \left[ \delta_{0}(S_n) (\tilde{M}_n - m_n)  \right] + \tilde{E}^{(1)}(n,L), \label{P1+} \\
\tilde{p}(n,L) & = & \frac1n \E_{0} \left[ \delta_{0}(W_{n \sigma^2}) \min\{L,\sup_{t \leq n \sigma^2} W_t - \inf_{t \leq n \sigma^2} W_t\}  
\right] + \tilde{E}^{(2)}(n,L). \label{P2+}
\end{eqnarray}
We verify in the subsection \ref{s6.3} that the same error bounds (\ref{E10}) and (\ref{E20}) hold in this case
and then the argument of Lemma \ref{12lemma} goes through, with the only change being the extra term in the
two-step Kac's formula (\ref{kac_eq_3+}). This leads to 
\begin{eqnarray*}
&&  \E_{0} \left[ \delta_{0}(S_{\tau_{0-}}) \min\{L,\tilde{M}_{\tau_{0-}} \} \right] =
 \log L + \frac{3}{2} \log 2 - \log \tilde{\sigma} \\
 &  &  
 \hspace{1.5in} + \sum_{n\geq 1} \frac2n \left( \mbox{Kac}_{\tilde{\rho}}(n) - \frac14 \right) 
+ \frac{1}{n^2} \int_{\mathbb{R}} x \left(\rho^{*n}(x)\right)^2 dx +
 o(1) 
\end{eqnarray*}
where $\tilde{\sigma}^2 = \int x^2 \tilde{\rho}(x) dx$. Using this in (\ref{poit}) completes the proof. \qed.
\subsubsection{Proof for Theorem \ref{pfaffian_szego_edge} when $p \in (0,\frac12)$} \label{s4.3.3}
We apply Lemma \ref{TWmanip2} for the interval $[-L,b]$ and take $b \to \infty$, giving
\[
2 \log \Pf_{[-L,\infty)} (\mathbf{J} - p \mathbf{K}) 
= \log \Det_{[-L,\infty)} (I - \beta_p T) +  \log \det^{-L,\infty}_3(K). 
\]
Lemma \ref{NTIFk3rep}
gives a probabilistic representation for $\det^{-L,\infty}_3(K)$ where it is simple to let $L \to \infty$. 
Indeed in this limit we get
\begin{eqnarray}
&& \hspace{-.2in} \det^{-L,\infty}_3(K) \nonumber \\
& = & \frac{2}{1-2p}
 \left| \begin{matrix} 1-p - \frac12  \E_{-L} [ \beta_p^{\tau_{(-L)-}}; \tilde{\tau}_{0+} > \tau_{(-L)-}]
 &   - \frac12 \E_{-L}[ \beta_p^{\tilde{\tau}_{0+}-1}; \tau_{(-L)-} \geq \tilde{\tau}_{0+}]    \\  
 - p \E_{-L} [ \beta_p^{\tilde{\tau}_{0+}-1}; \tilde{\tau}_{0+} \leq \tau_{(-L)-}] 
  &  \frac12 - p \E_{-L} [ \beta_p^{\tau_{(-L)-}-1}; \tau_{(-L)-} < \tilde{\tau}_{0+}]
  \end{matrix} \right|   \nonumber  \\
  & = & \frac{2}{1-2p}
 \left| \begin{matrix} 1-p - \frac12  \E_{0} [ \beta_p^{\tau_{0-}}; \tilde{\tau}_{L+} > \tau_{0-}]
 &   - \frac12 \E_{0}[ \beta_p^{\tilde{\tau}_{L+}-1}; \tau_{0-} \geq \tilde{\tau}_{L+}] \\  
 - p \E_{0} [ \beta_p^{\tilde{\tau}_{L+}-1}; \tilde{\tau}_{L+} \leq \tau_{0-}] 
  &  \frac12 - p \E_{0} [ \beta_p^{\tau_{0-}-1}; \tau_{0-} < \tilde{\tau}_{L+}]
  \end{matrix} \right|  \label{templim}  \\
& \to & \frac{2}{1-2p}
 \left| \begin{matrix} 1 - p - \frac12  \E_0 [ \beta_p^{\tau_{0-}}]
 &  0 \\  
0 & \frac12  - p \E_{0} [ \beta_p^{\tau_{0-}-1}] 
  \end{matrix} \right| = \frac{1-2p}{1-p} \nonumber
\end{eqnarray}
where we shifted the starting position for the expectations, then 
used $\Pp_0[ \tilde{\tau}_{L+} \to \infty] =1$ to take the limits, and finally 
evaluated the expectations using
Sparre Andersen's formula (\ref{andersen}) (since the stopping times $\tau_{0-}$ involve only the 
walk $(S_n)$ which is a one step random walk with symmetric increments) which gives
$\E_{0} [ \beta_p^{\tau_{0-}}] =2p$ when $p \in (0,\frac12)$.
Combined with Theorem \ref{nonTIszego}, which gives the asymptotic for the Fredholm determinant, this gives
the values of $\kappa_1(p), \kappa_2(p)$ as required.
\subsubsection{Proof for Theorem \ref{pfaffian_szego_edge} when $p \in (\frac12,1)$} \label{s4.3.4}
For $p \in (\frac12,1)$, Sparre Andersen's formula (\ref{andersen}) gives $\E_{0} [ \beta_p^{\tau_{0-}}] = 
2(1-p)$ which, as in (\ref{templim}), implies that  $\lim_{L \to \infty}  \det^{-L,\infty}_3(K) =0$.
Indeed, as in the translationally invariant case, this small determinant contributes to the leading 
order asymptotic. To resolve the asymptotic we use Sparre Andersen's formula via
\[
\E_{0} [ \beta_p^{\tau_{0-}}; \tilde{\tau}_{L+} > \tau_{0-}]
=
\E_{0} [ \beta_p^{\tau_{0-}}]
- \E_{0} [ \beta_p^{\tau_{0-}}; \tau_{0-} \geq \tilde{\tau}_{L+}]
\] 
and then argue that $\E_{0} [ \beta_p^{\tau_{0-}}; \tau_{0-} \geq \tilde{\tau}_{L+}] 
= o(\E_{0} [ \beta_p^{\tau_{L+}}; \tau_{0-} \geq \tilde{\tau}_{L+}]) $ as in (\ref{littleo}). Using these
in (\ref{templim}) we reach
\begin{eqnarray*}
 \det^{-L,\infty}_3(K) 
   & = & \frac{2}{1-2p}
 \left| \begin{matrix} \frac12  \E_{0} [ \beta_p^{\tau_{0-}}; \tau_{0-} \geq \tilde{\tau}_{L+}]
 &   - \frac12 \E_{0}[ \beta_p^{\tilde{\tau}_{L+}-1}; \tau_{0-} \geq \tilde{\tau}_{L+}] \\  
 - p \E_{0} [ \beta_p^{\tilde{\tau}_{L+}-1}; \tau_{0-} \geq  \tilde{\tau}_{L+} ] 
  &  \frac{p}{\beta_p} \E_{0} [ \beta_p^{\tau_{0-}}; \tau_{0-} \geq \tilde{\tau}_{L+}]
  \end{matrix} \right| \\
  & = & \frac{p}{(2p-1)\beta_p^2} \left( \E_{0}[ \beta_p^{\tilde{\tau}_{L+}}; \tau_{0-} \geq \tilde{\tau}_{L+}] \right)^2 (1+o(1)).
  \end{eqnarray*}
The non translationally invariant analogue of Lemma \ref{finalconstant} is as follows.
\begin{lemma} \label{finalconstant+}
Suppose  there exists $\phi_p>0$ so that $\beta_p \int e^{\phi_p x} \tilde{\rho}(x)dx = 1$, and that both
$\int |x| e^{\phi_p x} \tilde{\rho}(x) dx$ and $\int e^{\phi_p |x|} \rho(x) dx$ are finite. Then
\begin{equation} \label{olegf+}
\lim_{L \to \infty}  e^{\phi_p L} \, \E_0 [\beta_p^{\tilde{\tau}_{L+}}; \tau_{0-}  \geq \tilde{\tau}_{L+} ] = \beta_+
 \frac{\sqrt{1-\beta_p}}{\phi_p \E^{(p)}_0[S_1]} 
\left(\Pp^{(p)}_0 [ \tau_{0-} = \infty]\right)^2
\end{equation}
where $\beta_+^{-1} = \int_{\mathbb{R}} e^{\phi_p x} \rho(x) dx$.
\end{lemma}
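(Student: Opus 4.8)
The plan is to imitate the proof of Lemma \ref{finalconstant}, but to carry along the two-step structure carefully because the stopping condition is $\tilde\tau_{L+}$ (defined via the intermediate walk $(\tilde S_n)$) while the killing exponent $\beta_p$ pairs with the increment density $\tilde\rho$ of the full-step walk $(S_n)$. First I would tilt: since $\beta_p\int e^{\phi_p x}\tilde\rho(x)dx=1$, the process $X_n=\beta_p^n\exp(\phi_p S_n)$ is a $\Pp_0$-martingale with respect to the filtration $\mathcal F_n=\sigma(S_1,\dots,S_n)$, and $d\Pp_0^{(p)}/d\Pp_0=X_n$ on $\mathcal F_n$, where under $\Pp^{(p)}$ the full-step walk $(S_n)$ has i.i.d.\ increments with tilted density $\rho^{(p)}(x)=\beta_p e^{\phi_p x}\tilde\rho(x)$ (this agrees with the tilted walk of Lemma \ref{finalconstant} for the density $\tilde\rho$). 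The complication compared with the one-step case is that $\tilde\tau_{L+}$ is not an $\mathcal F_n$-stopping time. So I would enlarge the filtration to $\mathcal G_n=\sigma(S_1,\dots,S_n,\tilde S_{n+1})=\sigma(\mathcal X_1,\mathcal Y_1,\dots,\mathcal X_n,\mathcal Y_n,\mathcal X_{n+1})$, note that $\tilde\tau_{L+}$ and $\tau_{0-}$ are $\mathcal G$-stopping times, and extend the change of measure to $\mathcal G$ by tilting the extra increment: $d\Pp_0^{(p)}/d\Pp_0=X_n\cdot\beta_+ e^{\phi_p\mathcal X_{n+1}}$ on $\mathcal G_n$, where $\beta_+^{-1}=\int e^{\phi_p x}\rho(x)dx$ (the $\mathcal X$-increment has density $\rho(-x)$, so $\E_0[e^{\phi_p\mathcal X}]=\int e^{\phi_p x}\rho(x)dx=\beta_+^{-1}$, and under $\Pp^{(p)}$ the $\mathcal X$-increments have density $\beta_+ e^{-\phi_p x}\rho(-x)$). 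The finiteness of $\int e^{\phi_p|x|}\rho(x)dx$ is exactly what makes this tilt well-defined. Applying optional stopping at $\tilde\tau_{L+}$ (justified since $\Pp_0[\tilde\tau_{L+}<\infty]=1$ and $\tilde S_{\tilde\tau_{L+}}\ge L$ while $S_{\tilde\tau_{L+}-1}<$ something controlled, so the martingale is uniformly integrable up to that time), I get
\[
e^{\phi_p L}\,\E_0[\beta_p^{\tilde\tau_{L+}};\tau_{0-}\ge\tilde\tau_{L+}]
=\beta_+\,\E_0^{(p)}\big[e^{-\phi_p(\tilde S_{\tilde\tau_{L+}}-L)}e^{-\phi_p\mathcal Y_{\tilde\tau_{L+}}};\tau_{0-}\ge\tilde\tau_{L+}\big],
\]
after writing $S_{\tilde\tau_{L+}}=\tilde S_{\tilde\tau_{L+}}+\mathcal Y_{\tilde\tau_{L+}}$; the factor $e^{-\phi_p\mathcal Y}$ combined with the tilt on $\mathcal Y$ reverts that one increment to its original law, so effectively the overshoot is governed by the $(S_n)$-walk under $\Pp^{(p)}$.

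Next I would identify the right-hand side asymptotically. The key observation is that $\tilde\tau_{L+}$ for the two-step walk, in terms of the full-step walk $(S_n)$ under $\Pp^{(p)}$, is a renewal-type first passage: $\tilde S_{\tilde\tau_{L+}}=S_{\tilde\tau_{L+}-1}+\mathcal X_{\tilde\tau_{L+}}$, and conditionally on the full-step walk the extra $\mathcal X$-increment is independent. So the overshoot $\tilde S_{\tilde\tau_{L+}}-L$ decomposes, and its exponential moment under $\Pp^{(p)}$ converges as $L\to\infty$ by the renewal theorem (as in \eqref{renewal}), to a constant $C_h$ expressible through $\E^{(p)}_0[S_{\tau_{0+}}]$ and $\E^{(p)}_0[1-e^{-\phi_p S_{\tau_{0+}}}]$ — the same $C_h$ as in Lemma \ref{finalconstant}, because the renewal structure depends only on the ascending ladder heights of the $(S_n)$-walk, which is unchanged. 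Then conditioning on $\sigma(S_{\tau_{0-}})$ exactly as in the proof of Lemma \ref{finalconstant} produces the factor $\Pp_0^{(p)}[\tau_{0-}=\infty]$, giving limit $\beta_+\,C_h\,\Pp^{(p)}_0[\tau_{0-}=\infty]$. Finally I would re-express $C_h$ using the Wiener--Hopf identity \eqref{WH} for the symmetric density $\tilde\rho$ tilted by $\phi_p$: the computation \eqref{9078} gives $1-\E^{(p)}_0[e^{-\phi_p S_{\tau_{0+}}}]=\sqrt{1-\beta_p}$, and \eqref{overlapmean} with \eqref{entranceprob} gives $\E^{(p)}_0[S_{\tau_{0+}}]=\E^{(p)}_0[S_1]/\Pp_0^{(p)}[\tau_{0-}=\infty]$. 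Substituting yields
\[
\beta_+ C_h\,\Pp^{(p)}_0[\tau_{0-}=\infty]=\beta_+\,\frac{\sqrt{1-\beta_p}}{\phi_p\E^{(p)}_0[S_1]}\big(\Pp^{(p)}_0[\tau_{0-}=\infty]\big)^2,
\]
which is \eqref{olegf+}.

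The main obstacle I anticipate is the bookkeeping around the enlarged filtration and the mixed tilt: making precise that optional stopping applies to $(X_n\beta_+ e^{\phi_p\mathcal X_{n+1}})$ at $\tilde\tau_{L+}$, and that the extra $\mathcal Y$-increment factor $e^{-\phi_p\mathcal Y_{\tilde\tau_{L+}}}$ really does cancel the $\mathcal Y$-tilt at the stopping time (this needs the strong Markov property applied at time $\tilde\tau_{L+}$ together with the fact that $\mathcal Y_{\tilde\tau_{L+}}$ is the next increment, independent of $\mathcal G_{\tilde\tau_{L+}-1}$ and of the event $\{\tau_{0-}\ge\tilde\tau_{L+}\}$ on which only $\tilde S_{\tilde\tau_{L+}}$, not $S_{\tilde\tau_{L+}}$, has been constrained). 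Once that reduction to a renewal problem for the symmetric $(S_n)$-walk under $\Pp^{(p)}$ is clean, the remaining steps are verbatim repetitions of the one-step arguments in Section \ref{s3.3.3}. A minor additional check is the uniform smallness of $\Pp_x[\tilde\tau_{L+}=k]$ over $x\le 0$ as $L\to\infty$, needed to discard the $o(1)$ terms; this follows from an explicit estimate on the two-step walk exactly as the analogous statement below \eqref{littleo}.
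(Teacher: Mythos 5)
Your route is essentially the paper's: tilt the two-step walk exponentially, reduce the left-hand side to an exponential moment of the overshoot $\tilde{S}_{\tilde{\tau}_{L+}}-L$ under the tilted law, identify its limit $C_h$ by the renewal theorem as in (\ref{renewal}), condition on $S_{\tau_{0-}}$ to produce $\Pp^{(p)}_0[\tau_{0-}=\infty]$, and finish with (\ref{9078}) and (\ref{overlapmean}) applied to the symmetric density $\tilde{\rho}$. The filtration issue you worry about is handled in the paper by interleaving the half steps into the single martingale $(1,\tilde{Z}_1,Z_1,\tilde{Z}_2,Z_2,\ldots)$ with $\tilde{Z}_n=\beta_-\beta_p^{n-1}e^{\phi_p\tilde{S}_n}$ and $Z_n=\beta_p^ne^{\phi_pS_n}$, which is precisely your enlarged-filtration idea, so conceptually you are on the same track.

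However, your tilting step contains a concrete error, and it matters because $\rho$ is \emph{not} symmetric here (in the main application $\rho(x)=\frac{2}{\sqrt{\pi}}e^{x-e^{2x}}$). The $\mathcal{X}$-increments have density $\rho(-x)$, so $\E_0[e^{\phi_p\mathcal{X}}]=\int e^{-\phi_p x}\rho(x)\,dx=\beta_-^{-1}$, not $\beta_+^{-1}$; with your constant, $X_n\,\beta_+e^{\phi_p\mathcal{X}_{n+1}}$ does not integrate to one and so does not define a change of probability measure. Correspondingly, your displayed identity after optional stopping cannot hold: keeping the prefactor $\beta_+$ \emph{and} the factor $e^{-\phi_p\mathcal{Y}_{\tilde{\tau}_{L+}}}$ double counts, because under the tilted law $\E^{(p)}[e^{-\phi_p\mathcal{Y}}]=\beta_+$ rather than $1$ (the factor does not "revert the increment to its original law"); as written the right-hand side exceeds the left by a factor $\beta_+$. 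The correct statement is either $e^{\phi_pL}\,\E_0[\beta_p^{\tilde{\tau}_{L+}};\tau_{0-}\ge\tilde{\tau}_{L+}]=\beta_+\,\E^{(p)}_0[e^{-\phi_p(\tilde{S}_{\tilde{\tau}_{L+}}-L)};\tau_{0-}\ge\tilde{\tau}_{L+}]$, obtained by tilting only up to the half step (i.e.\ using $\tilde{Z}_{\tilde{\tau}_{L+}}$, with the extra $\mathcal{X}$-increment carrying the constant $\beta_-$, and $\beta_p/\beta_-=\beta_+$), or the same formula without the prefactor but with $e^{-\phi_p\mathcal{Y}_{\tilde{\tau}_{L+}}}$ retained. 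Your two slips cancel in the subsequent heuristic evaluation, which is why you land on the correct limit $\beta_+C_h\Pp^{(p)}_0[\tau_{0-}=\infty]$, but the derivation as written is not valid. A smaller point: to prove $\E^{(p)}_0[e^{-\phi_p(\tilde{S}_{\tilde{\tau}_{L+}}-L)}]\to C_h$ you should condition on the \emph{first} half step $\tilde{S}_1$, after which the remaining passage problem is for a walk with the tilted $\tilde{\rho}$-increments and the function $V$ of Lemma \ref{finalconstant}, giving a renewal-type identity and $\tilde{V}(L)\to C_h$ by dominated convergence; your decomposition at the \emph{last} half step, with the claim that "conditionally on the full-step walk the extra $\mathcal{X}$-increment is independent", is not correct as stated, since given $(S_n)$ the split of each increment into $\mathcal{X}_k,\mathcal{Y}_k$ (and hence $\tilde{\tau}_{L+}$ itself) is not independent of the path.
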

Combined with Theorem \ref{nonTIszego}, which gives the asymptotic for the Fredholm determinant, 
and the exact formula (\ref{entranceprob}), this lemma leads to the stated forms for $\kappa_1(p),\kappa_2(p)$. 

\vspace{.1in}
\noindent
\textbf{Proof of Lemma \ref{finalconstant+}.}
Choose $\beta_-,\beta_+$ so that $\beta_- \int_\R \rho(-x) e^{\phi_p x} dx = \beta_+ \int_\R \rho(x) e^{\phi_p x} dx =1$.
Then $\beta_p = \beta_- \beta_+$. 
Let $\Pp^{(p)}_x, \E^{(p)}_x$ be the tilted probability and expectation, where the two-step walk
$(\tilde{S}_n,S_n)$ uses increments $\mathcal{X}_i,\mathcal{Y}_i$ with the tilted densities $\beta_- \exp(\phi_p x) \rho(-x) dx$
and $\beta_+ \exp(\phi_p x) \rho(x) dx$. Then defining
\[
\tilde{Z}_n = \beta_- \beta_p^{n-1} e^{\phi_p \tilde{S}_n}  \qquad Z_n = \beta_p^n e^{\phi_p S_n} \quad \mbox{for $n \geq 1$}
\]
the process $(1, \tilde{Z}_1,Z_1,\tilde{Z}_2,Z_2,\ldots)$ is a martingale. Moreover
\begin{eqnarray*}
\E^{(p)}_0[ e^{-\phi_p \tilde{S}_{\tilde{\tau}_{L+}}}; \tau_{0-} \geq \tilde{\tau}_{L+} ]
&=& \E_0 [ e^{-\phi_p \tilde{S}_{\tilde{\tau}_L+}} \tilde{Z}_{\tilde{\tau}_{L+}};  \tau_{0-} \geq \tilde{\tau}_{L+}] \\
&=&  \E_0 [\beta_- \beta_p^{\tilde{\tau}_{L+}-1}; \tau_{0-} \geq \tilde{\tau}_{L+}]
\end{eqnarray*}
so that
\begin{eqnarray}
e^{\phi_p L} \, \E_0 [\beta_p^{\tilde{\tau}_{L+}};  \tau_{0-} \geq \tilde{\tau}_{L+} ] 
&=&  \beta_+ \E^{(p)}_0[ e^{-\phi_p (\tilde{S}_{\tilde{\tau}_{L+}}-L)};  \tau_{0-} \geq \tilde{\tau}_{L+}]. \label{p1234+}
\end{eqnarray}
By conditioning on the value of $\tilde{S}_{1}$ we see that 
\[
\tilde{V}(L) := 
\E^{(p)}_0[ e^{-\phi_p (\tilde{S}_{\tilde{\tau}_{L+}}-L)}] = \int_{-\infty}^L \rho(-x) V(L-x) dx + \int_L^{\infty} \rho(-x) e^{-\phi_p(x-L)} dx
\]
where $V(L) = \E^{(p)}_0[ \exp(-\phi_p (S_{\tau_{L+}}-L))]$. We know from (\ref{renewal}) that
$V(L) \to C_h$ as $L \to \infty$, and we deduce that $\tilde{V}(L) \to C_h$.  
Conditioning on $\sigma(S_{\tau_{0-}})$, we see
\begin{eqnarray*}
\E^{(p)}_0[ e^{-\phi_p (\tilde{S}_{\tilde{\tau}_{L+}}-L)};  \tau_{0-} \geq \tilde{\tau}_{L+}]
&=& \tilde{V}(L) - \E^{(p)}_0[ e^{-\phi_p (\tilde{S}_{\tilde{\tau}_{L+}}-L)}; \tau_{0-} < \tilde{\tau}_{L+} ]. \\
& = & \tilde{V}(L) - \E^{(p)}_0[ \tilde{V}(L- S_{\tau_{0-}}); \tau_{0-} < \tilde{\tau}_{L+}] \\
& \to & C_h - C_h \Pp^{(p)}_0[ \tau_{0-} < \infty] = C_h \Pp^{(p)}_0[ \tau_{0-} = \infty].
\end{eqnarray*}
With (\ref{p1234+}) this implies that $e^{\phi_p L} \, \E_0 [\beta_p^{\tilde{\tau}_{L+}}; \tau_{0-}  \geq \tilde{\tau}_{L+}] 
\to \beta_+ C_h \Pp^{(p)}_0[ \tau_{0-} = \infty]$ and the desired form for the limit follows from the expression for
$C_h$ in Lemma \ref{finalconstant}. \qed.

\subsubsection{Proof for Theorem \ref{pfaffian_szego_edge} when $p = \frac12$} \label{s4.3.5}
Applying Lemma \ref{TWmanip2} for the interval $[-L,b]$ and taking $b \to \infty$, and then
using the probabilitistic representation in Lemma
\ref{NTIFk3rep} for $\det^{-L,\infty}_3(K)$, gives
\begin{equation} \label{ert}
2 \log \Pf_{[-L,\infty)} (\mathbf{J} - p \mathbf{K}) 
=  \log \Det_{[-L,\infty)}(I - T)
+ \log \Pp_0
 [\tilde{\tau}_{L+} \leq \tau_{0-}].
 \end{equation}
Comparing with the translationally invariant analogue (\ref{E1}) we see there is a missing
$\log 2$. This and the slightly different form for the two-step Kac's formula (\ref{kac_eq_3+}) 
turn out to be the only differences between the two asymptotics when $p=\frac12$. 

Let $\mu = \int_{\mathbb{R}} x \rho(-x) dx$. Then the process
\[
(0, \tilde{S}_1-\mu, S_1, \tilde{S}_2 - \mu, S_2, \ldots)
\]
is a martingale under $\Pp_0$. The optional stopping theorem implies
\begin{eqnarray*}
0 &=& \E_0[ S_{\tau_{0-}}; \tau_{0-} < \tilde{\tau}_{L+}] +  \E_0[ \tilde{S}_{\tilde{\tau}_{L+}}-\mu; \tilde{\tau}_{L+} \leq \tau_{0-}] \\
&=& \E_0[ S_{\tau_{0-}}] - \E_0[ S_{\tau_{0-}}; \tilde{\tau}_{L+} \leq \tau_{0-}]  +
\E_0[ \tilde{S}_{\tilde{\tau}_{L+}} -L; \tilde{\tau}_{L+} \leq \tau_{0-}] + 
(L + \mu) \Pp_0[\tilde{\tau}_{L+} \leq \tau_{0-}] 
\end{eqnarray*}
(the argument from Lemma 5.1.1 in \cite{lawler} justifies the optional stopping theorem being valid). 
Rearranging gives
\[
\Pp_0[\tilde{\tau}_{L+} \leq \tau_{0-}]  = - \frac{\E_0[ S_{\tau_{0-}}]}{L+\mu} + o(1) \quad \mbox{as $L \to \infty$}
\]
by arguing as in section \ref{s3.3.4}. Together with the asymptotics from Theorem \ref{nonTIszego} for the Fredholm determinant in (\ref{ert}), and Spitzer's formula (\ref{spitzer}), this finishes the calculation. 
%
\section{The proof of Theorem \ref{Exittheorem}}  \label{s5}
In this section we will establish the Pfaffian structure for exit measures
and find the corresponding kernels. The arguments broadly follow those in
\cite{TZ} and \cite{GPTZ}, which derive the Pfaffian kernels from duality identities. The new 
feature here is that the dual process, which is a system of annihilating motions,  
has immigration of particles. 
\subsection{Product ratio moments}  \label{s5.1}
The starting point that shows 
that the Pfaffian structure still holds is the following Pfaffian formula for product moments
 for annihilating particle systems with immigration.

Consider the following finite particle system: between reactions particles evolve as independent strong Markov process 
motions on $\mathbb{R}$; upon collision any pair of particles instantaneously annihilate. The processes starts from immigrated particles, starting at the space-time points $z_i = (y_i,t_i) \in [0,t] \times \mathbb{R}$ for $i = 1, \ldots, 2n$ for some $n \geq 0$. 
We list the positions of all surviving particles at time $t$ as $Y^1_t< Y^2_t < \ldots $ in increasing order. 
Note that number of particles alive at time $t$ will be even since the total number of immigrated particles 
 is even and we remove particles in pairs upon annihilation. Some restriction is needed on the motion process, for example to ensure no triple collisions occur. Since we need only the two examples of Brownian motions on $\mathbb{R}$ and Brownian motions with reflection on $[0,\infty)$, we restrict to these two cases below (which also makes some of the p.d.e. arguments straightforward), but the proof makes it clear that the result should hold more generally. 
 
 We write $Y_t$ for this point process at time $t$. We write $\mathbb{P}^{A}_{\mathbf{z}}$, where 
 $\mathbf{z} = (z_1, \ldots,z_{2n})$, for the law of this annihilating process $(Y_t: t \geq 0)$. 
\begin{lemma} \label{apmlemma}
Let $g,h:[0,\infty) \to \mathbb{R}$ be bounded and measurable. For the finite annihilating system described above with 
immigration at $\mathbf{z} = (z_1, \ldots,z_{2n}) \in ( [0,t] \times \mathbb{R})^{2n}$, define an alternating product moment by
\begin{equation} \label{apm}
M_{g,h} (Y_t) = \prod_{i \geq 1} g(Y^{2i-1}_t) \prod_{i \geq 1} h(Y^{2i}_t)
\end{equation}
where an empty product is taken to have value $1$. Then
\[
\mathbb{E}^A_{\mathbf{z}} [ M_{g,h}(Y_t)] = \pf \left( \mathbb{E}_{\mathbf{(z_i,z_j)}} [ M_{g,h}(Y_t)]: i < j \leq 2n \right).
\]
\end{lemma}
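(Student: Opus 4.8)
The plan is to prove the Pfaffian product-moment identity by the standard route used for annihilating systems without immigration (as in \cite{TZ}, \cite{GPTZ}): set up a dual/ODE description of the quantity $\mathbb{E}^A_{\mathbf{z}}[M_{g,h}(Y_t)]$ as a function of the immigration data, verify that both sides of the claimed identity satisfy the same evolution equation and the same initial/boundary conditions, and conclude by uniqueness. Concretely, first I would observe that between immigration times the process is just a finite annihilating system and $u(\mathbf{x},s) := \mathbb{E}^A_{(\mathbf{x},s)}[M_{g,h}(Y_t)]$ (as a function of the current particle positions $\mathbf{x}$ and current time $s \le t$) solves a backward heat equation (with Neumann boundary condition in the reflected case) on the Weyl chamber, with the annihilation reactions encoded as the boundary condition $u = $ (value with that colliding pair removed) on the collision hyperplanes $x_i = x_{i+1}$, and with terminal condition at $s=t$ given by $M_{g,h}$ evaluated on the ordered positions. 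The effect of an immigration event at space-time point $z=(y,\sigma)$ is simply to insert the coordinate $y$ into the list of live particles at time $\sigma$; so $\mathbb{E}^A_{\mathbf{z}}[M_{g,h}(Y_t)]$ is obtained by running this PDE backward in time, inserting coordinates at the prescribed immigration times.

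The core of the argument is then a Pfaffian closure statement: define $F(\mathbf{z}) := \pf(\,\mathbb{E}_{(z_i,z_j)}[M_{g,h}(Y_t)] : i<j\le 2n\,)$, the right-hand side, where each $2$-particle expectation $\mathbb{E}_{(z_i,z_j)}[M_{g,h}(Y_t)]$ is itself (by the $n=1$ case, which is a direct computation) the solution of the corresponding two-particle problem. I would show $F$ satisfies exactly the same system as the left-hand side. The key algebraic input is that the Pfaffian of a matrix whose entries solve a linear heat-type equation again solves the heat equation in each variable — this follows because differentiating a Pfaffian in a parameter gives a sum over cofactor-weighted derivatives of single entries (expansion $\partial_\alpha \pf(A) = \frac12\sum_{i\ne j}\partial_\alpha A_{ij}\,\mathrm{pf}_{ij}(A)$, where $\mathrm{pf}_{ij}$ is the Pfaffian minor), and since only entries in rows/columns $i$ and $j$ involve $z_i$, the Laplacian in $x_i$ passes through to the pair-entries $A_{ik}$. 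The boundary behaviour is the classical cancellation: when $x_i = x_{i+1}$ one checks, using the two-particle boundary condition $\mathbb{E}_{(z_i,z_{i+1})}[M_{g,h}(Y_t)]\big|_{x_i=x_{i+1}} = \mathbb{E}_{\emptyset}[M_{g,h}(Y_t)]$ together with a Pfaffian expansion along those two rows/columns, that $F$ restricted to the collision hyperplane equals the Pfaffian of the matrix with rows/columns $i,i+1$ deleted times the scalar $\mathbb{E}_\emptyset[M_{g,h}]$ — i.e. the same "remove the colliding pair" rule. Finally, at an immigration time one verifies that inserting the new coordinate $y$ into $F$ as two new rows/columns of the matrix is consistent because the freshly immigrated particle has not yet interacted; this requires the matching of the inserted-particle ratio moment with the correct pair entries, and is where the alternating structure of $M_{g,h}$ (odd positions get $g$, even positions get $h$) is used.

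The main obstacle, as usual in these Pfaffian-duality arguments, is bookkeeping at the immigration steps: one must ensure that the relabelling caused by a new particle being inserted into the \emph{ordered} list interacts correctly with the alternating product $M_{g,h}$ and with the antisymmetry of the Pfaffian matrix. A clean way to handle this is to avoid ordering altogether by working with an exchangeable (symmetrized) formulation — extend $M_{g,h}$ to unordered configurations via the determinantal/sign structure so that the "alternating product" becomes a genuinely symmetric functional of the point process, and then immigration is just appending a coordinate with no relabelling. Once the state space is phrased this way, the PDE-plus-boundary-condition uniqueness argument (for which Brownian motion and reflected Brownian motion give well-posedness with no triple collisions, as remarked in the statement) closes the proof: both $\mathbb{E}^A_{\mathbf{z}}[M_{g,h}(Y_t)]$ and $F(\mathbf{z})$ solve the same well-posed system, hence coincide. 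I would present the $n=0$ case ($M=1$, both sides equal $1$) and $n=1$ case (direct two-particle computation, giving the base of the induction / the definition of the matrix entries) explicitly, and then the inductive/uniqueness step for general $n$.
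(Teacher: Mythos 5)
Your proposal follows essentially the same route as the paper's proof: both sides are shown to solve the same backward heat equation with Neumann condition at the origin, the ``remove the colliding pair'' boundary conditions on the hyperplanes $x_i=x_{i+1}$, and insertion conditions at the immigration times, with the Pfaffian side verified entry-wise (linearity of $\pf$ in each entry, row/column cancellation at collisions, and the algebraic identity $\prod_i g(x_{2i-1})h(x_{2i})=\pf(g(x_i)h(x_j))$ for the terminal data), and the conclusion drawn from uniqueness. The only difference is presentational: the paper handles the immigration bookkeeping through an explicit hierarchy of functions $m^{(p,q)}_{s,t}$ on ordered cells $V^+_q$, solved inductively between consecutive immigration times, rather than via your proposed symmetrized reformulation.
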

\noindent
Note that the terms in the Pfaffian use systems with just two particles, and so the 
double product moment $M_{g,h}(Y_t)$ takes either the value $g(Y^1_t) h(Y^2_t)$ or the value $1$, depending on whether the two particles have annihilated. 

\vspace{.1in}

\noindent
\textbf{Proof.} We give the proof in the case of reflected Brownian motions on $\mathbb{R}$ and indicate the slight simplifications for the full space case. The proof follows those in \cite{TZ} and \cite{GPTZ}, where the Kolmogorov equation for the expectation 
is shown to be solved by the Pfaffian. Due to us immigrating particles over the time interval $[0,t]$ we will solve the 
equation in the intervals between immigration times. For this we need more detailed notation, used only in this proof. 
The final time $t$, the number of particles $2n$ and the immigration positions $\mathbf{z}$ are fixed throughout the proof. 
We suppose the points $z_i = (y_i,t_i)$ are listed so that $y_i \geq 0$ and $t =t_0 > t_1 >  \ldots > t_{2n} > 0$; we will establish the result for such 
$\mathbf{z}$, and when there are one or more equalities between the time points $t_i$ the result follows by continuity in these variables. 
We write $\mathbf{z}^p$ for the vector $((y_1,t_1), \ldots, (y_p,t_p))$ when $ 1 \leq p \leq 2n$ (and $\mathbf{z}^0 = \emptyset$). 
Also we take $g,h$ to be continuous, and the measurable case can be established by approximation.

We write  
$V_k^+ = \{ \mathbf{x} = (x_1, \ldots, x_{k}): 0 \leq x_1 < \ldots < x_{k}\}$ for a cell in $\mathbb{R}^k$ and define $\mathbf{x}^q =
(x_1,\ldots,x_q) \in V_q^+$ when $q \leq k$.
From an element 
$\mathbf{x} \in V_k^+ $ we define a set of space-time points by
\[
(\mathbf{x},s) = ((x_1,s), \ldots, (x_k,s)) \quad \mbox{for  $s \in [0,t]$.}
\]
Define the system of functions
\begin{eqnarray*}
m_{s,t}^{(p,q)} ( \mathbf{z}^p, (\mathbf{x},s))
&=&  \mathbb{E}^A_{((y_1,t_1), \ldots, (y_p,t_p), (x_1,s), \ldots, (x_q,s))} [ M_{g,h}(X_t)] \\
&& \hspace{-1in} \mbox{for $p,q \geq 0$, $p+q \leq 2n$ and $p+q$ even, $\mathbf{x} \in V^+_q$, and $s \in [0,t_p)$.}
\end{eqnarray*}
Thus $(\mathbf{x}^q,s)$ will describe the positions particles alive at time $s$, and $\mathbf{z}^p$ describes the remaining
positions for particles to be immigrated after time $s$. Each of these functions satisfies a backwards heat equation with reflected boundary condition
\begin{eqnarray}
&& ( \partial_s + \frac12 \sum_{i=1}^q \partial^2_{x_i}) \; m_{s,t}^{(p,q)} ( \mathbf{z}^p, (\mathbf{x},s)) =0 \quad
\mbox{for $s \in [0,t_p)$ and $\mathbf{x} \in V^+_q$,} \label{pde1} \\
&& \partial_{x_1} m_{s,t}^{(p,q)} ( \mathbf{z}^p, (\mathbf{x},s))  = 0 \quad 
\mbox{for $s \in [0,t_p)$ and $0=x_1<x_2 < \ldots <x_q$.} \label{pde2}
\end{eqnarray}
(In the case of Brownian motions on $\mathbb{R}$ we remove the boundary condition (\ref{pde2}) and allow
$\mathbf{x} \in V_k = \{ \mathbf{x} = (x_1, \ldots, x_{k}): x_1 < \ldots < x_{k}\}$.)

The function $m_{s,t}^{(p,q)} ( \mathbf{z}^p, (\mathbf{x},s))$ extends continuously to $\mathbf{x} \in \overline{V}^+_k, s \in [0,t_p)$ and 
satisfies the boundary conditions, for $i =1,\ldots,q-1$,
\begin{eqnarray}
m_{s,t}^{(p,q)} ( \mathbf{z}^p, (\mathbf{x},s)) &=&  m_{s,t}^{(p,q-2)} ( \mathbf{z}^p, (\mathbf{x}^{i,i+1},s)) \nonumber \\
&& \hspace{-.63in}
\mbox{when $s \in [0,t_p)$ and $0<x_1 < \ldots <x_i = x_{i+1} < \ldots < x_q$} \label{pde3}
\end{eqnarray}
where $\mathbf{x}^{i,i+1} = (x_1,\ldots,x_{i-1},x_{i+1},\ldots,x_q)$ is the vector $\mathbf{x}$ with the co-ordinates
$x_i,x_{i+1}$ `annihilated'. (Conditions on other parts of the boundary $\partial V^+_k$ are not needed to ensure uniqueness.)
Finally they satisfy final conditions
\begin{eqnarray}
\lim_{s \uparrow t} m_{s,t}^{(0,2n)} (\emptyset, (\mathbf{x},s)) &=&   \prod_{i \geq 1} g(x_{2i-1}) \prod_{i \geq 1} h(x_{2i})
\quad \mbox{for $\mathbf{x} \in V^+_{2n}$} \nonumber \\
\lim_{s \uparrow t_p}  m_{t_p,t}^{(p,q)} ( \mathbf{z}^p, (\mathbf{x},s)) &=&  
m_{t_p,t}^{(p-1,q+1)} ( \mathbf{z}^{p-1}, (\mathbf{x}|y_p,s))  \label{pde4}  \\
&& \quad \mbox{when $p \geq 1$, $\mathbf{x} \in V^+_q$, and $y_p \not \in \{x_1, \ldots, x_q\}$, } \nonumber 
\end{eqnarray}
where $\mathbf{x}|y_{p}$ is the element of $V^+_{q+1}$ with coordinates $x_1, \ldots, x_q,y_{p}$ 
listed in increasing order. 

We claim the system (\ref{pde1},\ref{pde2},\ref{pde3},\ref{pde4}) of equations for 
$(m^{(p,q)}: \mbox{even $p+q \leq 2n$})$ has unique bounded solutions, where
$m^{(p,q)} \in C([0,t_p) \times \overline{V}^+_q) \cap C^{1,2} ([0,t_p) \times V^+_q)$.
 This is an inductive proof,
working downwards in $p=2n,2n-1, \ldots,1,0$, and for each fixed $p$ working upwards in $q=0,1,\ldots,2n-p$
(subject to $p+q$ being even) as indicated in the following diagram.
The functions to the left of right arrows determine the  boundary conditions for the functions on the right;
the functions to the northeast of  southwest arrows play the role of the final conditions for the functions to the
southwest of these arrows. Vertical arrows correspond to the final 
conditions for the functions  $m^{(0,2n)}_{s,t}$ at the top layer.
\begin{eqnarray*}\label{array} 
{\tiny
\begin{array}{lcccccccc}
&&g(x_1) h(x_2)&&\ldots&&{\displaystyle \prod_{k=1}^{n-1}}g(x_{2k-1})h(x_{2k})&&{\displaystyle \prod_{k=1}^{n}}g(x_{2k-1})h(x_{2k})\\ \\
&&\downarrow&&&&\downarrow&&\downarrow\\ \\
1=m_{s,t}^{(0,0)} &\mapsto&m_{s,t}^{(0,2)}&\mapsto&\ldots&\mapsto&
m_{s,t}^{(0,2n-2)}&\mapsto& m_{s,t}^{(0,2n)}\\ \\
&& \hspace{-.5in}  \swarrow&&~&&  \hspace{-.5in}  \swarrow&&\hspace{-.5in}  \swarrow\\\\
&m_{s,t}^{(1,1)}&\mapsto&\ldots&\mapsto&m_{s,t}^{(1,2n-3)}&\mapsto&m_{s,t}^{(1,2n-1)}&\\ \\
&\hspace{-.5in} \swarrow&&&&\hspace{-.5in} \swarrow&&\hspace{-.5in} \swarrow&\\ \\
1=m^{(2,0)}_{s,t} &\mapsto\ldots&&\mapsto&m^{(2,2n-4)}_{s,t}&\mapsto&m^{(2,2n-2)}_{s,t}&&\\ \\
&&&&\hspace{-.5in} \swarrow&&\hspace{-.5in} \swarrow&&\\ \\
&&\cdot&\cdot&\cdot&\cdot&&&\\ \\
\end{array}}
\end{eqnarray*}
We will now claim that, when $p+q \leq 2n$ is even, that  for $s \in [0,t_p]$ and $\mathbf{x} \in V^+_q$
\begin{equation} \label{pfaffclaim}
m_{s,t}^{(p,q)} ( \mathbf{z}^p, (\mathbf{x},s)) = \pf
  \left(\begin{array}{@{}c|c@{}}
  m_{s,t}^{(2,0)} ((y_i,t_i),(y_j,t_j))  &  m_{s,t}^{(1,1)} ((y_i,t_i),(x_j,s))\\
1 \leq i < j < p & i=1,\ldots,p, \; j=1,\ldots,q \\ \hline
  &  m_{s,t}^{(0,2)} ((x_i,s),(x_j,s))\\
& 1 \leq i < j < q
  \end{array}\right)
\end{equation}
where we have listed the upper triangular elements of this $(p+q) \times (p+q)$ antisymmetric 
block matrix. Specialising to $p=2n, q=0$ and $s=0$ we find the conclusion of the lemma.

The claim (\ref{pfaffclaim}) follows by uniqueness once 
we verify that the Pfaffian expression on the right hand side also solves the equations 
(\ref{pde1},\ref{pde2},\ref{pde3},\ref{pde4}). The arguments that the Pfaffian solves the p.d.e (\ref{pde1},\ref{pde2}), and 
the boundary conditions (\ref{pde3}), is the same as for the simpler one time period case in \cite{TZ}. 
The final conditions $(\ref{pde4})$ for $m_{t,t}^{(0,2n)}$ require that
\[
\prod_{i \geq 1} g(x^{2i-1}) \prod_{i \geq 1} h(x^{2i}) = \pf( g(x_i) h(x_j): i < j \leq 2n) \quad \mbox{for $\mathbf{x} \in V^+_{2n}$.}
\]
which follows since the antisymmetric matrix in the Pfaffian is of the form $D^T J_{2n} D$ where
$D$ is the diagonal matrix with entries $(g(x_1), h(x_2), \ldots, g(x_{2n-1})h(x_{2n}))$ and $J_{2n}$ is the 
block diagonal matrix with $n$ blocks of the form 
$\left( \begin{array}{cc} 0 & 1 \\ -1 & 0 \end{array} \right)$ along the diagonal (so that $\pf(J_{2n}) =1$). 
The final conditions for all other  $m_{t_p,t}^{(p,q)}$ when $p \geq 1$ follow inductively.
 \qed

\subsection{Dualities and thinning}  \label{s5.2}
We now follow the steps used in \cite{TZ}  (to which we will refer for some details) 
using the Brownian web on a half-space developed in \cite{Toth_Werner}. This gives a duality formula
for a coalescing system on $(0,\infty)$ together with its exit measure on $\{0\} \times [0,\infty)$. 

Let $(X_t: t \geq 0)$ be a system of (instantaneously) coalescing Brownian motions on $(0,\infty) \times [0,\infty)$,
frozen upon hitting the boundary, and let $X_e$ be the exit measure on $\{0\} \times [0,\infty)$. 
We consider $X_t$ as a locally finite simple point measure on $(0,\infty)$ at each $t \geq 0$. 
We start by considering the case where the initial condition $X_0=\mu$ is deterministic and contains a finite
number $\mu(0,\infty)$ of particles. 
We write $\mathbb{P}^{C}_{\mu}$ for the law of this process. 

The following lemma follows immediately from the non-crossing properties of the half-space Brownian web and 
dual Brownian web paths (which are reflected Brownian motions). It characterises the joint law of 
$X_t$ and $X_e|_{\{0\} \times [0,t]}$.
We use, from section \ref{s5.1}, the law
$ \mathbb{P}^{A}_{\mathbf{z}}$ of a finite annihilating system of Brownian motions $(Y_y)$ with reflection on $[0,\infty)$, 
with immigrated particles at $\mathbf{z}$. 
\begin{lemma} (Toth and Werner \cite{Toth_Werner}) 

\noindent
Let $I_1 = \{0\} \times [a_1,a_2], \ldots, I_n = \{0\} \times [a_{2n-1},a_{2n}]$, for 
$\mathbf{a} \in  V^+_{2n}, n \geq 0$ be disjoint intervals inside $\{0\} \times [0,t]$. 
Let $J_1 = \{0\} \times [b_1,b_2], \ldots, J_n = \{0\} \times [b_{2m-1},b_{2m}]$, for 
$\mathbf{b} \in  V^+_{2m}, m \geq 0$,  be disjoint intervals inside $[0,\infty)$. Then 
\begin{equation} \label{duality1}
\mathbb{P}^{C}_{\mu} [ X_e(I_1\cup \ldots \cup I_{n}) =0, \; X_t(J_1\cup \ldots \cup J_{m})=0 ] 
= \mathbb{P}^{A}_{\mathbf{z}} [\mu(S_t) =0]
\end{equation}
where $z_i = (0,t - a_i)$ for $i = 1,\ldots,2n$ and $z_i = (b_{i-2n}, 0)$ for $i = 2n+1, \ldots, 2n+2m$, and where
\[
S_t = (Y^1_t,Y^2_t) \cup   (Y^3_t,Y^4_t) \cup \ldots  
\]
formed from all remaining annihilating particles at time $t$ (and $S_t=\emptyset$ if there are no particles). 
\end{lemma}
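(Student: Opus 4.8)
The plan is to couple both sides on a single probability space built from the forward half-space Brownian web $\mathcal{W}$ on $[0,\infty)\times[0,\infty)$ and its dual $\hat{\mathcal{W}}$, as constructed in Toth and Werner \cite{Toth_Werner}: $\mathcal{W}$ consists of coalescing Brownian paths from every space-time point, absorbed on hitting $\{x=0\}$, while $\hat{\mathcal{W}}$ consists of coalescing backward paths, each a Brownian motion Skorokhod-reflected at $0$, with the defining property that no forward path crosses any backward path. Write $\mu=\sum_{y\in A}\delta_y$ with $A\subset(0,\infty)$ finite, let the coalescing particle at $y$ follow the forward path $W_y$ from $(y,0)$, and let $T_y$ be its absorption time, so that the support of $X_e$ is $\{(0,T_y):y\in A\}$ and $X_t=\sum_{y:\,T_y>t}\delta_{W_y(t)}$.

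First I would convert the left-hand event into a parity statement about $\mu$ and the time-$0$ values of $2N:=2(n+m)$ backward dual paths. Run the dual path $\hat U_i$ from the boundary point $(0,a_i)$, $i=1,\dots,2n$, and $\hat V_k$ from $(b_k,t)$, $k=1,\dots,2m$, down to time $0$ (assuming, by continuity, that all endpoints lie in the open half-line). Non-crossing forces, a.s.\ and for every $y\in A$: if $y<\hat U_i(0)$ then $W_y$ is trapped between $\hat U_i$ and $\{x=0\}$, so $T_y\le a_i$; if $y>\hat U_i(0)$ then $W_y$ stays weakly to the right of $\hat U_i$, so $T_y\ge a_i$ (the borderline event $\{y>\hat U_i(0),\,T_y\le a_i\}$ would force the reflected path $\hat U_i$ to vanish at the random time $T_y$, which has probability zero since $T_y$ has a density and the zero set of $\hat U_i$ is Lebesgue-null). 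Hence $\{T_y\le a_i\}=\{y\le\hat U_i(0)\}$ a.s., and likewise $\{T_y>t,\,W_y(t)\le b_k\}=\{T_y>t,\,y\le\hat V_k(0)\}$, with $W_y(t)\ge b_{2k-1}>0$ already forcing $T_y>t$. The structural point for the exit problem is that every $\hat U_i$ is born on the reflecting boundary $x=0$, hence to the left of every path alive at that backward moment, and coalescence preserves order; with $a_{2j-1}<a_{2j}$ and $b_k$ increasing this gives at time $0$ the sorted order $\hat U_1(0)\le\dots\le\hat U_{2n}(0)\le\hat V_1(0)\le\dots\le\hat V_{2m}(0)$, which is consistent with the pairing $(\hat U_{2j-1},\hat U_{2j})$, $(\hat V_{2k-1},\hat V_{2k})$. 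Listing these values as $\hat Z^1\le\dots\le\hat Z^{2N}$, the left-hand event therefore equals $\{\forall y\in A:\ \#\{i:\hat Z^i<y\}\ \text{is even}\}$ (the parity rule, valid since for fixed $y\in A$ the event $y=\hat Z^i$ is null).

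Next I would identify this with $\{\mu(S_t)=0\}$. Reversed in time, the $2N$ dual paths form a \emph{coalescing} reflected system in which the $b_k$ are present at time $0$ and a new particle enters at $0$ at time $t-a_i$; this is exactly the immigration scheme $\mathbf{z}$ of the annihilating process $(Y_s)$ of Lemma~\ref{apmlemma}, so I would couple $(Y_s)$ to the dual system by letting annihilating particles follow the same dual paths but disappear in pairs at each meeting. For fixed $y>0$, the parity of the number of particles lying strictly below $y$ is unchanged by a coalescence and by an annihilation (at the meeting time both involved paths are a.s.\ on the same side of $y$) and is flipped by each immigration at $0<y$; since the two systems coincide at time $0$, they share the below-$y$ parity at time $t$, where the dual system has the sorted values $\hat Z^1\le\dots\le\hat Z^{2N}$ and the annihilating system has the survivors $Y^1_t<\dots<Y^{2\ell}_t$ (an even number, since $2N$ particles are removed two at a time). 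Applying the parity rule to both — to $S_t=(Y^1_t,Y^2_t)\cup(Y^3_t,Y^4_t)\cup\cdots$ since the survivors are paired consecutively, and to the dual side by the ordering established above — yields the left-hand event $=\{\forall y\in A:\#\{i:\hat Z^i<y\}\text{ even}\}=\{\forall y\in A:\#\{i:Y^i_t<y\}\text{ even}\}=\{\mu(S_t)=0\}$ on this coupling, whence the two probabilities agree. I expect the main obstacle to lie in carrying out the last two steps rigorously: extracting the non-crossing comparisons with the correct null sets, verifying the ordering claim that keeps the global sorted order aligned with the interval pairing, and making the coalescing-web / annihilating-immigration coupling precise enough that the parity counts provably coincide; the remaining edge cases (endpoints on $\{x=0\}$ or coinciding, equal immigration times) are handled by monotone and continuity limits as in \cite{TZ}, \cite{GPTZ}.
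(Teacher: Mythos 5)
Your route is the same as the paper's: the paper gives no written proof, asserting that the identity ``follows immediately from the non-crossing properties of the half-space Brownian web and dual Brownian web paths'' and citing \cite{Toth_Werner}, and your proposal reconstructs exactly that argument (forward absorbed web, backward reflected dual, translation of the event into interval conditions on the duals' time-zero positions, then identification of the paired/annihilated dual with the immigration system $\mathbb{P}^{A}_{\mathbf{z}}$ via a parity count). The ordering claim $\hat U_1(0)\le\cdots\le\hat U_{2n}(0)\le\hat V_1(0)\le\cdots\le\hat V_{2m}(0)$ and the resulting odd-region description of the left-hand event are correct.

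Two of your justifications, however, do not hold up as written. First, the null-set argument for $\{y>\hat U_i(0)\}\Rightarrow\{T_y>a_i\}$: you note the event forces $\hat U_i(T_y)=0$ and dismiss it ``since $T_y$ has a density and the zero set of $\hat U_i$ is Lebesgue-null''. That inference is invalid for dependent objects --- $T_y$ and $\hat U_i$ are functionals of the same web --- and the pattern fails in general (the last zero of a Brownian motion before time $1$ has a density yet lies a.s.\ in the null zero set). The needed statement is true, but it is essentially the content of the Toth--Werner duality itself: either invoke it as the characterization of the absorbed set $\{y':T_{y'}\le a\}$ by the dual path from $(0,a)$, or derive it from the ordering of dual paths (for instance, the dual started at $(0,t)$ lies below every $\hat V_k$, so $y>\hat V_k(0)$ forces $T_y>t$, and analogously for the boundary duals). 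Second, the parity-transfer step: as stated, ``the parity of the number of particles strictly below $y$ is unchanged by a coalescence and by an annihilation and is flipped by each immigration'' tracks a quantity that is not invariant in either system, since the count below $y$ changes each time a particle diffuses across the level $y$. What makes the comparison work is the coupling fact you leave implicit: when two alive particles annihilate, their web paths have coalesced at that instant and remain identical forever, so dead particles travel in glued pairs; hence, counting the coalescing dual with multiplicity, the difference of the two below-$y$ counts equals twice the number of glued dead pairs below $y$ at every time, so it is even and the parities at time $t$ agree. With these two repairs (plus the routine endpoint/null-set remarks you already flag), your argument is complete and coincides with the intended proof.
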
 

We can obtain a corresponding duality statement for mixed coalescing/annihilating systems CABM($\theta$) 
by thinning. We recall a colouring argument. 
Consider first initial conditions with finitely many particles,
that is $\mu([0,\infty))<\infty$.  Fix an evolution of $(X_t: t \geq 0),X_e$ under $\mathbb{P}^{C}_{\mu}$ 
and colour particles red or blue as follows: at time zero let each particle independently be red $R$ with 
probability $1/(1+\theta)$ and blue $B$ with probability $\theta/(1+\theta)$; colour the particles at later 
times by following the colour change rules
\[
B+B \to B, \qquad R+B \to R, \qquad R+R \to \left\{ \begin{array}{ll}
B & \mbox{with probability $\theta$,} \\
R & \mbox{with probability $1-\theta$,}
\end{array} \right.
\]
independently at each of the finitely many collisions. The particles coloured red form a CABM($\theta$) system.
Moreover, the particles in $X_t$ alive at time $t$ together with the frozen particles 
in $X_e(\{0\}\times [0,t])$ remain independently red $R$ with probability $1/(1+\theta)$ and 
blue $B$ with probability $\theta/(1+\theta)$. This can be shown by checking that after each collision 
this property is preserved. 

Write $\Theta(\mu)$ for the thinned random measure created by deleting each particle of 
$\mu$ independently with probability $1/(1+\theta)$. Writing $\mathbb{P}^{CABM(\theta)}_{\Xi}$ for the law of 
the mixed CABM system started at a (possible random) finite initial condition $\Xi$, the colouring procedure 
above implies the equality in distribution, for finite $\mu$,
\begin{equation} \label{law}
\mbox{$\left( X_t, X_e |_{\{0\}\times [0,t])}\right) $ under $\mathbb{P}^{CABM(\theta)}_{\Theta(\mu)}$}
\;\; \stackrel{\mathcal{D}}{=} \;\;
\mbox{$\left( \Theta(X_t), \Theta(X_e |_{\{0\}\times [0,t])})\right) $ under $\mathbb{P}^{C}_{\mu}$}.
\end{equation}
Thinning a finite set of $n \geq 1$ particles leaves $B(n,(1+\theta)^{-1})$ a Binomial number of remaining particles. 
Note, when $ \theta \in (0,1]$, that
$\mathbb{E} [ (-\theta)^{B(n,(1+\theta)^{-1})}] = 0$ for all $n \geq 1$. Then we have, writing $\mathbb{E}_{\Theta}$ for 
the expectation over the thinning, 
\begin{eqnarray}
&& \hspace{-.3in} 
\mathbb{E}_{\Theta} \mathbb{E}^{CABM(\theta)}_{\Theta(\mu)} 
\left[ (-\theta)^{X_e(I_1\cup \ldots \cup I_{n})} (-\theta)^{X_t(J_1\cup \ldots \cup J_{m})} \right] \nonumber \\
& = & \mathbb{E}_{\Theta} \mathbb{E}^{C}_{\mu} 
\left[ (-\theta)^{\Theta(X_e)(I_1\cup \ldots \cup I_{n})} (-\theta)^{\Theta(X_t)(J_1\cup \ldots \cup J_{m})} \right] 
\quad \mbox{using (\ref{law})}  \nonumber\\
& = & \mathbb{P}^{C}_{\mu} 
\left[ X_e(I_1\cup \ldots \cup I_{n}) =0, \; X_t(J_1\cup \ldots \cup J_{m})=0 \right] \nonumber \\
& = & \mathbb{P}^{A}_{\mathbf{z}} [\mu(S_t) =0] 
\quad \mbox{using (\ref{duality1})}  \nonumber \\
& = & \mathbb{E}_{\Theta} \mathbb{E}^{A}_{\mathbf{z}} \left[ (-\theta)^{\Theta(\mu)(S_t)} \right]. \label{thin10}
\end{eqnarray}
This implies the duality 
\begin{equation} \label{duality2}
\mathbb{E}^{CABM(\theta)}_{\mu} 
\left[ (-\theta)^{X_e(I_1\cup \ldots \cup I_{n})} (-\theta)^{X_t(J_1\cup \ldots \cup J_{m})} \right] =
\mathbb{E}^{A}_{\mathbf{z}} \left[ (-\theta)^{\mu(S_t)} \right]
\end{equation}
for finite $\mu$; this can be checked by induction on the number $\mu(0,\infty)$ of initial particles
by expanding the identity (\ref{thin10}) into the sum over terms where different size subsets of particles 
in $\mu$ remain after the thinning. Note that the duality (\ref{duality2}) contains the duality
(\ref{duality1}) as the limit $\theta \downarrow 0$. Indeed, henceforth we shall take $0^0 =1$ so that
$(-\theta)^k = \I(k=0)$ when $\theta =0$ to allow a unified treatment over $\theta \in [0,1]$.

The extension of (\ref{duality2}) to the case of infinite initial conditions $\mu$ can be established
by approximation arguments. This is (somewhat tersely) sketched in the appendix to \cite{TZ}, 
and we summarize some points here. We can consider the measure $X_t$ as living in the space
$\mathcal{M}$ of locally finite point measures on $[0,\infty)$, which we give the topology of vague convergence.
Due to the instantaneous reactions, we restrict to the subset $\mathcal{M}_0$ of simple point measures.
The arguments in \cite{TZ} show that there is a Feller semigroup on this space, allowing us to construct the law
$\mathbb{P}^{CABM(\theta)}_{\mu}$  for the
CABM($\theta$) process starting from any $\mu \in \mathcal{M}_0$. Moreover there is an entrance law that is the limit
of Poisson$(\lambda)$ initial conditions as $\lambda \uparrow \infty$, which we informally call the maximal entrance 
law. (For the case $\theta =0$ of coalescing particles, this corresponds to the point set process in the Brownian web
starting from the set $[0,\infty)$.) The exit measure $X_e$ also exists under $\mathbb{P}^{CABM(\theta)}_{\mu}$ 
and is the limit of the exit measures for any approximating finite system - the point is the formulae
(\ref{duality2}) characterise the laws of the pair $(X_t, X_e |_{\{0\}\times [0,t])})$ as 
simple locally finite point measures.

\vspace{.1in}
\noindent
\textbf{Remark.} We digress here to record a lemma, based on the same tools, that shows thinnings are useful in the 
study of massive CBMs, where masses add upon coalescence. 
This process yields a point process in position/mass space $\mathbb{R} \times \mathbb{N}_0$
(in the case of integer masses). 
The lemma below gives only a very partial description of the 
process, and a full tractable
multi-particle description for this model is still lacking (though notably the one point distribution 
has been obtained \cite{spouge}).
\begin{lemma} \label{MCBM}
Consider a system $(X_t)$ of massive coalescing Brownian motions where each particle has an integer mass and masses
add upon coalesence.  
Suppose the masses of particles at $t=0$
are independent uniform random variable on $\{1,2,\ldots, q \}$, for a fixed $q \in \{2,3,\ldots\}$.
Let $(R_t)$ be the positions of particles present at time $t$ with labels not divisible by $q$; 
let $(B_t)$ be the positions of particles present at time $t$ with labels divisible by $q$.
Then the process $(R_t,B_t, t\geq 0)$ is a two-species particle system, where
the evolution of types at a collision time is governed by the following rules: 
for $\theta = 1/(q-1)$
\begin{equation} \label{2species}
B+B \rightarrow B, \quad B+R \rightarrow R, \quad 
R+R \stackrel{\theta}{\rightarrow} B, \quad  
R+R \stackrel{1-\theta}{\rightarrow} R.
\end{equation}
Moreover, at a fixed $t \geq 0$, the positions $(B_t)$ are a $1/q$ thinning,
and the positions $(R_t)$ are a $(q-1)/q$ thinning, of the 
positions of the full system $(X_t)$.
\end{lemma}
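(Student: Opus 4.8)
The strategy is to reduce Lemma \ref{MCBM} to the colouring argument already developed in Section \ref{s5.2} for the CABM($\theta$) system, by noticing that the labels-mod-$q$ colouring of massive coalescing Brownian motions obeys exactly the same combinatorial rules as the red/blue colouring used there. First I would recall that a system of massive coalescing Brownian motions has the same trajectories as a system of (massless) coalescing Brownian motions $(X_t)$: the mass is simply a label transported along each coalescing path, with masses added at collisions. So it suffices to track, for each surviving particle at each time, the residue class modulo $q$ of its accumulated mass. Colour a particle $R$ (``red'') if its mass is not divisible by $q$ and $B$ (``blue'') if it is. At a collision the two incoming masses $a,b$ produce $a+b$, so the colour of the product depends only on the residues $a \bmod q$ and $b \bmod q$: if $a \equiv 0$ and $b\equiv 0$ then $a+b\equiv 0$ ($B+B\to B$); if exactly one of them is $\equiv 0$ then $a+b \not\equiv 0$ ($R+B \to R$); if both are $\not\equiv 0$ then $a+b$ may or may not be $\equiv 0$ ($R+R \to B$ or $R$). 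This already gives the qualitative rules in (\ref{2species}), leaving only the probability $\theta$ of the outcome $R+R\to B$ to be identified.

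The key computational step is to verify that, conditionally on the collision history of $(X_t)$, the residues modulo $q$ of the surviving masses are independent and uniform on $\{0,1,\ldots,q-1\}$ at all times, and in particular that a collision of two red particles produces a blue particle with probability exactly $\theta = 1/(q-1)$. The initial masses are i.i.d. uniform on $\{1,\ldots,q\}$, hence their residues are i.i.d. uniform on $\mathbb{Z}/q\mathbb{Z}$. The invariance is then an induction over the (almost surely finite, in any finite system) collisions: if $U,V$ are independent uniform on $\mathbb{Z}/q\mathbb{Z}$ then $U+V$ is again uniform on $\mathbb{Z}/q\mathbb{Z}$ and independent of all masses not involved in the collision. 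This gives that at a fixed time $t$, a particle of $(X_t)$ is blue with probability $1/q$ and red with probability $(q-1)/q$, independently over particles -- so $(B_t)$ is a $1/q$-thinning and $(R_t)$ a $(q-1)/q$-thinning of $(X_t)$, which is the last assertion of the lemma. For the transition rule, condition on two red particles colliding: their residues $U,V$ are i.i.d. uniform on $\{1,\ldots,q-1\}$ (the nonzero residues), and the product is blue iff $U+V\equiv 0 \bmod q$, i.e. iff $V \equiv -U$; for each fixed value of $U\in\{1,\ldots,q-1\}$ the residue $-U$ is again nonzero, so $\Pp[V\equiv -U\mid U] = 1/(q-1)$, giving $\theta = 1/(q-1)$ as claimed. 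Since the $q=2$ case is degenerate ($R+R$ is always $B$, consistent with $\theta = 1$), and the rules match those in the construction of Section \ref{s5.2}, the process $(R_t,B_t)$ is precisely a CABM($1/(q-1)$) system with its blue particles retained, exactly as in the colouring that underlies (\ref{law}).

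The extension from finite to infinite (e.g. entrance-law) initial conditions follows the same approximation argument sketched at the end of Section \ref{s5.2}: both the massive coalescing system and the two-species system are obtained as limits of finite systems, and the thinning/colouring identity passes to the limit because it characterises the joint law of simple locally finite point measures. I do not expect a genuine obstacle here -- the main thing to be careful about is the bookkeeping of \emph{which} particle carries which label when two particles coalesce (the statement refers to ``labels'', which I would interpret as the accumulated mass, and I would make this explicit at the start of the proof to avoid ambiguity). The only mildly delicate point is that the colouring is a function of the coalescence history alone, so that once that history is fixed the residues evolve by the independent-uniform rule above; this is what makes the thinning statement exact rather than merely a statement about one-point intensities. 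In short: state the label convention, do the residue induction, read off the three reaction rules and the value $\theta = 1/(q-1)$, read off the two thinning probabilities, and invoke the finite-to-infinite approximation from Section \ref{s5.2}.
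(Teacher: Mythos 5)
Your proof is correct and follows essentially the same route as the paper's (very terse) argument: the paper's proof consists precisely of checking that two colliding particles with independent uniform-mod-$q$ masses produce a particle whose mass is again uniform mod $q$ and independent of the rest, from which the rules \eqref{2species} with $\theta=1/(q-1)$ and the $1/q$, $(q-1)/q$ thinnings follow. Your write-up just makes explicit the induction over collisions, the conditional computation giving $\theta$, and the mass-blindness of the coalescing trajectories, all of which the paper leaves implicit (it also sidesteps infinite initial conditions by restricting to finite systems, which you handle by the same approximation remark).
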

We have not detailed the initial positions or state space for $(X_t)$ which play no part in
the simple proof - consider finite systems on $\mathbb{R}$ to be specific. 
The proof consists of checking that for two colliding particles, whose masses are independent and 
uniform modulo($q$) on $\{1,\ldots,q\}$, the resultant coalesced particle still has mass that is 
uniform modulo($q$) on $\{1,\ldots,q\}$. This then implies that the $(R_t)$ system evolves
as a CABM($\theta$) system as in (\ref{2species}). 

The point of including this lemma is to show that both strong thinning ($p<1/2$) and weak thinning ($p \geq 1/2$)
of CBM has an interpretation in terms of interacting particle systems:
here weak thinning with probability $(q-1)/q$ singles out particles with masses not divisible by $q$, whereas
strong thinning with probability $1/q$ singles out particles with masses divisible by $q$. 
\subsection{Pfaffian kernels}  \label{s5.3}
We can now read off the Pfaffian kernels in Theorem \ref{Exittheorem} from the duality $(\ref{duality2})$ and
the alternating product moment formulae in Lemma \ref{apmlemma}. 
For $\mu \in \mathcal{M}_0$ and $\theta \in (0,1]$, the duality $(\ref{duality2})$ in the case where 
$\mathbf{b} = \emptyset$, so that we are only interested in the exit measure, and when $t = a_{2n}$, gives
\[
\mathbb{E}^{CABM(\theta)}_{\mu} 
\left[ (-\theta)^{X_e(I_1\cup \ldots \cup I_{n})} \right] = \mathbb{E}^{A}_{\mathbf{z}} \left[ (-\theta)^{\mu(S_{a_{2n}})} \right]
\]
where $z_i = (0,a_{2n}-a_i)$ for $i = 1, \ldots, 2n$. The right hand side is an alternating product moment
(\ref{apm}) where $g(x) = (-\theta)^{\mu[0,x]}$ and $h(x) = (-\theta)^{-\mu[0,x]}$. When $\mu$ is finite then $g,h$ are bounded and 
Lemma {\ref{apmlemma} gives 
\[
\mathbb{E}^{CABM(\theta)}_{\mu} 
\left[ (-\theta)^{X_e(I_1\cup \ldots \cup I_{n})} \right] = \pf (H(a_i,a_j): i < j \leq 2n)
\]
where
\[
H(a_i,a_j) = 
\mathbb{E}^{A}_{(z_i,z_j)} \left[ (-\theta)^{\mu(S_{a_{2n}})} \right] = 
\mathbb{E}^{A}_{(0,0),(0,a_j-a_i)} \left[ (-\theta)^{\mu(S_{a_{j}})} \right].
\]
The same conclusion holds for $\mu \in \mathcal{M}_0$ by taking limits $\mu |_{[0,n]} \to \mu$. 
To derive the correlation function we differentiate in the variables $a_2,\ldots,a_{2n}$ and then
let $a_1 \uparrow a_2, \ldots, a_{2n-1} \uparrow a_{2n}$ (details of this calculation are given in \cite{TZ}). 
We reach, writing $\rho^{CABM(\theta)}_n$ for the $n$ point intensity of $X_e$ under 
$\mathbb{E}^{CABM(\theta)}_{\mu}$, 
\begin{eqnarray*}
&& \hspace{-.3in} (-(1+\theta))^n \rho^{CABM(\theta)}_n (a_2,a_4,\ldots,a_{2n}) \\
&= & \pf \left(
\left( \begin{array}{cc}
H(a_{2i},a_{2j}) & D_2 H(a_{2i},a_{2j}) \\
D_1 H(a_{2i},a_{2j}) & D_{12} H(a_{2i},a_{2j}) 
\end{array} \right): i < j \leq n \right).
\end{eqnarray*}
To massage this into the stated derived form in Theorem \ref{Exittheorem} we 
first conjugate the kernel with the block matrix $A$ with entries $\pm 1$ down the diagonal (using
$\pf(A^TBA) = (-1)^n$), and then define $K = H-1$ to allow for the jump discontinuity in the derived form
(\ref{derived_form}). This leads to  $ \rho^{CABM(\theta)}_n (t_1,\ldots,t_{n}) = \Pf( \mathbf{K}(t_i,t_j): i<j\leq n)$
where
\begin{equation} \label{mukernel}
K(s,t) = \frac{1}{1+\theta} \;
\mathbb{E}^{A}_{(0,0),(0,t-s)} \left[ \left((-\theta)^{\mu(Y^2_t - Y^1_t)}-1\right) \I(\tau > t) \right] \quad \mbox{for $0<s<t$,}
\end{equation}
where $\tau$ is the hitting time of the pair $Y^1,Y^2$. 

For the cases where the initial condition is a Poisson measure $\Xi$, with bounded intensity $\lambda(x) dx$, we restart with the 
duality $(\ref{duality2})$, which gives
\begin{eqnarray*}
\mathbb{E}^{CABM(\theta)}_{\Xi} 
\left[ (-\theta)^{X_e(I_1\cup \ldots \cup I_{n})}  \right] 
&=&
\mathbb{E}_{\Xi} \mathbb{E}^{A}_{\mathbf{z}} \left[ (-\theta)^{X_0(S_t)} \right] \\
&=& \mathbb{E}^{A}_{\mathbf{z}} \bigg[ e^{ - (1+\theta) \sum_{i} \int^{Y^{2i}_t}_{Y^{2i-1}_t} \lambda(z) dz} \bigg] \\
& = & \mathbb{E}^{A}_{\mathbf{z}} [ M_{(g,h)}(Y_t)]
\end{eqnarray*}
for $g(x) = \exp( - (1+ \theta) \int^x_0 \lambda(z) dz)$ and $h(x) = 1/g(x)$. When $\lambda$ is compactly supported $g,h$ are bounded, 
and the product moment Lemma \ref{apmlemma} gives
\begin{equation} \label{poissonduality}
\mathbb{E}^{CABM(\theta)}_{\Xi} 
\left[ (-\theta)^{X_e(I_1\cup \ldots \cup I_{n})} \right] 
= \pf (H(a_i,a_j): i < j \leq 2n)
\end{equation}
where 
\[
H(a_i,a_j) = 
\mathbb{E}^{A}_{(z_i,z_j)} \bigg[ e^{ - (1+\theta) \int_{S_t} \lambda(z) dz} \bigg] = 
\mathbb{E}^{A}_{(0,0),(0,a_j-a_i)} \bigg[ e^{ - (1+\theta) \int_{S_{a_j}} \lambda(z) dz} \bigg].
\]
Approximating $\lambda I[0,n] \to \lambda$ allows the same conclusion for bounded $\lambda$. 
Now we repeat the steps above to extract the kernel showing that, under  $\mathbb{E}^{CABM(\theta)}_{\mu} $
the exit measure $X_e$ is a Pfaffian point process with kernel $\mathbf{K}(s,t)$ in derived form based 
on the scalar kernel
\begin{equation} \label{Pkernel}
K(s,t) = \frac{1}{1+\theta} \;
\mathbb{E}^{A}_{(0,0),(0,t-s)} \bigg[ \bigg(e^{ - (1+\theta) \int_{Y^1_t}^{Y^2_t} \lambda(z) dz}-1\bigg)  \I(\tau > t)  \bigg]
\end{equation}
where $\tau$ is the hitting time of the pair $Y^1,Y^2$. 
The distribution for $(Y^1_t,Y^2_t)$ is given (for example by conditioning at time $t-s$ and then using the 
Karlin McGrgeor formula for non-colliding Markov processes) by
\begin{equation} 
 \mathbb{P}^{A}_{(0,0),(0,t-s)} [ Y^1_t \in dy_1, Y^2_t \in dy_2, \tau >t] \nonumber \\
=
\left| \!\! \begin{array}{cc} 
p^R_s(0,y_1)  &  p^R_t(0,y_1) \\
p^R_s(0,y_2) &   p^R_t(0,y_2) 
\end{array} \!\! \right| dy_1 dy_2,\label{2pttd}
\end{equation}
(recall $p^R_t(x,y)$ is the transition density for reflected Brownian motion on $[0,\infty)$). 
Rewriting the kernels using this density gives the form stated in Theorem \ref{Exittheorem}. Note that 
all derivatives exist of $K(s,t)$ exist and are bounded in the region $0 < s \leq t$. 
The density in (\ref{2pttd}) is of the form $\phi(s) \psi(t) - \psi(s) \phi(t)$ and this 
allows one to check that the antisymmetric extension of $K(s,t)$ to $s,t > 0$  is a $\mathcal{C}^2$.
For deterministic locally finite $\mu$, or Poisson $\Xi$ with a smooth intensity $\lambda$, the kernel extends 
to a $\mathcal{C}^2$ function on $s,t \geq 0$. 
\section{Further proofs}  \label{s6}
Here we collect proofs of more technical statements made in Sections
\ref{s2}, \ref{s2.2}.
\subsection{Proof of Fourier transform formulae for $\kappa_1(p),\kappa_2(p)$} \label{s6.1}
We derive the formulae (\ref{k1FT}) and (\ref{k2FT}). Recall we are assuming, for simplicity,
that $\rho$ is in Schwarz class. 
Since $\rho$ is symmetric, then $\hat{\rho}$ is symmetric and real valued. For $k \in \mathbb{R}$, 
 $\hat{\rho}(k)< 1$ for all $k \neq 0$ and decays faster than polynomially, and near zero
has an expansion
\begin{equation} \label{ska}
\hat{\rho}(k) = 1 - \frac{\sigma^2 k^2}{2} + O(|k|^4).
\end{equation}
These imply that the integral (\ref{Ldefn}) defining $L_{\rho}(p,x)$ is well defined
and absolutely integrable for all $p$.

Fourier inversion and Fubini's Theorem imply that
\begin{eqnarray*}
\sum_{n=1}^{\infty} \frac{(4p(1-p))^n}{n} \rho^{*n}(0) 
& = & \sum_{n=1}^{\infty} \frac{(4p(1-p))^n}{2 \pi n} \int_{\mathbb{R}} (\hat{\rho}(k))^n  dk \\
& = &   - \frac{1}{2 \pi} \int_{\mathbb{R}} \log (1 - 4p(1-p) \hat{\rho}(k))  dk = - L_{\rho}(p,0).
\end{eqnarray*}
which completes identity (\ref{k1FT}) for $\kappa_1(p)$. Similarly, when $p \neq \frac12$,
\[
 \int_0^{\infty} \! x\, \Big( \sum_{n=1}^{\infty} \frac{(4p(1-p))^n  \tilde{\rho}^{*n}(x)}{n} \Big)^2 dx
 = \int_0^{\infty} \! x\, L^2_{\rho}(p,x) dx
 \]
and 
\begin{eqnarray*}
&& \hspace{-.4in} \sum_{n=1}^{\infty}  \frac{(4p(1-p))^n}{n} \int_{-\infty}^0  e^{\phi_p x} \rho^{*n}(x) dx \\
& = & - \frac{1}{2 \pi} \int_{-\infty}^0  \int_{\mathbb{R}} e^{(\phi_p-ik)x} \log(1-4p(1-p) \hat{\rho}(k)) dk dx \\
& = &  - \frac{1}{2 \pi}  \int_{\mathbb{R}} \frac{\phi_p}{\phi_p^2+k^2} \log(1-4p(1-p) \hat{\rho}(k)) dk
\end{eqnarray*}
completing the formula (\ref{k2FT}) for $\kappa_2(p)$ in the cases $p \neq \frac12$. 

The case $p=1/2$ needs some care. Recall that we suppose that $\rho$ has a finite exponential moment. Therefore
$\hat{\rho}(k)$ is analytic in a strip $|k| < 2 \mu$ and the small $k$ expansion (\ref{ska}) also 
holds for $k \in \mathbb{C}$. 
By (\ref{1011}) the infinite series for $\kappa_2(1/2)$ is
absolutely convergent so that 
\begin{eqnarray}
~~~~~~~\kappa_2\left(\fr\right) - \log 2 & = & 
\frac{1}{2}\sum_{n=2}^{\infty}  \left( \sum_{k=1}^{n-1}
\int_0^{\infty} x \frac{\rho^{*k}(x) 
\rho^{*(n-k)}(x)}{k(n-k)} dx - \frac{1}{2n} \right) - \frac14 \label{temp317}  \\
& = & \lim_{\epsilon \downarrow 0}
\frac{1}{2}\sum_{n=2}^{\infty} (1- \epsilon)^n \left( \sum_{k=1}^{n-1}
\int_0^{\infty} x \frac{\rho^{*k}(x) 
\rho^{*(n-k)}(x)}{k(n-k)} dx - \frac{1}{2n} \right) - \frac14 \nonumber \\
& = & \lim_{\epsilon \downarrow 0} \frac{\log \epsilon}{4} + \frac{1}{8 \pi^2} \int_0^{\infty} x \left( \int_{\mathbb{R}} e^{-ikx} 
\log(1-(1-\epsilon) \hat{\rho}(k) dk\right)^2 dx \nonumber  \\
& = & \lim_{\epsilon \downarrow 0} \frac{\log \epsilon}{4}- \frac{(1-\epsilon)^2}{8 \pi^2} 
\int_0^{\infty} \! x^{-1} \! \left( \int_{\mathbb{R}} e^{-ikx} 
\frac{\hat{\rho}'(k)}{1-(1-\epsilon) \hat{\rho}(k)} dk\right)^2 \! dx  \nonumber 
\end{eqnarray}
where we have integrated by parts in the $dk$ integral to help understand the divergence in $\epsilon$.
Indeed the function 
\begin{equation} \label{fdefn}
f_{\epsilon}(x) = \frac{1}{2 \pi i} \int_{\mathbb{R}} e^{-ikx} \frac{\hat{\rho}'(k)}{1-(1-\epsilon) \hat{\rho}(k)} dk
\end{equation}
has an integrand with two poles that approaches the real axis as $\epsilon \downarrow 0$ and this will
lead to the cancellation of the term $\frac14 \log \epsilon$. 
The asymptotics (\ref{ska}) allow us to fix $\mu>0$ so that 
the denominator $1-(1-\epsilon) \hat{\rho}(k)$ has, for small enough $\epsilon$, 
only two zeros on $|k| \leq \mu$, at 
$\pm r_{\epsilon} i$, where 
\begin{equation} \label{rdefn}
r_{\epsilon} = \frac{\sqrt{2 \epsilon}}{\sigma} + O(\epsilon).
\end{equation}
We move the contour defining $f_{\epsilon}$ from the real axis to the curve
$C_{\mu}$ consisting of the segments $(-\infty,-\mu)$, $(\mu, \infty)$ on the real axis and the half circle
$\{-\mu e^{it}: t \in [0,\pi]\}$. This move crosses the 
the pole at $-r_{\epsilon}i$ so that, evaluating the residue at $-r_{\epsilon}i$, we have
\begin{equation} \label{tfdefn}
f_{\epsilon}(x) = \frac{1}{1-\epsilon} e^{-r_{\epsilon} x} + \tilde{f}_{\epsilon}(x), \quad
\mbox{where} \quad \tilde{f}_{\epsilon}(x) =  \frac{1}{2 \pi i} \int_{C_{\mu}} e^{-ikx} \frac{\hat{\rho}'(k)}{1-(1-\epsilon) \hat{\rho}(k)} dk.
\end{equation}  
Substituting this into the the expression (\ref{temp317}) we find
\begin{eqnarray*}
&& \hspace{-.4in} \kappa_2(1/2) - \log 2 \\ 
& = &
\lim_{\epsilon \downarrow 0} \frac{\log \epsilon}{4} +  \frac{(1-\epsilon)^2}{2} 
\int_0^{\infty} \! x^{-1} f_{\epsilon}^2(x) dx \\
& = &
\lim_{\epsilon \downarrow 0} \frac{\log \epsilon}{4} - (1-\epsilon)^2
\int_0^{\infty} \log x f_{\epsilon}(x) f_{\epsilon}'(x) dx \\
& = & \lim_{\epsilon \downarrow 0} \frac{\log \epsilon}{4} - (1-\epsilon)^2
\int_0^{\infty} \log x \left(\frac{1}{1-\epsilon} e^{-r_{\epsilon} x} + \tilde{f}_{\epsilon}(x)\right) 
 \left(\frac{-r_{\epsilon}}{1-\epsilon} e^{-r_{\epsilon} x} + \tilde{f}'_{\epsilon}(x)\right) dx \\
  & = &  \frac14 \log \big(\frac{\sigma^2}{8}\big) - \frac{\gamma}{2}  \qquad \quad \mbox{(recall 
  $\gamma = - \int^{\infty}_0 e^{-x} \log x dx$)} \\
&&  - \lim_{\epsilon \downarrow 0} (1-\epsilon)^2
\int_0^{\infty} \log x  \left( - \tilde{f}_{\epsilon}(x)
\frac{r_{\epsilon}}{1-\epsilon} e^{-r_{\epsilon} x} +
\left(\frac{1}{1-\epsilon} e^{-r_{\epsilon} x} + \tilde{f}_{\epsilon}(x)\right) \tilde{f}'_{\epsilon}(x)\right) dx \\
&=& \frac14 \log \big(\frac{\sigma^2}{8}\big) - \frac{\gamma}{2} - 
\int_0^{\infty} \log x 
\left(1 + \tilde{f}_{0}(x)\right) \tilde{f}'_{0}(x) dx.
\end{eqnarray*}
To justify passing to the limit in the last equality one can verify (by integrating by parts 
in the definition of $\tilde{f}_{\epsilon}$ and $\tilde{f}'_{\epsilon}$ and noting that 
$1-\hat{\rho}$ does not vanish on $C_{\mu}$) that there exists $\epsilon_0,C$ so that 
$|\tilde{f}_{\epsilon}(x)| \vee |\tilde{f}_{\epsilon}'(x)| \leq C (1+x^2)^{-1}$ for all
$x \geq 0$ and all $0 \leq \epsilon \leq \epsilon_0$. 

The last step is to rewrite $\tilde{f}_0(x)$ in terms of $L_{\rho}$. We move the contour of integration
in 
$\tilde{f}_0(x) =  \frac{1}{2 \pi i} \int_{C_{\mu}} e^{-ikx} \frac{\hat{\rho}'(k)}{1- \hat{\rho}(k)} dk$
back to the real line. The integrand $\frac{\hat{\rho}'(k)}{1- \hat{\rho}(k)}$
has a simple the pole at the origin, so that letting $\mu \downarrow 0$ we get half the residue at the origin 
and the principle value for the integral around the origin, that is 
\[
\tilde{f}_0(x) =  \frac{1}{2 \pi i} \int_{C_{\mu}} e^{-ikx} \frac{\hat{\rho}'(k)}{1- \hat{\rho}(k)} dk
= 1 + \frac{1}{2 \pi i} P.V. \int_{\mathbb{R}} e^{-ikx} \frac{\hat{\rho}'(k)}{1- \hat{\rho}(k)} dk.
\]
It is not hard to check that one may integrate by parts to identify
\[
\frac{1}{2 \pi i} P.V. \int_{\mathbb{R}} e^{-ikx} \frac{\hat{\rho}'(k)}{1- \hat{\rho}(k)} dk
= - x L_{\rho}(1/2,x)
\]
completing the proof. 
\subsection{Regularity of $p \to \kappa_i(p)$ for the Gaussian kernel} \label{s6.2}
We complete the proof of Corollary \ref{gaussiankernel}. We recall the expression 
(\ref{k1C1}) for $\kappa_1(p)$: 
\begin{equation} \label{k1C1+}
\kappa_1(p) 
= \frac{1}{4\sqrt{\pi t}} \mbox{Li}_{3/2}(4p(1-p))
+  \I(p >1/2) \left( - t^{-1} \log 4p(1-p)\right)^{1/2}. 
\end{equation}
This shows that $\kappa_1(p)$
 is a smooth function of $p \in (0,\frac12) \cup (\frac12,1)$. To examine the behaviour at $p=\frac12$
 we use a series representation (section 9 of \cite{Wood}) for $\mbox{Li}_s$, for $s \not = 1,2,\ldots$,
\begin{equation}
\label{lismart}
\mbox{Li}_s(\beta)=\Gamma(1-s)\left(-\log(\beta)\right)^{s-1}+\sum_{n=0}^\infty \zeta(s-n)
\frac{\log^n(\beta)}{n!},
\end{equation}
where $\zeta$ is Riemann's zeta function and the infinite series converges for $|\log(\beta)| < 2 \pi$. 
Using this for $s=3/2$ and taking  $p = \frac12 + \epsilon$ in (\ref{k1C1+}) we reach
\[
t^{1/2} \kappa_1\left(\frac12 + \epsilon\right) = \frac12 \sqrt{ - \log(1-4 \epsilon^2)} \, \sgn(\epsilon) + A(\epsilon)
\]
where $A$ is analytic for $\epsilon \in (-\frac12, \frac12)$ and given by 
$A(\epsilon) = \frac{1}{4 \sqrt{\pi}}  \sum_{n=0}^\infty \zeta \left(\frac32 -n\right)
\frac{\log^n(1-4 \epsilon^2)}{n!}$.  Also 
\[
\frac12 \sqrt{ - \log(1-4 \epsilon^2)} \sgn(\epsilon) 
 =  \epsilon \Psi^{1/2}(4 \epsilon^2) \quad 
 \mbox{where $\Psi(z) = - \frac{\log(1-z)}{z}$}
\]
showing that $\kappa_1(p)$ is analytic for $p \in (0,1)$. 

The infinite series in (\ref{DZ50}) and (\ref{DZ53}) for $\kappa_2(p)$, together with their derivatives in $p$, converge uniformly 
for $p$ in compacts inside $[0,\frac12) \cup (\frac12,1)$.  
This implies the continuous differentiability of  $\kappa_2$
except at the point $p=1/2$. For $p< \frac12$ the formula (\ref{DZ50}) can be re-written as
\[
 \log \frac{1}{1-p} - p(1-p) + \frac{1}{4 \pi} \sum_{n=2}^{\infty} \frac{(4p(1-p))^n}{n} \Big(
 \sum_{k=1}^{n-1} \frac{1}{\sqrt{k(n-k)}} - \pi \Big).
\]
The absolute convergence of the sum over $n$ for all $p$,  using 
$\sum_{k=1}^{n-1}\frac{1}{\sqrt{k(n-k)}}-\pi =O(n^{-1/2})$, 
implies the left continuity of
$\kappa_2(p)$ as $p \uparrow \frac12$. 
A straightforward re-arrangement of the terms in (\ref{DZ53}) 
constituting $\kappa_2(p)$ for $p>1/2$ leads to
\begin{equation} \label{rtcty}
\lim_{p\downarrow \fr} \kappa_2(p)=\kappa_2(1/2)-2\log 2-
\lim_{\delta \downarrow 0} \Big( \log \delta+\sum_{n=1}^\infty \frac{1}{n}
\mathrm{erfc} (\sqrt{n\delta}) \Big)
\end{equation}
(using the complementary error function $\mathrm{erfc}(x) = \frac{2}{\sqrt{\pi}} \int_x^{\infty} \exp(-t^2) dt 
= 1- \mbox{erf}(x)$). 
To compute the limit in the right hand side, we fix $c>0$ and write
\begin{eqnarray}
&& \hspace{-.2in} \lim_{\delta \downarrow 0} \Big( \log \delta+\sum_{n=1}^\infty \frac{1}{n}
\mathrm{erfc} (\sqrt{n\delta}) \Big) \nonumber \\
&=& \lim_{ \delta \downarrow 0} \Big( \log \delta + \sum_{n  \geq c/\delta}\frac{1}{n}
\mathrm{erfc} (\sqrt{n\delta})  + \sum_{n < c/\delta} \frac{1}{n} 
(1-\mbox{erf} (\sqrt{n \delta})) \Big) \nonumber  \\
& = & \log c +\gamma+
\lim_{ \delta \downarrow 0} \Big( \sum_{n  \geq c/\delta}\frac{1}{n}
\mathrm{erfc} (\sqrt{n\delta})  - \sum_{n < c/\delta} 
\frac{1}{n} \mbox{erf} (\sqrt{n \delta}) \Big) \nonumber  \\
& = & 
\log c +\gamma +\int_c^\infty \frac{1}{x} \mathrm{erfc}(\sqrt{x}) dx
 - \lim_{ \delta \downarrow 0} \mathcal{E}_{c,\delta} \label{temp130}
\end{eqnarray}
where we have used $\sum_{n=1}^N \frac{1}{n}=\log N+\gamma+O\left(N^{-1}\right)$ 
for $\gamma$ the Euler-Mascheroni constant, and we may estimate (using $\mbox{erf}(x) \leq x$)
\[
0 \leq \mathcal{E}_{c,\delta} = \sum_{n < c/\delta} \frac{1}{n} \mbox{erf} (\sqrt{n \delta})
\leq  \sqrt{\delta}  \sum_{n < c/\delta} n^{-1/2} \leq  2 \sqrt{c}.
\]
Integrating by parts,
\[
\int_c^\infty \frac{1}{x} \mathrm{erfc}(\sqrt{x}) dx
=  \frac{1}{\sqrt{\pi}} \int_c^{\infty}
\frac{\log x}{\sqrt{x}} e^{-x} dx - \mathrm{erfc}(\sqrt{c}) \log c.
\]
Therefore, taking the limit as $c \downarrow 0$ in (\ref{temp130}), 
\[
\lim_{\delta \downarrow 0} \Big( \log \delta+\sum_{n=1}^\infty \frac{1}{n}
\mathrm{erfc} (\sqrt{n\delta}) \Big)
=\gamma+  \frac{1}{\sqrt{\pi}} \int_0^{\infty}\frac{\log x}{\sqrt{x}} e^{-x} dx = -2 \log 2
\]
using the known special value of the digamma function 
$ \psi^{(0)}(\frac12) =  \frac{1}{\sqrt{\pi}} \int_0^{\infty}\frac{\log x}{\sqrt{x}} e^{-x} dx$.
From (\ref{rtcty}), the right continuity of $\kappa_2$ at $1/2$ is proved.

We may directly calculate $\kappa'_2(p)$ for $p \in [0,\frac12) \cup (\frac12,1)$ from the formulae
(\ref{DZ50}) and (\ref{DZ53}), leading to (and writing $\beta_p = 4p(1-p)$)

\begin{equation} \label{kappap}
\kappa_2'(p)=
\left\{
\begin{array}{ll}
(2p-1) + \frac{1}{1-p} +\frac{1-2p}{\pi} \left(\frac{1}{\beta_p}\mbox{Li}_{\fr}^2(\beta_p)-\frac{\pi \beta_p}{1-\beta_p} \right) &p<\frac12,\\
(2p-1) + \frac{1}{1-p} +\frac{1-2p}{\pi}\left(\frac{1}{\beta_p}\mbox{Li}_{\fr}^2(\beta_p)-\frac{\pi \beta_p}{1-\beta_p} \right) & \\
\hspace{.2in} -\frac{1-2p}{\beta_p \sqrt{-\log \beta_p}}\left(\frac{1}{\sqrt{\pi}}\mbox{Li}_{\fr}(\beta_p) - \frac{1}{\sqrt{-\log \beta_p}} 
\right)&p>\frac12.
\end{array}
\right.
\end{equation}
Using the series representation (\ref{lismart}) for $L_{\frac12}(\beta)$
and computing the limits one finds
\[
\lim_{p\downarrow \fr} \kappa_2'(p)=2+\frac{2}{\sqrt{\pi}} \zeta(1/2)
=\lim_{p\uparrow \fr} \kappa_2'(p)
\]
which establishes the continuous differentiability of $\kappa_2$ at $\frac12$.

\subsection{Proof of error bounds for $p=\frac12$ asymptotics} \label{s6.3}
We give the proofs of the error bound for the asymptotic (\ref{1011}),
the error bounds (\ref{E10}),(\ref{E20}) and 
their analogues needed for the non translationally invariant case in (\ref{P1+}), (\ref{P2+}).

We use a local central limit theorem in the form (see Theorem 2 of XVI.2 \cite{feller} using the 
symmetry of $\rho$ to imply the third moment $\mu_3$ is zero) 
\begin{equation} \label{lclt}
\left|\rho^{*n}(x) - g_{n}(x) \right| \leq C n^{-3/2}  \quad \mbox{for all $n \geq 1, x \in \mathbb{R}$.}
\end{equation} 
for the Gaussian density $g_t(x) = (2 \pi \sigma^2 t)^{-1/2} \exp(-x^2/2 \sigma^2 t)$. We therefore approximate
\begin{eqnarray}
 \Kac_{\rho}(n) &=& \frac{n}{2} \int_0^{\infty} x 
 \sum_{k=1}^{n-1} \frac{\rho^{*k}(x)\rho^{*(n-k)}(x)}{k(n-k)} dx  \nonumber \\
 & = & \frac{n}{2} \int_0^{\infty} x  \sum_{k=1}^{n-1} \frac{g_{k}(x)g_{n-k}(x)}{k(n-k)} dx  + \mathcal{E}_n 
 \nonumber \\
 & = & \frac{1}{4\pi} \sum_{k=1}^{n-1} \frac{1}{\sqrt{k(n-k)}} + \mathcal{E}_n. 
 \label{2666}
 \end{eqnarray}
The error $\mathcal{E}_n$ is bounded by
 \begin{eqnarray*}
 \mathcal{E}_n  & \leq & n \sum_{1 \leq k \leq n/2} \int_0^{\infty} \frac{x}{k(n-k)}
 \left|\rho^{*k}(x)\rho^{*(n-k)}(x) - g_{k}(x)g_{n-k}(x) \right| dx. 
 \end{eqnarray*}
Letting $C$ depend on $\sigma$ and vary from line to line, we bound the 
sum over $k \leq n^{1/2}$, using $\rho^{(n-k)*}(x) \leq C n^{-1/2}$ and 
$g_{n-k}(x) \leq C n^{-1/2}$, by 
\[
C n^{1/2} \sum_{1 \leq k \leq n^{1/2}} \int_0^{\infty} \frac{x}{k(n-k)} (\rho^{*k}(x) + g_{k}(x))dx 
\leq C n^{-1/2} \sum_{1 \leq k \leq n^{1/2}} \frac{1}{k^{1/2}} = O(n^{-1/4}),
\]
and the sum over $n^{1/2}< k \leq n/2$ using (\ref{lclt}) by 
 \begin{eqnarray*}
&& \hspace{-.3in} C n \sum_{n^{1/2} < k \leq n/2} \int_0^{\infty} \frac{x}{k(n-k)} 
\left( \rho^{(n-k)*} |\rho^{*k}-  g_{k}|  + g_{k}|  \rho^{*(n-k)}-  g_{n-k}|\right) (x) dx \\
& \leq & C n \sum_{n^{1/2} < k \leq n/2} \int_0^{\infty} \frac{x}{k(n-k)} 
\left( \rho^{(n-k)*}(x) k^{-3/2} + g_{k}(x) n^{-3/2} \right) dx \\
& \leq & C n \sum_{n^{1/2} < k \leq n/2} \left( \frac{1}{n^{1/2} k^{5/2}} + \frac{1}{n^{5/2} k^{1/2}}\right) = O(n^{-1/4}).
 \end{eqnarray*}
The sum in (\ref{2666}) is a Riemann approximation to the Beta integral $(4 \pi)^{-1} B(\frac12,\frac12) = \frac14$ with a 
further error that is $O(n^{-1/2})$. This completes the asymptotic (\ref{1011}).

For the error term bound (\ref{k1FT}) we start with
\begin{eqnarray*}
E^{(1)}(n,L) & = & p(n,L) - \frac1n \E_{0} \left[ \delta_{0}(S_n) (M_n - m_n)  \right]  \\
& = & \frac1n \E_{0} \left[ \delta_{0}(S_n) \min\{L,M_n - m_n\}  \right] 
 - \frac1n \E_{0} \left[ \delta_{0}(S_n) (M_n - m_n)  \right]  \\
 & = & - \frac1n \E_{0} \left[ \delta_{0}(S_n) ((M_n - m_n)-L)_+  \right] 
\end{eqnarray*}
so that, using $(a+b)_+ \leq a_+ + b_+$, 
\begin{eqnarray*}
|E^{(1)}(n,2L) | & \leq & \frac1n \E_{0} \left[ \delta_{0}(S_n) (M_n -L)_+  \right] + \frac1n
\E_{0} \left[ \delta_{0}(S_n) (- m_n-L)_+  \right] \\
& = & \frac{2}{n} \E_{0} \left[ \delta_{0}(S_n) (M_n -L)_+  \right]\\
& = & \frac{2}{n} \E_{0} \left[ \delta_{0}(S_n) (M_{n-1} -L)_+  \right] \\
& \leq & \frac{2 \|\rho\|_{\infty}}{n} \E_{0} \left[(M_{n-1} -L)_+  \right] \\
& \leq &  \frac{2 \|\rho\|_{\infty}}{n} \E_{0} \left[(M_{n} -L)_+  \right]
\end{eqnarray*}
where in the penultimate step we bounded the density of the single step $S_n-S_{n-1}$. Then
\begin{eqnarray*}
\E_{0} \left[(M_{n} -L)_+  \right] & \leq & C L^{-3}  \E_{0} \left[|M_{n}|^4 \right] \\
& \leq & C L^{-3}  \E_{0} \left[|S_{n}|^4 \right] \quad \mbox{by Doob's inequality,} \\
& \leq & C L^{-3}  n^2 \quad \mbox{by a Marcinkiewicz-Zygmund inequality,}
\end{eqnarray*}
which completes the proof of (\ref{E10}). 
The changes needed for $\tilde{E}^{(1)}(n,L)$ in the non translationally invariant case are minor:
the only new term that arises is
\[
\E_{0} \left[ \delta_{0}(S_n) (\tilde{M}_n -L)_+  \right] \leq \|\rho\|_{\infty} 
\E_{0} \left[ (\tilde{M}_n -L)_+  \right] 
\] 
(by again averaging over the final step $S_n-\tilde{M}_n = \mathcal{Y}_n$). It is not
difficult to check that the bound $\E_{0} \left[(\tilde{M}_{n} -L)_+  \right] \leq C L^{-3}  n^2$
still holds.  

For the second error term $E^{(2)}(n,L)$ we use a Skorokhod embedding of the walk into a Brownian 
motion $(W(t):t \geq 0)$ run at speed $\sigma^2$. We choose 
stopping times $(T_1,T_2,\ldots)$ so that 
$(S_1,S_2,\ldots) \stackrel{\mathcal{D}}{=} (W(T_1),W(T_2),\ldots)$ and so that 
$T_1,(T_2-T_1),(T_3-T_2),\ldots$ are  an i.i.d. set of non-negative variables with 
$\E[T_k] = k$ and $\E[(T_k-T_{k-1})^2] \leq 4 \E[ X_1^4]/\sigma^4 < \infty$.
Let $\hat{n} = \lceil n - n^{\alpha} \rceil$ for some $\alpha \in (\frac12,1)$ and let
\[
\Omega_n = \{ \max_{k \leq \hat{n}} |T_k- k| \leq n^{\beta}\}
\] 
for some $\beta \in (\frac12,\alpha)$. Since $(T_k-k)$ is a square integrable martingale, Doob's inequality
implies that $P[\Omega^c_n] = O(n^{1-2\beta})$.
Then we make the following approximations:
\begin{eqnarray}
&& \hspace{-.3in} \E_{0} \left[ \delta_{0}(S_n) \min\{L,M_n - m_n\}  \right] \nonumber \\
& = & \E_{0} \left[ \delta_{0}(S_n) \min\{L,M_{\hat{n}} - m_{\hat{n}}\}\right] + \mathcal{E}_1  \nonumber \\
& = & \E_{0} \left[ \delta_{0}(S_n) \min\{L,M_{\hat{n}} - m_{\hat{n}}\}; \Omega_n   \right] + \mathcal{E}_2  \nonumber \\
& = & \E_{0} \left[ \delta_{0}(W(n)) \min\{L,M_{\hat{n}} - m_{\hat{n}}\}; \Omega_n   \right] + \mathcal{E}_3  \nonumber \\
& = & \E_{0} \left[ \delta_{0}(W(n)) \min\{L,W^*(\hat{n}) - W_*(\hat{n})\}; \Omega_n   \right] + \mathcal{E}_4  \nonumber \\
& = & \E_{0} \left[ \delta_{0}(W(n)) \min\{L,W^*(\hat{n}) - W_*(\hat{n})\} \right]  + \mathcal{E}_5 \nonumber \\
& = & \E_{0} \left[ \delta_{0}(W(n)) \min\{L,W^*(n) - W_*(n)\}  \right] + \mathcal{E}_6 \label{aaargh}
\end{eqnarray}
Thus we aim to estimate $|E^{(2)}(n,L)| = \frac1n |\mathcal{E}_6|$.
The reason for this slightly messy set of approximations is in order to be able to use the 
local central limit theorem to estimate the expectation of the delta functions 
$\delta_0(S_n)$, $\delta_{0}(W(n))$
by conditioning at an earlier time.

Using first  $|\min\{L, x\} - \min\{L,y\}| \leq |x-y|$, the symmetry of $\rho$, and then the simple
inequality 
$|\max\{x,y\} - x| \leq y$ for $x,y \geq 0$, we have
\begin{eqnarray*}
|\mathcal{E}_1| 
& \leq & 2 \E_{0} \left[ \delta_{0}(S_n) |M_n - M_{\hat{n}}| \right] \\
& \leq & 2 \E_{0} \left[ \delta_{0}(S_n) \max\{ S_k: \hat{n} < k \leq n\}  \right] \\
& = & 2 \E_{0} \left[ \delta_{0}(S_n) \max \{ S_k: k \leq n - \hat{n}\}  \right] \\
& \leq & 2 \E_{0} \left[M_{n - \hat{n}}  \right] \|\rho^{\hat{n}*}\|_{\infty} 
\end{eqnarray*}
where we have used time reversal and symmetry of the increments in the equality above, and 
then conditioned at time $n - \hat{n}$ in the final step. 
By Doob's inequality $\E_{0} [M_n] \leq C n^{1/2}$;
by $(\ref{lclt})$ we have $|\rho^{*n}(x)| \leq C n^{-1/2}$;
using $n-\hat{n} = O(n^{\alpha})$ 
we conclude that $\mathcal{E}_1 = O(n^{(\alpha-1)/2})$. 
The bound $|\mathcal{E}_6 - \mathcal{E}_5|= O(n^{(\alpha-1)/2})$
is similar.

Next, also by similar steps, 
\begin{eqnarray*}
|\mathcal{E}_2-\mathcal{E}_1| 
& \leq & 2 \E_{0} \left[ \delta_{0}(S_n) M_{\hat{n}} \I(\Omega_n^c) \right] \\
& \leq & 2 \E_{0} \left[ M_{\hat{n}} \I(\Omega_n^c)    \right] \|\rho^{(n-\hat{n})*}\|_{\infty} \\
& \leq & 2 (\E_{0}[ M_{\hat{n}}^2])^{1/2} (\Pp[\Omega_n^c])^{1/2} \|\rho^{(n-\hat{n})*}\|_{\infty} = O(n^{1- \beta - \frac{\alpha}{2}}).
\end{eqnarray*}
The bound $|\mathcal{E}_5 - \mathcal{E}_4|= O(n^{1- \beta - \frac{\alpha}{2}})$
is similar.

Next recall that we have embedded $M_{\hat{n}} = \max_{k \leq \hat{n}} W(T_k)$. 
Conditioning on $\mathcal{F}^W_{T_{\hat{n}}}$ we see
\begin{eqnarray*}
|\mathcal{E}_3-\mathcal{E}_2|
& = & \left|\E_{0} \left[ \min\{L,M_{\hat{n}} - m_{\hat{n}}\} \I(\Omega_n)  \left(\delta_{0}(W(n)) - \delta_{0}(S_n) \right)
 \right] \right| \\
 & = & \left|\E_{0} \left[ \min\{L,M_{\hat{n}} - m_{\hat{n}}\} \I(\Omega_n)  
 \left(g_{n-T_{\hat{n}}}(S_{\hat{n}}) - \rho^{(n-\hat{n})*}(S_{\hat{n}}) \right)  \right] \right| \\
  & \leq & 2 \E_{0} \left[ M_{\hat{n}} \I(\Omega_n)  \|g_{n-T_{\hat{n}}} -  \rho^{(n-\hat{n})*}\|_{\infty}  \right] \\
    & \leq & 2 \E_{0} \left[ M_{\hat{n}} \I(\Omega_n) \left( \|g_{n-T_{\hat{n}}} -  g_{n-\hat{n}} \|_{\infty} 
    + \|g_{n-\hat{n}}  -  \rho^{(n-\hat{n})*}\|_{\infty}\right)  \right].
\end{eqnarray*}
On the set $\Omega_n$ we have $|\hat{n}-T_{\hat{n}}| \leq n^{\beta}$ and then 
 $\|g_{n-T_{\hat{n}}} -  g_{n-\hat{n}} \|_{\infty} \leq C n^{\beta - \frac32 \alpha}$. 
 Combined with (\ref{lclt}) we find $|\mathcal{E}_3-\mathcal{E}_2| = O(n^{\frac12 +\beta - \frac32 \alpha})$. 

Finally we use the modulus of continuity for a Brownian motion showing, for $\epsilon> 0$, there is a 
variable $H_{\epsilon}$ with finite moments, so that 
$|W(t) - W(s)| \leq H_{\epsilon} n^{\epsilon} |t-s|^{\frac12 - \epsilon}$ for all $0 \leq s,t \leq n$, almost surely.
The last errror is
\begin{eqnarray}
|\mathcal{E}_4-\mathcal{E}_3|
& \leq & 2 \E_{0} \left[ \delta_{0}(W(n)) | \max_{k \leq \hat{n}} W(T_k) - W^*(\hat{n})|; \Omega_n \right]  \nonumber \\
& \leq & C n^{-\frac{\alpha}{2}} \E_{0} \left[  | \max_{k \leq \hat{n}} W(T_k) - W^*(\hat{n})|; \Omega_n \right]  \nonumber \\
& \leq & C n^{-\frac{\alpha}{2}} \E_{0} \left[
| \max_{k \leq \hat{n}} W(T_k) - W^*(T_{\hat{n}})| + |W^*(T_{\hat{n}}) - W^*(\hat{n})|; \Omega_n \right].  \label{temp567}
\end{eqnarray}
On the set $\Omega_n$, using the modulus of continuity we have 
$|W^*(T_{\hat{n}}) - W^*(\hat{n})| \leq H_{\epsilon}  n^{(\frac12 - \epsilon)\beta + \epsilon}$.
Also
\begin{eqnarray*}
| \max_{k \leq \hat{n}} W(T_k) - W^*(T_{\hat{n}})| & \leq & H_{\epsilon} n^{\epsilon} \max_{k < \hat{n}}
|T_k - T_{k+1}|^{\frac12 - \epsilon} \\
& \leq & H_{\epsilon} n^{\epsilon} \left( 1 + 2 \max_{k \leq \hat{n}}
|T_k - k| \right)^{\frac12 - \epsilon}\\
& \leq & C H_{\epsilon}  n^{(\frac12 - \epsilon)\beta + \epsilon}
\end{eqnarray*}
(by the triangle inequality $|T_k - T_{k+1}| \leq |T_k - k| +1+ |(k+1)- T_{k+1}|$). Using these 
estimates in (\ref{temp567}) we reach $|\mathcal{E}_4-\mathcal{E}_3| = O(n^{-\frac{\alpha}{2} + (\frac12 - \epsilon)\beta + \epsilon})$.
Collecting all error estimates and choosing $\alpha = \frac56$ and $\beta = \frac23$ leads to an overall error
$|\mathcal{E}_6| = O(n^{-\frac{1}{12} + \epsilon})$ completing the proof of (\ref{E20}).

The changes needed for $|\tilde{E}^{(2)}(n,L)| $ in the non-translationally invariant case are again small.
We leave the chain of approximations (\ref{aaargh}) exactly as before, except that it starts with the 
expectation  $\E_{0} \left[ \delta_{0}(S_n) \min\{L,\tilde{M}_n - m_n\}  \right]$. 
This implies that we only need to re-estimate the error in the first step, which requires a bound on 
the new term 
\begin{equation} \label{last}
\E_{0} \left[ \delta_{0}(S_n) |\tilde{M}_n - M_{\hat{n}}| \}  \right] \leq 
\E_{0} \left[ \delta_{0}(S_n) |\tilde{M}_n - \tilde{M}_{\hat{n}}| \}  \right] + \E_{0} \left[ \delta_{0}(S_n) |\tilde{M}_{\hat{n}} - M_{\hat{n}}| \}  \right].
\end{equation}
The second term in (\ref{last}) is estimated as $O(n^{(\alpha-1)/2})$ using the local central limit theorem 
to bound the density of $S_n - S_{\hat{n}}$ as before.
For the first term
we use time reversal again
\begin{eqnarray*}
\E_{0} \left[ \delta_{0}(S_n) |\tilde{M}_n - \tilde{M}_{\hat{n}}| \}  \right] & \leq &
\E_{0} \left[ \delta_{0}(S_n) \max\{\tilde{S}_k: \hat{n} < k \leq n \}  \right] \\
& = & \E_{0} \left[ \delta_{0}(S_n) \max\{\tilde{S}_k: 1 \leq k \leq n- \hat{n} \}  \right] \\
& \leq & C n^{-1/2} \E_{0} \left[ \max\{\tilde{S}_k: 1 \leq k \leq n- \hat{n} \}  \right]  = O(n^{(\alpha-1)/2})
\end{eqnarray*}
where in the final inequality we have estimated the 
density $\tilde{\rho}^{\hat{n}*} * \rho$ of $(S_n - \tilde{S}_{n- \hat{n}})$
again by the local central limit theorem.
\begin{acks}[Acknowledgments]
We are grateful to Thomas Bothner for many useful discussions.
\end{acks}
\begin{funding}
The first author was supported by EPSRC as part of the MASDOC DTC, Grant.
No. EP/HO23364/1.
\end{funding}
\bibliographystyle{imsart-number} 
\bibliography{FPfinal_aop_revised}       
\end{document}